\documentclass[10pt,a4paper]{article}
%%%%%%%%%%%%%%%%%%%%%%%%%%%%%%%%%%%%%%%%%%%%%%%%%%%%%%%%%%%%%%%%%%%%%%%%%%%%%%%%%%%%%%%%%%%%%%%%%%%%%%%%%%%%%%%%%%%%%%%%%%%%%%%%%%%%%%%%%%%%%%%%%%%%%%%%%%%%%%%%%%%%%%%%%%%%%%%%%%%%%%%%%%%%%%%%%%%%%%%%%%%%%%%%%%%%%%%%%%%%%%%%%%%%%%%%%%%%%%%%%%%%%%%%%%%%
\usepackage{amssymb}
\usepackage{graphicx}
\usepackage{amsmath,amsthm}
\usepackage{amsfonts,amssymb}
\usepackage{url,paralist}
\usepackage{anysize}
\usepackage[utf8]{inputenc}
\usepackage[arrow,curve,matrix,tips,2cell]{xy}  
  \SelectTips{eu}{10} \UseTips
  \UseAllTwocells
  \usepackage{tikz,calc,etex}
  \usetikzlibrary{matrix, arrows, calc,backgrounds,fit,shapes}
\usepackage{hyperref}
\usepackage{color}
\usepackage{enumerate}

\makeatletter
\newcommand\getdepthofnode[2]{%
    \pgfextracty{#1}{\pgfpointanchor{#2}{base}}%
    \pgfextracty{\pgf@ya}{\pgfpointanchor{#2}{south}}% \pgf@xa is a length defined by PGF for temporary storage. No need to create a new temporary length.
    \addtolength{#1}{-\pgf@ya}%
}
\newcommand\getheightofnode[2]{%
    \pgfextracty{#1}{\pgfpointanchor{#2}{north}}%
    \pgfextracty{\pgf@ya}{\pgfpointanchor{#2}{base}}% \pgf@xa is a length defined by PGF for temporary storage. No need to create a new temporary length.
    \addtolength{#1}{-\pgf@ya}%
}
\newcommand\getwidthofnode[2]{%
    \pgfextractx{#1}{\pgfpointanchor{#2}{east}}%
    \pgfextractx{\pgf@xa}{\pgfpointanchor{#2}{west}}% \pgf@xa is a length defined by PGF for temporary storage. No need to create a new temporary length.
    \addtolength{#1}{-\pgf@xa}%
}
\makeatother

\newcommand{\tikzhline}[5]{%
%requires packages tikz, calc and etex and tikzlibrary calc
% place this command AFTER the matrix definition
% 1 name of the matrix
% 2 number of the row
% 3 number of the start column
% 4 number of the end column
% 5 properties for the line such a thickness or arrow tip
% example:
%\matrix (B)[matrix of nodes]{ a & b\\ c & d\\};
%\tikzhline {B}{1}{1}{2}{thick}
\begingroup
\locdimen\Depth
\locdimen\MaxDepth
\locdimen\WidthStart
\locdimen\MaxWidthStart
\locdimen\WidthEnd
\locdimen\MaxWidthEnd
\setlength{\MaxDepth}{0pt}
\foreach \col in {1,...,\the\pgfmatrixcurrentcolumn}{%
\getdepthofnode{\Depth}{#1-#2-\col}
\setlength{\MaxDepth}{\maxof{\MaxDepth}{\Depth}}
\global\MaxDepth=\MaxDepth
}
\setlength{\MaxWidthStart}{0pt}
\foreach \row in {1,...,\the\pgfmatrixcurrentrow}{%
\getwidthofnode{\WidthStart}{#1-\row-#3}
\setlength{\MaxWidthStart}{\maxof{\MaxWidthStart}{\WidthStart}}
\global\MaxWidthStart=\MaxWidthStart
}
\setlength{\MaxWidthEnd}{0pt}
\foreach \row in {1,...,\the\pgfmatrixcurrentrow}{%
\getwidthofnode{\WidthEnd}{#1-\row-#4}
\setlength{\MaxWidthEnd}{\maxof{\MaxWidthEnd}{\WidthEnd}}
\global\MaxWidthEnd=\MaxWidthEnd
} 
\draw[#5] ($(#1-#2-#3.base)-(\MaxWidthStart/2,\MaxDepth+\pgfmatrixrowsep/2)$)--($(#1-#2-#4.base)-(-\MaxWidthEnd/2,\MaxDepth+\pgfmatrixrowsep/2)$);
\endgroup
}

\newcommand{\tikzvline}[5]{%
% requires packages tikz, calc and etex and tikzlibrary calc
% place this command AFTER the matrix definition
% 1 name of the matrix
% 2 number of the start row
% 3 number of the end row
% 4 number of the column
% 5 properties for the line such a thickness or arrow tip
% example:
%\matrix (B)[matrix of nodes]{ a & b\\ c & d\\};
%\tikzhline {B}{1}{1}{2}{thick}
\begingroup
\locdimen\Width
\locdimen\MaxWidth
\locdimen\HeightStart
\locdimen\MaxHeightStart
\locdimen\DepthEnd
\locdimen\MaxDepthEnd
\setlength{\MaxWidth}{0pt}
\foreach \row in {1,...,\the\pgfmatrixcurrentrow}{%
\getwidthofnode{\Width}{#1-\row-#4}
\setlength{\MaxWidth}{\maxof{\MaxWidth}{\Width}}
\global\MaxWidth=\MaxWidth
}
\setlength{\MaxHeightStart}{0pt}
\foreach \col in {1,...,\the\pgfmatrixcurrentcolumn}{%
\getheightofnode{\HeightStart}{#1-#2-\col}
\setlength{\MaxHeightStart}{\maxof{\MaxHeightStart}{\HeightStart}}
\global\MaxHeightStart=\MaxHeightStart
}
\setlength{\MaxDepthEnd}{0pt}
\foreach \col in {1,...,\the\pgfmatrixcurrentcolumn}{%
\getdepthofnode{\DepthEnd}{#1-#3-\col}
\setlength{\MaxDepthEnd}{\maxof{\MaxDepthEnd}{\DepthEnd}}
\global\MaxDepthEnd=\MaxDepthEnd
} 
\draw[#5] ($(#1-#2-#4.base)+(\MaxWidth/2+\pgfmatrixcolumnsep/2,\MaxHeightStart)$)--($(#1-#3-#4.base)+(\MaxWidth/2+\pgfmatrixcolumnsep/2,-\MaxDepthEnd)$);
\endgroup
}

\newtheorem{theorem}{Theorem}[section]

\newtheorem{lemma}[theorem]{Lemma}
\newtheorem{corollary}[theorem]{Corollary}
\newtheorem{conjecture}[theorem]{Conjecture}
\theoremstyle{definition}
\newtheorem{definition}[theorem]{Definition}
\theoremstyle{remark}

\newtheorem{remark}[theorem]{Remark}

\newcommand{\RR}{\mathbb{R}}

\newcommand{\QQ}{\mathbb{Q}}
\newcommand{\ZZ}{\mathbb{Z}}
\newcommand{\FF}{\mathbb{F}}
\newcommand{\BB}{\mathrm{B}}
\newcommand{\EE}{\mathrm{E}}
\newcommand{\ArA}{\mathcal{A}}
\newcommand{\ArB}{\mathcal{B}}
\newcommand{\DiA}{\mathfrak{X}}
\newcommand{\hcolim}{\mathrm{hocolim}}
\newcommand{\colim}{\mathrm{colim}}
\newcommand\Sym{\mathfrak S}

 %without (set operation)
\newcommand{\To}{\longrightarrow} %long function arrow
 %arrow with symmetry group
 %arrow with symmetry group and name of the function above
 %arrow with symmetry group and name of the function above
 %left arrow with symmetry group and name of the function above
 %differiential
 %image
 %image
 %domain
%\newcommand{\ker}{\textnormal{ker}} %kernel

%\newcommand{\textdef}[1]{\textnormal{\textbf{#1}}}
 %homotopy equivalent
 %homeomorphic
 %isomorphic
\newcommand{\sgn}{\textnormal{sign}} %sign of a permutation or a real number
%\newcommand{\det}{\textnormal{det}} %determinant
 %boundary operator
 %co-boundary operator
 %convex hull
 %all one-vector
 %all two-vector
 %all three-vector
 %barycentric subdivision
\newcommand{\Index}[1]{\textnormal{Index}_{#1}} %cohomological ideal-valued index_G
%\newcommand{\deg}{\textnormal{deg}} %degree
 %transversality sign
%\newcommand{\pt}{\{*\}} %one point space
 %one point space
 %rounding down
 %rounding up
%\newcommand{\iff}{\Longleftrightarrow} %iff and only if
 %short inclusion arrow
 %long inclusion arrow
 %such that '|'
%\newcommand{\qm}[1]{`{#1}'} %single quotation marks `...'
%\newcommand{\dqm}[1]{``{#1}''} %double quotation marks ``...''
 %single quotation marks `...'
 %double quotation marks ``...''
 %comment
 %short implication arrow
\newcommand{\Impl}{\Longrightarrow} %long implication arrow
\newcommand{\codim}{\textnormal{codim}} %codimension
 %Hom

\newcommand{\id}{\textnormal{id}} %identity  

\newcommand{\smallToggle}{\small}

\newcommand{\cat}{\operatorname{cat}}
\newcommand{\secat}{\operatorname{secat}}
\newcommand{\ind}{\operatorname{ind}}
\newcommand{\pr}{\operatorname{pr}}
\newcommand{\res}{\operatorname{res}}
\newcommand{\trf}{\operatorname{trf}}
\newcommand{\wgt}{\operatorname{wgt}}
\newcommand{\conve}{\operatorname{Conv}}
%==========================================================

%-------------------------------------------------------------------%
\begin{document}
% -------------------------------------------------------------------%

% -------------------------------------------------------------------%
\title%[] % <---short title
{Equivariant Topology of Configuration Spaces} % <--- long title
% -------------------------------------------------------------------%

% -------------------------------------------------------------------%
\author{Pavle V. M. Blagojevi\'{c}\thanks{The research leading to
    these results has received funding from the European Research Council under
    the European Union's Seventh Framework Programme (FP7/2007-2013) /
    ERC Grant agreement no.~247029-SDModels. Also supported by the grant ON 174008 of the Serbian
    Ministry of Education and Science.} \\
  \smallToggle Matemati\v cki Institut SANU\\[-3pt]
  \smallToggle Knez Mihailova 36\\[-3pt]
  \smallToggle 11001 Beograd, Serbia\\[-3pt]
  \smallToggle \url{pavleb@mi.sanu.ac.rs}\\[-3pt]
  \smallToggle and\\[-3pt]
  \smallToggle Institut f\" ur Mathematik, Freie Universit\"at Berlin\\[-3pt]
  \smallToggle Arnimallee 2\\[-3pt]
  \smallToggle 14195 Berlin, Germany\\[-3pt]
  \smallToggle \url{blagojevic@math.fu-berlin.de}
  \and 
  \setcounter{footnote}{6} 
  Wolfgang L\" uck\thanks{Research supported by a Leibniz award of the German Research Association DFG.}\\
  \smallToggle Mathematisches Institut der Universit\"at Bonn\\[-3pt]
  \smallToggle Endenicher Allee 60\\[-3pt]
  \smallToggle 53115 Bonn, Germany\\[-3pt]
  \smallToggle \url{wolfgang.lueck@him.uni-bonn.de} \and
  \setcounter{footnote}{6} 
  G\"unter M. Ziegler$^*$\thanks{$^*$The research leading to
    these results has received funding from the European Research Council under
    the European Union's Seventh Framework Programme (FP7/2007-2013) /
    ERC Grant agreement no.~247029-SDModels.} \\[-3pt]
  \smallToggle Institut f\" ur Mathematik, Freie Universit\"at Berlin\\[-3pt]
  \smallToggle Arnimallee 2\\[-3pt]
  \smallToggle 14195 Berlin, Germany\\[-3pt]
  \smallToggle \url{ziegler@math.fu-berlin.de}}
  \date{\today}
% -------------------------------------------------------------------%

\maketitle

% -------------------------------------------------------------------%
\begin{abstract}\noindent
  We study the Fadell--Husseini index of the configuration space $F(\RR^d,n)$
  with respect to various subgroups of the symmetric group $\Sym_n$.  For $p$
  prime and $k\ge 1$, we compute $\Index{\ZZ/p}(F(\RR^d,p);\FF_p)$ and partially describe $\Index{(\ZZ/p)^k}(F(\RR^d,p^k);\FF_p)$.   
  In this process we obtain results of independent interest, including: 
  (1) an extended equivariant Goresky--MacPherson formula, 
  (2) a complete description of the top homology of the partition lattice $\Pi_p$ as an $\FF_p[\ZZ_p]$-module, and 
  (3) a generalized Dold theorem for elementary abelian groups.
 
  The results on the Fadell--Husseini index yield a new proof of the
  Nandakumar \& Ramana Rao conjecture for primes.  For $n=p^k$ a
  prime power, we compute the Lusternik--Schnirelmann category
  $\cat(F(\RR^d,n)/\Sym_n)=(d-1)(n-1)$.
  Moreover, we extend coincidence results related to the Borsuk--Ulam theorem, 
  as obtained by Cohen \& Connett, Cohen \& Lusk, and Karasev \& Volovikov.
  \\[2mm]
  Keywords: Configuration spaces, equivariant Goresky--MacPherson formula, equivariant cohomology,
  Fadell--Husseini index, Lusternik--Schnirelmann category.\\
  2010 Mathematics Subject Classification: 14N20, 55M30, 55Q91, 55S91, 52A37.
\end{abstract}

% -------------------------------------------------------------------%

%%%%%%%%%%%%%%%%%%%%%%%%%%%%%%%%%%%%%%%%%%%%%%%%%%%%%%%%%%%%%%%%%%%%%%%%%%%%%%%%%%%%%
%%%%%%%%%%%%%%%%%%%%%%%%%%%%%%%%%%%%%%%%%%%%%%%%%%%%%%%%%%%%%%%%%%%%%%%%%%%%%%%%%%%%%
% -----------------------------------------------------------------------------------%
\section{Introduction and statement of main results}
\label{Sec:Intro}
% -----------------------------------------------------------------------------------%
%%%%%%%%%%%%%%%%%%%%%%%%%%%%%%%%%%%%%%%%%%%%%%%%%%%%%%%%%%%%%%%%%%%%%%%%%%%%%%%%%%%%%
%%%%%%%%%%%%%%%%%%%%%%%%%%%%%%%%%%%%%%%%%%%%%%%%%%%%%%%%%%%%%%%%%%%%%%%%%%%%%%%%%%%%%

%%%%%%%%%%%%%%%%%%%%%%%%%%%%%%%%%%%%%%%%%%%%%%%%%%%%%%%%%%%%%%%%%%%%%%%%%%%%%%%%%%%%%

\subsection{Configuration spaces}
The \emph{configuration space} of $n$ labeled points in the topological space
$X$ is the space
\[
F(X,n)=\{(x_1,\ldots,x_n)\in X^n~:~x_i\neq x_j\text{ for all }i\neq j\}\subset X^n.
\]
The symmetric group $\Sym_n$ naturally acts on $F(X,n)$ by permuting the points $x_1,\ldots ,x_n$.

We refer to F.~Cohen \cite{Cohen} and Fadell \& Husseini \cite{FadellHusseini:book} for background on configuration spaces as well as for references to the external literature in this context.

%%%%%%%%%%%%%%%%%%%%%%%%%%%%%%%%%%%%%%%%%%%%%%%%%%%%%%%%%%%%%%%%%%%%%%%%%%%%%%%%%%%%%

\subsection{The Fadell--Husseini index}
\label{subsec:Fadell--Huesseini_index_intro}

In this paper the focus is on the Fadell--Husseini index of the configuration
space $F(\RR^d,n)$ with respect to different subgroups $G$ of the symmetric group $\Sym_n$.

Let $G$ be a finite group acting on the space $Y$, and let $R$ be a commutative
ring with unit.  The \emph{Fadell--Husseini index} of $Y$ with respect to the
group $G$ and coefficients $R$ is the kernel ideal of the map in equivariant
cohomology induced by the $G$-equivariant map $p_Y \colon Y\to \mathrm{pt}$:
\[
\Index{G}(Y;R) := \ker\bigl(p_Y^*\colon H^*_{G}(\mathrm{pt},R)\longrightarrow H^*_{G}(Y,R)\bigr)
= \ker\bigl(H^*(\BB G,R)\longrightarrow H^*(\EE G\times _{G}Y,R)\bigr).
\]
The main property of the Fadell--Husseini index is that it yields a necessary
condition for the existence of a $G$-equivariant map $Y \to Z$, namely, that $\Index{G}(Z;R)
\subseteq \Index{G}(Y;R)$ must hold.

To study the Fadell--Husseini index of the configuration space $F(\RR^d,n)$, we
have to understand the Serre spectral sequence associated to the fibration
\[
F(\RR^d,n)\longrightarrow \EE G\times_{G}F(\RR^d,n)\longrightarrow \BB G
\]
whose $E_2$-term is given by
\[
E_2^{r,s}=H^r(\BB G;\mathcal{H}^s(F(\RR^d,n);R))\cong H^r(G;H^s(F(\RR^d,n);R)).
\]
Here $\mathcal{H}^*$ denotes the cohomology with local coefficients where the local coefficient system is given by the action of $\pi_1(\BB G)\cong G$ on the cohomology $H^*(F(\RR^d,n);R)$.

\noindent In order to compute this spectral sequence we need to determine
\begin{compactitem} %[$\circ$]
\item the $E_2$-term of the spectral sequence. 
For this we need to determine the $R[G]$-module structure on the cohomology $H^*(F(\RR^d,n);R)$, see Sections~\ref{Sec:GM} and~\ref{Sec:CohomologyOfCS-as-module};

\item the rows of the $E_2$-term of the spectral sequence as $H^*(G;R)$-modules, see Sections~\ref{Sec:FH-Index-I};

\item the differentials of this spectral sequence as $H^*(G;R)$-morphisms, see Section~\ref{Sec:DiffSSSeq} and ~\ref{Sec:FH-Index-I}.
\end{compactitem}
Utilizing all these data, we derive in Section~\ref{Sec:FH-Index-I} the following results for a prime $p$:
\begin{compactenum}[\rm (1)]
\item the complete description of the Fadell--Husseini index of the
  configuration space $F(\RR^d,p)$ with respect to the group $\ZZ/p\leq\Sym_p$
  and coefficients $\FF_p$, in Theorem~\ref{Th:IndexF(X,p)};
\item a partial estimate of the Fadell--Husseini index of the configuration
  space $F(\RR^d,p^k)$ with respect to the regularly embedded subgroup $(\ZZ/p)^k$ of the symmetric group $\Sym_{p^k}$ and
  coefficients $\FF_p$, in   Theorem~\ref{Th:Estimate-IndexF(X,p_upper_k)};
\item for $n = p^k$ a prime power, the existence of a non-zero element in the
  difference
  \[
  H^{(d-1)(n-1)}_{\Sym_n^{(p)}}(\mathrm{pt};\ZZ){\setminus}\Index{\Sym_n^{(p)}}(F(\RR^d,n);\ZZ).
  \]
\end{compactenum}
This last result is not obtained via spectral sequence calculations.  Instead,
using our results from~\cite[Section~4]{B-Z-New}, we identify the non-zero
obstruction element for the existence of an $\Sym_n^{(p)}$-equivariant map
$F(\RR^d,n)\to S(W_n^{\oplus(d-1)})$ with the appropriate Euler class of the vector bundle
\[
W_n^{\oplus(d-1)} \longrightarrow F(\RR^d,n)\times_{\Sym_n^{(p)}} W_n^{\oplus(d-1)}\longrightarrow
F(\RR^d,n)/\Sym_n^{(p)},
\]
where $\Sym_n^{(p)}$ is a $p$-Sylow subgroup and $W_n$ a specific orthogonal
representation.

%%%%%%%%%%%%%%%%%%%%%%%%%%%%%%%%%%%%%%%%%%%%%%%%%%%%%%%%%%%%%%%%%%%%%%%%%%%%%%%%%%%%%

\subsection{Further results}
\label{subsec:Further_Results}

As a by-product of the Fadell--Husseini index calculations, we reprove some
known facts and obtain new results that are of independent interest.  For
example, we get:
\begin{compactenum}[\rm (1)]
\item an equivariant Goresky--MacPherson formula, in Theorem~\ref{Th:EqGM};
\item the $R[\Sym_n]$-module structure on the cohomology of the configuration
  space $H^*(F(\RR^d,n);R)$, in Theorem~\ref{Th:ModuleStructureOnCohomology};
\item an extended generalization of Dold's theorem for elementary abelian
  groups, in Theorem~\ref{Th:Dold};
\item for a prime $p$ the $\FF_p[\ZZ/p]$-module structure on the top homology of
  the proper part of the partition lattice $\Pi_p$, in Corollary~\ref{cor:RepresPi_p}.
\end{compactenum}

%%%%%%%%%%%%%%%%%%%%%%%%%%%%%%%%%%%%%%%%%%%%%%%%%%%%%%%%%%%%%%%%%%%%%%%%%%%%%%%%%%%%%

\subsection{Applications}
\label{subsec:Applications}

We are interested in the following two conjectures, one from convex geometry and the other from algebraic topology.

\begin{compactitem} %[$\circ$]
\item The Nandakumar \& Ramana--Rao conjecture: For any planar
  convex body $K$ and any natural number $n>1$ there exists a partition of the
  plane into $n$ convex pieces $P_1,\ldots ,P_n$ such that
  \[
  \mathrm{area}(P_1\cap K)=\cdots =\mathrm{area}(P_n\cap K) \quad \text{and}
  \quad \mathrm{perimeter}(P_1\cap K)=\cdots =\mathrm{perimeter}(P_n\cap K).
  \]

\item The Lusternik--Schnirelmann category of the configuration space
  $F(\RR^d,n)/\Sym_n$ of unordered pairwise distinct points in $\RR^d$ is equal
  to the cohomological dimension of the configuration space $F(\RR^d,n)$, i.e.,
  \[
  \cat(F(\RR^d,n)/\Sym_n)=(d-1)(n-1).
  \]
  This has been conjectured by Roth~\cite[Conjecture~1.3]{Roth}. (See also Karasev \cite[Lemma~6 and Theorem~9]{Karasev:genus-and-category}
  for partial results.)
\end{compactitem}
The results on the Fadell--Husseini index are used in Section~\ref{Sec:App} as the main ingredient to give a new proof of the Nandakumar \& Ramana--Rao
conjecture for $n = p$ and give a proof of the second conjecture for $n = p^k$, where $p$ is a prime.
The partial calculation of the Fadell--Husseini index in the case $n = p^k$, Section~\ref{Sec:FH-Index-II}, allows us to extend and improve Borsuk--Ulam type coincidence results by Cohen \& Connett~\cite{Cohen-Connett}, Cohen \& Lusk~\cite{Cohen-Lusk}, and Karasev \& Volovikov~\cite{Karasev-Volovikov}.

\subsection{Connections with classical results on configuration spaces}
\label{subsec:Connections}

In this section we point out connections and overlaps of our results with classical results by F.~Cohen, P.~May and F.~Cohen \& L.~Taylor.

\subsubsection{Cohomology of the configuration space as a module}

The configuration space $F(X,n)$ admits a free action of the symmetric group $\Sym_n$. Consequently, both homology $H_*(F(X,n);R)$ and cohomology $H^*(F(X,n);R)$ of the configuration space, with coefficients in any ring $R$, have the natural structure of an $R[G]$-module for any subgroup $G$ of $\Sym_n$.

Frederick Cohen, in his landmark paper from 1976  \cite[Section 7]{Cohen}, described the $\FF[\Sym_n]$-module structure on $H_*(F(\RR^d,n);\FF)$ and $H^*(F(\RR^d,n);\FF)$, where $\FF$ denotes an arbitrary field, as follows.
The basis for (co)homology of the configuration space that arose from the work of Fadell \& Neuwirth $\{\beta_{i,j}\}$ was modified to a basis $\{\alpha_{i,j}\}$ that allowed the author to describe the action in a concise way \cite[Lemma 7.1]{Cohen}.
The action of an arbitrary transposition $\tau_r=(r,r+1)$ on this basis was given in \cite[Proposition 7.2, Corollary 7.4]{Cohen}.
A more systematic description of the module structure on the cohomology $H^*(F(\RR^d,n);R)$, in the language of the representation theory, was given by F.~Cohen \& Taylor in 1993 \cite[Sections 3 and 5]{Cohen-Taylor}.  
Further on, a description of the top $(d-1)(n-1)$-th (co)homology of the configuration space $F(\RR^d,n)$ as a $\ZZ[\Sym_n]$-module in the language of Lie algebras was given by F.~Cohen in \cite[Theorem 6.1]{Cohen95}.

A paper by Ossa from 1996 presented similar results: It described the action of transpositions on the generators of $H^*(F(\RR^d,n);\ZZ)$ and  decomposed $H^*(F(\RR^d,n);\QQ)$ into a direct sum of induced representations \cite[Section 2]{Ossa}.

\subsubsection{Cohomology of the quotient configuration spaces}

The cohomology of the quotient configuration space $F(X,n)/G$ for a subgroup $G$ of the symmetric group $\Sym_n$, with coefficients in the ring $R$, can be studied via the Serre spectral sequence of the fibration
\[
F(X,n)\longrightarrow \EE G\times_{G}F(X,n)\longrightarrow \BB G,
\]
with the $E_2$-term given by
\[
E_2^{r,s}=H^r(\BB G;\mathcal{H}^s(F(X,n);R))\cong H^r(G;H^s(F(X,n);R)).
\]
The key ingredient in  the computation of this spectral sequence is the $R[G]$-module structure on the coefficients $H^*(F(X,n);R)$.

In the case when $p$ is an odd prime, using the previously described action of the transpositions on the cohomology   $H^*(F(\RR^d,p);\FF_p\otimes\sgn^{\varepsilon})$, $\varepsilon=0,1$ (twisted, or not, with the sign representation) F.~Cohen in \cite[Sections 5 and 8 to 11]{Cohen} analyzed the spectral sequences associated to the fibrations
\[
F(\RR^d,p)\longrightarrow \EE \ZZ/p\times_{\ZZ/p}F(\RR^d,p)\longrightarrow \BB \ZZ/p
\quad
\text{and}
\quad
F(\RR^d,p)\longrightarrow \EE \Sym_p\times_{\Sym_p}F(\RR^d,p)\longrightarrow \BB \Sym_p.
\]
The corresponding morphism of spectral sequences induced by the restriction turns out to be a monomorphism between $E_2$-terms.
One of many results derived in this framework is the following isomorphism of algebras \cite[Theorem 5.2]{Cohen}:
\begin{equation}
\label{revision:1}
H^*(F(\RR^d,p)/\Sym_p;\FF_p)\cong H^{\leq (d-1)(p-1)}(\Sym_p;\FF_p),
\end{equation}
which holds for odd integers $d\ge3$.
An analogous result in the case of the twisted coefficients $\FF_p\otimes\sgn$ was given in \cite[Theorem 5.3]{Cohen}.

The paper of Ossa \cite[Section 3]{Ossa} gave a brief account of these results.

\subsubsection{Fadell--Husseini index of the configuration space}

The Fadell--Husseini index of the configuration space $F(\RR^d,n)$, with respect to a subgroup $G$ of the symmetric group $\Sym_n$ and coefficients in the ring $R$, is the kernel ideal
\[
\Index{G}(F(\RR^d,n);R) := \ker\bigl(H^*_{G}(\mathrm{pt},R)\rightarrow H^*_{G}(F(\RR^d,n),R)\bigr).
\]

From the results of F.~Cohen in \cite{Cohen}, assuming that $p$ is an odd prime,  the following informations about the Fadell--Husseini index of the configuration space can be deduced:

\smallskip
\begin{compactenum}
\item[\rm (A)]  The Vanishing theorem \cite[Theorem 8.2]{Cohen} can be used to derive that
\[
\Index{\ZZ/p}(F(\RR^d,p);\FF_p)=H^{\geq (d-1)(p-1)+1}(\ZZ/p;\FF_p).
\]
\item[\rm (B)]  The description \eqref{revision:1} of the cohomology $H^*(F(\RR^d,p)/\Sym_p;\FF_p)$ given in \cite[Theorem 5.2]{Cohen} implies that for an odd integer $d\ge3$
\[
\Index{\Sym_p}(F(\RR^d,p);\FF_p)=H^{\geq (d-1)(p-1)+1}(\Sym_p;\FF_p).
\]
\end{compactenum}

\noindent
Further on, let us consider the vector bundle $\xi_{d,n}$ given by
  \[
  \RR^n\longrightarrow F(\RR^d,n)\times_{\Sym_n}\RR^n\longrightarrow F(\RR^d,n)/\Sym_n,
  \]
where $\Sym_n$ acts on $\RR^n$ by permuting the coordinates.
Then:

\smallskip
\begin{compactenum}
\item[\rm (C)] F.~Cohen \& Handel, using the little cube operad proved in \cite[Lemma 3.2]{Cohen-Handel} that in the case when $n$ is a power of $2$ 
\[
w_{n-1}(\xi_{2,n})\notin \Index{\Sym_n}(F(\RR^2,n);\FF_2).
\]
Here $w_{n-1}(\xi_{2,n})$ denotes the $(n-1)$-th Stiefel--Whitney class of the vector bundle $\xi_{2,n}$.
\item[\rm (D)]  In the case when both $d$ and $n$ are powers of $2$, Chisholm extended the result of F.~Cohen \& Handel and proved in \cite[proof of Lemma 3]{Chisholm} that
\[
w_{n-1}^{d-1}(\xi_{d,n})\notin \Index{\Sym_n}(F(\RR^d,n);\FF_2).
\]
\item[\rm (E)] With some additional work, using the result of Gromov in \cite[Non-vanishing lemma 5.1 and Remark after it]{Gromov}, one can obtain that in the case when $n$ is a power of $2$ and $d\geq2$:
\[
w_{n-1}^{d-1}(\xi_{d,n})\notin \Index{\Sym_n}(F(\RR^d,n);\FF_2).
\]
\end{compactenum}

\subsubsection*{Acknowledgments}

We are grateful to Frederick Cohen, Roman Karasev, Peter Landweber, Jim Stasheff, and Volkmar Welker for valuable discussions, comments, and pointers to the literature. Thanks to a referee for excellent comments and recommendations.

%%%%%%%%%%%%%%%%%%%%%%%%%%%%%%%%%%%%%%%%%%%%%%%%%%%%%%%%%%%%%%%%%%%%%%%%%%%%%%%%%%%%%
%%%%%%%%%%%%%%%%%%%%%%%%%%%%%%%%%%%%%%%%%%%%%%%%%%%%%%%%%%%%%%%%%%%%%%%%%%%%%%%%%%%%%
% -----------------------------------------------------------------------------------%
\section{An Equivariant Goresky--MacPherson formula}
\label{Sec:GM}
% -----------------------------------------------------------------------------------%
%%%%%%%%%%%%%%%%%%%%%%%%%%%%%%%%%%%%%%%%%%%%%%%%%%%%%%%%%%%%%%%%%%%%%%%%%%%%%%%%%%%%%
%%%%%%%%%%%%%%%%%%%%%%%%%%%%%%%%%%%%%%%%%%%%%%%%%%%%%%%%%%%%%%%%%%%%%%%%%%%%%%%%%%%%%

% -----------------------------------------------------------------------------------%

The main result of this section is the following theorem that is a generalization of a result by Sundaram 
\& Welker \cite[Theorem 2.5, page 1397]{SunWel}; explanations and the proof will follow below.

\begin{theorem}[Equivariant Goresky--MacPherson formula]
  \label{Th:EqGM}
  Let $\rho\colon G\to \mathrm{O}(d)$ be an orthogonal action of a finite group
  $G$ on the Euclidean space $E\cong\RR^d$.  Consider a $G$-invariant
  arrangement of linear subspaces $\ArA =\{V_1,\ldots,V_k\}$ in $E$.
  Assume that\\[2mm]
  {\bf (R)} The coefficient ring $R$ is a principal ideal domain and for every
  $V\in L_{\ArA}^{>\hat{0}}$ the homology groups $H_{*}(\Delta(\hat{0},V);R)$
  are free $R$-modules; and \\[2mm]
  {\bf (C)} The arrangement $\ArA$ is a $c$-arrangement for some integer $c>1$.
  \\[2mm]
  Then:
  \begin{compactenum}[\rm (i)]
  \item \label{Th:EqGM-Formula-1} For the homology of the link of the arrangement $\ArA$ there is an isomorphism of $R[G]$-modules
    \begin{equation*}
      H_{i}(D_{\ArA};R)\cong\bigoplus_{r+s=i}\; \bigoplus_{V\in L_{\ArA}^{>\hat{0}}/G} 
      \ind^G_{G_V}\tilde{H}_{r-1}(\Delta(\hat{0},V);R)\otimes_R \tilde{H}_s(S(V);R) ,
    \end{equation*}
    where $S(V)$ denotes the unit sphere in the linear subspace $V$.

  \item \label{Th:EqGM-Formula-2} For the cohomology of the complement of the arrangement $\ArA$ there is an isomorphism of $R[G]$-modules
    \begin{equation*}
      H^{i}(M_{\ArA};R)\cong\mathcal{R}\otimes\bigoplus_{r+s=d-i-2}\; \bigoplus_{V\in L_{\ArA}^{>\hat{0}}/G} 
      \ind^G_{G_V}\tilde{H}_{r-1}(\Delta(\hat{0},V);R)\otimes_R \tilde{H}_s(S(V);R) ,
    \end{equation*}
    where $\mathcal{R}$ is the $R[G]$-module whose underlying $R$-module is $R$
    and for which $g\in G$ acts by $g\cdot r:=\det_{R}(\rho (g))r$.
  \end{compactenum}
\end{theorem}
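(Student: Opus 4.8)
The plan is to realize the link $D_{\ArA}$, $G$-equivariantly and up to homotopy, as a homotopy colimit over the intersection lattice, to apply an equivariant version of the Ziegler--Živaljević wedge lemma in order to obtain part~\ref{Th:EqGM-Formula-1}, and then to deduce part~\ref{Th:EqGM-Formula-2} from it by equivariant Alexander duality inside the one-point compactification $S^d=E\cup\{\infty\}$. Throughout one uses that, since $\ArA$ is $G$-invariant, $G$ acts on the intersection lattice $L_{\ArA}$ by poset automorphisms fixing $\hat 0$; the stabilizer $G_V$ of $V\in L_{\ArA}^{>\hat 0}$ then acts on the subspace $V$ through $\rho$ --- by an action that need not preserve orientation --- hence on the sphere $S(V)$, and on the open interval $(\hat 0,V)$, hence simplicially on the order complex $\Delta(\hat 0,V)$.

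First I would set up the combinatorial model: following Ziegler--Živaljević and Sundaram--Welker, the diagram $V\mapsto S(V)$ over the poset $L_{\ArA}^{>\hat 0}$ has homotopy colimit $G$-homotopy equivalent to $D_{\ArA}$, and, $\ArA$ being $G$-invariant, this is a $G$-diagram and the equivalence can be taken $G$-equivariant. Hypothesis~(C) enters at this point and below: for a $c$-arrangement with $c>1$ the codimensions occurring in $L_{\ArA}$ are multiples of $c$, which is the standard condition ensuring that this link model and the splitting below behave well over an arbitrary coefficient ring. The core of the argument is an equivariant wedge lemma: the homotopy colimit splits, $G$-equivariantly, as a wedge indexed by the $G$-orbits on $L_{\ArA}^{>\hat 0}$, the piece attached to the orbit of $V$ being the $|G/G_V|$ translated copies of the join $\Delta(\hat 0,V)*S(V)$ that are permuted by $G$ with $G_V$ acting on one copy diagonally through its two actions described above --- i.e., the induced space $\ind^G_{G_V}(\Delta(\hat 0,V)*S(V))$. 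Passing to reduced homology, a permuted family of summands produces an induced module, so the orbit of $V$ contributes $\ind^G_{G_V}\widetilde H_*(\Delta(\hat 0,V)*S(V);R)$; the join formula together with the Künneth theorem --- hypothesis~(R), the freeness of $\widetilde H_*(\Delta(\hat 0,V);R)$, killing the Tor terms --- gives $\widetilde H_i(\Delta(\hat 0,V)*S(V);R)\cong\bigoplus_{r+s=i-1}\widetilde H_r(\Delta(\hat 0,V);R)\otimes_R\widetilde H_s(S(V);R)$, and reindexing $r\mapsto r-1$ yields exactly the summands of part~\ref{Th:EqGM-Formula-1} (with the usual convention $\widetilde H_{-1}(\emptyset)=R$ for atoms). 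Since $\ind^G_{G_V}$ commutes with direct sums this is the asserted isomorphism of $R[G]$-modules.

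For part~\ref{Th:EqGM-Formula-2} I would extend the $G$-action on $E$ to $S^d=E\cup\{\infty\}$ by fixing $\infty$, so that $M_{\ArA}=S^d\setminus\overline{\bigcup\ArA}$ and $\overline{\bigcup\ArA}=\bigcup\ArA\cup\{\infty\}$ is a compact $G$-invariant polyhedron. Because $\rho$ is linear it commutes with radial rescaling, so as a $G$-space $\overline{\bigcup\ArA}$ is the unreduced suspension of $D_{\ArA}$ with the two cone points $0$ and $\infty$ fixed and $G$ acting trivially on the suspension coordinate; hence $\widetilde H_j(\overline{\bigcup\ArA};R)\cong\widetilde H_{j-1}(D_{\ArA};R)$ as $R[G]$-modules. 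Equivariant Alexander duality in $S^d$ then gives an isomorphism $\widetilde H^i(M_{\ArA};R)\cong\widetilde H_{d-i-1}(\overline{\bigcup\ArA};R)\otimes_R\mathcal R$ of $R[G]$-modules, the twist being precisely the $G$-action on the orientation module $H_d(S^d;R)\cong R$, on which $g$ acts by the sign $\det_R(\rho(g))$ of the linear action --- so $\mathcal R$ is exactly the module appearing in the statement. Feeding part~\ref{Th:EqGM-Formula-1} in degree $d-i-2$ into the last two isomorphisms, and using that $\otimes_R\mathcal R$ is exact and commutes with direct sums and with $\ind^G_{G_V}$, one reads off part~\ref{Th:EqGM-Formula-2} (in positive degrees; the degree-zero case is routine).

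The step I expect to be the main obstacle is the equivariant wedge lemma. The classical Ziegler--Živaljević homotopy equivalence is assembled from non-canonical contractions and collapses and is therefore not a priori $G$-equivariant, so the real work is to carry out the decomposition compatibly with the $G$-action --- for instance by filtering the homotopy colimit by the rank function of $L_{\ArA}$, which is $G$-invariant, performing all collapses on $G$-invariant subcomplexes, and verifying that no extension problems arise over the principal ideal domain $R$; equivalently, one must produce a degenerating spectral sequence of $R[G]$-modules associated with this filtration whose $E_2$-page is $\bigoplus_{V\in L_{\ArA}^{>\hat 0}/G}\ind^G_{G_V}\widetilde H_*(\Delta(\hat 0,V)*S(V);R)$. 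It is exactly here that hypotheses~(R) and~(C) do their work, eliminating Tor terms and extension problems over a PID as opposed to a field.
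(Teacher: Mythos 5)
Your overall line of attack coincides with the paper's: replace $D_{\ArA}$ by the homotopy colimit of the sphere diagram over $L_{\ArA}^{>\hat 0}$ (a $G$-homotopy equivalence after Sundaram--Welker), compute its homology $G$-equivariantly, and then get part~(ii) by Alexander duality with the orientation twist $\mathcal R$. Your closing paragraph, where you retreat from a literal equivariant wedge splitting to a filtration spectral sequence of $R[G]$-modules, is exactly what the paper does; it filters the homotopy colimit by the skeleta of the order-complex factors $\Delta(\hat 0,V]$, identifies the $d^1$-differential with the boundary of Whitney homology (here one needs that the inclusions $S(V)\hookrightarrow S(W)$ of positive codimension are zero on reduced homology), reads off $E^2$ via Whitney homology plus the universal coefficient theorem (this is where (R) enters), and then kills the spectral sequence by a rank count against the non-equivariant Goresky--MacPherson answer.

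You correctly flag the genuine obstacle, but you should be more precise about how hypothesis~(C) resolves it, because that is the whole point of the theorem. Collapse at $E^2$ still leaves a filtration of $R[G]$-modules, and a filtration need not split. Condition~(C) does two separate jobs. First, a $c$-arrangement has geometric intersection lattice, so $\tilde H_*(\Delta(\hat 0,V);R)$ is concentrated in the top degree; this forces the $E^2$-page to be supported on a ``staircase.'' Second, and crucially, because $c>1$ the contribution of a subspace $V$ with $\codim V=c(m+1)$ sits in bidegree $(m,\dim V-1)$, i.e.\ in total degree $d-2-(1-\tfrac1c)\codim V$, which is a strictly decreasing function of $\codim V$; hence for each total degree $n$ there is at most one index $r$ with $F_{r,n-r}\neq F_{r-1,n-r+1}$, so the filtration has exactly one nontrivial subquotient per degree and splits for trivial reasons. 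Your phrase ``eliminating \dots\ extension problems over a PID'' is too weak: a PID alone does not force filtrations of $R[G]$-modules to split, and indeed the paper gives a separate corollary for $R$ a field of characteristic prime to $|G|$, where one does not need~(C). Making this mechanism explicit is the missing step; everything else in your outline is sound.
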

\noindent Here we use the convention that $\tilde{H}_{-1}(\emptyset;R)=R$.

If we would like to drop the condition (C) on the arrangement and still have the same description of the $R[G]$-module structure of the cohomology of the complement  we need to strengthen the condition on the coefficients. 
\begin{corollary}
\label{Cor:EqGM}
  Let $\rho\colon G\to \mathrm{O}(d)$ be an orthogonal action of a finite group
  $G$ on the Euclidean space $E\cong\RR^d$.  Consider a $G$-invariant
  arrangement of linear subspaces $\ArA =\{V_1,\ldots,V_k\}$ in $E$.
  Assume that the coefficient ring $R$ is a field of characteristic prime to the order of $G$.
  Then:
  \begin{compactenum}[\rm (i)]
  \item \label{Th:EqGM-Formula-1cor} For the homology of the link of the arrangement $\ArA$ there is an isomorphism of $R[G]$-modules
    \begin{equation*}
      H_{i}(D_{\ArA};R)\cong\bigoplus_{r+s=i}\; \bigoplus_{V\in L_{\ArA}^{>\hat{0}}/G} 
      \ind^G_{G_V}\tilde{H}_{r-1}(\Delta(\hat{0},V);R)\otimes_R \tilde{H}_s(S(V);R) ,
    \end{equation*}
    where $S(V)$ denotes the unit sphere in $V$.

  \item \label{Th:EqGM-Formula-2cor} For the cohomology of the complement of the arrangement $\ArA$ there is an isomorphism of $R[G]$-modules
    \begin{equation*}
      H^{i}(M_{\ArA};R)\cong\mathcal{R}\otimes\bigoplus_{r+s=d-i-2}\; \bigoplus_{V\in L_{\ArA}^{>\hat{0}}/G} 
      \ind^G_{G_V}\tilde{H}_{r-1}(\Delta(\hat{0},V);R)\otimes_R \tilde{H}_s(S(V);R) ,
    \end{equation*}
    where $\mathcal{R}$ is the $R[G]$-module whose underlying $R$-module is $R$
    and for which $g\in G$ acts by $g\cdot r:=\det_{R}(\rho (g))r$.
  \end{compactenum}
\end{corollary}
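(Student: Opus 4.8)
The plan is to revisit the proof of Theorem~\ref{Th:EqGM} and to observe that condition (C) is used there in exactly one place --- to make a spectral sequence degenerate --- while over a field this degeneration is automatic. To begin with, condition (R) is vacuous in the present setting: over a field every module is free, so $H_*(\Delta(\hat{0},V);R)$ is a free $R$-module for every $V\in L_{\ArA}^{>\hat{0}}$. Thus only the use of (C) has to be circumvented, and parts~(i) and~(ii) can be treated in the same order as in Theorem~\ref{Th:EqGM}.

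Second, and this is the heart of the matter, I would re-run for an arbitrary $G$-invariant arrangement $\ArA$ the $G$-equivariant diagram-of-spaces construction underlying the proof of Theorem~\ref{Th:EqGM} (in the spirit of Sundaram--Welker~\cite{SunWel}): the equivariant model of the link $D_{\ArA}$ as a homotopy colimit, over the $G$-poset $L_{\ArA}^{>\hat{0}}$, of the associated diagram built from the spheres $S(V)$, together with the resulting homology spectral sequence of $R[G]$-modules converging to $H_*(D_{\ArA};R)$ whose $E_2$-page is, in total degree $i$ and as an $R[G]$-module,
\[
\bigoplus_{r+s=i}\;\bigoplus_{V\in L_{\ArA}^{>\hat{0}}/G}\ind^G_{G_V}\tilde{H}_{r-1}(\Delta(\hat{0},V);R)\otimes_R\tilde{H}_s(S(V);R).
\]
Setting up this model, identifying the $E_2$-page, and computing the $R[G]$-module structure of its entries --- in particular the sign character by which the stabilizer $G_V$ acts on $\tilde{H}_s(S(V);R)$ --- never uses that $\ArA$ is a $c$-arrangement; that hypothesis enters the proof of Theorem~\ref{Th:EqGM} only afterwards, to kill the higher differentials. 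Now the classical, non-equivariant Goresky--MacPherson formula holds for \emph{every} subspace arrangement, so, after forgetting the $G$-action and using $\dim_R\ind^G_{G_V}M=[G:G_V]\cdot\dim_R M$, the total dimension of the $E_2$-page in each degree already equals $\dim_R H_\bullet(D_{\ArA};R)$. A dimension count therefore forces all differentials of the spectral sequence to vanish, so $E_2=E_\infty$ as $R[G]$-modules.

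Third, I would pass from the collapsed spectral sequence to the asserted module structure. The $E_\infty$-page is the associated graded of a $G$-stable filtration of $H_*(D_{\ArA};R)$; since $\mathrm{char}\,R$ is prime to $|G|$, the ring $R[G]$ is semisimple by Maschke's theorem, so this filtration splits $G$-equivariantly and $H_*(D_{\ArA};R)\cong E_\infty=E_2$ as $R[G]$-modules. This is part~(i). For part~(ii) I would invoke $G$-equivariant Alexander duality in the one-point compactification $E\cup\{\infty\}\cong S^d$, exactly as in the proof of Theorem~\ref{Th:EqGM}: it identifies $H^i(M_{\ArA};R)$ with the homology of the link, with the degree shift $r+s=d-i-2$ and with a global twist by the orientation $R[G]$-module of $E$, which is precisely the module $\mathcal{R}$ with $g\cdot r=\det_R(\rho(g))\,r$. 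Neither this duality nor the identification of the twist uses (C), so part~(ii) follows from part~(i).

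The step that I expect to require the most care is the second one: one must go back into the proof of Theorem~\ref{Th:EqGM} and verify that the $c$-arrangement hypothesis is genuinely confined to the degeneration argument, and is not used, even implicitly, in constructing the equivariant diagram-of-spaces model of $D_{\ArA}$, in identifying the $E_2$-page, or in pinning down the sign characters and the twist $\mathcal{R}$. Once this is confirmed, the two external inputs ``classical Goresky--MacPherson forces collapse'' and ``Maschke splits the abutment'' complete the proof, and --- notably --- no representation theory beyond semisimplicity of $R[G]$ is needed; in particular there is no need to assume $R$ algebraically closed or to argue via characters.
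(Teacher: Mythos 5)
Your proposal is correct and matches the paper's own argument. The paper derives Lemma~\ref{lem:equivariant_filtration} (the $G$-stable filtration with $E^2 = E^\infty$ as associated graded) under hypothesis (R) alone, then observes in Section~\ref{subsec:c-arrangements} that the resulting filtration of $H_n(D_{\ArA};R)$ splits $R[G]$-equivariantly precisely when the characteristic of the field $R$ is prime to $|G|$ (i.e.\ by semisimplicity of $R[G]$, as you say via Maschke), concluding Corollary~\ref{Cor:EqGM} without invoking (C); part~(ii) then follows by the same Alexander duality argument, twisted by the orientation module $\mathcal{R}$.
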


%%%%%%%%%%%%%%%%%%%%%%%%%%%%%%%%%%%%%%%%%%%%%%%%%%%%%%%%%%%%%%%%%%%%%%%%%%%%%%%%%%%%%

\subsection{Arrangements}
\label{subsec:Arrangements}

Let $d>0$ be a fixed integer and $E=\RR^d$ the Euclidean space.  An
\textit{arrangement} in $E$ is any finite collection $\ArA$ of linear subspaces
of $E$.  To any arrangement $\ArA =\{V_1,\ldots,V_k\}$ we associate:

\begin{compactitem} %[$\bullet$]

\item the \textit{union} of the arrangement $\ArA$ to be the topological space
  $U_{\ArA}:=V_1\cup\cdots\cup V_k$;
\item the \textit{complement} of the arrangement $\ArA$ to be the topological
  space $M_{\ArA}:=E{\setminus} (V_1\cup\cdots\cup V_k)=E{\setminus} U_{\ArA}$;
\item the \textit{link} of arrangement $\ArA$ to be the topological space
  $D_{\ArA}:=S(E)\cap U_{\ArA}$, where $S(E)\approx S^{d-1}$ denotes the unit
  sphere in $E$ with the center in the origin;
\item the \textit{intersection lattice} of the arrangement $\ArA$ to be the
  partially ordered set $L_{\ArA}$ of all intersections of elements of the
  arrangement $\ArA$ partially ordered by reversed (!) inclusion and augmented
  with the space $E$ as the minimum of $L_{\ArA}$, i.e., $\hat{0}:=E$ and $\hat{1}=V_1\cap\cdots\cap V_k$.

\end{compactitem}

The formula of Goresky and MacPherson~\cite[Theorem~III.1.3.A]{G-M},~\cite[Theorem~2.2]{ZZ} describes the (co)homology of the complement of the arrangement in terms of combinatorial data, namely the homology of lower intervals of the intersection lattice:
\begin{equation}
  \label{eq:GMFormula}
  \tilde{H}^i(M_{\ArA};R)\cong\bigoplus_{V\in L_{\ArA}^{>\hat{0}}} \tilde{H}_{\codim (V)-i-2}(\Delta(\hat{0},V);R)
\end{equation}
where the coefficients are taken in a commutative ring with unit $R$.  Here
$\Delta(\hat{0},V)$ stands for the order complex of the open interval
$(\hat{0},V)$ of the lattice $L_{\ArA}$.  The Goresky--MacPherson isomorphism
\eqref{eq:GMFormula} factors in the following way:
\begin{eqnarray}
  \tilde{H}^i(M_{\ArA};R) & \cong &  \tilde{H}^i(S(E){\setminus} D_{\ArA};R)         
  \label{fact_(1)}
  \\
  & \cong &  \tilde{H}_{\dim(E)-2-i}(D_{\ArA};R)                               
  \label{fact_(2)}
  \\
  & \cong &  \bigoplus_{V\in L_{\ArA}^{>\hat{0}}} \tilde{H}_{\codim (V)-i-2}(\Delta(\hat{0},V);R).
  \label{fact_(3)}
\end{eqnarray}
The first isomorphism is a consequence of the radial deformation retraction of
$M_{\ArA}$ onto $S(E){\setminus} D_{\ArA}$, while the second one is the
Alexander duality isomorphism.  The final isomorphism was obtained in the work
of Goresky and MacPherson~\cite{G-M} as an application of stratified Morse
theory.  
On the other hand, in \cite[Theorem~2.4]{ZZ} the homotopy type of the link $D_{\ArA}$ of
the arrangement was determined:
\begin{equation}
  \label{eq:Homotopy-Link}
  D_{\ArA}\simeq \bigvee_{V\in L_{\ArA}^{>\hat{0}}} \Delta(\hat{0},V) * S^{\dim (V)-1}.
\end{equation}
This also implies the last isomorphism \eqref{fact_(3)} of the Goresky--MacPherson isomorphism
factorization.

%%%%%%%%%%%%%%%%%%%%%%%%%%%%%%%%%%%%%%%%%%%%%%%%%%%%%%%%%%%%%%%%%%%%%%%%%%%%%%%%%%%%%

\subsection{Equivariant arrangements}
\label{subsec:equivariant_Arrangements}

Now consider an orthogonal action of a finite group $G$ on the Euclidean space
$E$ via a fixed homomorphism $\rho :G\to\mathrm{O}(d)$.  An arrangement $\ArA
=\{V_1,\ldots,V_k\}$ is \textit{$G$-invariant} if for every $V\in\ArA$ and every
$g\in G$ we have $g\cdot V\in\ArA$. Thus, the union of the arrangement
$U_{\ArA}$ and its complement $M_{\ArA}$ are $G$-invariant subspaces of $E$.
Moreover, since the action of $G$ is orthogonal, the link of the arrangement
$D_{\ArA}$ is also a $G$-invariant subspace of the $G$-invariant sphere $S(E)$.
The action of the group $G$ on the arrangement $\ArA$ also induces an action on
the cohomology of its complement. We want to describe the $R[G]$-module
structure of the cohomology ring $H^*(M_{\ArA};R)$.  To isolate the main
difficulty let us analyze the factorization of the Goresky--MacPherson
isomorphism for a $G$-invariant arrangement $\ArA$.

The action of the group $G$ on $E$ is orthogonal and so the first isomorphism is
induced via a $G$-equivariant radial deformation retraction $M_{\ArA}\to
S(E){\setminus}D_{\ArA}$.  Consequently, the first isomorphism~\eqref{fact_(1)} is an
isomorphism of $R[G]$-modules.

The Alexander duality map is a $G$-equivariant map up to an ``orientation character.''  The
duality map is given by $\alpha\mapsto \alpha\cap \mathcal{O}$ where
$\mathcal{O}$ is the fundamental class of the sphere $S(E)$ and ``$\cap$'' is
the usual ``cap''-pairing relating homology with cohomology.  Therefore, in
order to transform the second isomorphism into an isomorphism of $R[G]$-modules
we have to take into account the associated orientation character.  This means
that we need to know how the fundamental class $\mathcal{O}$ is transformed by
the action of the group $G$.  Since the action of $G$ on $E$ is orthogonal we
have that $g\cdot\mathcal{O}=\det_{R}(\rho(g))\mathcal{O}$, for each $g$ in $G$.
Here $\det_{R}$ is evaluated in the ring $R$.  In many cases of interest for
this paper the orientation character will be trivial and so the second
isomorphism~\eqref{fact_(2)} will also be an isomorphism of $R[G]$-modules.

It remains to deal with an equivariant version of the third
isomorphism~\eqref{fact_(3)}.  For that, following the setup of Sundaram and
Welker~\cite[Section~2]{SunWel}, we adapt the diagram approach presented
in~\cite{ZZ}.

%%%%%%%%%%%%%%%%%%%%%%%%%%%%%%%%%%%%%%%%%%%%%%%%%%%%%%%%%%%%%%%%%%%%%%%%%%%%%%%%%%%%%

\subsection{The diagram approach}
\label{subsec:diagrams_approach}

Let us fix an orthogonal action of the group $G$ on the Euclidean space
$E=\RR^d$, $d>0$, and a $G$-invariant arrangement $\ArA =\{V_1,\ldots,V_k\}$.

Any partially ordered set $(P,\leq)$ can be considered as a small category with
the objects coinciding with the elements of the poset, $\mathrm{Ob}_{P}:=P$, and
a unique morphism $p\to q$ whenever $q\leq p$ in $P$.  We abuse notation by
making no distinction between a poset and its induced small category.  Moreover,
for every subposet $Q$ of $P$, there is a natural inclusion poset map, or
functor, $\mathfrak{i}^P_Q:Q\to P$.

The arrangement $\ArA$ induces a covariant functor, or a \textit{diagram of
  spaces}, $\DiA:L_{\ArA}^{>\hat{0}}\to\mathrm{Top}$ in the following way:
\begin{compactitem}[$\circ$]
\item $\DiA_{V}:=S(E)\cap V\approx S^{\dim (V)-1}$, for every $V\in
  L_{\ArA}^{>\hat{0}}$, and
\item $\DiA_{V\subseteq W}:(\DiA_{V}=S(E)\cap V)\To (\DiA_{W}=S(E)\cap W)$ is
  the inclusion map, for every relation $V\subseteq W$ in $L_{\ArA}^{>\hat{0}}$.
\end{compactitem}
Here $L_{\ArA}^{>\hat{0}}$ denotes the small category induced by the
intersection lattice, and $\mathrm{Top}$ the category of topological spaces.
For a detailed account of the notions introduced and some applications
consult~\cite{WZZ} or~\cite{ZZ}.

The orthogonal action of the group $G$ induces an additional structure on the
diagram $\DiA$.  The intersection lattice $L_{\ArA}$ becomes a $G$-set.  Indeed,
for any $g\in G$:
\begin{eqnarray*}
  W=V_{i_1}\cap\cdots\cap V_{i_r}\in L_{\ArA} &\Impl & g\cdot W=(g\cdot V_{i_1})\cap\cdots\cap (g\cdot V_{i_r})\in L_{\ArA},\\
  V\subseteq W\text{~in~} L_{\ArA}            &\Impl  & g\cdot V\subseteq g\cdot W\text{~in~} L_{\ArA}.
\end{eqnarray*}
Moreover, for every relation $V\subseteq W$ in $L_{\ArA}^{>\hat{0}}$ and every
group element $g\in G$ the following diagram commutes:
\[
\xymatrix@!C= 4em{\DiA_{V} \ar[r]^{g \cdot} \ar[d]_{\DiA_{V\subseteq W}} &
  \DiA_{g\cdot V} \ar[d]^{\DiA_{g\cdot V\subseteq g\cdot W}}
  \\
  \DiA_{W} \ar[r]_{g \cdot} & \DiA_{g\cdot W} }
\]

We introduce two topological spaces associated to the diagram of an arrangement.
The following definitions can be directly generalized for any diagram of spaces
over a small category $\mathcal{C}$, that is, a covariant functor
$\mathcal{C}\to \mathrm{Top}$.
\begin{compactitem} %[$\circ$]

\item The \textit{colimit} of a diagram
  $\DiA:L_{\ArA}^{>\hat{0}}\to\mathrm{Top}$ is defined to be the quotient space
  \begin{equation*}
    \colim_{L_{\ArA}^{>\hat{0}}}\DiA:=\coprod_{V\in L_{\ArA}^{>\hat{0}}}\DiA_{V}/\thicksim ,
  \end{equation*}
  where ``$\thicksim$'' is generated by all relations of the form
  \[
  x\sim y \Leftrightarrow (\exists V\subseteq W \; \text{in} \;
  L_{\ArA}^{>\hat{0}})~x\in V,\; y\in W,\; \DiA_{V\subseteq W}(x)=y.
  \]
  In the case when all the maps in the diagram are
  inclusions, as it is in the case of the diagram associated to an arrangement, the
  colimit of the diagram coincides with the union.
  Thus the colimit of the diagram $\DiA:L_{\ArA}^{>\hat{0}}\to\mathrm{Top}$ is the link of the arrangement $\ArA$, i.e.,
  $\colim_{L_{\ArA}^{>\hat{0}}}\DiA=D_{\ArA}$.

\item The \textit{homotopy colimit} of the diagram
  $\DiA:L_{\ArA}^{>\hat{0}}\to\mathrm{Top}$ is defined as:
  \begin{equation}
    \label{eq:DefHomcolim}
    \hcolim_{L_{\ArA}^{>\hat{0}}}\DiA:=\coprod_{V\in L_{\ArA}^{>\hat{0}}}\Delta(\hat{0},V]\times\DiA_{V}/\backsim
  \end{equation}
  where the equivalence relation ``$\backsim$'' is defined as follows.  For
  simplicity let use denote by
  \[
  X:=\coprod_{V\in L_{\ArA}^{>\hat{0}}}\Delta(\hat{0},V]\times\DiA_{V} \quad
  \text{and} \quad Y:=\coprod_{V\subseteq W\text{~in~}
    L_{\ArA}^{>\hat{0}}}\Delta(\hat{0},W]\times\DiA_{V}.
  \]
  Consider the maps $\alpha :Y\to X$ and $\beta :Y\to X$ given by the component
  maps:
  \[\begin{array}{lcll}
    \alpha_{W\supseteq V} \colon \Delta (\hat{0},W]\times \DiA_{V} & \longrightarrow & \Delta (\hat{0},W]\times \DiA_{W}, 
    & \quad
    (p,a)\longmapsto (p,\DiA_{V\subseteq W}(a));
    \\
    \beta_{W\supseteq V} \colon\Delta (\hat{0},W]\times \DiA_{V} & \longrightarrow &\Delta (\hat{0},V]\times \DiA_{V}, 
    &\quad
    (p,a)\longmapsto (\Delta\big(\mathfrak{i}^{(\hat{0},V]}_{(\hat{0},W]}\big)(p),a).
  \end{array}
  \]
  for every relation $V\subseteq W$ in $L_{\ArA}^{>\hat{0}}$.  Here $\Delta\big(\mathfrak{i}^{(\hat{0},V]}_{(\hat{0},W]}\big)$ denotes the map
  between order complexes induced by the inclusion $\mathfrak{i}^{(\hat{0},V]}_{(\hat{0},W]}$ 
  of posets.  Now the equivalence relation ``$\backsim$'' is given by
  $\alpha(p,x)\backsim\beta(p,x)$, for every $(p,x)\in Y$.
\end{compactitem}
The projection to the second factor, in the definition of homotopy colimit
\eqref{eq:DefHomcolim}, induces a natural map
\begin{equation*} 
  \hcolim_{L_{\ArA}^{>\hat{0}}}\DiA \To \colim_{L_{\ArA}^{>\hat{0}}}\DiA
\end{equation*}
called the \textit{projection map}.  The central property of this map is that under certain conditions for general diagrams over small categories it induces a
homotopy equivalence~\cite[Proposition~3.1]{WZZ}
\[
\hcolim_{L_{\ArA}^{>\hat{0}}}\DiA \simeq \colim_{L_{\ArA}^{>\hat{0}}}\DiA.
\]
Thus, instead of studying the link of an arrangement, one can then consider the homotopy colimit of the diagram induced by the arrangement.

\smallskip

The group $G$ induces additional structure on the intersection lattice of the arrangement $\ArA$ and on the diagram $\DiA$.  
Consequently, the action of the group $G$ can be defined on
both $\colim_{L_{\ArA}^{>\hat{0}}}\DiA$ and $\hcolim_{L_{\ArA}^{>\hat{0}}}\DiA$
such that:
\begin{compactitem} %[$\circ$]
\item the $G$-action on $\colim_{L_{\ArA}^{>\hat{0}}}\DiA$ coincides with the
  $G$-action induced on the link $D_{\ArA}$, and
\item the projection map $\hcolim_{L_{\ArA}^{>\hat{0}}}\DiA \To
  \colim_{L_{\ArA}^{>\hat{0}}}\DiA$ is a $G$-homotopy
  equivalence~\cite[Lemma~2.1]{SunWel}.
\end{compactitem}
Therefore, in order to understand the $G$-module structure on the homology of
the link $D_{\ArA}$ we will study the $G$-module structure on the homology of
the homotopy colimit $\hcolim_{L_{\ArA}^{>\hat{0}}}\DiA$ of the arrangement
$\ArA$.  This will be done using the spectral sequence converging to the
homology of the homotopy colimit of a diagram introduced by
Segal~\cite[Proposition~5.1]{Segal}, see also~\cite[Theorem~4.7]{Davis-Lueck(1998)},~\cite[Theorem~3.5]{ZZ}.  In the
present situation, the additional structure imposed by the group action can be
retrieved by a careful study of the spectral sequence convergence.

%%%%%%%%%%%%%%%%%%%%%%%%%%%%%%%%%%%%%%%%%%%%%%%%%%%%%%%%%%%%%%%%%%%%%%%%%%%%%%%%%%%%%

\subsection{A spectral sequence argument}
\label{subsec:a_spectral_sequence_argument}

Let $R$ be a commutative ring with unit.  For a simplicial complex $K$ and
$r\geq 0$ an integer, let $K^{(r)}$ stand for the $r$-skeleton subcomplex of
$K$.

Let us consider the family of $G$-invariant subspaces $\{X_r:r\geq 0\}$ of the
homotopy colimit of the diagram $\DiA$ defined by
\begin{equation*}
  X_r:=\coprod_{V\in L_{\ArA}^{>\hat{0}}}\Delta(\hat{0},V]^{(r)}\times\DiA_{V}/\backsim_{r}
\end{equation*}
where the equivalence relation ``$\backsim_{r}$'' is the restriction of the
relation ``$\backsim$.''  The following sequence of inclusions of $G$-invariant
subspaces
\begin{equation}
  \label{eq:Fil}
  X_0\subseteq X_1 \subseteq \cdots \subseteq X_d=\hcolim_{L_{\ArA}^{>\hat{0}}}\DiA
\end{equation}
defines a $G$-invariant filtration of the homotopy colimit.

The homology spectral sequence  associated to the filtration~\eqref{eq:Fil}, has the $E^1$-term  
\[
E^1_{r,s}=H_{r+s}(X_r,X_{r-1};R),
\]
and for the differential
\[
\partial^1\colon  E^1_{r,s}=H_{r+s}(X_r,X_{r-1};R)\longrightarrow
E^1_{r-1,s}=H_{r+s-1}(X_{r-1},X_{r-2};R)
\]
the boundary map of the long exact sequence of the triple
$(X_r,X_{r-1},X_{r-2})$.

The $G$-action on the filtration~\eqref{eq:Fil} implies the following
$R[G]$-module decomposition of the $E^1$-term:
\[
E^1_{r,s}=H_{r+s}(X_r,X_{r-1};R)\cong\bigoplus_{(V_0<\cdots <V_r)\in (\Delta
  (L_{\ArA}^{>\hat{0}}))^{(r)}/G} \ind^G_{G_{V_r}}\tilde{H}_s(\DiA_{V_r};R)
\]
where $G_{V_r}$ denotes the subgroup of $G$ that stabilizes the element $V_r\in
L_{\ArA}^{>\hat{0}}$.  Consider an $(r+s)$-chain $(V_0<\cdots <V_r)\times c$ of
the space $(V_0<\cdots <V_r)\times \DiA_{V_r}$ where $c$ is a cycle.  The
evaluation of the differential $\partial^1$ on this chain is given by
\begin{eqnarray}
  \label{eq:D1-1}
  \partial^1((V_0<\cdots <V_r)\times c) 
  & = & \sum_{i=1}^{r-1}(-1)^i(V_0<\cdots <\widehat{V_i} <\cdots<V_r)\times c + \\
  &   & (-1)^r (V_0<\cdots <V_{r-1})\times (\DiA_{V_r\subseteq V_{r-1}})_{\#}(c).
  \nonumber
\end{eqnarray}
Here $(\DiA_{V_r\subseteq V_{r-1}})_{\#}$ denotes the map on the chain level
induced by the inclusion map $\DiA_{V_r\subseteq V_{r-1}}\colon
\DiA_{V_r}\to\DiA_{V_{r-1}}$.  Since all the maps $\DiA_{V\subseteq W}$ are
positive codimension inclusions of spheres, all the maps in homology
\[
(\DiA_{V_r\subseteq V_{r-1}})_{*}\colon
\tilde{H}_s(\DiA_{V_r};R)\longrightarrow\tilde{H}_s(\DiA_{V_{r-1}};R)
\]
vanish.  Therefore, the differential $\partial^1$ is determined by the
expression in line~\eqref{eq:D1-1}, i.e.,\ without losing generality we can
assume that
\begin{eqnarray*}
  \partial^1((V_0<\cdots <V_r)\times c) & = & \sum_{i=1}^{r-1}(-1)^i(V_0<\cdots <\hat{V_i} <\cdots<V_r)\times c.
\end{eqnarray*}
Recalling the notion of the Whitney homology of a poset~\cite[Section~5,
pages~120-122]{Bjo}~\cite[Section~1, pages~227-229]{Sun}, we conclude that the
differential $\partial^1$ is the boundary operator of the Whitney homology of
the intersection lattice with coefficients in $R[G_V]$-modules
$H_*(\DiA_{V};R)$.  Moreover, using~\cite[Theorem~5.1, page~121]{Bjo}
or~\cite[Theorem~1.2, page~229]{Sun}, we have the following description of the
$E^2$-term:
\begin{equation*}
  E^2_{r,s}\cong \bigoplus_{V\in \Delta (L_{\ArA}^{>\hat{0}})/G} \ind^G_{G_V}\tilde{H}_{r-1}(\Delta(\hat{0},V);\tilde{H}_s(\DiA_{V_r};R)).
\end{equation*}
In order to avoid confusion when comparing references, we note that
Bj\"orner's definition of the Whitney homology in~\cite{Bjo} has dimension shift
$+1$ with respect to the Whitney homology as defined by Sundaram in~\cite{Sun}.

Recall that we assume the condition ${\bf (R)}$ that the coefficient ring $R$ is
a principal ideal domain and additionally for every $V\in L_{\ArA}^{>\hat{0}}$
the homology groups $H_{*}(\Delta(\hat{0},V);R)$ are free $R$-modules.  The
universal coefficient theorem implies that there is an isomorphism of
$R[G]$-modules
\begin{equation*}
  E^2_{r,s}\cong \bigoplus_{V\in L_{\ArA}^{>\hat{0}}/G} \ind^G_{G_V}\tilde{H}_{r-1}(\Delta(\hat{0},V);R)\otimes_R \tilde{H}_s(\DiA_V;R).
\end{equation*}
The spectral sequence we consider converges to the homology of the link
$H_{*}(D_{\ArA};R)$.  On the other hand, the homotopy type of the link is given
by~\eqref{eq:Homotopy-Link}, and consequently its homology as an $R$-module is
known.  The condition ${\bf (R)}$ implies the equality
\[
\sum_{r+s=n}\mathrm{rank}_R E^2_{r,s}=\mathrm{rank}_R H_{n}(D_{\ArA};R),
\]
where $\mathrm{rank}_R$ denotes the rank of a module over the principal ideal
domain $R$.  Therefore the spectral sequence collapses at the $E^2$-term, i.e.,
$E^2_{*,*}=E^{\infty}_{*,*}$. So we conclude

\begin{lemma} \label{lem:equivariant_filtration} There is an ascending
  filtration of $R[G]$-modules
  \[
  F_{-1,n+1} = \{0\} \subseteq F_{0,n} \subseteq F_{1,n-1} \subseteq \cdots
  \subseteq F_{n-1,1} \subseteq F_{n,0} = H_{n}(D_{\ArA};R),
  \]
  such that we have isomorphisms of $R[G]$-modules
  \[
  F_{r,s}/F_{r-1,s+1} \cong E^{\infty}_{r,s} = E^{2}_{r,s} \cong \bigoplus_{V\in
    L_{\ArA}^{>\hat{0}}/G}
  \ind^G_{G_V}\tilde{H}_{r-1}(\Delta(\hat{0},V);R)\otimes_R
  \tilde{H}_s(\DiA_V;R).
  \]
\end{lemma}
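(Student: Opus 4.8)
\emph{Proof proposal.} The plan is to read off the asserted filtration from the spectral sequence of \eqref{eq:Fil}, after verifying that it degenerates at $E^2$ as a spectral sequence of $R[G]$-modules. To begin, since every $X_r$ in \eqref{eq:Fil} is a $G$-invariant subspace, the long exact sequences of the triples $(X_r,X_{r-1},X_{r-2})$ are sequences of $R[G]$-modules and $R[G]$-maps; hence the associated spectral sequence is one of $R[G]$-modules and every differential $\partial^t$ ($t\ge1$) is an $R[G]$-homomorphism. The filtration is finite with top term $X_d=\hcolim_{L_{\ArA}^{>\hat{0}}}\DiA$, so the spectral sequence converges to $H_*(\hcolim_{L_{\ArA}^{>\hat{0}}}\DiA;R)$; and as the projection map $\hcolim_{L_{\ArA}^{>\hat{0}}}\DiA\to\colim_{L_{\ArA}^{>\hat{0}}}\DiA=D_{\ArA}$ is a $G$-homotopy equivalence, it converges to $H_*(D_{\ArA};R)$ as an $R[G]$-module; in particular $\rank_R H_n(D_{\ArA};R)=\sum_{r+s=n}\rank_R E^\infty_{r,s}$.

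Next I would establish the degeneration. From the $E^2$-identification above (hypothesis $\mathbf{(R)}$ and the universal coefficient theorem) each $E^2_{r,s}$ is a direct sum of modules $\ind^G_{G_V}\bigl(\tilde H_{r-1}(\Delta(\hat{0},V);R)\otimes_R\tilde H_s(\DiA_V;R)\bigr)$ in which $\tilde H_{r-1}(\Delta(\hat{0},V);R)$ is free by $\mathbf{(R)}$ and $\tilde H_s(\DiA_V;R)$ is free because $\DiA_V$ is a sphere (or empty, with $\tilde H_{-1}(\emptyset;R)=R$); so every $E^2_{r,s}$ is a free $R$-module. Invoking the rank identity
\[
\sum_{r+s=n}\rank_R E^2_{r,s}=\rank_R H_n(D_{\ArA};R)
\]
established above from the homotopy decomposition \eqref{eq:Homotopy-Link}, and using that $E^{t+1}_{r,s}$ is a subquotient of $E^t_{r,s}$ (so $\rank_R E^\infty_{r,s}\le\rank_R E^2_{r,s}$ for all $r,s$), one concludes $\rank_R E^t_{r,s}=\rank_R E^2_{r,s}$ for every $t\ge2$. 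Now argue by induction on $t$: $E^2$ is free over $R$; assuming $E^t$ free, a nonzero $R[G]$-differential $\partial^t$ would have nonzero image inside the torsion-free $R$-module $E^t_{r-t,s+t-1}$, hence image of positive rank, which upon passing to $\partial^t$-homology would force $\rank_R E^{t+1}_{r-t,s+t-1}<\rank_R E^t_{r-t,s+t-1}$ — contradicting the rank equality; thus $\partial^t=0$, $E^{t+1}=E^t$ is again free, and the induction continues. Hence $E^2_{*,*}=E^\infty_{*,*}$ as $R[G]$-modules.

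Finally I would invoke the standard description of the limit of a convergent spectral sequence of $R[G]$-modules: setting $F_{r,n-r}:=\im\bigl(H_n(X_r;R)\to H_n(D_{\ArA};R)\bigr)$ gives an ascending chain of $R[G]$-submodules — the maps are $G$-equivariant because each $X_r$ is $G$-invariant — with $F_{-1,n+1}=\{0\}$, $F_{n,0}=H_n(D_{\ArA};R)$ and successive quotients $F_{r,s}/F_{r-1,s+1}\cong E^\infty_{r,s}=E^2_{r,s}$, which is exactly the lemma. The only step that needs genuine care is the degeneration: a coincidence of total ranks does not by itself kill the higher differentials, and it is the freeness of the $E^2$-page (a consequence of $\mathbf{(R)}$) that promotes it to vanishing of the $\partial^t$ as module maps; everything else is routine bookkeeping with a finite $G$-equivariant filtration.
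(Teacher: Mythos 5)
Your proposal is correct and follows essentially the same route as the paper: identify the $E^2$-term (via condition~(R) and the universal coefficient theorem), compare ranks with the known $R$-module structure of $H_*(D_{\ArA};R)$ coming from the homotopy decomposition of the link, and conclude $E^2=E^\infty$, then read off the filtration. Your added observation — that the rank equality alone does not force degeneration and that the $R$-freeness of the $E^2$-page (guaranteed by (R)) is what kills the higher differentials — is a correct and useful unpacking of a step the paper states tersely, but it is not a different argument from the paper's.
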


%%%%%%%%%%%%%%%%%%%%%%%%%%%%%%%%%%%%%%%%%%%%%%%%%%%%%%%%%%%%%%%%%%%%%%%%%%%%%%%%%%%%%

\subsection{$c$-arrangements}
\label{subsec:c-arrangements}

If we forget the $G$-action and use the condition ${\bf (R)}$, we obtain an
isomorphism of $R$-modules
\begin{equation}
  \label{eq:Isomor-Vsp-Ab}
  H_{n}(D_{\ArA};R)\cong\bigoplus_{r+s=n}\bigoplus_{V\in L_{\ArA}^{>\hat{0}}/G} 
  \ind^G_{G_V}\tilde{H}_{r-1}(\Delta(\hat{0},V);R)\otimes_R \tilde{H}_s(\DiA_V;R).
\end{equation}
An important observation is that this isomorphism does not automatically become an isomorphism of $R[G]$-modules. 
This is the case if $R$ is a field of characteristic prime to the order of $G$, and we conclude the proof of Corollary~\ref{Cor:EqGM}.
  
We are also interested in the modular case, i.e., the characteristic of $R$ will divide the order of $G$.  
In order to ensure that~\eqref{eq:Isomor-Vsp-Ab} is an $R[G]$-isomorphism, we make an
extra assumption on the arrangement, namely the condition ${\bf (C)}$ that the
arrangement is a $c$-arrangement for a some $c >1$ in the following sense.

\begin{definition}[$c$-arrangement]\label{def:c-arrangement}
  The arrangement $\ArA$ is called a \emph{$c$-arrangement} for some integer $c>1$,
  if $\codim_{\RR}V_i=c$, for all $i\in\{1,\ldots,k\}$, and for every pair of
  elements $V\subseteq W$ in the intersection lattice
  $c~|~\codim_{\RR}(V\subseteq W)$.
\end{definition}

In~\cite[Proposition in Section~III.4.1]{G-M} it was proved that the intersection
lattice $L_{\ArA}$ of every $c$-arrangement $\ArA$ is also a geometric lattice.
In particular, this means that for each open interval $(V,W)$ in $L_{\ArA}$
\[
\tilde{H}_i(\Delta (V,W);R)=0 \quad \text{for all} \quad i\neq \dim \Delta (V,W) \quad
\text{and any coefficients} \; R.
\]
If this additional assumption holds, as the filtration~\eqref{eq:Fil} grows from
$X_{m-1}$ to $X_m$, the contribution to the homology of the homotopy colimit
comes from all $V\in L_{\ArA}$ such that $\codim V=c(m+1)$ and it appears at the
position $(m, \dim V-1)=(\tfrac{1}{c}\codim V-1,\dim V-1)$ in dimension
\[
\tfrac{1}{c}\codim V+\dim V-2=d-2-(1-\tfrac{1}{c})\codim
V=\tfrac{d}{c}-2+(1-\tfrac{1}{c})\dim V.
\]
Since $c>1$, the contribution always comes in dimensions where there was no previous contribution from smaller elements of the filtration.  
Therefore, under the assumptions {\bf (R)} and {\bf (C)}, we get for every $n$ that in the filtration appearing in
Lemma~\ref{lem:equivariant_filtration} there is at most one index $r$ for which $F_{r,n-r} \not= F_{r-1,n+1-r}$. 
Hence the isomorphism~\eqref{eq:Isomor-Vsp-Ab} is an $R[G]$-isomorphism by
Lemma~\ref{lem:equivariant_filtration}. This finishes the proof of
assertion~\eqref{Th:EqGM-Formula-1} of Theorem~\ref{Th:EqGM}.

As we have seen, the Alexander duality isomorphism is an equivariant map up to
the orientation character.  Therefore assertion~\eqref{Th:EqGM-Formula-2} of
Theorem~\ref{Th:EqGM} follows from assertion~\eqref{Th:EqGM-Formula-1}. This
finishes the proof of Theorem~\ref{Th:EqGM}.

%%%%%%%%%%%%%%%%%%%%%%%%%%%%%%%%%%%%%%%%%%%%%%%%%%%%%%%%%%%%%%%%%%%%%%%%%%%%%%%%%%%%%
%%%%%%%%%%%%%%%%%%%%%%%%%%%%%%%%%%%%%%%%%%%%%%%%%%%%%%%%%%%%%%%%%%%%%%%%%%%%%%%%%%%%%
% -----------------------------------------------------------------------------------%
\section{Cohomology of the configuration space as an $R[\Sym_n]$-module}
\label{Sec:CohomologyOfCS-as-module}
% -----------------------------------------------------------------------------------%
%%%%%%%%%%%%%%%%%%%%%%%%%%%%%%%%%%%%%%%%%%%%%%%%%%%%%%%%%%%%%%%%%%%%%%%%%%%%%%%%%%%%%
%%%%%%%%%%%%%%%%%%%%%%%%%%%%%%%%%%%%%%%%%%%%%%%%%%%%%%%%%%%%%%%%%%%%%%%%%%%%%%%%%%%%%

In this section we consider the configuration space $F(\RR^d,n)$ as the complement of an $\Sym_n$-invariant arrangement.  
Here $\Sym_n$ denotes the group of permutations on $n$ letters.  
Using the Equivariant Goresky--MacPherson formula from Theorem~\ref{Th:EqGM}~\eqref{Th:EqGM-Formula-2} we describe the $R[\Sym_n]$-module structure on the cohomology of the configuration space $F(\RR^d,n)$ with coefficients in an appropriate ring $R$.

Different descriptions of the $\Sym_n$-equivariant structure on cohomology of the configuration space $F(\RR^d,n)$ can be found, in the landmark paper \cite[Section 7]{Cohen} of Fred Cohen, book by Fadell \& Husseini \cite[Part II, Chapter V]{FadellHusseini:book}, the paper of F.~Cohen \& Taylor \cite{Cohen-Taylor}
and in the paper by Arone \cite[Proposition 2.1 and Lemma 2.2]{Arone2006}.

%%%%%%%%%%%%%%%%%%%%%%%%%%%%%%%%%%%%%%%%%%%%%%%%%%%%%%%%%%%%%%%%%%%%%%%%%%%%%%%%%%%%%

\subsection{Configuration spaces}
\label{subsec:Configuration_spaces}

For a topological space $X$, the configuration space of $n$ distinct points is
defined to be
\[
F(X,n):=\{ (x_1,\ldots,x_n)\in X^n : x_i\neq x_j\text{ for all } i < j\} \subset
X^n.
\]
The symmetric group $\Sym_n$ acts on $X^n$ and consequently on $F(X,n)$ by
permuting the factors in the product $X^n$.  For $i < j$, let us denote:
\[
L_{i,j}(X):=\{ (x_1,\ldots,x_n)\in X^n : x_i=x_j\}
\]
and
\[
\mathcal{B}_n(X) :=\{ \sigma\cdot L_{1,2}(X) : \sigma\in\Sym_n\}=\{ L_{i,j}(X) : 1\leq i<j\leq n\}.
\]
Then the configuration space can be viewed as the complement of the subspace arrangement $\mathcal{B}_n$ 
\[
F(X,n)=X^n {\setminus}\bigcup\mathcal{B}_n(X)=X^n
{\setminus}\bigcup_{\sigma\in\Sym_n}\sigma\cdot L_{1,2}(X)=X^n{\setminus}\bigcup_{1\leq
  i<j\leq n}L_{i,j}(X).
\]
When $X$ is the Euclidean space $\RR^d$, the configuration space $F(\RR^d,n)$ is a
complement of the linear subspace arrangement
$\mathcal{B}_{n,d}:=\mathcal{B}_n(\RR^d)$.

%%%%%%%%%%%%%%%%%%%%%%%%%%%%%%%%%%%%%%%%%%%%%%%%%%%%%%%%%%%%%%%%%%%%%%%%%%%%%%%%%%%%%

\subsection{Partitions}
\label{subsec:partitions}

Let $\Pi_n$ denote the lattice of all partitions of the set
$[n]:=\{1,\ldots,n\}$ ordered by refinement (induced by inclusion of the
blocks).  The minimum of $\Pi_n$ is the partition
$\hat{0}:=\{\{1\},\{2\},\ldots,\{n\}\}$ given by all singletons and the maximum
is $\hat{1}:=\{[n]\}$.  Thus a typical element of $\Pi_n$ is a
partition $\{P_1,\ldots,P_k\}$ of the set $[n]$ where $P_1\cup\cdots\cup
P_k=[n]$ and $P_i\cap P_j=\emptyset$ for all $i < j$.  We assume that
there are no empty sets in the presentation of a partition.  The number of
(non-empty) blocks of the partition $\pi$ is also called its \textit{size},
denoted by $\mathrm{size}(\pi)$.

The poset $\Pi_n{\setminus}\{\hat{0}\}$ is isomorphic to the intersection poset
of the arrangement $\mathcal{B}_{n,d}$ for any $d\geq 1$.  The correspondence
between the elements of these posets is given by
\begin{eqnarray*}
\{P_1,\ldots,P_k\}                       
  \longleftrightarrow                         
  V_{\{P_1,\ldots,P_k\},d}  
& := & 
\left\{(x_1,\ldots,x_n)\in (\RR^d)^n:  x_i=x_j \text{ if }i,j\in P_r\text{ for some }1\leq r\leq k \right\} 
\\
& = & 
\bigcap_{\substack{r\in\{1,\ldots,k\} \\ i, j\in P_r; \;  i < j}} \; L_{i,j}(\RR^d).     
\end{eqnarray*}
In what follows, we do not distinguish between the partition
$\{P_1,\ldots,P_k\}$ and its associated linear subspace
$V_{\{P_1,\ldots,P_k\},d}$.

\smallskip

When we speak about topological properties of a lattice $L$, we always have the
order complex of its proper part $\bar{L}:=L{\setminus}\{\hat{0},\hat{1}\}$ in
mind,~\cite[Section~4, page~287]{BjoWel}.  The homotopy type of the partition
lattice $\Pi_n$ is known:
\begin{equation}
  \label{eq:HomotopyTypeofPi_n}
  \Delta(\Pi_n{\setminus}\{\hat{0},\hat{1}\})=\Delta(\bar{\Pi}_n)\simeq \bigvee_{(n-1)!} S^{n-3}.
\end{equation}
Consult for example~\cite[Theorem~1.5(b), page~279 or Proposition~4.1, page~288]{BjoWel}.

When necessary, we use notation $\Pi_X$ to denote the partition lattice of the
finite set $X$.  The set $X$ is also called the \textit{ground set} for the
partition lattice $\Pi_X$.  In particular $\Pi_n=\Pi_{[n]}$ and
$\Pi_X\cong_{poset}\Pi_{|X|}$.

%%%%%%%%%%%%%%%%%%%%%%%%%%%%%%%%%%%%%%%%%%%%%%%%%%%%%%%%%%%%%%%%%%%%%%%%%%%%%%%%%%%%%

\subsection{Lower intervals}
\label{subsec:lower_intervals}

Now we describe the topology of the lower intervals of the partition lattice $\Pi_n$.  
Consider a fixed partition $\pi:=\{P_1,\ldots,P_j\}$ of size $j$ of
$[n]$ and denote by:
\begin{compactitem}
\item $a_i(\pi)$ the size of the block $P_i$, $i\in\{1,\ldots,j\}$;
\item $b_i(\pi)$ the number of blocks of size $i$, $i\in\{1,\ldots,n\}$.
\end{compactitem}
Following Stanley~\cite[page~317]{Stanley} we obtain that
\[ [\hat{0},\pi]=[\hat{0},V_{\pi,d}]\cong
\Pi_{P_1}\times\cdots\times\Pi_{P_j}\cong\Pi_{a_1(\pi)}\times\cdots\times\Pi_{a_j(\pi)}\cong\Pi_{1}^{b_1(\pi)}\times\cdots\times\Pi_{n}^{b_n(\pi)}.
\]
Here for a poset $P$ its $0$-power $P^0$ is the poset with only one element,
i.e., minimum and maximum of $P$ coincide.  
Furthermore, from Walker~\cite[Theorem~6.1(d)]{JW}, with $\bar{\Pi}_1=\bar{\Pi}_2=\emptyset$ and ``$K*\emptyset =K$'', it follows that
\begin{eqnarray}
  \label{eq:HomotopyTypeofLowerConePi_n}
  \Delta(\hat{0},V_{\pi})                       & \simeq &
  \Sigma^{k-1}\left(\Delta(\bar{\Pi}_{P_1})*\cdots *\Delta(\bar{\Pi}_{P_j})\right)\\
  & \approx      &
  \Sigma^{k-1}\left(\Delta(\bar{\Pi}_{a_1(\pi)})*\cdots *\Delta(\bar{\Pi}_{a_j(\pi)})\right)\nonumber\\
  & \approx      &
  \Sigma^{k-1}\left(\Delta(\bar{\Pi}_{1})^{*b_1(\pi)}*\cdots *\Delta(\bar{\Pi}_{n})^{*b_n(\pi)}\right).\nonumber
\end{eqnarray}
Here $\Sigma$ denotes the suspension.

The stabilizing subgroup of the partition $\pi$ is the subgroup
\[
(\Sym_n)_{\pi}:=(\Sym_1\wr\Sym_{b_1(\pi)})\times\cdots\times
(\Sym_n\wr\Sym_{b_n(\pi)})
\]
of the symmetric group $\Sym_n$. 
For $b_r=0$, the group $\Sym_{b_r}\wr\Sym_{b_r(\pi)}$ is trivial. 
Only the maximal and minimal element, the partitions $\hat{1}=\{[n]\}$ and $\hat{0}=\{\{1\},\ldots,\{n\}\}$, are stabilized by the complete symmetric group $\Sym_n$. 
Now $(\Sym_n)_{\pi}$ acts on the lower interval $[\hat{0},V_{\pi,d}]$ and consequently on its homology.
Using Wachs~\cite[Theorem~5.1.5, page~588]{Wachs}, we obtain the
$R[(\Sym_n)_{\pi}]$-module structure on the homology of the lower interval
$[\hat{0},V_{\pi,d}]$:
\[
\tilde{H}_{r}(\Delta(\hat{0},V_{\pi,d});R)\cong
\bigoplus_{i_1+\cdots+i_j+2j-2=r}\tilde{H}_{i_1}(\Delta(\bar{\Pi}_{a_1(\pi)});R)\otimes\cdots\otimes\tilde{H}_{i_j}(\Delta(\bar{\Pi}_{a_j(\pi)});R).
\]
Moreover, this homology is non-trivial if and only if $r=n-j-2$, and
\begin{eqnarray}
  \tilde{H}_{n-j-2}(\Delta(\hat{0},V_{\pi,d});R)              &\cong &
  \tilde{H}_{a_1(\pi)-3}(\Delta(\bar{\Pi}_{P_1});R)\otimes\cdots\otimes\tilde{H}_{a_j(\pi)-3}(\Delta(\bar{\Pi}_{P_j});R)\nonumber\\
  &\cong &
  \tilde{H}_{a_1(\pi)-3}(\Delta(\bar{\Pi}_{a_1(\pi)});R)\otimes\cdots\otimes\tilde{H}_{a_j(\pi)-3}(\Delta(\bar{\Pi}_{a_j(\pi)});R)\nonumber\\
  &\cong &
  \tilde{H}_{1-3}(\Delta(\bar{\Pi}_{1});R)^{\otimes b_1(\pi)}\otimes\cdots\otimes\tilde{H}_{n-3}(\Delta(\bar{\Pi}_{n});R)^{\otimes b_n(\pi)}\nonumber\\
  \label{eq:HomologyLowerCone-as-module}
  &\cong &
  \tilde{H}_{-2}(\Delta(\bar{\Pi}_{1});R)^{\otimes b_1(\pi)}\otimes\cdots\otimes\tilde{H}_{n-3}(\Delta(\bar{\Pi}_{n});R)^{\otimes b_n(\pi)}
\end{eqnarray}
assuming that $\tilde{H}_{-2}(\emptyset ;R)=\tilde{H}_{-1}(\emptyset ;R)=R$.
The group $(\Sym_n)_{\pi}$ acts componentwise, meaning that each factor
$\tilde{H}_{j-3}(\Delta(\bar{\Pi}_{j});R)^{\otimes b_j(\pi)}$ in the tensor
product is an $R[\Sym_j\wr\Sym_{b_j(\pi)}]$-module in a natural way.

%%%%%%%%%%%%%%%%%%%%%%%%%%%%%%%%%%%%%%%%%%%%%%%%%%%%%%%%%%%%%%%%%%%%%%%%%%%%%%%%%%%%%

\subsection{The cohomology of the configuration space as $R[\Sym_n]$-module}
\label{subsec:The_cohomology_of_the_configuration_space}

\begin{theorem}
  \label{Th:ModuleStructureOnCohomology}
  Let $n>1$ and $d>1$ are integers. Let $R$ be a principal ideal domain.  Then
  \begin{center}
    $H^i(F(\RR^d,n);R)\neq 0$ if and only if $i=(d-1)(n-j)$ for some
    $j\in\{1,\ldots,n\}$.
  \end{center}
  For $j\in\{1,\ldots,n-1\}$ there is an isomorphism of $R[\Sym_n]$-modules

\begin{multline*}
  H^{(d-1)(n-j)}(F(\RR^d,n);R) \cong
  \\
  \mathcal{R}\otimes \bigoplus_{\substack{\pi\in (\Pi_n \setminus
      \{\hat{0}\})/\Sym_n\\ \mathrm{size}(\pi)=j}}
  \ind^{\Sym_n}_{(\Sym_n)_{\pi}}\tilde{H}_{-2}(\Delta(\bar{\Pi}_{1});R)^{\otimes
    b_1(\pi)}
  \otimes\cdots\otimes\tilde{H}_{n-3}(\Delta(\bar{\Pi}_{n});R)^{\otimes
    b_n(\pi)}\otimes\mathcal{R}_{V_{\pi,d}}
\end{multline*}
where $\mathcal{R}$ is the $R[\Sym_n]$-module whose underlying $R$-module is $R$
and for which $g\in\Sym_n$ acts by $g\cdot r:=\det_{R}(g)r$, and
$\mathcal{R}_{V_{\pi,d}}$ is the $R[(\Sym_n)_{\pi}]$-module whose underlying
$R$-module is $R$ and for which $g\in(\Sym_n)_{\pi}$ acts by $g\cdot
r:=\det_{R}(g|_{V_{\pi,d}})r$.
\end{theorem}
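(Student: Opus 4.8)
The plan is to identify $F(\RR^d,n)$ as the complement $M_{\ArA}$ of the $\Sym_n$-invariant $c$-arrangement $\ArB_{n,d}$ and then apply Theorem~\ref{Th:EqGM}\,\eqref{Th:EqGM-Formula-2} directly, afterwards simplifying each ingredient of the resulting formula using the combinatorial data assembled in Sections~\ref{subsec:partitions}--\ref{subsec:lower_intervals}. First I would check that the hypotheses of Theorem~\ref{Th:EqGM} are met: the coefficient ring $R$ is a PID by assumption (condition {\bf (R)} on freeness of $H_*(\Delta(\hat 0,V);R)$ holds because, as recorded after \eqref{eq:HomologyLowerCone-as-module}, every lower interval of $\Pi_n$ has homology concentrated in one degree and free over $R$, being a tensor product of top homologies of partition lattices, which are free), and condition {\bf (C)} holds since $\ArB_{n,d}$ is a $(d-1)\cdot 1$-arrangement when... more precisely each $L_{i,j}$ has codimension $d$ and codimensions of intervals in the intersection lattice are multiples of $d$; wait, this makes it a $d$-arrangement, so $c=d>1$ since $d>1$. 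Thus the theorem applies with $G=\Sym_n$ and $\rho$ the permutation representation on $(\RR^d)^n$.

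Next I would translate the output of the theorem term by term. The intersection lattice $L_{\ArB_{n,d}}^{>\hat 0}$ is, by Section~\ref{subsec:partitions}, the poset $\Pi_n\setminus\{\hat 0\}$; the orbits $L_{\ArB_{n,d}}^{>\hat 0}/\Sym_n$ are orbits of partitions, which I would index by a partition $\pi$ of size $j$, so the outer direct sum over $V\in L^{>\hat 0}/G$ becomes the sum over $\pi\in(\Pi_n\setminus\{\hat 0\})/\Sym_n$. For a fixed $\pi$, the stabilizer $G_V$ is $(\Sym_n)_\pi=\prod_r(\Sym_r\wr\Sym_{b_r(\pi)})$, and $\tilde H_s(S(V);R)$ is nonzero only for $s=\dim V-1=(d-1)j-1$ (here $\dim_\RR V_{\pi,d}=dj$ as a real vector space, so $S(V)\approx S^{dj-1}$; I will need to recompute: $\dim V_{\pi,d}$ over $\RR$ equals $d\cdot\mathrm{size}(\pi)=dj$, so $s=dj-1$), contributing $\mathcal R_{V_{\pi,d}}$ as the $(\Sym_n)_\pi$-module via the sign of the action on $V_{\pi,d}$. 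The factor $\tilde H_{r-1}(\Delta(\hat 0,V);R)$ is nonzero only for $r-1=n-j-2$ by \eqref{eq:HomologyLowerCone-as-module}, and equals the tensor product $\bigotimes_r \tilde H_{r-3}(\Delta(\bar\Pi_r);R)^{\otimes b_r(\pi)}$ as $R[(\Sym_n)_\pi]$-module. Plugging $r+s=d-i-2$ forces $i=(d-1)(n-j)$, which establishes the stated vanishing pattern and the single nonzero contribution for each $j$.

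Finally I would assemble these pieces: the $\mathcal R$ out front in Theorem~\ref{Th:EqGM}\,\eqref{Th:EqGM-Formula-2} is exactly the sign module for the $\Sym_n$-action on $(\RR^d)^n$, whose determinant is $\det(g)^d=\det(g)$ when $d$ is... no, $\det$ of the permutation matrix on $(\RR^d)^n$ equals $(\sgn g)^d$, and I would note this matches the claimed $g\cdot r=\det_R(g)r$ up to the parity of $d$ — I need to be careful and may absorb a sign into $\mathcal R_{V_{\pi,d}}$, since $\det(g|_{(\RR^d)^n})=\det(g|_{V_{\pi,d}})\cdot\det(g|_{V_{\pi,d}^\perp})$ and the orthogonal complement carries a related permutation action; the bookkeeping of these determinant characters is where I expect the main obstacle to lie. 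The remaining work is to use $\ind^{\Sym_n}_{(\Sym_n)_\pi}$ to move the internally-defined $(\Sym_n)_\pi$-module (the tensor product of partition-lattice homologies tensored with $\mathcal R_{V_{\pi,d}}$) up to an $\Sym_n$-module, and to verify that the componentwise $(\Sym_n)_\pi$-action described after \eqref{eq:HomologyLowerCone-as-module} is compatible with the induction, giving precisely the displayed formula. The hard part will be the careful matching of the orientation/determinant characters $\mathcal R$ and $\mathcal R_{V_{\pi,d}}$ across the Alexander duality step and across the induction, ensuring no stray sign representation is lost.
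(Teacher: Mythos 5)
Your proposal is correct and follows exactly the paper's one-line proof: verify that conditions \textbf{(R)} and \textbf{(C)} hold for $\mathcal{B}_{n,d}$ (a $d$-arrangement with free interval homology by \eqref{eq:HomotopyTypeofPi_n} and \eqref{eq:HomotopyTypeofLowerConePi_n}), then plug into Theorem~\ref{Th:EqGM}\,\eqref{Th:EqGM-Formula-2} and simplify using \eqref{eq:HomologyLowerCone-as-module}. Your minor stumbles (writing ``$d-i-2$'' where the ambient dimension $dn$ must be used, and an initial misremembering of $\dim V_{\pi,d}$) were self-corrected to the right arithmetic, and the determinant characters $\mathcal{R}$ and $\mathcal{R}_{V_{\pi,d}}$ that worry you are simply carried verbatim from the two factors in Theorem~\ref{Th:EqGM}\,\eqref{Th:EqGM-Formula-2}, so no further bookkeeping is needed.
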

\begin{proof}
  The homotopy equivalences~\eqref{eq:HomotopyTypeofPi_n}
  and~\eqref{eq:HomotopyTypeofLowerConePi_n} imply that all the assumptions of
  the Equivariant Goresky--MacPherson formula~\ref{Th:EqGM} hold.  Now apply
  Theorem~\ref{Th:EqGM}~\eqref{Th:EqGM-Formula-2} and
  formula~\eqref{eq:HomologyLowerCone-as-module}.
\end{proof}

\begin{remark}
Observe that for every subgroup $G$ of $\Sym_n$ a similar theorem can be stated that describes the cohomology of the configuration space $F(\RR^d,n)$ with the coefficients in the ring $R$ as an $R[G]$-module.
The proof would be identical.
\end{remark}

\begin{remark}
The structure of the cohomology of the configuration space $F(\RR^d,n)$ described in Theorem ~\ref{Th:ModuleStructureOnCohomology}, in the case $R=\ZZ,$ can be directly related to the results of F.~Cohen \& Taylor given in \cite[Section 3, Theorem 3.9]{Cohen-Taylor} via the following dictionary:
{\small
\begin{center}
    \begin{tabular}{ | l | l | l|}
    \hline
    Partition of $[n]$ & $\pi$ & $\mathcal{P}$ \\ \hline
    Number of blocks in the partition& $k$& $\pi_0(\mathcal{P})$\\ \hline
    A direct summand in  $H^*(F(\RR^d,n);\ZZ)$& $\tilde{H}_{n-k-2}(\Delta(\hat{0},V_{\pi,d});\ZZ)\otimes \mathcal{R}\otimes\mathcal{R}_{V_{\pi,d}}$    &$T(\mathcal{P},d)$\\ \hline
     Stabilizing subgroup of the partition & $(\Sym_n)_{\pi}$ & $\Sigma(\mathcal{P})$ \\ \hline
     A direct summand in a module decomposition& $\mathcal{R}\otimes\ind^{\Sym_n}_{(\Sym_n)_{\pi}}\tilde{H}_{n-k-2}(\Delta(\hat{0},V_{\pi,d});\ZZ)\otimes\mathcal{R}_{V_{\pi,d}}$  &
     $T(\mathcal{L}_{\mu},d)|^{\Sigma_n}$\\ \hline
    \end{tabular}
\end{center}
}

\end{remark}

\begin{remark}
Consider the natural inclusion of the partition lattices $i_{\Pi_n}^{\Pi_{n+1}}\colon\Pi_n\to\Pi_{n+1}$ given by
\[
\pi:=(P_1,\ldots,P_j)\longmapsto \pi':=(P_1,\ldots,P_j,\{n+1\})
\]
where $(P_1,\ldots,P_j)$ is a partition of $[n]$. 
Let $1\le j\leq n$ and $\Pi_n^{\{j\}}$ denotes the anti-chain of all partitions of size $j$.
Then $i_{\Pi_n}^{\Pi_{n+1}}$ includes the anti-chain $\Pi_n^{\{j\}}$ into the anti-chain $\Pi_{n+1}^{\{j+1\}}$.

If $\Sym_n\to\Sym_{n+1}$ is a monomorphism of groups induced from the set inclusion $[n]\to[n+1]$, then $i_{\Pi_n}^{\Pi_{n+1}}$ is an $\Sym_n$-equivariant map.
Hence, for every $\pi\in\Pi_n$ the inclusion $i_{\Pi_n}^{\Pi_{n+1}}$ induces an $\Sym_n$-equivariant poset isomorphism between the open intervals $(0,V_{\pi})\to (0,V_{\pi'})$.

Now, Theorem~\ref{Th:ModuleStructureOnCohomology} implies that the inclusion $i_{\Pi_n}^{\Pi_{n+1}}$ induces a monomorphism of $R[\Sym_n]$-modules:
\[
\Phi_{n,\ell}\colon H^{(d-1)\ell}(F(\RR^d,n);R)\rightarrow H^{(d-1)\ell}(F(\RR^d,n+1);R),
\] 
when $d\geq 2$, $n\geq 1$ and $1\leq\ell\leq n-1$,

Observe that for  $2(j+1)>n+1$ each partition in $\Pi_{n+1}^{\{j+1\}}$  must have at least one  part of length $1$.
Therefore, the $\Sym_{n+1}$-orbit of the image $i_{\Pi_n}^{\Pi_{n+1}}(\Pi_n^{\{j\}})$ of the anti-chain $\Pi_n^{\{j\}}$ equals $\Pi_{n+1}^{\{j+1\}}$.
Consequently,  the $\Sym_{n+1}$-orbit of the image $\Phi_{n,\ell}(H^{(d-1)\ell}(F(\RR^d,n);R))$ equals $H^{(d-1)\ell}(F(\RR^d,n+1);R)$ when $l<\tfrac{n+1}{2}$.

These two properties, derived from Theorem~\ref{Th:ModuleStructureOnCohomology}, are manifestations of ``Representation stability'' phenomena of Church and Farb. 
More precisely, these are ``Injectivity'' and ``Surjectivity'' properties in the language of \cite[Definition 1.1]{Church} and \cite[Definition 2.3]{Church-Farb}.
\end{remark}

%%%%%%%%%%%%%%%%%%%%%%%%%%%%%%%%%%%%%%%%%%%%%%%%%%%%%%%%%%%%

\subsection{The special case $G = (\ZZ/p)^k$}
\label{subsec:The_special_case_G_is_(Z/p)k}

Finally, we discuss the special case when $n=p^k$ is a power of a prime $p$.
Moreover, let $G\cong (\ZZ/p)^k$ be a subgroup of $\Sym_n$ given by the regular
embedding $\mathrm{(reg)} \colon G \to \Sym_n$,~\cite[Example~2.7  on page~100]{Adem-Milgram}.
The regular embedding is given by the left translation action of $(\ZZ/p)^k$ on itself. 
To each element $g\in (\ZZ/p)^k$ we associate permutation $L_g\colon  (\ZZ/p)^k\to  (\ZZ/p)^k$ from $\mathrm{Sym}((\ZZ/p)^k)\cong\Sym_{p^k}$ given by $L_g(x)=g+x$.
In this special case we are interested in partitions of
$[n]$ that are stabilized by the whole group $G$.

Let $H$ be a non-zero subgroup of $G$ and $m:=|G/H|$.  Then $H$ acts on $[n]$.
Let $O_1,\ldots,O_m$ be the orbits of the $H$-action on $[n]$ and $\pi_H$ be the
element of $\Pi_n$ corresponding to the partition $\{O_1,\ldots,O_m\}$.  If
$\pi_H$ is considered as an element of the intersection lattice of an
arrangement, it will be denoted by $V_{H,d}$, where $(\RR^d)^n$ is assumed to be the 
ambient space of the corresponding arrangement, i.e.,
\begin{equation}
  \label{eq:V_H}
  V_{H,d}
  :=
  \left\{(x_1,\ldots,x_n)\in (\RR^d)^n : x_i=x_j \; \text{if } i,j\in O_r\text{ for some } 1\leq r\leq m \right\} = ((\RR^d)^n)^H.
\end{equation}
All the blocks in the partition $\{O_1,\ldots,O_m\}$ are of the same size $|H|$. 
A partition $\pi$ of $[n]$ is stabilized by $G$ if and only if there is a non-zero subgroup $H$ of $G$ such that $\pi=\pi_H$.

As we have already seen
\begin{equation*} {[\mathbf{0},\pi_H]}
  \cong_{H\text{-poset}}
  \Pi_{O_1}\times\cdots\times\Pi_{O_m}
  \cong_{H\text{-poset}} 
  \underbrace{\Pi_{H}\times\cdots\times\Pi_{H}}_{m\;\text{times}}
  \cong_{\text{poset}}
  \underbrace{\Pi_{|H|}\times\cdots\times\Pi_{|H|}}_{m\; \text{times}},
\end{equation*}
and consequently
\begin{equation*}
  \Delta(\mathbf{0},\pi_H)
  \simeq 
  \Sigma^{m-1}
  \Big(
  \underbrace{\Delta(\bar{\Pi}_{|H|})*\cdots *\Delta(\bar{\Pi}_{|H|})}_{m\;\text{times}}
  \Big).
\end{equation*}
Therefore, the $i$-th reduced homology of $\Delta(\mathbf{0},\pi_H)$ is
non-trivial if and only if $i=n-m-2$, and
\begin{equation}
  \label{eq:TensorInduced}
  \tilde{H}_{n-m-2}(\Delta(\mathbf{0},\pi_H);R)
  \cong
  \tilde{H}_{|H|-3}(\Delta(\bar{\Pi}_{|H|});R)^{\otimes G/H}
  \cong
  \tilde{H}_{|H|-3}(\Delta(\bar{\Pi}_{H});R)^{\otimes G/H}
\end{equation}
is the ``tensor induced'' $R[G]$-module obtained from the $R[H]$-module
$\tilde{H}_{|H|-3}(\Delta(\bar{\Pi}_{|H|});R)$, see~\cite[Chapter~5.1, page~45]{Evens}.  
Here the $H$-action on $\tilde{H}_{|H|-3}(\Delta(\bar{\Pi}_{|H|});R)$ is induced from the action of $H$ on the partition lattice $\Pi_H$.

%%%%%%%%%%%%%%%%%%%%%%%%%%%%%%%%%%%%%%%%%%%%%%%%%%%%%%%%%%%%%%%%%%%%%%%%%%%%%%%%%%%%%
%%%%%%%%%%%%%%%%%%%%%%%%%%%%%%%%%%%%%%%%%%%%%%%%%%%%%%%%%%%%%%%%%%%%%%%%%%%%%%%%%%%%%
% -----------------------------------------------------------------------------------%
\section{Differentials in the Serre spectral sequence of the Borel construction}
\label{Sec:DiffSSSeq}
% -----------------------------------------------------------------------------------%
%%%%%%%%%%%%%%%%%%%%%%%%%%%%%%%%%%%%%%%%%%%%%%%%%%%%%%%%%%%%%%%%%%%%%%%%%%%%%%%%%%%%%
%%%%%%%%%%%%%%%%%%%%%%%%%%%%%%%%%%%%%%%%%%%%%%%%%%%%%%%%%%%%%%%%%%%%%%%%%%%%%%%%%%%%%

Fix a prime $p$. Let $\mathfrak{I}_G$ denote the family of all $\FF_p[G]$-modules that are
$\FF_p[G]$-isomorphic to finite direct sums of $\FF_p[G]$-modules of the shape
$\ind^G_H N$ for some subgroup $H \subseteq G, H \not= G$ and some $\FF_p[H]$-module~$N$.
In particular we assume that $0\in\mathfrak{I}_G$.
In our case coinduced and induced $\FF_p[G]$-modules
coincide,~\cite[Proposition~III.5.9 page~70]{Brown}, therefore we do not
distinguish between them. Define a wider family of $\FF_p[G]$-modules $\mathfrak{FI}_G$
to consist of those  $\FF_p[G]$-modules $M$ for which there exists an integer $r>0$
and a (finite) filtration of $M$ by $\FF_p[G]$-modules
\[
0=M_0\subseteq M_1 \subseteq\cdots\subseteq M_r = M
\]
such that all quotients $M_{i}/M_{i-1}$ belong to $\mathfrak{I}_G$. 

The main result of this section treats the Serre spectral sequence for the Borel construction of the $G$-space $X$, i.e., for the fibration $X\longrightarrow \EE G\times_{G}X\longrightarrow\BB G$:

\begin{theorem}
  \label{Th:DiffSSSeq-EAb}
  Let $G=\left(\ZZ/p\right) ^k$ be an elementary abelian group and $n$ be a
  natural number.  Let $X$ be a connected $G$-space such that $H^i(X;\FF_p)\in
  \mathfrak{FI}_G$ for every $i\in \{1,\ldots,n\}$.  Then for every $r\geq 0$
  and every $s\in\{2,\ldots,n+1\}$ the differential
  \[
  \partial_{s}:E_{s}^{r,s-1}(\EE G\times _{G}X)\longrightarrow E_{s}^{r+s,0}(\EE G\times_{G}X)
  \]
  vanishes. Consequently,
  \[
  \Index{G}(X;\FF_p)\subseteq H^{\geq n+2}_{G}(\mathrm{pt};\FF_p).
  \]
\end{theorem}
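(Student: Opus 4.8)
The plan is to exploit the fact that, for $G = (\ZZ/p)^k$ elementary abelian, the cohomology $H^*(\BB G;\FF_p)$ detects induced modules in a very clean way: if $N$ is an $\FF_p[H]$-module for a proper subgroup $H \subsetneq G$, then in the Serre/Lyndon--Hochschild--Serre spectral sequence $H^*(G; \ind^G_H N)$, viewed via the Shapiro isomorphism as $H^*(H;N)$, is annihilated by the kernel of restriction $H^*(\BB G;\FF_p) \to H^*(\BB H;\FF_p)$, and for elementary abelian $G$ one knows precisely which polynomial/exterior generators survive restriction to each hyperplane subgroup. The key structural input I would isolate first is a lemma: \emph{if $M \in \mathfrak{I}_G$, then the row $H^r(G;M)$, as a module over $H^*(\BB G;\FF_p)$, is killed by a nonzerodivisor, or more precisely every element of $H^r(G;M)$ is supported away from the top stratum} --- whatever formulation is needed so that no differential can hit it nontrivially after finitely many pages. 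Then the filtration defining $\mathfrak{FI}_G$ lets one pass from $\mathfrak{I}_G$ to $\mathfrak{FI}_G$ by a short five-lemma/diagram chase on the induced filtration of the spectral sequence.

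First I would set up the spectral sequence $E_2^{r,s} = H^r(G;H^s(X;\FF_p)) \Rightarrow H^{r+s}_G(X;\FF_p)$ and record that the bottom row $E_*^{*,0}$ is $H^*(G;\FF_p) = H^*(\BB G;\FF_p)$, since $X$ is connected. The differential $\partial_s\colon E_s^{r,s-1} \to E_s^{r+s,0}$ lands in a subquotient of this bottom row. The heart of the argument is that $E_s^{r,s-1}$ for $2 \le s \le n+1$ is a subquotient of $E_2^{r,s-1} = H^r(G;H^{s-1}(X;\FF_p))$ with $s-1 \in \{1,\dots,n\}$, hence $H^{s-1}(X;\FF_p) \in \mathfrak{FI}_G$ by hypothesis. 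So I would prove: (a) for $M \in \mathfrak{I}_G$, any $H^*(\BB G;\FF_p)$-module map from $H^r(G;M)$ to a subquotient of the \emph{polynomial part} $H^*(\BB G;\FF_p)/(\text{nilpotents})$ — equivalently into the bottom row — is zero; (b) this persists under the filtration to $M \in \mathfrak{FI}_G$. For (a), use Shapiro and the Quillen-type observation that $H^*(G;\ind^G_H N)$ is finitely generated over $H^*(\BB H;\FF_p)$ and the composite $H^{r+s}(\BB G) \to E_s^{r+s,0}$ followed by the putative differential factors through restriction to $H$, which is not injective in positive degrees for $H$ a proper subgroup of an elementary abelian group — the image of $\partial_s$ therefore consists of restriction-trivial classes, which in a polynomial ring on the whole of $G$ forces them into the radical, and then a dimension/Poincaré-series count against the known target forces vanishing. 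The cleanest route is likely to invoke the Dold-type machinery the paper develops in Theorem~\ref{Th:Dold}, if its statement permits; otherwise argue directly with the explicit description of $H^*((\ZZ/p)^k;\FF_p)$ and its restriction maps to subgroups.

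For the passage (b) from $\mathfrak{I}_G$ to $\mathfrak{FI}_G$: given $0 = M_0 \subseteq \cdots \subseteq M_r = M$ with each $M_i/M_{i-1} \in \mathfrak{I}_G$, the short exact sequences $0 \to M_{i-1} \to M_i \to M_i/M_{i-1} \to 0$ induce long exact sequences in $H^*(G;-)$, and one runs an induction on $i$ showing that the differential out of (the relevant subquotient of) $H^r(G;M_i)$ vanishes, using that it vanishes on both $H^r(G;M_{i-1})$ and $H^r(G;M_i/M_{i-1})$ together with naturality of the spectral sequence differentials with respect to these coefficient sequences. One must be slightly careful that the differential is defined on a \emph{subquotient} $E_s^{r,s-1}$, not on $E_2^{r,s-1}$ itself; but since the obstruction lives in the bottom row and the filtration of $M$ is by $\FF_p[G]$-submodules, the induced filtration on each page is compatible, so the vanishing on associated graded pieces propagates. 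Finally, once all $\partial_s$ with $2 \le s \le n+1$ vanish, the edge homomorphism $H^*(\BB G;\FF_p) = E_2^{*,0} \to E_{n+2}^{*,0}$ is an isomorphism through total degree $\le n+1$ in the bottom row, so no element of $H^{\le n+1}_G(\pt;\FF_p)$ maps to zero in $H^*_G(X;\FF_p)$; that is, $\Index{G}(X;\FF_p) \cap H^{\le n+1}_G(\pt;\FF_p) = 0$, which is exactly $\Index{G}(X;\FF_p) \subseteq H^{\ge n+2}_G(\pt;\FF_p)$.

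\textbf{Main obstacle.} The crux is step (a): showing that a class in $H^r(G;M)$ for $M \in \mathfrak{I}_G$ cannot be hit by a differential from the bottom row, equivalently that $\partial_s$ into $E_s^{r+s,0}$ is forced to be zero rather than merely small. The clean statement one wants is essentially a restriction/localization property of $H^*(\BB G;\FF_p)$ peculiar to elementary abelian groups — that induced modules are "invisible" to the polynomial invariants coming from the full group — and getting this in exactly the form the spectral-sequence bookkeeping needs (uniformly in $r$ and across all pages $s \le n+1$) is where the real work lies. I expect this is precisely why the paper first proves the generalized Dold theorem (Theorem~\ref{Th:Dold}): that result should be invoked here to supply the vanishing on the nose.
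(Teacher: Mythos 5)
Your plan correctly identifies the right general mechanism — there should be an element of $H^*(\BB G;\FF_p)$ that is simultaneously a nonzerodivisor on the bottom row and (in some weakened sense) annihilates the higher rows, and $H^*(\BB G;\FF_p)$-linearity of the differentials should then force the vanishing. This is exactly what the paper does: it chooses $\xi \in S_G \cap \bigcap_{H<G}\ker(\res^G_H)$, where $S_G$ is the polynomial part, and the whole theorem then follows from the single fact that multiplication by $\xi$ is \emph{nilpotent} on $H^*(G;M)$ for every $M\in\mathfrak{FI}_G$. So $\xi^d\cdot\partial_s(x)=\partial_s(\xi^d x)=0$, while $\xi^d$ remains injective on $E_s^{*,0}=H^*(G;\FF_p)$ once the inductive hypothesis gives $E_s^{*,0}=E_2^{*,0}$. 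That is the entire argument — no Poincar\'e-series count, no analysis of which classes are ``restriction-trivial'' in the image.

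But as written, your step (a) has a gap: the claim that ``the image of $\partial_s$ therefore consists of restriction-trivial classes'' is not how the argument works, and the proposed ``dimension/Poincar\'e-series count against the known target'' is a red herring — the correct mechanism is purely that $\xi^d$ kills the \emph{source} and is a nonzerodivisor on the \emph{target}. Your phrasing ``so that no differential can hit it'' also has the arrows pointing the wrong way; the relevant differentials go \emph{out} of the rows $s\ge 1$ \emph{into} the bottom row. More seriously, your step (b) tries to run the extension argument at the level of spectral-sequence differentials, proposing an ``induced filtration on each page'' coming from the filtration of the coefficient module. This is the wrong level: the Serre spectral sequence for $X$ has a fixed coefficient module $H^{s-1}(X;\FF_p)$, and a filtration of that module by $\FF_p[G]$-submodules does not give a map of spectral sequences (the inclusions $M_i\hookrightarrow M$ do not induce injections on $H^*(G;-)$), so ``vanishing of the differential for each $M_i/M_{i-1}$'' is not even well-posed, and there is no five-lemma available. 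The clean route — and the one the paper takes — is to prove the nilpotency of $\xi$ on $H^*(G;M)$ for $M\in\mathfrak{FI}_G$ by induction on the filtration length using the long exact sequences of group cohomology (this is an extension property of nilpotent endomorphisms, Lemma~\ref{lem;properties_of_mathfrac_and_nil}), and only then feed the resulting single statement into the spectral sequence. Finally, your suggestion to invoke Theorem~\ref{Th:Dold} here is circular: in the paper the generalized Dold theorem is deduced as a corollary of the present theorem, not used to prove it.
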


An immediate consequence of the previous theorem is an extension of the
generalized Dold theorem for elementary abelian groups from~\cite[Theorem~16, page~1934]{BlDiMcC}.

\begin{theorem}[Generalized Dold theorem]
  \label{Th:Dold} Let $G=\left(\ZZ/p\right) ^k$ be an elementary abelian group
  and let $n$ be a natural number.  Let $X$ and $Y$ be connected
  $G$-spaces. Suppose that $H^i(X;\FF_p)\in\mathfrak{FI}_{G}$ for every $1\leq
  i\leq n$, and $\pi^{*}_X\colon H^{j}_G(\mathrm{pt};\FF_p )\rightarrow
  H^{j}_G(Y;\FF_p)$ is not injective for some $1\leq j\leq n+1$.

  Then there is no $G$-equivariant map $X\to Y$.
\end{theorem}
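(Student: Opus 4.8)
The plan is to deduce Theorem~\ref{Th:Dold} from Theorem~\ref{Th:DiffSSSeq-EAb} by a straightforward index comparison, using the basic monotonicity property of the Fadell--Husseini index stated in Section~\ref{subsec:Fadell--Huesseini_index_intro}: if there is a $G$-equivariant map $X\to Y$, then $\Index{G}(Y;\FF_p)\subseteq\Index{G}(X;\FF_p)$. So I would argue by contraposition: assume such a $G$-map exists, and derive a contradiction with the hypothesis that $\pi^*_X\colon H^j_G(\mathrm{pt};\FF_p)\to H^j_G(Y;\FF_p)$ fails to be injective for some $1\le j\le n+1$.

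First I would observe that the non-injectivity of $\pi^*_X$ in degree $j$ means precisely that $\Index{G}(Y;\FF_p)$ contains a nonzero element in degree $j\le n+1$; that is, $\Index{G}(Y;\FF_p)\not\subseteq H^{\ge n+2}_G(\mathrm{pt};\FF_p)$. (Here I am using the standard notation $\pi_X$ or rather $\pi_Y$ for the classifying map $\EE G\times_G Y\to \BB G$; I would make sure the notation matches whatever the paper uses, reading $H^*_G(\mathrm{pt};\FF_p)=H^*(\BB G;\FF_p)$.) On the other hand, the hypothesis $H^i(X;\FF_p)\in\mathfrak{FI}_G$ for all $1\le i\le n$ is exactly the hypothesis of Theorem~\ref{Th:DiffSSSeq-EAb} (with the same $n$), so that theorem applies and gives $\Index{G}(X;\FF_p)\subseteq H^{\ge n+2}_G(\mathrm{pt};\FF_p)$.

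Then I would simply chain the inclusions: if a $G$-map $X\to Y$ existed, then
\[
\Index{G}(Y;\FF_p)\ \subseteq\ \Index{G}(X;\FF_p)\ \subseteq\ H^{\ge n+2}_G(\mathrm{pt};\FF_p),
\]
which contradicts the fact, noted above, that $\Index{G}(Y;\FF_p)$ contains a nonzero class in degree $j\le n+1$. Hence no such $G$-map exists.

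I do not expect any serious obstacle here, since Theorem~\ref{Th:DiffSSSeq-EAb} does all the real work (the vanishing of the low differentials in the Serre spectral sequence of the Borel construction, which is where the elementary-abelian hypothesis and the $\mathfrak{FI}_G$ condition are genuinely used). The only point requiring a little care is the translation between ``$\pi^*_X$ not injective in degree $j$'' and ``$\Index{G}(Y;\FF_p)$ has a nonzero element of degree $j$'': by definition the index is the kernel of $\pi^*_Y\colon H^*(\BB G;\FF_p)\to H^*_G(Y;\FF_p)$, so I must be sure the theorem's hypothesis is phrased for the space $Y$ (the target), which it is. Everything else is formal, and I would keep the proof to a few lines.
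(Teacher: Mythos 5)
Your argument is correct and is precisely the intended one: the paper presents Theorem~\ref{Th:Dold} as an immediate corollary of Theorem~\ref{Th:DiffSSSeq-EAb} via monotonicity of the index, which is exactly the chain of inclusions you write down. You also correctly flagged and resolved the small notational slip in the statement (the map should be $\pi^*_Y$, i.e., for the target space $Y$, so that its kernel is $\Index{G}(Y;\FF_p)$).
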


The proof of Theorem~\ref{Th:DiffSSSeq-EAb} needs some preparation.

The cohomology of the group $G=\left(\ZZ/p\right) ^k$ with coefficients in the
field $\FF_p$ is given by:
\begin{equation*}
  \begin{array}{llll}
    H^*(\left( \ZZ/2\right) ^k;\FF_2)        & =    & \FF_2[t_1,\ldots,t_k],                                 & \deg t_j=1,                     \\
    H^*(\left( \ZZ/p\right) ^k;\FF_p)        & =    & \FF_p[t_1,\ldots,t_k]\otimes \Lambda [e_1,\ldots,e_k], & \deg t_j=2,\deg e_i=1,\text{ for }p>2,
  \end{array}
\end{equation*}
where $\Lambda[e_1,\ldots,e_k]$ denotes the exterior algebra generated by elements $e_1,\ldots,e_k$.

The cohomology algebra $H^*(G;\FF_p)$ contains the maximal multiplicative set
\begin{equation*}
  S_{G}:=\text{(polynomial part of }H^*(G;\FF_p)\text{)}{\setminus}\{0\}=\left\{
    \begin{array}{lll}
      \FF_2[t_1,\ldots,t_k]{\setminus}\{0\},         &     & \text{for} \;G=\left( \ZZ/2\right) ^k, \\
      \FF_p[t_1,\ldots,t_k]{\setminus}\{0\},         &     & \text{for}\; G=\left( \ZZ/p\right) ^k\;\text{and}\;p>2.
    \end{array}
  \right.
\end{equation*}
The central property \cite[Proof of Proposition~1, page~45]{Hsiang:cohomology}~\cite[Lemma~15]{BlDiMcC} of the multiplicative set $S_G$ and
the class of elementary abelian groups is that
\begin{equation}
  \label{eq:Intersection_Res_and_SG}
  \bigcap _{H\in \mathrm{Sub}_G}\ker \left(\res_H^G\colon H^*(G;\FF_p)\To H^*(H;\FF_p)\right)\cap S_G\neq \emptyset;
\end{equation}
here $\mathrm{Sub}_{G}$ stands for the collection of all proper subgroups of the group $G$, i.e., all the subgroups different from $G$.

\medskip

We call an endomorphism $f \colon M \to M$ of a graded abelian group \emph{nilpotent of degree $\le d$} if $f^d = 0$, and \emph{nilpotent}
if it is nilpotent of degree $\le d$ for some natural number $d$.
\newpage

\begin{lemma}
\label{lem;properties_of_mathfrac_and_nil}
\qquad
  \begin{compactenum}[\rm (i)]
  \item \label{lem;properties_of_mathfrac_and_nil_NI_and_extensions} 
If $0 \to L \xrightarrow{i} M \xrightarrow{p}  N \to 0$ is an exact sequence of $\FF_p[G]$-modules and $L$ and $N$ belong to $\mathfrak{FI}_G$, then $M$ also belongs to $\mathfrak{FI}_G$.
  \item \label{lem;properties_of_mathfrac_and_nil:nilpotnent_and-extensions}
Consider the following diagram of graded abelian groups, where the horizontal maps are degree preserving, while the vertical maps do not necessarily preserve the degree:
\[
\xymatrix{ M_0 \ar[r]^{i} \ar[d]^{f_0} & M_1 \ar[r]^{p} \ar[d]^{f_1} & M_2
  \ar[d]^{f_2}
  \\
  M_0 \ar[r]^{i} & M_1 \ar[r]^{p} & M_2 }
\]
Suppose that the rows are degreewise exact at $M_1$, and that $f_0$ and $f_2$ are nilpotent of degree $\le d_0$ resp. $\le d_2$.
Then $f_1$ is nilpotent of degree $\le (d_0+d_2)$.
\item \label{lem;properties_of_mathfrac_and_nil:submodules_and_quotient_modules}
  Let $f \colon N \to N$ be an endomorphism of the graded abelian group $N$
  which is nilpotent of degree $\le d$.  If $M \subseteq N$ is a graded abelian
  subgroup with $f(M) \subseteq M$, then $f|_{M} \colon M \to M$ and the induced
  map $\overline{f} \colon N/M \to N/M$ are nilpotent of degree $\le d$.
\end{compactenum}
\end{lemma}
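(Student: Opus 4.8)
The plan is to prove each of the three parts essentially by elementary bookkeeping, treating (i) separately from (ii) and (iii), which are about nilpotency.

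\medskip

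\noindent\textbf{Part (i).} First I would recall the definition of $\mathfrak{FI}_G$: $N \in \mathfrak{FI}_G$ means $N$ admits a finite filtration by $\FF_p[G]$-submodules with each successive quotient in $\mathfrak{I}_G$ (a direct sum of modules induced from proper subgroups). Given the exact sequence $0 \to L \xrightarrow{i} M \xrightarrow{p} N \to 0$, I would take a filtration $0 = L_0 \subseteq L_1 \subseteq \cdots \subseteq L_a = L$ with $L_j/L_{j-1} \in \mathfrak{I}_G$, and a filtration $0 = N_0 \subseteq N_1 \subseteq \cdots \subseteq N_b = N$ with $N_k/N_{k-1} \in \mathfrak{I}_G$. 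I would then splice them: set $M_j := i(L_j)$ for $0 \le j \le a$, and $M_{a+k} := p^{-1}(N_k)$ for $0 \le k \le b$. This is an ascending chain of $\FF_p[G]$-submodules of $M$ from $0$ to $M$. For the quotients, $M_j/M_{j-1} \cong L_j/L_{j-1} \in \mathfrak{I}_G$ for $j \le a$ (since $i$ is injective), while $M_{a+k}/M_{a+k-1} \cong N_k/N_{k-1} \in \mathfrak{I}_G$ for $k \ge 1$ (standard: $p$ induces an isomorphism $p^{-1}(N_k)/p^{-1}(N_{k-1}) \cong N_k/N_{k-1}$), and $M_a/M_{a-1}$ is handled by the $L$-part. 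Hence $M \in \mathfrak{FI}_G$.

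\medskip

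\noindent\textbf{Part (ii).} Here I would argue by diagram chase with the commuting squares. The key is that although $f_0, f_1, f_2$ need not preserve degree, the rows are degreewise exact at $M_1$, so exactness holds separately in each degree, and nilpotency is a statement about iterated application. Set $d := d_0 + d_2$; I claim $f_1^{d} = 0$. Take $x \in M_1$. Then $p(f_1^{d_2}(x)) = f_2^{d_2}(p(x)) = 0$ by commutativity of the right square iterated $d_2$ times and $f_2^{d_2} = 0$. So $f_1^{d_2}(x) \in \ker(p) = \operatorname{im}(i)$, say $f_1^{d_2}(x) = i(y)$ for some $y \in M_0$. Applying $f_1^{d_0}$ and using commutativity of the left square, $f_1^{d_0}(i(y)) = i(f_0^{d_0}(y)) = i(0) = 0$, so $f_1^{d_0 + d_2}(x) = f_1^{d_0}(f_1^{d_2}(x)) = 0$. (I should note $i$ need not be injective, but that is not needed; I only use $\ker p = \operatorname{im} i$ and the commuting squares.) Since $x$ was arbitrary, $f_1^{d} = 0$.

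\medskip

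\noindent\textbf{Part (iii).} This is immediate: if $f^d = 0$ on $N$ and $f(M) \subseteq M$, then $(f|_M)^d = (f^d)|_M = 0$, and the induced map $\overline{f}$ on $N/M$ satisfies $\overline{f}^{\,d}(n + M) = f^d(n) + M = M$, so $\overline{f}^{\,d} = 0$. The only subtlety worth a sentence is that $f$ need not be degree-preserving, but iterating and then restricting (resp.\ passing to the quotient) commutes with composition regardless of grading behaviour, so the argument goes through verbatim.

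\medskip

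\noindent I do not anticipate a genuine obstacle here; the statement is a collection of formal lemmas assembled for later use. The only point requiring a little care is in (ii), making sure the splicing of "apply $f_1^{d_2}$, land in the image of $i$, then apply $f_1^{d_0}$" is done in the correct order and that one does not accidentally need $i$ injective; the chase above avoids that. For (i), the mild care is in checking the middle quotient $M_a/M_{a-1}$ at the splice point matches up correctly, which it does because $M_a = i(L) = \ker p = p^{-1}(N_0)$.
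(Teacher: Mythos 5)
Your proof is correct and follows essentially the same route as the paper: for (i) the splicing of the two filtrations via $M_j=i(L_j)$ and $M_{a+k}=p^{-1}(N_k)$, for (ii) the diagram chase applying $f_1^{d_2}$ first and then $f_1^{d_0}$ via $\ker p=\operatorname{im} i$, and for (iii) the direct restriction/quotient observation. Your remark that injectivity of $i$ is not needed in (ii) is a fair point and is likewise implicit in the paper's argument.
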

\begin{proof}~(\ref{lem;properties_of_mathfrac_and_nil_NI_and_extensions}) By
  assumption we can choose filtrations $0=L_0\subseteq L_1
  \subseteq\cdots\subseteq L_r = L$ and $0=N_0\subseteq N_1
  \subseteq\cdots\subseteq N_s = N$ such that each quotient belongs to
  $\mathfrak{I}_G$.  Define $M_k = i(L_k)$ for $k = 0,1,2 \ldots, r$ and $M_k =
  p^{-1}(N_{k-r})$ for $k = (r+1), \ldots, (r+s)$. Then we obtain a filtration
  $0=M_0\subseteq M_1 \subseteq\cdots\subseteq M_{r+s} = M$ such that each
  quotient belongs to $\mathfrak{I}_G$. Hence $M$ belongs to $\mathfrak{FI}_G$.
  \\[1mm]~(\ref{lem;properties_of_mathfrac_and_nil:nilpotnent_and-extensions})
  Consider $x \in M_1$. Then $p \circ f_1^{d_2}(x) = (f_2)^{d_2} \circ p(x) =
  0$. Hence there exists $y \in N_0$ with $i(y) = f_1^{d_2}(x)$. We conclude that
  $f_1^{d_0 + d_2}(x) = f_1^{d_0} \circ f_1^{d_2}(x) = f_1 ^{d_2} \circ i(y) = i
  \circ f_0^{d_0}(y) = 0$.  Hence $f_1$ is nilpotent of degree $\le (d_0 +
  d_2)$.
  \\[1mm]~(\ref{lem;properties_of_mathfrac_and_nil:submodules_and_quotient_modules})
  This is obvious.
\end{proof}

\begin{lemma}
  \label{lem:nilpotent_operation}
  Let $M$ be an $\FF_p[G]$-module in $\mathfrak{FI}_{G}$. 
  Consider an element $\xi \in H^*(G;\FF_p)$ which is mapped to zero under the restriction map $\res_{H}^G\colon H^*(G;\FF_p) \to H^*(H;\FF_p)$ for every 
  $H \in \mathrm{Sub}_G$. 
  Then multiplication with $\xi$ is a nilpotent map $\xi \colon H^*(G;M) \to H^*(G;M)$.
\end{lemma}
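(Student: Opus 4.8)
The plan is to prove Lemma~\ref{lem:nilpotent_operation} by reducing to the class $\mathfrak{I}_G$ of induced modules via the filtration defining $\mathfrak{FI}_G$, and then handling induced modules using the vanishing hypothesis on $\xi$ together with the restriction--induction (Frobenius reciprocity / projection formula) machinery for group cohomology.

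\medskip

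\noindent\textbf{Step 1: Reduce to $\mathfrak{I}_G$.} First I would show that the set of $\FF_p[G]$-modules $M$ for which multiplication by $\xi$ on $H^*(G;M)$ is nilpotent is closed under extensions. Given a short exact sequence $0\to L\to M\to N\to 0$ of $\FF_p[G]$-modules with $\xi$ nilpotent on $H^*(G;L)$ and on $H^*(G;N)$, the long exact cohomology sequence gives, in each fixed total degree, a three-term exact sequence
\[
H^*(G;L)\longrightarrow H^*(G;M)\longrightarrow H^*(G;N),
\]
and multiplication by $\xi$ is a chain map of this sequence to itself (shifting degree by $\deg\xi$). Applying Lemma~\ref{lem;properties_of_mathfrac_and_nil}~(\ref{lem;properties_of_mathfrac_and_nil:nilpotnent_and-extensions}) with $f_0,f_1,f_2$ the multiplication-by-$\xi$ maps, I conclude $\xi$ is nilpotent on $H^*(G;M)$ (of degree bounded by the sum of the two degrees). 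Iterating along the finite filtration $0=M_0\subseteq\cdots\subseteq M_r=M$ witnessing $M\in\mathfrak{FI}_G$, whose quotients lie in $\mathfrak{I}_G$, it suffices to treat the case $M\in\mathfrak{I}_G$. Since $\mathfrak{I}_G$ is closed under finite direct sums and $H^*(G;-)$ commutes with finite direct sums, it further suffices to treat $M=\ind_H^G N$ for a proper subgroup $H\subsetneq G$ and an $\FF_p[H]$-module $N$.

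\medskip

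\noindent\textbf{Step 2: The induced case.} For $M=\ind_H^G N$, Shapiro's lemma gives a natural isomorphism $H^*(G;\ind_H^G N)\cong H^*(H;N)$ compatible with the $H^*(G;\FF_p)$-module structure, where $H^*(G;\FF_p)$ acts on the right-hand side through the restriction homomorphism $\res_H^G\colon H^*(G;\FF_p)\to H^*(H;\FF_p)$. (Equivalently one uses the projection formula $\res_H^G(\xi)\cdot x = $ image of $\xi\cdot(\text{something})$ under transfer/restriction.) By hypothesis $\res_H^G(\xi)=0$ since $H\in\mathrm{Sub}_G$. Hence multiplication by $\xi$ on $H^*(G;\ind_H^G N)\cong H^*(H;N)$ is multiplication by $\res_H^G(\xi)=0$, i.e.\ it is the zero map — in particular nilpotent of degree $\le 1$. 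Combining with Step~1, multiplication by $\xi$ on $H^*(G;M)$ is nilpotent for every $M\in\mathfrak{FI}_G$.

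\medskip

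\noindent\textbf{Main obstacle.} The only genuinely delicate point is verifying that the $H^*(G;\FF_p)$-module structure on $H^*(G;\ind_H^G N)$ really is pulled back along $\res_H^G$ under the Shapiro isomorphism; this is the compatibility of Shapiro's lemma with cup products, which follows from the projection formula for the transfer, but one must be careful with signs and with the fact that over $\FF_p[G]$ induction and coinduction agree (as already noted in the excerpt, via~\cite[Proposition~III.5.9 page~70]{Brown}) so that Shapiro's lemma applies in the cohomological direction. Once that naturality is in hand, the nilpotence (indeed vanishing, degree $\le 1$) in the induced case is immediate, and the reduction in Step~1 is purely formal via Lemma~\ref{lem;properties_of_mathfrac_and_nil}. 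Note this lemma together with~\eqref{eq:Intersection_Res_and_SG} is exactly what will later force the Serre spectral sequence differentials in Theorem~\ref{Th:DiffSSSeq-EAb} to vanish: one picks $\xi\in S_G$ lying in $\bigcap_{H\in\mathrm{Sub}_G}\ker\res_H^G$, so that $\xi$ acts nilpotently on the relevant rows while being a non-zero-divisor on $H^*(G;\FF_p)$.
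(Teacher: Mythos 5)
Your proposal is correct and follows essentially the same route as the paper: reduce along the filtration using Lemma~\ref{lem;properties_of_mathfrac_and_nil}\,(\ref{lem;properties_of_mathfrac_and_nil:nilpotnent_and-extensions}), then handle the base case $M=\ind_H^G N$ via Shapiro's lemma and the identity $\alpha(\xi\cdot x)=\res_H^G(\xi)\cdot\alpha(x)$, which kills the action since $\res_H^G(\xi)=0$ for proper $H$. Your Step~1/Step~2 decomposition and the caveat about compatibility of the Shapiro isomorphism with the $H^*(G;\FF_p)$-module structure match the paper's argument exactly.
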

\begin{proof}
  We use induction over $n$ for which there exists an $\FF_p[G]$-filtration $0 =
  M_0 \subseteq M_1 \subseteq \cdots \subseteq M_n = M$ such that each quotient
  belongs to $\mathfrak{I}_G$. The induction beginning $n = 0$ is trivial. In
  the induction step from $n$ to $n+1$ we consider the exact a sequence $0 \longrightarrow
  M_n \longrightarrow M \longrightarrow M/M_n \to 0$.  It induces a sequence $H^*(G;M_n) \longrightarrow H^*(G;M)
  \longrightarrow H^*(G;M/M_n)$ which is exact at $H^*(G;M)$. By induction hypothesis $\xi
  \colon H^*(G;M_n) \to H^*(G;M_n)$ is nilpotent.  
  In order to show that $\xi  \colon H^*(G;M) \to H^*(G;M) $ is nilpotent, using 
  Lemma~\ref{lem;properties_of_mathfrac_and_nil}\,(\ref{lem;properties_of_mathfrac_and_nil:nilpotnent_and-extensions}),
  it is enough to prove that $\xi \colon H^*(G;M/M_n) \to H^*(G;M/M_n)$ is nilpotent. 
  Since $M/M_n$ belongs to $\mathfrak{I}_{G}$, it suffices to show that $\xi \colon
    H^*(G;\mathrm{\ind}_H^G N) \to H^*(G;\ind_H^G N)$ is trivial 
  for any subgroup $H\in \mathrm{Sub}_G$
  and any $\ZZ[H]$-module $N$. 
  By Shapiro's
  Lemma \cite[Proposition 6.2, page 73]{Brown} there is an isomorphism $\alpha \colon H^*(G;\mathrm{\ind}_H^G N)
  \xrightarrow{\cong} H^*(H;N)$ such that $\alpha(\xi \cdot x) =
  \mathrm{res}^G_H(\xi) \cdot \alpha(x)$ holds for all $x \in
  H^*(G;\mathrm{\ind}_H^G N)$. Since $\mathrm{res}^G_H(\xi) = 0$ by assumption,
  the claim follows.
\end{proof}

\begin{proof}[Proof of Theorem~\ref{Th:DiffSSSeq-EAb}]
  Because of~\eqref{eq:Intersection_Res_and_SG} we can choose $0\neq \xi\in
  S_{G}\cap \bigcap_{\alpha \in \Lambda}\ker (\res_{H_{\alpha}}^G)$.  The Serre
  spectral sequence comes with a module structure over the graded ring
  $H^*(G;\FF_p)$.  The $E_2$-term looks like
  $E_{2}^{*,s}=H^{*}(G;H^{s}(X;\FF_p))$.  Hence multiplication with $\xi$
  induces a nilpotent map $\xi \colon E_{2}^{*,s} \to E_{2}^{*,s}$ for $1\leq s\leq n$ by
  Lemma~\ref{lem:nilpotent_operation}. 
  Since all differentials are $H^*(G;\FF_p)$-maps, we conclude from
  Lemma~\ref{lem;properties_of_mathfrac_and_nil}\,(\ref{lem;properties_of_mathfrac_and_nil:submodules_and_quotient_modules})
  that there is a natural number $d$ such that the map $\xi^d \colon E_{r}^{*,s} \to
  E_{r}^{*,s}$ given by multiplication with $\xi^d$ is trivial for all $1\leq s\leq n$ and $r\geq 2$.
  Notice for the sequel that for every $x \in E_{2}^{r,s-1}$ we have
  $\partial_s(\xi^d \cdot x) = \xi^d \cdot \partial_s(x)$, and hence $\xi^d
  \cdot \partial_s(x) = 0$.

  Now we show by induction that $\partial_{s}\colon E_{s}^{r,s-1}(\EE G\times
  _{G}X)\longrightarrow E_{s}^{r+s,0}(\EE G\times_{G}X)$ vanishes for
  $s\in\{2,\ldots,n+1\}$.  The induction beginning $s = 2$ follows from the fact
  that the map $\xi^d \colon E_2^{*,0} = H^*(G;\FF_p) \to E_2^{*,0} =
  H^*(G;\FF_p)$ is injective since $\xi$ belongs to $S_{G}$.  Finally we explain
  the induction step from $s-1$ to $s \ge 3$. By induction hypothesis all
  differentials landing in the $0$-th row in the $E_i$-term are trivial for $i
  \le s-1$. Hence $E_s^{*,0} = E_2^{*,0} = H^*(G;R)$.  Now the same argument
  as above shows that $\partial_{s} \colon E_{s}^{r,s-1} \to E_s^{r+s,0}$ is trivial.
\end{proof}

%%%%%%%%%%%%%%%%%%%%%%%%%%%%%%%%%%%%%%%%%%%%%%%%%%%%%%%%%%%%%%%%%%%%%%%%%%%%%%%%%%%%%
%%%%%%%%%%%%%%%%%%%%%%%%%%%%%%%%%%%%%%%%%%%%%%%%%%%%%%%%%%%%%%%%%%%%%%%%%%%%%%%%%%%%%
% -----------------------------------------------------------------------------------%
\section{Equivariant obstruction theory, Euler classes and Lusternik--Schnirelmann category}
\label{Sec:EqOb_Euler_LS-cat}
% -----------------------------------------------------------------------------------%
%%%%%%%%%%%%%%%%%%%%%%%%%%%%%%%%%%%%%%%%%%%%%%%%%%%%%%%%%%%%%%%%%%%%%%%%%%%%%%%%%%%%%
%%%%%%%%%%%%%%%%%%%%%%%%%%%%%%%%%%%%%%%%%%%%%%%%%%%%%%%%%%%%%%%%%%%%%%%%%%%%%%%%%%%%%

\subsection{Equivariant primary obstructions and Euler classes}
\label{subsec:Equivariant_Primary_Obstructions_and_Euler_Classes}
Let $G$ be a finite group and $X$ be a free $G$-CW complex.
Consider an orthogonal $G$-representation $W$. 
Denote by $\xi$ the associated vector bundle $X \times_G W\longrightarrow X/G$ and by $S(\xi)$ the related sphere bundle $X \times_G S(W) \longrightarrow X/G$.
\begin{lemma}\label{sections_and_maps}
  There is a one-to-one correspondence between sections of the sphere bundle $S(\xi) \colon X \times_G S(W) \longrightarrow X/G$ and $G$--equivariant maps from $X\to S(W)$.
\end{lemma}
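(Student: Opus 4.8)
The plan is to unwind the definition of a section of the Borel-type sphere bundle $S(\xi)$ and see that it is literally the same data as a $G$-equivariant map $X\to S(W)$. First I would recall that $\xi$ is the bundle $X\times_G W\to X/G$ obtained as the quotient of the trivial $G$-bundle $X\times W\to X$ by the diagonal $G$-action (which is free on the base $X$ since $X$ is a free $G$-CW complex), and similarly $S(\xi)=X\times_G S(W)\to X/G$. A section of $S(\xi)$ is a map $\sigma\colon X/G\to X\times_G S(W)$ with $p\circ\sigma=\id_{X/G}$, where $p$ denotes the bundle projection.

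The key step is to pass to the pullback along the quotient map $q\colon X\to X/G$. Pulling back $S(\xi)$ along $q$ gives the trivial bundle $X\times S(W)\to X$, because in the pullback square the total space is $\{(x,[y,w]) : q(x)=p([y,w])\}$, and $p([y,w])=q(y)$, so this is $\{(x,[y,w]) : q(x)=q(y)\}\cong X\times S(W)$ via $(x,w)\mapsto(x,[x,w])$. Hence a section $\sigma$ of $S(\xi)$ pulls back to a section of the trivial bundle $X\times S(W)\to X$, i.e.\ to a map $X\to S(W)$, say $f$, characterized by $\sigma(q(x))=[x,f(x)]$ for all $x\in X$. The remaining thing to check is that the sections of $S(\xi)$ are exactly those pullback sections that are $G$-invariant in the appropriate sense, and that this $G$-invariance translates into $G$-equivariance of $f$. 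Concretely, $\sigma$ being well defined on $X/G$ forces $[x,f(x)]=[gx,f(gx)]$ for all $g\in G$, $x\in X$; since the $G$-action on $X\times S(W)$ is the diagonal one, $[gx,f(gx)]=[gx,g\cdot(g^{-1}f(gx))]=[x,g^{-1}f(gx)]$, and because $G$ acts freely on $X$ the representative with first coordinate $x$ is unique, so $g^{-1}f(gx)=f(x)$, i.e.\ $f(gx)=g\cdot f(x)$. Conversely, given a $G$-map $f\colon X\to S(W)$, the assignment $q(x)\mapsto[x,f(x)]$ is well defined by the same computation run backwards, is continuous (it is induced by the continuous $G$-map $x\mapsto(x,f(x))$ on the quotient), and is manifestly a section.

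Finally I would note that these two constructions $\sigma\mapsto f$ and $f\mapsto\sigma$ are mutually inverse, which is immediate from the formula $\sigma(q(x))=[x,f(x)]$ that characterizes the correspondence in both directions, establishing the claimed bijection. I expect the only genuine point requiring care—rather than an obstacle—is the freeness of the $G$-action on $X$: it is what makes the representative $[x,\,\cdot\,]$ with prescribed first coordinate unique, and hence what lets one extract a single-valued $f$ from $\sigma$ and conversely guarantees well-definedness; without it one would only get a correspondence up to the isotropy action. Everything else is the standard identification of sections of an associated bundle with equivariant maps into the fiber, and no serious computation is involved.
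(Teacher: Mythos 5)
Your proof is correct and the core mechanism---pull back, trivialize, and use the freeness of the $G$-action to extract/glue a single-valued map---is the same as the paper's. The packaging differs slightly: you pull back the sphere bundle $S(\xi)$ along the quotient map $q\colon X\to X/G$ and identify $q^*S(\xi)\cong X\times S(W)$ directly, then verify equivariance of $f$ by an explicit computation with representatives; the paper instead pulls back the $G$-covering $X\to X/G$ along $p\colon X\times_G S(W)\to X/G$ and then along the section $s$, using the identity $p\circ s=\id$ and the canonical isomorphism of $G$-coverings to produce the $G$-map $X\to X\times S(W)$ more structurally. Your version is a bit more hands-on and arguably easier to check; the paper's avoids choosing representatives by phrasing everything in terms of $G$-coverings. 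Both hinge on exactly the same input (freeness of the $G$-action on $X$), and both are complete.
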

\begin{proof}
  Given a $G$--equivariant map $f \colon X \to S(W)$, we obtain a section of $S(\xi)$ by
  taking the $G$-quotient of the map $X \to X \times S(W),\; x \mapsto
  (x,f(x))$.
   
  Let $s \colon X/G \to X \times_G S(W)$ be a section of $S(\xi)$.  Consider the
  following diagram whose squares are pullbacks
  \[
  \xymatrix{s^*p^*X \ar[r]^{\overline{s}} \ar[d] & p^*X \ar[r]^{\overline{p}}
    \ar[d] & X\ar[d]
    \\
    X/G \ar[r]^s & X \times_G S(W) \ar[r]^p & X/G .}
  \]
  There is a canonical isomorphism of $G$-coverings over $X \times_G S(W)$ from $X
  \times S(W) \to X \times_G S(W)$ to $p^*X \to X \times_G S(W)$.  Since $p \circ
  s = \id$, we obtain a preferred isomorphism of $G$-coverings over $X/G$ from
  $X \to X/G$ to $s^*p^*X \to X/G$.  Hence the $G$-equivariant map $\overline{s}$ can be
  identified with a $G$-equivariant map $X \to X \times S(W)$.  Its composition with the
  projection $X \times S(W) \to S(W)$ yields a $G$-equivariant map $X \to S(W)$.
\end{proof}

Now suppose that $X$ is a $d$-dimensional connected free $G$-$CW$-complex for $d
= \dim(W)$.  Let $w \colon G \to \{\pm 1\}$ be the orientation homomorphism of
$W$, i.e., $w(g)$ is $1$ if $g$ acts orientation preserving and is $-1$
otherwise. The first Stiefel-Whitney class $w_1(\xi)\in H^1(X/G;\ZZ/2)$ is given
by the composition $w_1(\xi)\colon \pi_1(X/G) \xrightarrow{\partial} G
\xrightarrow{w} \{\pm 1\}$, where $\partial$ is the classifying map associated to
the $G$-covering $X \to X/G$. 

There is the notion of the \emph{Euler class}
\begin{eqnarray*}
  & e(\xi) \in H^d(X/G;\mathcal{Z})&
\end{eqnarray*}  
where $H^d(X/G;\mathcal{Z})$ is the cohomology of $X/G$ with coefficients in the
local coefficient system $\mathcal{Z}$, which assigns to $y \in X/G$ the
$(d-1)$-st homotopy group of the fiber of $S(\xi)$ over $y$. It is defined as the
primary obstruction to the existence of a section of $S(\xi)$ and is a
characteristic class.  See for instance~\cite{Greenblatt}, where further
references, e.g.,~\cite{Steenrod(1951)} and~\cite{Thom(1952)}, are given.  The
orientable case, i.e., $w_1$ is trivial, is treated in~\cite[\S~9 and \S~12]{M-S}.

There is a natural identification
\[
H^d(X/G;\mathcal{Z}) \cong H^d(X/G;\ZZ^{w_1(\xi)})
\]
where $H^d(X/G;\ZZ^{w_1(\xi)})$ is 
$H^d\bigl(\hom_{\ZZ[ \pi_1(X/G)]}(C_*(\widetilde{X/G}),\ZZ^{w_1(\xi)})\bigr)$ for the
$\ZZ[\pi_1(X/G)]$-module $\ZZ^{w_1(\xi)}$ whose underlying abelian group is
$\ZZ$ and for which $g$ acts by multiplication with $w_1(X/G)(g)$. Moreover,
there is a natural identification
\[
H^d(X/G;\ZZ^{w_1(\xi)}) \cong H^d_G(X;\ZZ^w) := H^d\bigl(\hom_{\ZZ
  G}(C_*(X),\ZZ^w)\bigr).
\]
The primary equivariant obstruction for the existence of a $G$-equivariant map $X \to S(W)$ is
an element (see~\cite[page~120]{tDieck})
\[
\gamma^G(X,S(W)) \in H^d_G(X;\ZZ^w).
\]
The reduction of coefficients $\ZZ^w$ to $\ZZ/2$ defines the (mod\,2)-primary obstruction
\[
\gamma^G_{\ZZ/2}(X,S(W)) \in H^d_G(X;\ZZ/2)
\]
in a natural way.

The proof of the next lemma consists of unravelling the definitions of the
primary obstruction for the existence of a section of $S(\xi)$ and the
equivariant primary obstruction for the existence of a $G$-equivariant map $X \to S(W)$
using Lemma~\ref{sections_and_maps} and the fact that cells in $X/G$ correspond
to equivariant cells in $X$.

\begin{lemma}
  \label{lem:Euler_and_o}
  Let $X$ be a $d$-dimensional connected free $G$-$CW$-complex for $d =
  \dim(W)$. Then the composite of  natural isomorphisms
  \[H^d(X/G;\mathcal{Z}) \xrightarrow{\cong} H^d(X/G;\ZZ^{w_1(\xi)})
  \xrightarrow{\cong} H^d_G(X;\ZZ^w)
  \]
  maps the Euler class $e(\xi)$ to the equivariant primary obstruction  $\gamma^G(X;S(W))$.
\end{lemma}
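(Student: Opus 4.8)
The strategy is to unravel the two obstruction-theoretic definitions on a common cellular model and check that they produce literally the same cochain, so that the statement reduces to matching up coefficient systems. First I would fix a $G$-CW structure on $X$; since the $G$-action is free and cellular, $X/G$ inherits a CW structure whose cells correspond bijectively to the equivariant cells of $X$, and the projection $p\colon X\to X/G$ restricts to $X^{(k)}=p^{-1}\bigl((X/G)^{(k)}\bigr)$ for every $k$. The fiber $S(W)\approx S^{d-1}$ is $(d-2)$-connected, so a section of $S(\xi)$ over $(X/G)^{(d-1)}$ exists; by Lemma~\ref{sections_and_maps} applied over each skeleton, it corresponds to a $G$-map $f\colon X^{(d-1)}\to S(W)$, and any two choices of partial section (resp.\ partial $G$-map) agree after restriction to $(X/G)^{(d-2)}$ (resp.\ $X^{(d-2)}$), so both obstruction classes below are independent of the choices.

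Next I would write out the two obstruction cochains on this model. For the Euler class, the chosen section $s$ over $(X/G)^{(d-1)}$ has an obstruction cocycle $c(s)\in C^d\bigl(X/G;\mathcal{Z}\bigr)$ that assigns to a $d$-cell $\bar\sigma$ the class in $\pi_{d-1}$ of the fiber over an interior point, determined by restricting $s$ to $\partial\bar\sigma$ in a local trivialization of $S(\xi)$ over $\bar\sigma$; by definition of the primary obstruction, $e(\xi)=[c(s)]$. For the equivariant primary obstruction, the $G$-map $f$ over $X^{(d-1)}$ has an obstruction cochain $o(f)\in\hom_{\ZZ G}\bigl(C_d(X),\ZZ^w\bigr)$ whose value on an equivariant $d$-cell, with chosen orbit representative $\sigma$ and attaching map $\varphi\colon S^{d-1}\to X^{(d-1)}$, is $\deg\bigl(f\circ\varphi\colon S^{d-1}\to S(W)\bigr)$, and $\gamma^G(X,S(W))=[o(f)]$; the module $\ZZ^w$ enters because $G$ permutes the equivariant cells and acts on $\pi_{d-1}(S(W))\cong\ZZ$ through $w$.

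The main point, and where I expect the actual work, is to identify $c(s)$ with $o(f)$ under the coefficient isomorphisms in the statement. Pulling $S(\xi)$ back along $p$ yields the trivial bundle $X\times S(W)\to X$ canonically, and under this identification the section corresponding to $f$ is the $G$-quotient of $x\mapsto(x,f(x))$; so over a single $d$-cell $\bar\sigma$ — where $\xi$ is trivial — choosing the lift $\sigma$ of $\bar\sigma$ in $X$ amounts to choosing a trivialization of $S(\xi)|_{\bar\sigma}$, and in that trivialization $s|_{\partial\bar\sigma}$ becomes exactly $f\circ\varphi$. Thus the element of $\pi_{d-1}$ recorded by $c(s)$ on $\bar\sigma$ equals the degree recorded by $o(f)$ on $\sigma$, once the coefficient systems are matched. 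They are matched as follows: the monodromy of $\mathcal{Z}$ along a loop $\gamma$ in $X/G$ is $\pm1$ according to whether $\xi|_\gamma$ is orientable, i.e.\ it equals $w_1(\xi)(\gamma)$, which gives $H^d(X/G;\mathcal{Z})\xrightarrow{\cong}H^d(X/G;\ZZ^{w_1(\xi)})$; and since $w_1(\xi)$ factors as $\pi_1(X/G)\xrightarrow{\partial}G\xrightarrow{w}\{\pm1\}$, the standard identification $C^*\bigl(X/G;\ZZ^{w_1(\xi)}\bigr)\cong\hom_{\ZZ G}\bigl(C_*(X),\ZZ^w\bigr)$ — valid because $X\to X/G$ is a free $G$-cover — carries $c(s)$ to $o(f)$. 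Passing to cohomology then sends $e(\xi)=[c(s)]$ to $[o(f)]=\gamma^G(X,S(W))$, as claimed.

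The only genuine obstacle is the sign and orientation bookkeeping in the last step: one must check that changing the lift of a cell by $g\in G$ changes the local trivialization of $S(\xi)$ by the action of $g$ on $\pi_{d-1}(S(W))$, namely by $w(g)$ — this is precisely why $\mathcal{Z}$ corresponds to the twisted module $\ZZ^w$ and not to the constant system, and it follows from a cell-by-cell comparison of the two standard obstruction-cocycle recipes, requiring no new idea.
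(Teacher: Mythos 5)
Your proposal is correct and follows exactly the route the paper itself indicates: the paper's ``proof'' is the one-sentence remark preceding the lemma, namely that one should unravel the two obstruction-cocycle definitions using Lemma~\ref{sections_and_maps} and the bijection between cells of $X/G$ and equivariant cells of $X$. You have simply written out that cell-by-cell comparison in full, including the coefficient-system bookkeeping (monodromy of $\mathcal{Z}$ equals $w_1(\xi)$, which factors through $w$) that the paper leaves implicit.
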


The reduction of twisted coefficients to $\ZZ/2$ yields a similar identification of the (mod 2)-primary obstruction with the appropriate Stiefel--Whitney class.
\begin{lemma}
  \label{lem:SW_and_o}
  Let $X$ be a $d$-dimensional connected free $G$-$CW$-complex for $d =
  \dim(W)$. Then the natural isomorphism
  \[H^d(X/G;\ZZ/2) \xrightarrow{\cong} H^d_G(X;\ZZ/2)
  \]
  maps the top Stiefel--Whitney class $w_d(\xi)$ to the equivariant ${\rm (mod~2)}$-primary obstruction $\gamma^G_{\ZZ/2}(X;S(W))$.
\end{lemma}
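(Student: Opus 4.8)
The plan is to mirror exactly the proof of Lemma~\ref{lem:Euler_and_o}, replacing the integral obstruction theory with its $\ZZ/2$-coefficient version. First I would recall that the top Stiefel--Whitney class $w_d(\xi)$ of a $d$-dimensional real vector bundle $\xi$ is, by definition (see~\cite[\S~9 and \S~12]{M-S}), the mod~$2$ reduction of the primary obstruction to a nowhere-vanishing section of $\xi$, equivalently the primary obstruction to a section of the sphere bundle $S(\xi)$ taken with $\ZZ/2$-coefficients; it lives in $H^d(X/G;\ZZ/2)$ because the local coefficient system $\mathcal{Z}$ of $(d-1)$-st homotopy groups of the fibers becomes the constant system $\ZZ/2$ after reduction. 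On the other side, the equivariant $(\text{mod}~2)$-primary obstruction $\gamma^G_{\ZZ/2}(X;S(W))$ was defined above as the reduction of $\gamma^G(X,S(W)) \in H^d_G(X;\ZZ^w)$ along $\ZZ^w \to \ZZ/2$.

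Next I would set up the commuting square of coefficient-reduction maps: the natural isomorphism $H^d(X/G;\mathcal{Z}) \xrightarrow{\cong} H^d(X/G;\ZZ^{w_1(\xi)}) \xrightarrow{\cong} H^d_G(X;\ZZ^w)$ from Lemma~\ref{lem:Euler_and_o} fits into a diagram with the reduction maps to $\ZZ/2$ on both ends, yielding the natural isomorphism $H^d(X/G;\ZZ/2) \xrightarrow{\cong} H^d_G(X;\ZZ/2)$ (here $w_1(\xi)$ becomes irrelevant mod~$2$ since $\ZZ/2$ has no nontrivial twisting, and the cellular chain identification $C_*(X/G) \cong \ZZ \otimes_{\ZZ G} C_*(X)$ — cells in $X/G$ correspond to equivariant cells in $X$, $X$ being a free $G$-CW-complex — survives base change to $\FF_2$). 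Since both $w_d(\xi)$ and $\gamma^G_{\ZZ/2}(X;S(W))$ are obtained by reducing $e(\xi)$ and $\gamma^G(X,S(W))$ respectively, and Lemma~\ref{lem:Euler_and_o} already matches $e(\xi) \leftrightarrow \gamma^G(X,S(W))$, naturality of the reduction maps forces $w_d(\xi) \leftrightarrow \gamma^G_{\ZZ/2}(X;S(W))$.

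The main obstacle, such as it is, is purely bookkeeping: one must check that the identification $H^d(X/G;\ZZ/2) \cong H^d_G(X;\ZZ/2)$ used here is genuinely the $\ZZ/2$-reduction of the composite isomorphism in Lemma~\ref{lem:Euler_and_o}, i.e.\ that the two ways of producing a $\ZZ/2$-class — first reduce coefficients then use the cellular identification, versus use the cellular identification for $\ZZ^w$ then reduce — agree. This is immediate from the functoriality of $H^*(X/G;-) \cong H^*(\hom_{\ZZ[\pi_1(X/G)]}(C_*(\widetilde{X/G}),-))$ in the coefficient module and the ring map $\ZZ^w \to \ZZ/2$ of $\ZZ[\pi_1(X/G)]$-modules, together with the corresponding functoriality of $H^*_G(X;-)$; no genuinely new computation is needed. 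As in the paragraph preceding Lemma~\ref{lem:Euler_and_o}, the proof therefore consists of unravelling the two definitions — the primary obstruction to a section of $S(\xi)$ with $\ZZ/2$-coefficients and the equivariant $(\text{mod}~2)$-primary obstruction to a $G$-map $X \to S(W)$ — via Lemma~\ref{sections_and_maps}, and observing they are computed from the same cochain.
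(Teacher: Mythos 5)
Your proof is correct and matches the paper's (implicit) argument: the paper only states that Lemma~\ref{lem:SW_and_o} follows from Lemma~\ref{lem:Euler_and_o} by "reduction of twisted coefficients to $\ZZ/2$," and you have spelled out precisely that reduction, using the characterization of $w_d(\xi)$ as the mod~$2$ primary obstruction and naturality of the coefficient-reduction maps on both sides of the cellular identification. Nothing essential is added or missing relative to what the paper intends.
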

%%%%%%%%%%%%%%%%%%%%%%%%%%%%%%%%%%%%%%%%%%%%%%%%%%%%%%%%%%%%%%%%%%%%%%%%%%%%%%%%%%%%%

%%%%%%%%%%%%%%%%%%%%%%%%%%%%%%%%%%%%%%%%%%%%%%%%%%%%%%%%%%%%%%%%%%%%%%%%%%%%%%%%%%%%%

\subsection{Restriction and transfer}
\label{subsec:Restriction_and_Transfer}

\begin{lemma}\label{lem:transfer}
  Let $G$ be a finite group and $H \subseteq G$ be a subgroup. 
  Consider a $\ZZ[G]$-chain complex $C_*=(C_n,c_n)$ and a $\ZZ[G]$-module $M$.
  Denote by $\res$ the restriction from $G$ to $H$.

  Then for any $n$ there is a restriction homomorphism
  \[
  \res \colon H^n\bigl(\hom_{\ZZ[G]}(C_*,M)\bigr) \to
  H^n\bigl(\hom_{\ZZ[H]}(\res C_*,\res M)\bigr),
  \]
  and a transfer homomorphism
  \[
  \trf \colon H^n\bigl(\hom_{\ZZ[H]}(\res C_*,\res M)\bigr) \to
  H^n\bigl(\hom_{\ZZ[G]}(C_*,M)\bigr),
  \]
with the property that
  \[
  \trf \circ \res = [G:H] \cdot \id.
  \]
\end{lemma}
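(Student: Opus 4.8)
The plan is to construct both $\res$ and $\trf$ already on the cochain complexes $\hom_{\ZZ[G]}(C_*,M)$ and $\hom_{\ZZ[H]}(\res C_*,\res M)$, to verify the identity $\trf\circ\res=[G:H]\cdot\id$ at that level, and then simply pass to cohomology. For the restriction map, one observes that every $\ZZ[G]$-linear map $C_*\to M$ is in particular $\ZZ[H]$-linear, so the identity on underlying functions gives an inclusion of cochain complexes $\res\colon\hom_{\ZZ[G]}(C_*,M)\to\hom_{\ZZ[H]}(\res C_*,\res M)$; it commutes with the coboundary induced by $c_n$, which is given by literally the same formula on both sides, and hence induces the asserted map on cohomology.

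For the transfer, fix coset representatives $g_1,\dots,g_m$ of $G/H$, where $m=[G:H]$, and for $\varphi\in\hom_{\ZZ[H]}(\res C_n,\res M)$ set
\[
\trf(\varphi)(x):=\sum_{i=1}^{m}g_i\cdot\varphi(g_i^{-1}x),\qquad x\in C_n.
\]
I would then check three points. \emph{Well-definedness:} replacing $g_i$ by $g_ih$ with $h\in H$ changes the $i$-th summand to $g_ih\cdot\varphi(h^{-1}g_i^{-1}x)=g_i\cdot\varphi(g_i^{-1}x)$ by $\ZZ[H]$-linearity of $\varphi$, so the sum is independent of the choice of representatives. \emph{$\ZZ[G]$-linearity:} for $g\in G$ write $g_iH=gg_{\sigma(i)}H$ for a permutation $\sigma$ of $\{1,\dots,m\}$, say $g_i=gg_{\sigma(i)}h_i$ with $h_i\in H$; then $g_i\cdot\varphi(g_i^{-1}gx)=gg_{\sigma(i)}h_i\cdot\varphi(h_i^{-1}g_{\sigma(i)}^{-1}x)=gg_{\sigma(i)}\cdot\varphi(g_{\sigma(i)}^{-1}x)$, and summing over $i$ gives $\trf(\varphi)(gx)=g\cdot\trf(\varphi)(x)$. \emph{Chain map:} each $g_i\cdot(-)$ commutes with the differentials of $C_*$ and of $M$, and $\varphi$ is a cochain map, so $\trf(\varphi)$ is too. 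Hence $\trf$ descends to a map in cohomology.

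Finally, for $\psi\in\hom_{\ZZ[G]}(C_n,M)$ one has $\res\psi=\psi$ as a function, so, using $\ZZ[G]$-linearity of $\psi$,
\[
\trf(\res\psi)(x)=\sum_{i=1}^{m}g_i\cdot\psi(g_i^{-1}x)=\sum_{i=1}^{m}g_ig_i^{-1}\cdot\psi(x)=\sum_{i=1}^{m}\psi(x)=m\cdot\psi(x).
\]
Thus $\trf\circ\res=[G:H]\cdot\id$ already at the cochain level, a fortiori in cohomology.

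There is no genuine obstacle in this argument: every step is a direct computation. The only spots meriting a line of care are the well-definedness and the $\ZZ[G]$-linearity of $\trf$, both of which rest on the $\ZZ[H]$-linearity of $\varphi$ together with the finiteness of $[G:H]$; I would spell these out explicitly. It is also worth recording (immediate from the formulas) that $\res$ and $\trf$ are natural in $C_*$ and in $M$, in case this naturality is needed when the lemma is applied, for instance in combination with Lemma~\ref{lem:Euler_and_o}.
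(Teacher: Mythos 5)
Your proof is correct and follows essentially the same route as the paper: both define the transfer on cochains by summing $g_i\cdot\varphi(g_i^{-1}\,\cdot\,)$ over coset representatives, verify independence of the choice of representatives and compatibility with the coboundary, and then compute $\trf\circ\res=[G:H]\cdot\id$ at the cochain level. You are slightly more explicit than the paper in verifying that $\trf(\varphi)$ is actually $\ZZ[G]$-linear (the paper checks only independence of the section and the cochain-map property), which is a useful supplement; the only loose phrasing is the reference to a ``differential of $M$''---$M$ is just a module and the coboundary on $\hom(C_*,M)$ is precomposition with $c_{n+1}$, so what is really used is $G$-equivariance of $c_{n+1}$ alone.
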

\begin{proof}
  Choose a map of sets $s \colon G/H \to G$ such that the composite $\pr\circ s$ with
  the projection $\pr \colon G \to G/H$ is the identity. Given a $\ZZ[H]$-map
  $\phi \colon \res C_n \to\res M$, define a $\ZZ[G]$-map
  \[
  \trf_n(\phi) \colon C_n \to M, \quad x \mapsto \sum_{gH \in G/H} s(gH) \cdot
  \phi(s(gH)^{-1} \cdot x).
  \]
  This definition is actually independent of the choice of $s$, as the following
  calculation for another section $s'$ shows
  \begin{eqnarray*}
    s'(gH) \cdot \phi(s'(gH)^{-1} x)
    & = & 
    s(gH) \cdot s(gH)^{-1} \cdot s'(gH) \cdot \phi(s'(gH)^{-1} \cdot x)
    \\
    & = & 
    s(gH) \cdot \phi\bigl( s(gH)^{-1} \cdot s'(gH) \cdot s'(gH)^{-1} \cdot x\bigr)
    \\
    & = & 
    s(gH) \cdot \phi( s(gH)^{-1} x).
  \end{eqnarray*}
  The collection of the maps $\trf_n$ yields a $\ZZ[H]$-cochain map $\trf_* \colon
  \hom_{\ZZ[H]}(\res C_*, \res M) \to \hom_{\ZZ[G]}(C_*, M)$ by the following
  calculation for $\phi \in \hom_{\ZZ[H]}(\res C_*, \res M)$ and $x \in C_{n+1}$
  using the fact that the differentials of $C_*$ are $G$-equivariant
  \begin{eqnarray*}
    \trf_n(\phi) (c_{n+1}(x))
    & = & 
    \sum_{gH \in G/H} s(gH) \cdot \phi\bigl(s(gH)^{-1} \cdot c_{n+1}(x)\bigr)
    \\
    & = & 
    \sum_{gH \in G/H} s(gH) \cdot (\phi\circ c_{n+1})\bigl(s(gH)^{-1} \cdot x\bigr)
    \\
    & = & 
    \trf_n(\phi_n \circ c_{n+1}).
  \end{eqnarray*}
  The desired transfer map $\trf \colon H^n\bigl(\hom_{\ZZ[H]}(\res C_*,\res
  M)\bigr) \to H^n\bigl(\hom_{\ZZ[G]}(C_*,M)\bigr)$ is obtained by applying
  cohomology to the cochain map $\trf_*$.  
  
  In order to prove that $\trf \circ \res =[G:H]$, it suffices to show that for any $\ZZ[G]$-map $\phi \colon C_n \to M$
  \begin{eqnarray*}
    \trf_n \circ \res(\phi)
    & = & 
    \sum_{gH \in G/H} s(gH) \cdot \phi\bigl(s(gH)^{-1} \cdot x\bigr)
    \\ 
    & = & 
    \sum_{gH \in G/H} s(gH) \cdot s(gH)^{-1} \cdot  \phi(x)
    \\ 
    & = & 
    \sum_{gH \in G/H}  \phi(x)
    \\ & = & [G:H] \cdot \phi.
  \end{eqnarray*}
  This finishes the proof of Lemma~\ref{lem:transfer}.
\end{proof}

%%%%%%%%%%%%%%%%%%%%%%%%%%%%%%%%%%%%%%%%%%%%%%%%%%%%%%%%%%%%%%%%%%%%%%%%%%%%%%%%%%%%%

\subsection{Lusternik--Schnirelmann category}
\label{subsec:Lusternik-Schnirelmann_Category}

The \emph{Lusternik--Schnirelmann category} $\cat(X)$ of a space $X$ is the least
integer $n$ for which $X$ can be covered by $n+1$ open subsets $U_1, U_2,
\ldots, U_{n+1}$ such that the inclusions $U_i \to X$ are nullhomotopic.

\noindent 
The \emph{sectional category} $\secat(p)$ of the fibration $F\to E\overset{p}{\to}B$ is the minimal integer $n$ for which $B$ can be covered by $n+1$ open subsets $U_1, U_2,\ldots, U_{n+1}$ such that each restriction fibration $F\to p^{-1}(U_i)\to U_i$ admits a section $s_i\colon U_i\to p^{-1}(U_i)$.
Originally, the notion of sectional category was introduced by Schwarz in \cite{Schwarz} under the name {\em genus}. 

A few key properties of the Lusternik--Schnirelmann and sectional category that we use are stated in the next lemma.

\begin{lemma}
\label{lem:LS-1}
\qquad
\begin{compactenum}[\rm (1)]
\item If $X$ is homotopy equivalent to $Y$, then $\cat(X)=\cat(Y)$.
\item If $p:X\to Y$ is a covering, then $\cat(X)\leq\cat(Y)$.
\item If $X$ is an $(n-1)$-connected $CW$-complex, then $\cat(X)\leq\tfrac{1}{n}\dim(X)$.
\item If $F\to E\overset{p}{\to}B$ is a fibration, then $\secat(p)\leq\cat(B)$.
\end{compactenum}
\end{lemma}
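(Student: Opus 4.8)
The plan is to prove the three stated properties of the Lusternik--Schnirelmann category, all of which are classical; the task is to assemble standard arguments rather than to invent anything.

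For part (1), if $h\colon X\to Y$ is a homotopy equivalence with homotopy inverse $h'$, and $U_1,\ldots,U_{n+1}$ is a categorical open cover of $Y$ (so each inclusion $U_i\incl Y$ is nullhomotopic), then I would set $V_i:=h^{-1}(U_i)$. These form an open cover of $X$, and the composite $V_i\incl X\tof{h} Y$ factors through $U_i\incl Y$, hence is nullhomotopic; since $V_i\incl X$ is homotopic to $h'\circ(h|_{V_i})$ and the latter is nullhomotopic, each $V_i\incl X$ is nullhomotopic. This gives $\cat(X)\le\cat(Y)$, and the reverse inequality follows by symmetry. (One small point to be careful about: the equivalence ``nullhomotopic after composing with a homotopy equivalence $\iff$ nullhomotopic'' uses only that a homotopy equivalence induces a bijection on homotopy classes of maps out of any space.)

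For part (2), let $q\colon X\to Y$ be a covering and $U_1,\ldots,U_{n+1}$ a categorical open cover of $Y$. Put $V_i:=q^{-1}(U_i)$, an open cover of $X$. The inclusion $V_i\incl X$ composed with $q$ equals the composite $V_i\tof{q|_{V_i}} U_i\incl Y$, which is nullhomotopic; so there is a homotopy $H\colon V_i\times[0,1]\to Y$ from $q\circ(V_i\incl X)$ to a constant map. Now I would lift $H$ through the covering $q$ using the homotopy lifting property, starting from the given inclusion $V_i\incl X$ at time $0$: this produces $\widetilde H\colon V_i\times[0,1]\to X$ with $\widetilde H_0=(V_i\incl X)$ and $q\circ\widetilde H_1$ constant, so $\widetilde H_1$ has image in a single fiber, which is discrete; since $V_i$ need not be connected I would argue componentwise, but each component maps to a point, so $\widetilde H_1$ is locally constant, hence $V_i\incl X$ is nullhomotopic (a map to a point on each component is the required nullhomotopy; if one insists on a genuine constant map per set $U_i$, refine the cover so each $U_i$ is connected, which does not change $\cat$). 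Hence $\cat(X)\le\cat(Y)$.

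For part (3), this is the classical Ganea--type dimension bound. Assume $X$ is an $(n-1)$-connected $CW$-complex of dimension $m=\dim(X)$; the goal is $\cat(X)\le m/n$. The standard approach is to invoke the description of $\cat$ via Ganea fibrations, or more elementarily: up to homotopy equivalence $X$ is built by attaching cells of dimension $\ge n$, and one can arrange a $CW$ structure with a single $0$-cell and no cells in dimensions $1,\ldots,n-1$. Then $X^{(n)}$ is a wedge of $n$-spheres (category $\le 1$), and each time one passes from $X^{(kn)}$-type pieces to the next, the category increases by at most $1$; more precisely one uses that if $A\subseteq X$ with $(X,A)$ having cells only in a range of $n$ consecutive dimensions then $\cat(X)\le\cat(A)+1$, together with the fact that $X$ is covered by $\lceil m/n\rceil$ such ``strata.'' Combining, $\cat(X)\le\lceil (m+1)/n\rceil-1$, and under the hypothesis $n\mid$ the relevant quantities (or just estimating) this yields $\cat(X)\le m/n$. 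Alternatively, the cleanest citation-based route: $\cat(X)\le\dim(X)/\mathrm{conn}(X)+1$ always, but the sharpened form with $(n-1)$-connectedness giving exactly $\le m/n$ is due to Ganea; I would simply cite this. \emph{The main obstacle} here is purely expository: parts (1) and (2) are elementary point-set/covering-space arguments that merely require care about connectivity of the sets $U_i$, whereas part (3) is a genuine theorem whose honest proof needs the Ganea construction or an explicit cell-by-cell engulfing argument — so I would present (1) and (2) in full and reduce (3) to a citation (e.g.\ to Ganea or to the monograph of Cornea--Lupton--Oprea--Tanr\'e on Lusternik--Schnirelmann category), since reproducing it would be disproportionate to its role in the paper.
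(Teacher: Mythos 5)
The paper states Lemma~\ref{lem:LS-1} without any proof at all: it is presented as a collection of standard facts about Lusternik--Schnirelmann category, implicitly referring the reader to the literature (the monograph~\cite{CLOT} is cited nearby for the category-weight properties). Your proposal therefore supplies arguments the authors took for granted rather than reproducing a proof in the paper.

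Your arguments for (1) and (2) are the standard ones and are essentially correct. In (1) the factorisation $V_i\incl X\simeq h'\circ(h|_{V_i})$ with $h|_{V_i}$ nullhomotopic is exactly right. In (2) the homotopy-lifting argument is correct, and the potential issue — the lifted homotopy ending at a locally constant rather than constant map — is indeed handled by path-connectedness of $X$: a map $V_i\to X$ that is constant on each component of $V_i$ is homotopic to a genuinely constant map once you join the finitely (or even infinitely) many values by paths. The parenthetical fix you offer — ``refine the cover so each $U_i$ is connected'' — is the one place that is a bit loose: replacing each $U_i$ by its connected components \emph{increases} the number of sets in the cover, so this does not directly preserve the count $n+1$; the clean fix is precisely the path-connectedness argument you gave first, which shows the original $V_i$ is already categorical. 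For (3) your decision to cite Ganea/CLOT is appropriate and matches the paper's own (implicit) treatment; the statement is~\cite[Theorem 1.50]{CLOT}, and an honest proof via Ganea fibrations or cellular skeleta would indeed be disproportionate to the lemma's role here.
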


Let $X$ be a topological space and $R$ be a commutative ring with unit.
The \emph{category weight} of the element $u\in H^*(X;R)$ is
\[
\wgt(u):=\left\{
\begin{array}{ll}
    \max \{k:p^*_{k-1}(u)=0 \} ,  &  \text{ if the maximum exists,}\\
    \infty  ,                     &  \text{ otherwise.}
\end{array}
\right.
\]
Here $p_{k-1}\colon G_{k-1}\to X$ denotes the $(k-1)$st Ganea fibration~\cite{Ganea}.
This definition of the category weight is due to Rudyak~\cite{Rud} and Strom~\cite{Strom}.
For more details consult~\cite[Section~2.7, page~62; Section~8.3, page~240]{CLOT}.
Properties of the category weight that we use are collected in the lemma that follows~\cite[Proposition~8.22, pages~242--243, page 259]{CLOT}, \cite[Proposition 2.2(3)]{Roth}.

\begin{lemma}
\label{lem:LS-2}
Let $R$ be a commutative ring with unit.
\begin{compactenum}[\rm (1)]
\item If $0\neq u\in H^{\ell}(X;R)$, then $\wgt(u)\leq\cat(X)$.
\item Let $f:X\to Y$ be a continuous map and $u\in H^{\ell}(Y;R)$.
      If $0\neq f^*(u)\in H^{\ell}(X;R)$, then $\wgt(u)\leq\wgt(f^*(u))$.
\item Let $G$ be a finite group. 
      If $0\neq u\in H^{\ell}(\BB G;R)$, then $\ell\leq\wgt(u)$.
\item Let $F\to E\overset{p}{\to}B$ be a fibration that is a pullback of a fibration  $F\to \hat{E}\overset{\hat{p}}{\to}\hat{B}$ along the map $f\colon B\to\hat{B}$.
If $\hat{E}$ is contractible, $0\neq u\in H^{\ell}(\hat{B};R)$ and $f^*(u)\neq0$, then $\wgt(u)\leq\secat(p)$.
\end{compactenum}
\end{lemma}

The main result of this section is the following theorem.

\begin{theorem}[Lusternik--Schnirelmann category]
\label{the:estimate_for_cat}
Let $G$ be a finite group and let $p$ be a prime.  Let $X$ be a free
$d$-dimensional connected $G$-$CW$-complex and let $W$ be a $d$-dimensional
orthogonal $G$-representation with unit sphere $S(W)$.  Suppose that a
$p$-Sylow subgroup $G^{(p)}$ acts orientation preserving on $S(W)$.  (This is
automatically satisfied if $p$ is odd.)  Suppose that there exists no $G$-equivariant map $X
\to S(W)$ and that every torsion element in $H_G^d(X;\pi_{d-1}(S(W)))$ has
$p$-power order.
\begin{compactenum}[\rm (1)]
\item The Lusternik--Schnirelmann category $\cat(X/G)$ of the quotient space is
\[
\cat(X/G)= d.
\]

\item Let $Y$ be a free $G$-space. 
If there exists a $G$-equivariant map $h\colon X\to Y$, then the Lusternik--Schnirelmann category $\cat(Y/G)$ of the quotient space satisfies
\[
\cat(Y/G)\geq d.
\]
\end{compactenum}

\end{theorem}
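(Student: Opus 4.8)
The plan is to derive both statements from the Euler class machinery set up in Section~\ref{subsec:Equivariant_Primary_Obstructions_and_Euler_Classes}, combined with the category-weight estimates of Lemma~\ref{lem:LS-2} and the upper bound from Lemma~\ref{lem:LS-1}(3). The key point is that the obstruction-theoretic hypothesis (no $G$-map $X\to S(W)$, together with the $p$-torsion condition on $H^d_G(X;\pi_{d-1}(S(W)))$) forces a non-zero characteristic class in $H^d$ of $X/G$ that cannot be killed by pulling back from a space of smaller cohomological dimension, and this is exactly what category weight detects.

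\textbf{Step 1: produce a non-zero mod-$p$ class.} By Lemma~\ref{sections_and_maps} the non-existence of a $G$-map $X\to S(W)$ is equivalent to the non-existence of a section of the sphere bundle $S(\xi)$, so the primary obstruction $e(\xi)\in H^d(X/G;\mathcal{Z})\cong H^d_G(X;\ZZ^w)$ (identified via Lemma~\ref{lem:Euler_and_o} with $\gamma^G(X,S(W))$) is non-zero. Since $X/G$ is a $d$-dimensional $CW$-complex, this obstruction is the \emph{only} obstruction, so $e(\xi)\neq 0$. Now I would use the hypothesis that every torsion element in $H^d_G(X;\pi_{d-1}(S(W)))$ has $p$-power order: this group is finitely generated and $d$-dimensional cohomology of a $d$-complex has no free part issues for our purposes, so reducing mod $p$ via the Bockstein/coefficient sequence $\ZZ^w\to\ZZ^w\to\ZZ/2$ (in the odd-$p$ case one uses $\ZZ^w\xrightarrow{p}\ZZ^w\to\FF_p$ after checking $w$ is trivial on $G^{(p)}$) keeps the class non-zero. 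Concretely: restrict $e(\xi)$ to the $p$-Sylow subgroup $G^{(p)}$; since $G^{(p)}$ acts orientation-preservingly on $S(W)$ the coefficients become untwisted over $X/G^{(p)}$, and by Lemma~\ref{lem:transfer} the composite $\trf\circ\res$ is multiplication by $[G:G^{(p)}]$, which is prime to $p$, hence injective on the $p$-primary part; therefore $\res_{G^{(p)}}(e(\xi))\neq 0$ in $H^d_{G^{(p)}}(X;\ZZ)$, and its mod-$p$ reduction $\bar e\in H^d_{G^{(p)}}(X;\FF_p)$ is non-zero because the $d$-dimensional integral class is pure $p$-torsion by hypothesis.

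\textbf{Step 2: category weight and lower bound.} The class $\bar e\in H^d(X/G^{(p)};\FF_p)$ is the pullback along the classifying map $c\colon X/G^{(p)}\to \BB G^{(p)}$ of a universal class $u\in H^d(\BB G^{(p)};\FF_p)$ — indeed the Euler/Stiefel--Whitney classes of the flat bundle $\xi$ are characteristic, hence pulled back from $H^*(\BB G^{(p)};\FF_p)$. By Lemma~\ref{lem:LS-2}(3), $\wgt(u)\geq d$; by Lemma~\ref{lem:LS-2}(2), since $c^*(u)=\bar e\neq 0$, we get $\wgt(\bar e)\geq\wgt(u)\geq d$; and by Lemma~\ref{lem:LS-2}(1), $\cat(X/G^{(p)})\geq\wgt(\bar e)\geq d$. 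Finally Lemma~\ref{lem:LS-1}(2) applied to the covering $X/G^{(p)}\to X/G$ gives $\cat(X/G)\geq\cat(X/G^{(p)})\geq d$. For part~(1) the matching upper bound $\cat(X/G)\leq d$ is immediate from Lemma~\ref{lem:LS-1}(3): $X$ is connected and free, so $X/G$ is a connected $CW$-complex of dimension $d$ (take $n=1$), giving $\cat(X/G)\leq d$; combined with the lower bound this yields equality. For part~(2), given the $G$-map $h\colon X\to Y$ with $Y$ a free $G$-space, the induced map $\bar h\colon X/G\to Y/G$ and the naturality of the Borel construction let me pull the non-zero universal class back further: the composite $X/G^{(p)}\xrightarrow{\bar h}Y/G^{(p)}\to\BB G^{(p)}$ equals the classifying map $c$ up to homotopy, so $\bar h^*$ carries the corresponding class on $Y/G^{(p)}$ to $\bar e\neq 0$, and Lemma~\ref{lem:LS-2}(2) again forces $\cat(Y/G)\geq\cat(Y/G^{(p)})\geq d$.

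\textbf{Main obstacle.} The delicate point is Step~1: passing from the (possibly twisted, possibly mixed-torsion) integral primary obstruction to a non-zero \emph{mod-$p$} class. One must use both hypotheses in tandem — the orientation-preserving action of $G^{(p)}$ to untwist coefficients after restriction, and the $p$-power-torsion hypothesis to guarantee that the mod-$p$ reduction of the restricted class survives. Getting the bookkeeping right (which coefficient system, which subgroup, which Bockstein) and confirming that the relevant class really is characteristic — i.e. genuinely pulled back from $\BB G^{(p)}$ so that Lemma~\ref{lem:LS-2}(3) applies — is where the real work lies; the category-theoretic conclusions in Step~2 are then formal.
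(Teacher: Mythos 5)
Your overall strategy is the paper's: establish that the integral equivariant obstruction survives restriction to $G^{(p)}$ via the transfer--restriction argument of Lemma~\ref{lem:transfer}, identify it with the Euler class of the flat bundle $\xi$ over $X/G^{(p)}$ using Lemma~\ref{lem:Euler_and_o}, pull it back from $\BB G^{(p)}$, and then run the category-weight estimates of Lemma~\ref{lem:LS-2} together with the dimension bound of Lemma~\ref{lem:LS-1}(3). Steps~2 and the part~(2) argument are essentially correct and essentially as in the paper.

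The gap is in the final sentence of your Step~1. You pass to the mod-$p$ class $\bar e \in H^d_{G^{(p)}}(X;\FF_p)$ and assert that it is nonzero ``because the $d$-dimensional integral class is pure $p$-torsion by hypothesis.'' This does not follow for two reasons. First, the hypothesis is only that every \emph{torsion} element of $H^d_G(X;\pi_{d-1}(S(W)))$ has $p$-power order; the obstruction class $\gamma^G(X,S(W))$ is not required to be torsion at all, and $H^d$ of a $d$-dimensional $CW$-complex can certainly carry free summands. Second, even if the restricted class $e(\xi) \in H^d_{G^{(p)}}(X;\ZZ)$ were honest $p$-power torsion, the kernel of the coefficient reduction $H^d(-;\ZZ) \to H^d(-;\FF_p)$ is $p\,H^d(-;\ZZ)$ (by the Bockstein exact sequence), so a nonzero $p$-torsion class divisible by $p$ has zero mod-$p$ reduction; nothing in the hypotheses rules this out. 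So $\bar e = 0$ is possible and the argument breaks.

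The fix is to drop the mod-$p$ reduction entirely. Lemma~\ref{lem:LS-2} holds over any commutative ring with unit, in particular over $R=\ZZ$, and after restriction to $G^{(p)}$ the orientation hypothesis makes the coefficient system trivial, so $e(\xi)\in H^d(X/G^{(p)};\ZZ)$ is an untwisted integral class, nonzero by the transfer argument you already gave, and equal to $f^*(e(\eta))$ for the classifying map $f$ and the universal bundle $\eta$ over $\BB G^{(p)}$ by naturality of the Euler class. The chain
$d \le \wgt(e(\eta)) \le \wgt(f^*e(\eta)) = \wgt(e(\xi)) \le \cat(X/G^{(p)}) \le \cat(X/G) \le \dim(X/G) = d$
then closes both bounds integrally, which is precisely what the paper does. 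Your ``main obstacle'' paragraph already sensed that the bookkeeping around the coefficient change was the delicate point; the resolution is that no coefficient change is needed.
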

\begin{proof}
Since there is no $G$-equivariant map $X \to S(W)$, the equivariant primary obstruction 
\[
\gamma^G(X,S(W)) \in H^d_G(X;\pi_{d-1}(S(W)))
\]
is non-trivial. 
Here $\pi_{d-1}(S(W))\cong_{{\rm Ab}}\ZZ$ is considered as a $G$-module that need not be trivial.
On the other hand, by assumption, $\pi_{d-1}(S(W))$ is a trivial $G^{(p)}$-module. 
The restriction homomorphism 
\[
\res_{G^{(p)}}^G \colon H^d_{G}(X;\pi_{d-1}(S(W))) \to H^d_{G^{(p)}}(X;\ZZ)
\]
maps $\gamma^G(X,S(W))$ to $\gamma^{G^{(p)}}(X,S(W))$. 
Since  every torsion element in $H_G^d(X;\pi_{d-1}(S(W)))$ has $p$-power order the composition 
\[
\trf_{G^{(p)}}^G \circ \res_{G^{(p)}}^G = [G:G^{(p)}] \cdot \id\colon H^d_{G}(X;\pi_{d-1}(S(W))) \to H^d_{G}(X;\pi_{d-1}(S(W)))
\]
is an injection.
Thus, $\res_{G^{(p)}}^G \colon H^d_G(X;\pi_{d-1}(S(W))) \to H^d_{G^{(p)}}(X;\ZZ)$ 
is injective and $\gamma^{G^{(p)}}(X,S(W))\in H^d_{G^{(p)}}(X;\ZZ)$ is non-trivial.
 
\textrm{(1)} 
Let $\xi$ be the vector bundle $X \times_{G^{(p)}} W\longrightarrow X/G^{(p)}$ and $\eta$ be the vector bundle $\EE G^{(p)} \times_{G^{(p)}} W\longrightarrow \BB G^{(p)}$. 
Choose a classifying map $f \colon X/{G^{(p)}} \to \BB G^{(p)}$ for the $G^{(p)}$-covering $X \to X/G^{(p)}$. 
Then $\xi$ is isomorphic to $f^*\eta$, and there is the following pullback diagram:
\[
\xymatrix@!C=9em
{
X \times_{G^{(p)}} W \ar[r]  \ar[d]^{\xi}&\EE G^{(p)} \times_{G^{(p)}}W\ar[d]^{\eta}\\
X/G^{(p)} \ar[r]^{f}                               &\BB G^{(p)} .
}
\]
By Lemma~\ref{lem:Euler_and_o} the Euler class $e(\xi) \in H^d(X/G^{(p)};\ZZ)$ is non-trivial. 
The naturality property of Euler classes implies that $H^d(f) \colon H^d(\BB G^{(p)};\ZZ)\to H^d(X/G^{(p)};\ZZ)$ maps $e(\eta)$ to $e(\xi)$. 
Since $e(\xi)$ is non-trivial, using Lemma~\ref{lem:LS-2}, we have that
\[
d \leq \wgt(e(\eta)) \leq \wgt(e(f^* \eta)) = \wgt(e(\xi)) \leq \cat(X/G^{(p)}).
\]
Furthermore, the quotient map $X/G^{(p)}\to X/G$ is a covering and therefore by Lemma~\ref{lem:LS-1} we get
\[
\cat(X/G^{(p)}) \leq \cat(X/G) \leq \dim(X/G) = d.
\]

\textrm{(2)} Let $\hat{h}\colon X/G^{(p)}\to Y/G^{(p)}$ be the quotient map. 
Consider $\chi$ the bundle $Y \times_{G^{(p)}} W\longrightarrow Y/G^{(p)}$ and  $g \colon Y/{G^{(p)}} \to \BB G^{(p)}$ a classifying map for the $G^{(p)}$-covering $Y \to Y/G^{(p)}$.
Then $\chi$ is isomorphic to $g^*\eta$.
The composition $l:=g\circ\hat{h}$ is homotopic to the classifying map $f$.
Consequently, $\xi$ is isomorphic to $l^*\eta$.
The relationship between these bundles can be illustrated by the following diagram:
\[
\xymatrix@!C=9em
{
X \times_{G^{(p)}} W\ar[r]\ar[d]^{\xi} & Y \times_{G^{(p)}} W\ar[r] \ar[d]^{\chi} & \EE G^{(p)} \times_{G^{(p)}}W\ar[d]^{\eta}\\
X/G^{(p)}\ar[r]^-{\hat{h}} &  Y/G^{(p)} \ar[r]^-{g}            & \BB G^{(p)}.
}
\]
The naturality property of Euler classes implies that
\[
\xymatrix{
e(\eta)\ar[r]^{H^d(g)} & e(\chi)\ar[r]^{H^d(\hat{h})} & e(\xi).
         }
\]
We have seen that $e(\xi)\neq 0$.
Therefore, $e(\chi)\neq 0$ and so by Lemmas~\ref{lem:LS-1} and~\ref{lem:LS-2} 
\[
d \leq \wgt(e(\eta)) \le \wgt(e(g^* \eta)) = \wgt(e(\chi))  \le \cat(Y/G^{(p)}) \le \cat(Y/G).
\]
This concludes the proof of the theorem.
\end{proof}

%%%%%%%%%%%%%%%%%%%%%%%%%%%%%%%%%%%%%%%%%%%%%%%%%%%%%%%%%%%%%%%%%%%%%%%%%%%%%%%%%%%%%
%%%%%%%%%%%%%%%%%%%%%%%%%%%%%%%%%%%%%%%%%%%%%%%%%%%%%%%%%%%%%%%%%%%%%%%%%%%%%%%%%%%%%
%-----------------------------------------------------------------------------------%
\section{Fadell--Husseini index of the configuration space}
\label{Sec:FH-Index-I}
%-----------------------------------------------------------------------------------%
%%%%%%%%%%%%%%%%%%%%%%%%%%%%%%%%%%%%%%%%%%%%%%%%%%%%%%%%%%%%%%%%%%%%%%%%%%%%%%%%%%%%%
%%%%%%%%%%%%%%%%%%%%%%%%%%%%%%%%%%%%%%%%%%%%%%%%%%%%%%%%%%%%%%%%%%%%%%%%%%%%%%%%%%%%%

Now we start our study of the Fadell--Husseini ideal-valued index of the configuration space $F(\RR^d,n)$ with respect to different subgroups of the symmetric group:
\begin{compactitem}
\item $\Index{\ZZ/p}(F(\RR^d,p);\FF_p)\subseteq H^*(\ZZ/p;\FF_p)$ 
for $p$ prime, $d>1$ and $\ZZ/p$ acting on $F(\RR^d,p)$ by a cyclic shift,
\item $\Index{(\ZZ/p)^k}(F(\RR^d,p^k);\FF_p)\subseteq H^*((\ZZ/p)^k;\FF_p)$ 
for $p$ prime, $d>1$ and $(\ZZ/p)^k$ acting on $F(\RR^d,p^k)$ as a subgroup of 
$\Sym_{p^k}$ that acts  on $[p^k]=\{1,\ldots , p^k\}$ via the regular embedding $\mathrm{(reg)} \colon (\ZZ/p)^k\to\Sym_{p^k}$. 
\end{compactitem}
In this section we will obtain a complete answer in case when $n$ is a prime.
 
To study the index of the configuration space $F(\RR^d,n)$ with respect to some subgroup $G$ of the symmetric group $\Sym_n$ we will consider the Serre spectral sequence of the fibration
\[
F(\RR^d,n)  \longrightarrow \EE G\times_{G}F(\RR^d,n) \longrightarrow \BB G .
\]

%-----------------------------------------------------------------------------------%
\subsection{Fadell--Husseini index, definition and a few basic properties}
\label{subsec:FH-indexDef}
%-----------------------------------------------------------------------------------%
First we collect some basic properties of the Fadell--Husseini index
that will be used below.  For more details and for proofs of the listed properties
consult~\cite{FH},~\cite{Vol-1} and~\cite{B-Z}.

Let $G$ be a finite group and $R$ be a commutative ring with unit. For a $G$-space
$X$ and a ring $R$, we define the \emph{Fadell--Husseini index} of $X$ to be the
kernel ideal of the map in equivariant cohomology induced by the $G$-equivariant
map $p_X \colon X\to \mathrm{pt}$:

\begin{eqnarray*}
\Index{G}(X;R)  & :=  & \ker\Big( p_X^*\colon H^*_{G}(\mathrm{pt};R)\longrightarrow H^*_{G}(X;R)\Big)\\
                &  =  & \ker\Big(H^*(G;R)\longrightarrow H^*(\EE G\times _{G}X;R)\Big).
\end{eqnarray*}

The Serre spectral sequence of the fibration $X\longrightarrow \EE G\times_{G}X\longrightarrow \BB G$ gives the presentation of the homomorphism 
$p_{X}^* \colon H^*(G;R)\rightarrow H^*(\EE G\times _{G}X;R)$ as the composition
\[
H^*(G;R)\longrightarrow E_{2}^{*,0}\longrightarrow E_{3}^{*,0}\longrightarrow E_{4}^{*,0}
\longrightarrow \cdots\longrightarrow E_{\infty}^{*,0}\subseteq H^*(\EE G\times _{G}X;R).
\]
The \textit{$k$-th partial Fadell--Husseini index} of $X$ is defined by
\begin{eqnarray*}
\Index{G}^1(X;R)      &  :=  &  \{0\},               \\
\Index{G}^r(X;R)      &  :=  &\ker \big(H^*(\BB G;R)\rightarrow E_{r}^{* ,0}\big),\qquad r\geq 2.
\end{eqnarray*}
The partial Fadell--Husseini indexes filter the Fadell--Husseini index
\[
\Index{G}^{1}(X;R)\subseteq \Index{G}^{2}(X;R)\subseteq \cdots\subseteq \Index{G}(X;R),
\]
with $\bigcup_{r\in\mathbb{N}}\Index{G}^{r}(X;R)=\Index{G}(X;R)$.
The (partial) Fadell--Husseini indexes satisfy the following properties:
\begin{compactitem}

\item \textit{Monotonicity}: If there is a $G$-equivariant map $X\to Y$ then
\[
\Index{G}^r(X;R) \supseteq \Index{G}^r(Y;R) \quad \text{and} \quad  \Index{G}(X;R) \supseteq \Index{G}(Y;R).
\]

\item \textit{Additivity}: If $(X_1\cup X_2,X_1,X_2)$ is an excisive triple of $G$-spaces, then
\[
\Index{G}^{r_1}(X_1;R)\cdot\Index{G}^{r_2}(X_2;R)\subseteq\Index{G}^{r_1 + r_2}(X_1\cup X_2;R),
\]
and
\[
\Index{G}(X_1;R)\cdot\Index{G}(X_2;R)\subseteq\Index{G}(X_1\cup X_2;R).
\]
\item The \textit{General Borsuk--Ulam--Bourgin--Yang theorem:} 
If there is a $G$-equivariant map $f:X\to Y$ and a closed $G$-invariant subspace $Z\subseteq Y$ then
\[
\Index{G}^{r_1}(f^{-1}(Z);R)\cdot\Index{G}^{r_2}(Y{\setminus}Z;R)\subseteq \Index{G}^{r_1 + r_2}(X;R),
\]
and
\[
\Index{G}(f^{-1}(Z);R)\cdot\Index{G}(Y{\setminus}Z;R)\subseteq \Index{G}(X;R).
\]

\end{compactitem}

%-----------------------------------------------------------------------------------%
\subsection{Calculation of $\Index{\ZZ/p}(F(\RR^d,p);\FF_p)$}
%-----------------------------------------------------------------------------------%
The result we present in this part can also be deduced from the Vanishing Theorem of F.~Cohen~\cite[Theorem~8.2, page~268]{Cohen}.

Let us denote the cohomology of the group $\ZZ/p$ with coefficients in~$\FF_p$ for a prime $p$, as in Section~\ref{Sec:DiffSSSeq}, by
\begin{equation*}
\begin{array}{llll}
H^*(\ZZ/2;\FF_2)        & =    & \FF_2[t],                                  & \quad\deg t=1,                    \\
H^*(\ZZ/p;\FF_p)        & =    & \FF_p[t]\otimes \Lambda [e],               &\quad \deg t=2,\deg e=1\text{ and $p$ odd.}
\end{array}
\end{equation*}

\begin{theorem}
\label{Th:IndexF(X,p)}
Let $p$ be a prime and $d>1$.
Then,
\begin{equation*}
\Index{\ZZ/p}(F(\RR^d,p);\FF_p)=H^{\geq (d-1)(p-1)+1}(\ZZ/p;\FF_p)=
\left\{
\begin{array}{ll}
\langle t^{(d-1)(p-1)+1}\rangle , & \text{~for~}p=2, \\
\langle e\,t^{\tfrac{(d-1)(p-1)}{2}}, t^{\tfrac{(d-1)(p-1)}{2}+1}\rangle , & \text{~for~}p\text{~odd}.
\end{array}
\right.
\end{equation*}
\end{theorem}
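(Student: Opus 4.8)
The plan is to run the Serre spectral sequence of the fibration $F(\RR^d,p)\to \EE(\ZZ/p)\times_{\ZZ/p}F(\RR^d,p)\to \BB(\ZZ/p)$ and pin down exactly one nonzero differential. First I would identify the $E_2$-term: by Theorem~\ref{Th:ModuleStructureOnCohomology} (applied to the subgroup $G=\ZZ/p\le\Sym_p$, using the remark following that theorem), the cohomology $H^*(F(\RR^d,p);\FF_p)$ is concentrated in degrees $(d-1)(p-j)$ for $j\in\{1,\dots,p\}$; for $j=p$ this is $H^0=\FF_p$ (trivial module), and for $j\in\{1,\dots,p-1\}$ every partition of $[p]$ of size $j$ has a nonsingleton block, hence is \emph{not} $\ZZ/p$-invariant, so the corresponding summand is induced from a proper (here trivial) subgroup. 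Thus $H^i(F(\RR^d,p);\FF_p)\in\mathfrak{I}_{\ZZ/p}\subseteq\mathfrak{FI}_{\ZZ/p}$ for $1\le i\le (d-1)(p-1)$, and the only other nonvanishing row is $s=(d-1)(p-1)$, which as an $\FF_p[\ZZ/p]$-module is $\mathcal R\otimes(\text{top homology of }\bar\Pi_p)$ — this is the $\FF_p[\ZZ/p]$-module identified in Corollary~\ref{cor:RepresPi_p}.

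Second, I would apply Theorem~\ref{Th:DiffSSSeq-EAb} with $k=1$ and $n=(d-1)(p-1)-1$: since $H^i(F(\RR^d,p);\FF_p)\in\mathfrak{FI}_{\ZZ/p}$ for all $1\le i\le n$, all differentials $\partial_s\colon E_s^{r,s-1}\to E_s^{r+s,0}$ vanish for $s\le n+1=(d-1)(p-1)$, and consequently
\[
\Index{\ZZ/p}(F(\RR^d,p);\FF_p)\subseteq H^{\ge (d-1)(p-1)+1}(\ZZ/p;\FF_p).
\]
This gives the inclusion ``$\subseteq$''. For the reverse inclusion I would show the single differential $\partial_{(d-1)(p-1)+1}\colon E^{0,(d-1)(p-1)}\to E^{(d-1)(p-1)+1,0}$ is \emph{nonzero} on the $\ZZ/p$-invariant part of the top row. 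Since the spectral sequence is a module over $H^*(\ZZ/p;\FF_p)$ and (by the $\subseteq$ part) $E^{*,0}$ survives unchanged up to that page, a single surjective-enough differential out of the top row will force $H^{\ge (d-1)(p-1)+1}(\ZZ/p;\FF_p)$ into the index, and dimension-counting against the known $\FF_p$-cohomology of $F(\RR^d,p)/\ZZ/p$ (or against $\EE(\ZZ/p)\times_{\ZZ/p}F(\RR^d,p)$, which is a bundle over a finite-dimensional base with finite-dimensional fiber, so its cohomology vanishes in high degrees) shows the top row must die entirely, hence the whole ideal $H^{\ge(d-1)(p-1)+1}(\ZZ/p;\FF_p)$ lies in the index.

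The main obstacle is establishing that this last differential is nonzero — equivalently, that there is \emph{no} $\ZZ/p$-map $F(\RR^d,p)\to S(W_p^{\oplus(d-1)})$ where $W_p$ is the standard $(p-1)$-dimensional representation, so that $S(W_p^{\oplus(d-1)})\simeq S^{(d-1)(p-1)-1}$ with its $\ZZ/p$-action. This is the classical Cohen–style nonexistence statement, and indeed the theorem statement already flags that it ``can also be deduced from the Vanishing Theorem of F.~Cohen''. Concretely I would either (a) invoke Cohen's vanishing theorem directly to conclude the relevant characteristic class is nonzero, or (b) give a self-contained argument: the top row is a free $\FF_p[\ZZ/p]$-module after removing its invariants (from Corollary~\ref{cor:RepresPi_p}, the top homology of $\bar\Pi_p$ is $\FF_p[\ZZ/p]$-free of rank $(p-2)!$, which is coprime to $p$ unless... — actually for $p=2$ it is $\FF_2$ itself, needing separate bookkeeping), so the only class that can possibly survive is the invariant one, and if it survived then the Euler class of the associated bundle would vanish, contradicting that finite-dimensionality of the Borel construction forces high-degree cohomology to be zero. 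Finally I would simply read off the two displayed presentations of the ideal from the known ring structure of $H^*(\ZZ/p;\FF_p)$: for $p=2$ the ideal $\langle t^m\rangle$ with $m=(d-1)(p-1)+1$, and for $p$ odd the ideal generated by $e\,t^{((d-1)(p-1))/2}$ and $t^{((d-1)(p-1))/2+1}$, noting $(d-1)(p-1)$ is even so these exponents are integers.
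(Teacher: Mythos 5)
Your ``$\subseteq$'' argument is essentially the paper's: concentrate the cohomology in degrees $(d-1)(p-j)$ via the equivariant Goresky--MacPherson formula, observe that the rows with $j\in\{2,\ldots,p-1\}$ are induced from the trivial subgroup (hence free $\FF_p[\ZZ/p]$-modules), and apply Theorem~\ref{Th:DiffSSSeq-EAb} to kill all differentials into the bottom row up to page $(d-1)(p-1)$. (A small slip: your sentence claiming that partitions of size $j$ for \emph{all} $j\in\{1,\ldots,p-1\}$ fail to be $\ZZ/p$-invariant is false for $j=1$; the partition $\{[p]\}$ is $\ZZ/p$-invariant, and it is exactly its contribution, row $s=(d-1)(p-1)$, that makes the computation interesting. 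You implicitly correct this later, so this is cosmetic.)

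The ``$\supseteq$'' direction, which you flag as ``the main obstacle,'' is where the proposal has a genuine gap. First, you cite Corollary~\ref{cor:RepresPi_p} as input, but in the paper that corollary is a \emph{byproduct} of the proof of this very theorem: the $\FF_p[\ZZ/p]$-structure of $\tilde H_{p-3}(\Delta(\bar\Pi_p);\FF_p)$ is deduced inside the spectral sequence argument, not assumed. Citing it here is circular. Second, you misstate its content: the top homology is \emph{not} $\FF_p[\ZZ/p]$-free (its $\FF_p$-dimension $(p-1)!$ is prime to $p$, so it cannot be free); the correct decomposition is $\FF_p[\ZZ/p]^{((p-1)!-p+1)/p}\oplus K$, where $K=\ker(\FF_p[\ZZ/p]\to\FF_p)$, and the single summand $K$ is the entire engine of the computation --- it is $H^*(\ZZ/p;K)\cong H^{*+1}(\ZZ/p;\FF_p)$, with its two $H^*(\ZZ/p;\FF_p)$-module generators $a\in H^0$, $b\in H^1$, whose images under $\partial_{(d-1)(p-1)+1}$ are exactly $e\,t^{(d-1)(p-1)/2}$ and $t^{(d-1)(p-1)/2+1}$. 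You never produce those two generators, and without them ``dimension-counting'' only shows that \emph{some} classes in high degree die, not that the whole ideal $H^{\geq(d-1)(p-1)+1}$ lies in the index. Third, the claimed equivalence with the non-existence of a $\ZZ/p$-map $F(\RR^d,p)\to S(W_p^{\oplus(d-1)})$ runs the wrong way: that non-existence already follows from the easy inclusion $\subseteq$ (since $\zeta\in\Index{\ZZ/p}(S(W_p^{\oplus(d-1)}))$ lives in degree $(d-1)(p-1)$, which you have just excluded from $\Index{\ZZ/p}(F)$); it gives no information about the reverse inclusion.

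What the paper actually does, and what is missing from your sketch, is the module-theoretic step: use the Allday--Hanke--Puppe theorem to write $H^{(d-1)(p-1)}(F(\RR^d,p);\FF_p)\cong T\oplus F\oplus A$ (trivial, free, and $K$-isotypical summands), then run a short case analysis inside the spectral sequence --- $T\neq0$ forces surviving classes $e t^l$ in the top row, $A=K^{\oplus a}$ with $a>1$ forces surviving classes because $\partial_{(d-1)(p-1)+1}$ lands in a one-dimensional target, and $T=A=0$ means nothing hits the bottom row at all --- each contradicting $E_\infty$-vanishing above degree $dp$. That forces $T=0$, $A=K$, produces Corollary~\ref{cor:RepresPi_p} as output, and only then lets one read off the two displayed generators. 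Without this step, or a fully worked-out alternative via Cohen's vanishing theorem, the reverse inclusion is not established.
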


\begin{proof}
  \textbf{(1)} The Equivariant Goresky--MacPherson formula,
  Theorem~\ref{Th:EqGM}~\eqref{Th:EqGM-Formula-2}, applied to the configuration
  space with acting group $\ZZ/p$ and with coefficients $\FF_p$ implies that the
  non-zero cohomology as an $\FF_p[\ZZ/p]$-module has the following description
 \[
H^{(d-1)(p-j)}(F(\RR^d,p);\FF_p)\cong \bigoplus_{\substack{\pi\in (\Pi_p{\setminus}{\hat{0}})/(\ZZ/p),\\\mathrm{size}(\pi)=j}}
\ind^{\ZZ/p}_{(\ZZ/p)_{\pi}}\tilde{H}_{-2}(\Delta(\bar{\Pi}_{1});\FF_p)^{\otimes b_1(\pi)}
\otimes\cdots\otimes\tilde{H}_{p-3}(\Delta(\bar{\Pi}_{p});\FF_p)^{\otimes b_p(\pi)}
\]
for $1\leq j \leq p-1$.

In this case the ``orientation'' representations from 
Theorem~\ref{Th:ModuleStructureOnCohomology} are trivial due to the choice of the acting
group and related coefficients.  The maximal partition
$\hat{1}=\{[p]\}$, of size $1$, is the only partition stabilized by
the whole group $\ZZ/p$.  The contribution of the maximal partition to the
cohomology of the configuration space appears in dimension $(d-1)(p-1)$.
Moreover, there is an isomorphism of $\FF_p[\ZZ/p]$-modules
\[
H^{(d-1)(p-1)}(F(\RR^d,p);\FF_p)\cong \tilde{H}_{p-3}(\Delta(\bar{\Pi}_{p});\FF_p).
\]
Since $H^i(F(\RR^d,p);\FF_p)\in\mathfrak{I}_{\ZZ/p}$ for $1\leq i\leq (d-1)(p-1)-1$, 
Theorem~\ref{Th:DiffSSSeq-EAb} can be applied to the Serre spectral sequence of the fibration
\[
F(\RR^d,p)\To \EE\ZZ/p\times_{\ZZ/p}F(\RR^d,p)\To \BB\ZZ/p.
\]
We obtain that for every $r\geq 0$ and every $s\in\{2,\ldots,(d-1)(p-1)\}$ the differential
\[
\partial_{s}:E_{s}^{r,s-1}(\EE\ZZ/p\times _{\ZZ/p}F(\RR^d,p))\longrightarrow E_{s}^{r+s,0}(\EE\ZZ/p\times_{\ZZ/p}F(\RR^d,p))
\]
vanishes. Hence $H^s(G;\FF_p) = E^s_{r,0} = E^{\infty}_{s,0}$ for $s \le (d-1)(p-1)$. This implies
\[
\Index{\ZZ/p}(F(\RR^d,p);\FF_p)\subseteq H^{\geq (d-1)(p-1)+1}(\ZZ/p;\FF_p).
\]

The $E_2$-term of this spectral sequence can be described further.
In this case the family $\mathfrak{I}_{\ZZ/p}$ is just the family of all free $\FF_p[\ZZ/p]$-modules.
Therefore, $H^{\ell}(\ZZ/p;M)=0$ for all $M\in\mathfrak{I}_{\ZZ/p}$ and $l>0$.
Consequently
\begin{equation}
\label{eq:E_2-term-zero}
E^{r,s}_2=H^r(\ZZ/p;H^s(F(\RR^d,p);\FF_p))=0,
\end{equation}
for all $r>1$ and $0<s<(d-1)(p-1)$.

\medskip

\noindent\textbf{(2)}
The configuration space $F(\RR^d,p)$ is a free $\ZZ/p$-space, so $F(\RR^d,p)/(\ZZ/p)\simeq E\ZZ/p\times_{\ZZ/p}F(\RR^d,p)$. 
The spectral sequence we consider converges to
\[
H^*(\EE\ZZ/p\times_{\ZZ/p}F(\RR^d,p);\FF_p)\cong H^*(F(\RR^d,p)/(\ZZ/p);\FF_p).
\]
Since $H^i(F(\RR^d,p)/(\ZZ/p);\FF_p)=0$ for at least $i>\dim F(\RR^d,p)=dp$
and $H^i(F(\RR^d,p);R)\neq 0$ if and only if $i=(d-1)(p-k)$ for some $k\in\{1,\ldots,p\}$ (see Theorem~\ref{Th:ModuleStructureOnCohomology}),
we know that the differential 
\[
\partial_{(d-1)(p-1)+1}:E_{(d-1)(p-1)+1}^{r,(d-1)(p-1)}(\EE \ZZ/p\times _{\ZZ/p}F(\RR^d,p))
\longrightarrow E_{(d-1)(p-1)+1}^{r+(d-1)(p-1)+1,0}(\EE \ZZ/p\times_{\ZZ/p}F(\RR^d,p))
\]
is NOT zero for some $r\geq 0$.

As we have already seen in~\eqref{eq:HomotopyTypeofPi_n}, the cohomology group
\[
H^{(d-1)(p-1)}(F(\RR^d,p);\ZZ)\cong \tilde{H}_{p-3}(\Delta(\bar{\Pi}_{p});\ZZ)\cong_{Ab} \ZZ^{(p-1)!}
\]
has no $p$-torsion.  
Therefore, a result of Allday, Hanke and Puppe in~\cite[Theorem~2, page~3276 or Proposition~4, page~3281]{A-H-P} can be applied
to our situation providing the following decomposition of $\FF_p[\ZZ/p]$-modules when $p>2$:
\begin{equation}
\label{eq:ModuleDecom-1}
H^{(d-1)(p-1)}(F(\RR^d,p);\FF_p)\cong \tilde{H}_{p-3}(\Delta(\bar{\Pi}_{p});\FF_p)\cong T \oplus F \oplus A .
\end{equation}
Here $T$ is a trivial $\FF_p[\ZZ/p]$-module, $F$ is a free $\FF_p[\ZZ/p]$-module and $A$ stands for a direct sum of kernels
of the augmentation map $K:=\ker(\epsilon \colon \FF_p[\ZZ/p]\To\FF_p)$. 

Since all differentials $\partial_{\ell}$ are $H^*(\ZZ/p;\FF_p)$-module maps then the description of the $E_2$-term given in \eqref{eq:E_2-term-zero} implies that $\partial_{\ell}=0$ for all $\ell\in\{1,\ldots ,(d-1)(p-1)\}$.
The only possible non-trivial differential is $\partial_{(d-1)(p-1)+1}$.
Consequently,
\[
E^{r,s}_2=E^{r,s}_{\ell}
\]
for all $r,s\in\ZZ$ and $\ell\in\{1,\ldots ,(d-1)(p-1)\}$.
In particular, 
\[
E^{*,(d-1)(p-1)}_{(d-1)(p-1)+1}
=
E^{*,(d-1)(p-1)}_{2}
=
H^*(\ZZ/p;T\oplus F\oplus A)
=
H^*(\ZZ/p;T)\oplus H^*(\ZZ/p;F)\oplus H^*(\ZZ/p;A).
\]
The $H^*(\ZZ/p;\FF_p)$-modules $H^*(\ZZ/p;T)$ and $H^*(\ZZ/p;F)$ have generators in dimension $0$, and $H^*(\ZZ/p;\FF_p)$-module $H^*(\ZZ/p;A)$ has generators in dimensions $0$ and $1$.
Therefore, the generators of the $E^{*,(d-1)(p-1)}_{(d-1)(p-1)+1}$-row as an $H^*(\ZZ/p;\FF_p)$-module appear in the $E^{0,(d-1)(p-1)}_{(d-1)(p-1)+1}$-term 
if $p=2$, and in the $E^{0,(d-1)(p-1)}_{(d-1)(p-1)+1}$ and $E^{1,(d-1)(p-1)}_{(d-1)(p-1)+1}$-term if $p$ is an odd prime. 

Since the $E_\infty$ term should not have any non-trivial entries above the $dp$ diagonal, then the differential $\partial_{(d-1)(p-1)+1}$ has to be non-trivial.
The differential $\partial_{(d-1)(p-1)+1}$ is completely determined by its values on the generators that appear at the position $(0,(d-1)(p-1))$ for $p=2$, and in the $(0,(d-1)(p-1))$ and $(1,(d-1)(p-1))$ positions for $p>2$.
Thus decomposition~\eqref{eq:ModuleDecom-1} for $p>2$ can be made more precise:
\begin{equation}
\label{eq:ModuleDecom-2}
H^{(d-1)(p-1)}(F(\RR^d,p);\FF_p)\cong \tilde{H}_{p-3}(\Delta(\bar{\Pi}_{p});\FF_p)
\cong \FF_p[\ZZ/p]^{\tfrac{(p-1)!-p+1}{p}}\oplus K.
\end{equation}
Indeed, having in mind that the differential $\partial_{(d-1)(p-1)+1}$ always
lands in an $\FF_p$ vector space of dimension one, we have that
\begin{compactitem}
\item if $T\neq 0$ then $E_{\infty}^{2l+1,(d-1)(p-1)}\neq 0$ for all $l>0$, a contradiction;
\item if $A=K^{\oplus a}$ and $a>1$, then $E_{\infty}^{l,(d-1)(p-1)}\neq 0$ for all $l\geq 0$, a contradiction;
\item if $A=0$ and $T = 0$, then $E_{\infty}^{l,0}\neq 0$ for all $l\geq 0$, also a contradiction.
\end{compactitem}
Finally, when $p=2$ the result is obtained directly from the fact that 
$\tilde{H}_{-1}(\Delta(\bar{\Pi}_{2});\FF_2)=\tilde{H}_{-1}(\emptyset;\FF_2)\cong\FF_2$ is a trivial $\FF_2[\ZZ/2]$-module.

\medskip

\noindent\textbf{(3)} Let $p>2$ be an odd prime and $\varepsilon$ denotes a generator of the group $\ZZ/p$.
The exact sequence of $\FF_p[\ZZ/p]$-modules
\[
0\To\FF_p\xrightarrow{1+\varepsilon+\cdots +\varepsilon^{p-1}}\FF_p[\ZZ/p]\To K\To 0
\]
induces a long exact sequence in cohomology of the group $\ZZ/p$,~\cite[Proposition~6.1, pages~71-72]{Brown}.
This exact sequence yields the following  description of $H^*(\ZZ/p;K)$ as an $H^*(\ZZ/p;\FF_p)$-module
\[
H^*(\ZZ/p;K)\cong H^{*+1}(\ZZ/p;\FF_p),
\]
for $*\geq 0$.
Thus as an $H^*(\ZZ/p;\FF_p)$-module $H^*(\ZZ/p;K)$ is generated 
by two elements $a\in H^0(\ZZ/p;K)$ and $b\in H^1(\ZZ/p;K)$ such that
\begin{compactitem}
\item $a$ spans $H^0(\ZZ/p;K)\cong\FF_p$ as a vector space over $\FF_p$,
\item $b$ spans $H^1(\ZZ/p;K)\cong\FF_p$ as a vector space over $\FF_p$,
\item $t^i\cdot a$ spans $H^{2i}(\ZZ/p;K)$ as a vector space over $\FF_p$,
\item $t^i\cdot b$ spans $H^{2i+1}(\ZZ/p;K)$ as a vector space over $\FF_p$,
\item $e\cdot a=0\in H^1(\ZZ/p;K)$ and $t^i\cdot a=e\,t^{i-1}\cdot b\in H^{2i}(\ZZ/p;K)$.
\end{compactitem}
Here ``$\cdot$'' denotes the action of $H^*(\ZZ/p;\FF_p)$.
Now the index of the configuration space is generated as an 
ideal by the $\partial_{(d-1)(p-1)+1}$ images of elements $a$ and $b$, i.e.,
\[
\Index{\ZZ/p}(F(\RR^d,p);\FF_p)=\langle\partial_{(d-1)(p-1)+1}(a),
\partial_{(d-1)(p-1)+1}(b)\rangle=\langle e\,t^{\tfrac{(d-1)(p-1)}{2}}, t^{\tfrac{(d-1)(p-1)}{2}+1}\rangle.
\]
A similar, but simpler, argument implies the result for $p=2$.
\end{proof}

An important consequence of the previous index calculation is a description of
the top homology of the partition lattice $\Pi_p$ as an $\FF_p[\ZZ_p]$-module.

\begin{corollary}
\label{cor:RepresPi_p}
Let $p$ be an odd prime.
Then
\[
\tilde{H}_{p-3}(\Delta(\bar{\Pi}_{p});\FF_p)\cong \FF_p[\ZZ/p]^{\tfrac{(p-1)!-p+1}{p}}\oplus K.
\]
\end{corollary}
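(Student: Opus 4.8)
The plan is to extract this corollary directly from the proof of Theorem~\ref{Th:IndexF(X,p)}, where it is already present in disguise. First I would recall the identification exploited in step \textbf{(1)} of that proof: the Equivariant Goresky--MacPherson formula (Theorem~\ref{Th:EqGM}~\eqref{Th:EqGM-Formula-2}), applied to $F(\RR^d,p)$ viewed as the complement of the $\ZZ/p$-invariant arrangement $\mathcal{B}_{p,d}$, isolates the contribution of the maximal partition $\hat 1=\{[p]\}$ and yields an isomorphism of $\FF_p[\ZZ/p]$-modules
\[
H^{(d-1)(p-1)}(F(\RR^d,p);\FF_p)\;\cong\;\tilde H_{p-3}(\Delta(\bar{\Pi}_p);\FF_p).
\]
Here the orientation twists occurring in Theorem~\ref{Th:EqGM}~\eqref{Th:EqGM-Formula-2} are trivial, as was already noted in the proof of Theorem~\ref{Th:IndexF(X,p)}: the cyclic shift acts on $(\RR^d)^p=\RR^{dp}$ as a product of $d$ disjoint $p$-cycles, hence with determinant $\bigl((-1)^{p-1}\bigr)^{d}=1$ for $p$ odd, and $\Sym_p$ acts trivially on the thin diagonal $V_{\hat{1},d}$.

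Next I would invoke equation \eqref{eq:ModuleDecom-2}, established in step \textbf{(2)} of the proof of Theorem~\ref{Th:IndexF(X,p)} for every $d>1$, which states
\[
H^{(d-1)(p-1)}(F(\RR^d,p);\FF_p)\;\cong\;\FF_p[\ZZ/p]^{\frac{(p-1)!-p+1}{p}}\oplus K .
\]
Feeding the first displayed isomorphism into the second immediately gives
\[
\tilde H_{p-3}(\Delta(\bar{\Pi}_p);\FF_p)\;\cong\;\FF_p[\ZZ/p]^{\frac{(p-1)!-p+1}{p}}\oplus K,
\]
which is the assertion of the corollary. One may equally well just set $d=2$ throughout, since the left-hand side is manifestly independent of $d$.

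I do not expect a real obstacle here: the corollary is merely a translation, via the Goresky--MacPherson formula, of the index computation already carried out in Theorem~\ref{Th:IndexF(X,p)}. The only point meriting a line of care is the triviality of the orientation characters, which is the determinant computation recorded above. As a consistency check, the total $\FF_p$-dimension of the right-hand side is $p\cdot\frac{(p-1)!-p+1}{p}+(p-1)=(p-1)!$, matching the Betti number of $\Delta(\bar{\Pi}_p)$ read off from \eqref{eq:HomotopyTypeofPi_n}.
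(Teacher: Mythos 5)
Your proposal is correct and takes essentially the same route as the paper: the corollary is precisely equation~\eqref{eq:ModuleDecom-2}, established in step~(2) of the proof of Theorem~\ref{Th:IndexF(X,p)}, combined with the $\FF_p[\ZZ/p]$-isomorphism $H^{(d-1)(p-1)}(F(\RR^d,p);\FF_p)\cong \tilde H_{p-3}(\Delta(\bar{\Pi}_p);\FF_p)$ from step~(1). Your explicit determinant computation justifying triviality of the orientation characters, and the final dimension check against \eqref{eq:HomotopyTypeofPi_n}, are both correct and make explicit what the paper states only in passing.
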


%-----------------------------------------------------------------------------------%
\subsection{Estimate of $\Index{(\ZZ/p)^k}(F(\RR^d,p^k);\FF_p)$, $k>1$}
\label{Sec:Estiamtion-1}
%-----------------------------------------------------------------------------------%
Using the notation of Section~\ref{Sec:DiffSSSeq} we have that
\begin{equation}
\label{eq:PresGroupCoho}
\begin{array}{llll}
H^*(\left( \ZZ/2\right) ^k;\FF_2)        & =    & \FF_2[t_1,\ldots,t_k],                                & \deg t_j=1                     \\
H^*(\left( \ZZ/p\right) ^k;\FF_p)        & =    & \FF_p[t_1,\ldots,t_k]\otimes \Lambda [e_1,\ldots,e_k], & \deg t_j=2,\deg e_i=1\text{ and $p$ odd.}
\end{array}
\end{equation}

Recall that we consider the action of the group $(\ZZ/p)^k$ on the configuration
space $F(\RR^d,p^k)$ via the regular embedding
$\mathrm{(reg)}:(\ZZ/p)^k\to\Sym_{p^k}$ as described in~\cite[Example~2.7, page~100]{Adem-Milgram}.
The regular embedding is given by the left translation action of $(\ZZ/p)^k$ on itself. 
To each element $g\in (\ZZ/p)^k$ we associate permutation $L_g\colon  (\ZZ/p)^k\to  (\ZZ/p)^k$ from $\mathrm{Sym}((\ZZ/p)^k)\cong\Sym_{p^k}$ given by $L_g(x)=g+x$.

We prove the following estimate.
(Note that the following theorem and its proof are also valid for $k=1$, but in this case Theorem \ref{Th:IndexF(X,p)}  is a sharper result.)

\begin{theorem}
\label{Th:Estimate-IndexF(X,p_upper_k)}
Let $p$ be a prime, $d>1$ and $k>1$.
Then,
\begin{equation*}
\Index{(\ZZ/p)^k}(F(\RR^d,p^k);\FF_p)\subseteq H^{\geq (d-1)(p^k-p^{k-1})+1}((\ZZ/p)^k;\FF_p).
\end{equation*}
\end{theorem}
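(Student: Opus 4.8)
The plan is to apply Theorem~\ref{Th:DiffSSSeq-EAb} to the elementary abelian group $G=(\ZZ/p)^k$, the $G$-space $X=F(\RR^d,p^k)$, and the integer
\[
n:=(d-1)(p^k-p^{k-1})-1 .
\]
Since $d>1$, the configuration space $F(\RR^d,p^k)$ is connected, so the only thing to verify is that $H^i(F(\RR^d,p^k);\FF_p)\in\mathfrak{FI}_{G}$ for every $i\in\{1,\ldots,n\}$. Once this is in place, Theorem~\ref{Th:DiffSSSeq-EAb} yields
\[
\Index{G}(F(\RR^d,p^k);\FF_p)\subseteq H^{\geq n+2}_{G}(\pt;\FF_p)=H^{\geq (d-1)(p^k-p^{k-1})+1}((\ZZ/p)^k;\FF_p),
\]
which is exactly the assertion. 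This is the same strategy as in part~(1) of the proof of Theorem~\ref{Th:IndexF(X,p)}, which is the case $k=1$.

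To check the $\mathfrak{FI}_G$-condition I would use the $G$-version of Theorem~\ref{Th:ModuleStructureOnCohomology} (valid by the Remark following it, applied to the subgroup $G\leq\Sym_{p^k}$): the group $H^i(F(\RR^d,p^k);\FF_p)$ vanishes unless $i=(d-1)(p^k-j)$ for some $j\in\{1,\ldots,p^k\}$, and for such $i$ it is, as an $\FF_p[G]$-module, a finite direct sum indexed by the $G$-orbits of partitions $\pi\in\Pi_{p^k}\setminus\{\hat{0}\}$ with $\mathrm{size}(\pi)=j$, of modules of the form $\mathcal{R}\otimes\ind^{G}_{G_{\pi}}(M_{\pi})$, where $G_\pi\leq G$ is the stabiliser of $\pi$. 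By the projection formula $\mathcal{R}\otimes\ind^{G}_{G_{\pi}}(M_{\pi})\cong\ind^{G}_{G_{\pi}}\bigl(\res_{G_\pi}^{G}\mathcal{R}\otimes M_{\pi}\bigr)$, so every summand with $G_\pi\subsetneq G$ is induced from a proper subgroup and thus belongs to $\mathfrak{I}_G$. Consequently, in any degree $i$ in which all contributing partitions have proper $G$-stabiliser, $H^i(F(\RR^d,p^k);\FF_p)$ is a finite direct sum of such modules, hence lies in $\mathfrak{I}_{G}\subseteq\mathfrak{FI}_{G}$; and in degrees where $H^i=0$ the condition is vacuous since $0\in\mathfrak{I}_G$.

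It remains to pin down the smallest degree in which a $G$-invariant partition can contribute, and here I would invoke Section~\ref{subsec:The_special_case_G_is_(Z/p)k}: a partition $\pi$ of $[p^k]$ with $G_\pi=G$ is necessarily $\pi=\pi_H$ for a non-zero subgroup $H\leq G$, and then $\pi_H$ has size $m=|G/H|=p^k/|H|\leq p^{k-1}$ because $|H|\geq p$. A partition of size $m$ contributes to $H^{\ast}(F(\RR^d,p^k);\FF_p)$ only in degree $(d-1)(p^k-m)\geq (d-1)(p^k-p^{k-1})$. Hence for every $i$ with $1\leq i\leq (d-1)(p^k-p^{k-1})-1=n$ all partitions contributing to $H^i(F(\RR^d,p^k);\FF_p)$ have proper $G$-stabiliser, so $H^i(F(\RR^d,p^k);\FF_p)\in\mathfrak{FI}_{G}$, and the theorem follows.

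I do not expect a genuine obstacle here: the statement is a formal consequence of the module-structure theorem (Theorem~\ref{Th:ModuleStructureOnCohomology}), the classification of $G$-invariant partitions in Section~\ref{subsec:The_special_case_G_is_(Z/p)k}, and the differential-vanishing theorem (Theorem~\ref{Th:DiffSSSeq-EAb}). The only place that needs care is the degree bookkeeping — correctly matching the degree $(d-1)(p^k-\mathrm{size}(\pi))$ of an orbit of partitions against the bound $n+2$ coming out of Theorem~\ref{Th:DiffSSSeq-EAb} — together with the observation of why the bound is weaker than for $k=1$: when $k>1$ there are $G$-invariant partitions $\pi_H$ with $H\neq G$, the smallest of which (those with $|H|=p$) already contribute in degree $(d-1)(p^k-p^{k-1})$, so one cannot push $n$ up to $(d-1)(p^k-1)-1$.
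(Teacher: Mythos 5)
Your proof is correct and follows essentially the same route as the paper's: use the equivariant Goresky--MacPherson module description together with the observation that a $G$-invariant partition must be some $\pi_H$ with $|H|\ge p$, hence of size at most $p^{k-1}$, to conclude $H^i(F(\RR^d,p^k);\FF_p)\in\mathfrak I_G$ for $1\le i\le (d-1)(p^k-p^{k-1})-1$, and then invoke Theorem~\ref{Th:DiffSSSeq-EAb}. The only cosmetic difference is that you dispose of the orientation twist $\mathcal R$ via the projection formula, whereas the paper simply notes that $\mathcal R$ is trivial for this group and coefficient field.
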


\begin{proof}
  Again, the Equivariant Goresky--MacPherson formula,
  Theorem~\ref{Th:EqGM}~\eqref{Th:EqGM-Formula-2}, now applied for the group
  $(\ZZ/p)^k$ and with coefficients $\FF_p$, implies that the positive non-zero
  cohomology of the configuration space as an $\FF_p[(\ZZ/p)^k]$-module can be
  described as
\begin{multline*}
H^{(d-1)(p^k-j)}(F(\RR^d,p^k);\FF_p)\cong 
\\
\bigoplus_{\substack{\pi\in (\Pi_{p^k}{\setminus}{\hat{0}})/(\ZZ/p)^k \\\mathrm{size}(\pi)=j}}
\ind^{(\ZZ/p)^k}_{((\ZZ/p)^k)_{\pi}}\tilde{H}_{-2}(\Delta(\bar{\Pi}_{1});\FF_p)^{\otimes b_1(\pi)}\otimes
 \cdots\otimes \tilde{H}_{p^k-3}(\Delta(\bar{\Pi}_{p^k});\FF_p)^{\otimes b_{p^k}(\pi)}
\end{multline*}
where $1\leq j\leq p^k-1$.
Again, the ``orientation'' representations from 
Theorem~\ref{Th:ModuleStructureOnCohomology} are trivial due to the choice of the acting
group and related coefficients.

\medskip

\noindent The key observation is that
\[
\mathrm{size}(\pi)>p^{k-1} \Longrightarrow  ((\ZZ/p)^k)_{\pi}\neq (\ZZ/p)^k.
\]
Actually more is true, consult Section~\ref{subsec:The_special_case_G_is_(Z/p)k}:
\[
\mathrm{size}(\pi)\text{ not a power of }p \Longrightarrow   ((\ZZ/p)^k)_{\pi}\neq (\ZZ/p)^k.
\]
Therefore, $H^i(F(\RR^d;p^k);\FF_p)\in\mathfrak{I}_{(\ZZ/p)^k}$ for all $1\leq i\leq (d-1)(p^k-p^{k-1})-1$.
As before, Theorem~\ref{Th:DiffSSSeq-EAb} implies that in the Serre spectral sequence of the fibration
\[
F(\RR^d;p^k)\To \EE(\ZZ/p)^k\times_{(\ZZ/p)^k}F(\RR^d,p^k)\To \BB(\ZZ/p)^k
\]
the differentials
\[
\partial_{s}:E_{s}^{r,s-1}(\EE(\ZZ/p)^k\times _{(\ZZ/p)^k}F(\RR^d,p^k))
\longrightarrow E_{s}^{r+s,0}(\EE(\ZZ/p)^k\times_{(\ZZ/p)^k}F(\RR^d,p^k))
\]
vanish for all $s\in\{2,\ldots,(d-1)(p^k-p^{k-1})\}$ and all $r \ge 0$.
Therefore,
\[
E_{2}^{r,0}(\EE(\ZZ/p)^k\times _{(\ZZ/p)^k}F(\RR^d,p^k))=
E_{\infty}^{r,0}(\EE(\ZZ/p)^k\times _{(\ZZ/p)^k}F(\RR^d,p^k))\cong
H^{r}((\ZZ/p)^k;\FF_p)
\]
for all $r\in\{0,\ldots,(d-1)(p^k-p^{k-1})\}$.
Consequently the claim of theorem follows:
\[
\Index{(\ZZ/p)^k}(F(\RR^d,p^k);\FF_p)\subseteq H^{\geq (d-1)(p^k-p^{k-1})+1}((\ZZ/p)^k;\FF_p).
\]
\end{proof}

\noindent In terms of the partial Fadell--Husseini indexes we just proved that
\[
\Index{(\ZZ/p)^k}^{1}(F(\RR^d,p^k);\FF_p)=\cdots =\Index{(\ZZ/p)^k}^{(d-1)(p^k-p^{k-1})+1}(F(\RR^d,p^k);\FF_p)=\{0\}.
\]

The estimate obtained in the previous theorem is an extension of  Cohen's results in~\cite{Cohen}.

%%%%%%%%%%%%%%%%%%%%%%%%%%%%%%%%%%%%%%%%%%%%%%%%%%%%%%%%%%%%%%%%%%%%%%%%%%%%%%%%%%%%%
%%%%%%%%%%%%%%%%%%%%%%%%%%%%%%%%%%%%%%%%%%%%%%%%%%%%%%%%%%%%%%%%%%%%%%%%%%%%%%%%%%%%%
%-----------------------------------------------------------------------------------%
\section{Fadell--Husseini index, II}
\label{Sec:FH-Index-II}
%-----------------------------------------------------------------------------------%
%%%%%%%%%%%%%%%%%%%%%%%%%%%%%%%%%%%%%%%%%%%%%%%%%%%%%%%%%%%%%%%%%%%%%%%%%%%%%%%%%%%%%
%%%%%%%%%%%%%%%%%%%%%%%%%%%%%%%%%%%%%%%%%%%%%%%%%%%%%%%%%%%%%%%%%%%%%%%%%%%%%%%%%%%%%

Let  $G:=(\ZZ/p)^k$ and $N:=(d-1)(p^k-p^{k-1})$.
In this section we study the partial index of the configuration space
\[
\Index{G}^{N+2}(F(\RR^d,p^k);\FF_p)=\ker (H^*(\BB G;\FF_p)\to E^{*,0}_{N+2}(EG\times_{G}F(\RR^d,p^k))).
\]
In particular we prove the following theorem.

\begin{theorem}
\label{Th:IndexN_plus_3}
Let  $G:=(\ZZ/p)^k$ and $N:=(d-1)(p^k-p^{k-1})$. Then
\begin{compactenum}[\rm (i)]
\item $\Index{G}^{N+2}(F(\RR^d,p^k);\FF_p)\neq \{0\}$,
\item $\Index{G}^{N+2}(F(\RR^d,p^k);\FF_p)\cap H^{N+1}(\BB G;\FF_p)\neq \{0\}$, i.e., the homomorphism
\[
 H^{N+1}(\BB G;\FF_p)\to E^{N+1,0}_{N+2}(EG\times_{G}F(\RR^d,p^k))
\]
is not injective.
\end{compactenum}
\end{theorem}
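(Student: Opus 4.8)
The plan is to reduce both assertions to the behavior of a single differential. Write $G=(\ZZ/p)^k$, $F=F(\RR^d,p^k)$, and look at the Serre spectral sequence of $F\to \EE G\times_G F\to\BB G$. By Theorem~\ref{Th:Estimate-IndexF(X,p_upper_k)} every differential into the bottom row vanishes through page $N+1$, so $E_{N+1}^{*,0}=H^*(G;\FF_p)$ and $\Index{G}^{N+1}(F;\FF_p)=\{0\}$; hence in cohomological degree $m$
\[
\Index{G}^{N+2}(F;\FF_p)\cap H^m(G;\FF_p)=\operatorname{im}\bigl(\partial_{N+1}\colon E_{N+1}^{m-N-1,\,N}\To H^m(G;\FF_p)\bigr).
\]
Thus (i) says that $\partial_{N+1}$ into the bottom row is not identically zero, and (ii) says that $\partial_{N+1}$ is already nonzero on the corner $E_{N+1}^{0,N}$; since (ii) implies (i), I would aim at (ii), picking up (i) on the way.

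The input needed about the $E_2$-term is the $\FF_p[G]$-module structure of $H^*(F;\FF_p)$ from the equivariant Goresky--MacPherson formula. As in the proof of Theorem~\ref{Th:Estimate-IndexF(X,p_upper_k)}, $H^s(F;\FF_p)\in\mathfrak{I}_G$ for $0<s<N$, while in degree $N=(d-1)(p^k-p^{k-1})$ the cohomology contains, as a direct summand, the contribution of the $G$-stable partitions of size $p^{k-1}$, namely $\bigoplus_{|H|=p}\tilde H_{p-3}(\Delta(\bar\Pi_p);\FF_p)^{\otimes G/H}$ (the orientation twists being trivial for $G$ and $\FF_p$), which by Corollary~\ref{cor:RepresPi_p} — and trivially for $p=2$, where it is a sum of trivial modules — does not lie in $\mathfrak{FI}_G$. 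Fix once and for all $0\neq\xi\in S_G$ restricting to zero on every proper subgroup, as in~\eqref{eq:Intersection_Res_and_SG}.

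Now localize the whole spectral sequence at $S_G$. Since $F/G$ is a finite-dimensional manifold, $H^*(F/G;\FF_p)$ is bounded, so $S_G^{-1}H^*(F/G;\FF_p)=0$ and, localization being exact, $S_G^{-1}E_\infty=0$. By Lemma~\ref{lem:nilpotent_operation} the rows $0<s<N$ die after localization, so $\xi$ acts nilpotently on each of them. For $p=2$ this already settles (i): choosing $x$ to be a generator of a trivial $\FF_2[G]$-summand of $H^N(F;\FF_2)$, the submodule $H^*(G;\FF_2)\cdot x$ of $E_2^{*,N}$ is free, and for $M\gg0$ the class $\xi^Mx$ survives the differentials $\partial_r$, $2\le r\le N$ (they carry it into $\xi$-nilpotent rows), cannot survive to the bounded $E_\infty$, and supports no outgoing differential beyond page $N+1$; hence $\partial_{N+1}(\xi^Mx)\neq0$. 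Combined with the $H^*(G;\FF_p)$-linearity of $\partial_{N+1}$ and the injectivity of $\xi$ on $H^*(G;\FF_p)$, this also gives $\partial_{N+1}(\xi^{M_0}x)\neq0$ for the least exponent $M_0$ with $\xi^{M_0}x$ surviving to page $N+1$ (and for $p$ odd the same scheme applies with $x$ replaced by any class of $H^*(G;H^N(F;\FF_p))$ that is not $S_G$-torsion).

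The remaining task — which I expect to be the main obstacle — is to establish (ii), i.e.\ to reduce $M_0$ to $0$: to show that the relevant invariant class survives to $E_{N+1}^{0,N}$ (equivalently, $\partial_r$ annihilates it for $2\le r\le N$; note nothing maps into the corner $(0,N)$) and that $\partial_{N+1}$ does not kill it. Here one really has to compute $\partial_{N+1}$ rather than argue by size alone: unlike the prime case $n=p$, where the intermediate rows of $E_2$ vanished and boundedness forced the analogous differential, for $k>1$ these rows are genuinely nonzero (because $\mathfrak{I}_G$ now contains modules induced from nontrivial proper subgroups, whose group cohomology need not vanish), and although localization removes them, the localized spectral sequence still has surviving rows above $N$ — the "Lie-type" rows in degrees $(d-1)(p^k-p^{k-i})$, $i\ge2$ — from which a later differential $\partial_{D+1}$ could a priori do the killing of the bottom row. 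To exclude this and place a nonzero class in the exact degree $N+1$, I would relate $\partial_{N+1}$ to the corresponding differential for $n=p$, whose value is given by Theorem~\ref{Th:IndexF(X,p)}: the $\ZZ/p$-equivariant map $F(\RR^d,p^k)\to F(\RR^d,p)$ restricting a configuration to one orbit of a subgroup $C\cong\ZZ/p\le G$, together with the splitting $G\cong C\times G/C$ and the resulting Künneth/Shapiro structure of the Borel cohomology, should identify the image of $\partial_{N+1}$ on the corner with a twisted copy of the degree-$((d-1)(p-1)+1)$ generator of $\Index{\ZZ/p}(F(\RR^d,p);\FF_p)$ — using also that $\Index{G}^{N+2}(F;\FF_p)$ is a Steenrod-closed ideal — and the detailed module structure of $H^N(F;\FF_p)$ supplied by the Goresky--MacPherson formula would be needed to make this precise.
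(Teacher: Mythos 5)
Your reduction of both assertions to $\partial_{N+1}$ on the row $E^{*,N}$ and your identification of the relevant non-$\mathfrak{FI}_G$ summand $\bigoplus_{|H|=p}\tilde H_{p-3}(\Delta(\bar\Pi_p);\FF_p)^{\otimes G/H}$ of $H^N$ are correct and match the paper's preliminary work (Lemma~\ref{lem:RelIndexPartial}). However, the two substantive steps of the paper's argument are missing, and your substitute for (i) has a gap.

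For (i), you localize the spectral sequence of $F(\RR^d,p^k)$ itself and argue that a suitable $\xi^M x$ must support a nonzero $\partial_{N+1}$ by boundedness. This does not follow: after localization the rows $(d-1)(p^k-p^i)$, $i\le k-2$, survive, and these can both hit your class by incoming differentials $\partial_r$ with $r\le N$ (since $(d-1)(p^k-p^{k-2})<2N$) and kill the bottom row on pages later than $N+1$; boundedness of $E_\infty$ alone does not force $\partial_{N+1}\neq 0$. The paper avoids this entirely by reducing, for a fixed subgroup $H<G$ of order $p$, to the sub-arrangement $\mathcal{B}_{H,d}=\{L_{i,j}\supseteq V_{H,d}\}$ and the $G$-map $F(\RR^d,p^k)\to S(\RR^{dp^k})\setminus D_{\mathcal{B}_{H,d}}$. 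That complement is a fixed-point-free $G$-space (its link contains $S(\RR^{dp^k})^H$), so its index is nonzero by the cohomological fixed-point criterion for elementary abelian groups \cite{tDieck}; by naturality the generators $E_{H,d},T_{H,d}$ of $E_{N+1}^{*,N}$ pull back from this simpler space, whose index is exactly $\langle\partial_{N+1}(E_{H,d}),\partial_{N+1}(T_{H,d})\rangle$. Your localization idea would in fact work cleanly if applied to this sub-arrangement's spectral sequence (it has only rows $0$ and $N$ after localization), but you never introduce the sub-arrangement.

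For (ii), your sketch — relating $\partial_{N+1}$ to the $n=p$ case via a restriction map $F(\RR^d,p^k)\to F(\RR^d,p)$, K\"unneth/Shapiro, and Steenrod-closure — is unfinished, and is not what the paper does. The paper pushes further to the sphere $S(V_{H,d}^\perp)$ and computes $\Index{G}(S(V_{H,d}^\perp);\FF_p)=\langle\zeta_H^d\rangle$, a polynomial ideal. Combined with the module relation $E_{H,d}\cdot t_H=T_{H,d}\cdot e_H$ (from the Nakaoka/Leary presentation), the facts that $\partial_{N+1}(T_{H,d})$ is a polynomial dividing $\zeta_H^d$ and that multiplication by $e_H$ is injective on polynomials force $\partial_{N+1}(E_{H,d})\neq 0$. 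Without the Euler class $\zeta_H$ and the sphere inclusion, you have no handle on the low-degree image of $\partial_{N+1}$.
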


\subsection{$\Index{G}(F(\RR^d,p^k);\FF_p)$ as a $\mathrm{GL}_k(\ZZ/p)$-invariant ideal}

Some properties of the Fadell--Husseini index of the configuration space are proved in this section.
They will not be used in the proof of Theorem \ref{Th:IndexN_plus_3}, but may be interesting and useful for other situations.

\begin{lemma} 
\label{lem:Fadell--Husseini_index_autos}
Let $G$ be any finite group, $X$ a $G$-space and $R$ a commutative ring with
unit.  Consider a group automorphism $\varphi \colon G \to G$ such that there
exists a $\varphi$-equivariant self homotopy equivalence $f \colon X \to X$,
i.e., for every $g\in G$ and $x\in X$
\[
f(g\cdot x)=\varphi(g)\cdot x.
\]
Then the induced map $\varphi^* \colon H^*(\BB G;R) \xrightarrow{\cong} H^*(\BB G;R)$
respects the Fadell--Husseini index and the partial Fadell--Husseini indices, that is,
\[
\varphi^*\bigl(\Index{G}(X;R)\bigr) = \Index{G}(X;R) \quad \text{and} \quad \varphi^*\bigl(\Index{G}^r(X;R)\bigr) = \Index{G}^r(X;R),
\]
for all $r\geq 2$.
\end{lemma}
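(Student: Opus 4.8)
The plan is to package the automorphism $\varphi$ together with the twisted self-equivalence $f$ into a single self-map of the Borel construction lying over a self-map of $BG$, and then to use that $\mathrm{Aut}(G)$ is finite (this is where the hypothesis that $G$ is finite enters). First I would fix a functorial model of $EG$, so that $\varphi\colon G\to G$ induces a $\varphi$-equivariant homotopy equivalence $E\varphi\colon EG\to EG$, i.e.\ $E\varphi(e\cdot g)=E\varphi(e)\cdot\varphi(g)$, and hence a map $B\varphi\colon \BB G\to\BB G$; functoriality gives $E(\psi\circ\varphi)=E\psi\circ E\varphi$ and $E(\id)=\id$, and on cohomology the induced map $(B\varphi)^*\colon H^*(\BB G;R)\to H^*(\BB G;R)$ is exactly the standard map $\varphi^*$ induced by $\varphi$ on group cohomology. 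Using the $\varphi$-equivariance of $f$ (that is, $f(g\cdot x)=\varphi(g)\cdot f(x)$) I would then check that $E\varphi\times f\colon EG\times X\to EG\times X$ descends to a well-defined map $\overline F\colon EG\times_G X\to EG\times_G X$: the points $(eg,x)$ and $(e,gx)$ are sent to $(E\varphi(e)\varphi(g),f(x))$ and $(E\varphi(e),\varphi(g)f(x))$, which are identified in $EG\times_G X$. By construction $\overline F$ covers $B\varphi$, i.e.\ $p_X\circ\overline F=B\varphi\circ p_X$ where $p_X\colon EG\times_G X\to \BB G$ is the structure map.

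Applying $H^*(-;R)$ to this square gives $p_X^*\circ\varphi^*=\overline F^*\circ p_X^*$. Hence if $u\in\Index{G}(X;R)=\ker p_X^*$, then $p_X^*(\varphi^*(u))=\overline F^*(p_X^*(u))=\overline F^*(0)=0$, so $\varphi^*(u)\in\Index{G}(X;R)$; thus $\varphi^*\bigl(\Index{G}(X;R)\bigr)\subseteq\Index{G}(X;R)$. For the partial indices one uses the same idea at the level of spectral sequences: $\overline F$ is a map of fibrations covering $B\varphi$ from the Borel fibration $X\to EG\times_G X\to \BB G$ to itself, so it induces an endomorphism of the associated Serre spectral sequence; on the bottom edge this gives maps $\overline F_r\colon E_r^{*,0}\to E_r^{*,0}$ fitting into commuting squares with the edge epimorphisms $q_r\colon H^*(\BB G;R)=E_2^{*,0}\twoheadrightarrow E_r^{*,0}$ and with $\varphi^*$ on $H^*(\BB G;R)$ (note that on the $0$-th row the coefficient system is trivial and $f$ acts as the identity). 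The identical kernel computation then yields $\varphi^*\bigl(\Index{G}^r(X;R)\bigr)=\varphi^*(\ker q_r)\subseteq\ker q_r=\Index{G}^r(X;R)$ for every $r\ge 2$.

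It remains to upgrade these inclusions to equalities. Since $G$ is finite, $\mathrm{Aut}(G)$ is finite, so $\varphi^m=\id_G$ for some $m\ge 1$, and by functoriality $(\varphi^*)^m=(\varphi^m)^*=\id$ on $H^*(\BB G;R)$. Writing $I$ for either $\Index{G}(X;R)$ or $\Index{G}^r(X;R)$, the chain of inclusions
\[
I=(\varphi^*)^m(I)\subseteq(\varphi^*)^{m-1}(I)\subseteq\cdots\subseteq\varphi^*(I)\subseteq I
\]
forces $\varphi^*(I)=I$, which is the assertion. (Bijectivity of $\varphi^*$ alone would not suffice, since a bijection may carry an infinite-dimensional subspace properly into itself; this is precisely why the finite-order argument, hence the finiteness of $G$, is used.)

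\textbf{Main obstacle.} The argument is essentially a diagram chase, and the only genuinely delicate point is the first step: choosing the model of $EG$ and keeping track of the $\varphi$-twist in the equivariance of $E\varphi$ and of $f$ so that $\overline F$ is well defined on the quotient and genuinely covers $B\varphi$. Once that square is in place, together with the observation that $\varphi$ has finite order, the rest is formal.
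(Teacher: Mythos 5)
Your proposal is correct, and the first half (constructing the square over $\BB\varphi$, passing to cohomology and to the Serre spectral sequence, getting $\varphi^*(I)\subseteq I$) matches the paper's argument. Where you diverge is in the step that upgrades the inclusion to an equality. The paper exploits that $f$ is a self-\emph{homotopy equivalence}: then $\overline{\Phi\times f}$ is a homotopy equivalence, its induced map on $H^*(\EE G\times_G X;R)$ is injective, and a short diagram chase (both vertical maps in the square are the same map $H^*(\pr;R)$, both horizontal maps are isomorphisms) shows $\varphi^*$ maps $\ker H^*(\pr;R)$ \emph{onto} itself, and similarly on each $E_r^{*,0}$. You instead observe that $\varphi$ has finite order since $G$ is finite, so $(\varphi^*)^m=\id$, and conclude equality from the chain $I=(\varphi^*)^m(I)\subseteq\cdots\subseteq\varphi^*(I)\subseteq I$. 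Both mechanisms are valid under the stated hypotheses, but they use different parts of those hypotheses: your route never actually needs $f$ to be a homotopy equivalence (any $\varphi$-equivariant self-map suffices), whereas the paper's route does not need $\varphi$ to have finite order and would also work, say, for a profinite or infinite $G$ carrying an automorphism of infinite order as long as $f$ is an equivalence. Your parenthetical warning that bijectivity of $\varphi^*$ alone would not give equality is well placed; it is precisely this gap that either the finite-order trick or the injectivity of $\overline{\Phi\times f}^*$ is needed to close.
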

\begin{proof}
  The group automorphism $\varphi$ induces a $\varphi$-equivariant map 
  $\Phi   \colon \EE G \to \EE G$ which is unique up to a $\varphi$-equivariant
  homotopy.  This follows from the universal property of $\EE G$ since both
  spaces, $\EE G$ and $\varphi^* \EE G$, are models for $\EE G$, where
  $\varphi^* \EE G$ is obtained from $\EE G$ by twisting the group action
  with $\varphi$.  We obtain the following commutative diagram where the
  horizontal maps are $\varphi$-equivariant homotopy equivalences and the
  vertical arrows are the canonical projections:
\[
\label{eq:Diagram-1}
\xymatrix{
\EE G \times X \ar[r]^{\Phi\times f} \ar[d]  & \EE G \times X  \ar[d] \\
\EE G \ar[r]_{\Phi}                          & \EE G
         }
\]
Dividing out the $G$-actions yields the commutative diagram:
\begin{equation}
\label{eq:Diagram-2}
\xymatrix{\EE G \times_G X \ar[r]^{\overline{\Phi \times f}} \ar[d]_{\pr}^{\simeq}  & \EE G \times_G X  \ar[d]^{\pr}_{\simeq} \\
\BB G \ar[r]_{\overline{\Phi}}                                            & \BB G
         }
\end{equation}
where the vertical maps are the canonical projections and the horizontal maps are homotopy equivalences. 
Applying cohomology yields a commutative diagram whose vertical maps are the same and whose horizontal arrows are isomorphisms (and thus bijections):
\[
\xymatrix@!C=12em{
H^*(\EE G \times_G X;R)           & H^*(\EE G \times_G X;R) \ar[l]^{\cong}_{H^*(\overline{\Phi \times f};R)} \\
H^*(\BB G;R)  \ar[u]^{H^*(\pr;R)} & H^*(\BB G;R)\ar[u]_{H^*(\pr;R)} \ar[l]^{\cong}_{H^*(\overline{\Phi};R) = \varphi^*} 
}
\]
This implies that
\[
\varphi^*\bigl(\Index{G}(X;R)\bigr)
 = 
\varphi^*\bigl(\ker(H^*(\pr;R))\bigr)
=
\ker(H^*(\pr;R))
 = 
 \Index{G}(X;R).
\]

The commutative diagram~\eqref{eq:Diagram-2} defines an automorphism of the
Borel construction fibration $X\to \EE G\times_{G}X\to \BB G$.  Consequently it
induces an automorphism of the associated Serre spectral sequence such that the
following diagram commutes:
\[
\xymatrix@!C=12em{
E^{*,0}_{r}(\EE G \times_G X;R)           & E^{*,0}_{r}(\EE G \times_G X;R)    \ar[l]_{E^{*,0}_{r}(\overline{\Phi\times f};R)}       \\
E^{*,0}_{2}(\EE G \times_G X;R)\ar[u]     & E^{*,0}_{2}(\EE G \times_G X;R)    \ar[l]_{E^{*,0}_{2}(\overline{\Phi\times f};R)} \ar[u] \\
H^*(\BB G;R)  \ar[u]                      & H^*(\BB G;R)\ar[u]                 \ar[l]^{\cong}_{H^*(\overline{\Phi};R) = \varphi^*} 
                 }
\]
Therefore,
\[
\varphi^*\bigl(\Index{G}^r(X;R)\bigr) = \Index{G}^r(X;R).
\]
\end{proof}

Recall that the group $G:=(\ZZ/p)^k$ acts on the configuration space $F(\RR^d,p^k)$ via the regular embedding $\mathrm{(reg)}:G\to\Sym_{p^k}$.  
The normalizer of the group $G$ in $\Sym_{p^k}$ is the semi-direct product $N(G)=G\rtimes \mathrm{Aut}(G)\cong G\rtimes\mathrm{GL}_k(\ZZ/p)$, consult~\cite[Example~2.7, page~100]{Adem-Milgram}.  
The group $N(G)$ acts on $G$ via conjugation, $\eta\cdot g:=\eta g \eta^{-1}$, for $\eta\in N(G)$ and $g\in G$.
The first factor in the semi-direct product $N(G)=G\rtimes \mathrm{Aut}(G)$ acts trivially on $G$ and the $N(G)$-action on $G$ factorizes over the projection 
$N(G) \to \mathrm{GL}_k(\ZZ/p)$ to the standard $\mathrm{GL}_k(\ZZ/p)$-action on $G$.  
This action defines an action of $\mathrm{Aut}(G)\cong\mathrm{GL}_k(\ZZ/p)$ on the group cohomology $H^*(G;\FF_p)$ which under the projection above corresponds to the obvious $\mathrm{GL}_k(\ZZ/p)$-action.
\newpage

\begin{theorem}
Let $G:=(\ZZ/p)^k$ and $k>1$. 
Then
\begin{compactenum}[\rm (1)]
\item  $\Index{G}(F(\RR^d,p^k);\FF_p)$ is a $\mathrm{GL}_k(\ZZ/p)$-invariant ideal in $H^*(G;\FF_p)$,
\item  $\Index{G}^r(F(\RR^d,p^k);\FF_p)$ is a $\mathrm{GL}_k(\ZZ/p)$-invariant ideal in $H^*(G;\FF_p)$, for any $r\geq 2$.
\end{compactenum}
\end{theorem}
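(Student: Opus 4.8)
The plan is to deduce the statement directly from Lemma~\ref{lem:Fadell--Husseini_index_autos}; write $X:=F(\RR^d,p^k)$. The only input required is, for each $\varphi\in\mathrm{Aut}(G)$, a $\varphi$-equivariant self-homotopy equivalence of $X$, and such a map will be produced from the ambient action of the normalizer $N(G)\leq\Sym_{p^k}$.

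First I would recall that, since $G=(\ZZ/p)^k$ is elementary abelian, every group automorphism of $G$ is $\FF_p$-linear, so $\mathrm{Aut}(G)\cong\mathrm{GL}_k(\ZZ/p)$; moreover, by the structure of the normalizer recalled above (see~\cite[Example~2.7, page~100]{Adem-Milgram}), the conjugation action of $N(G)=G\rtimes\mathrm{Aut}(G)$ on $G$ realizes every automorphism. Hence, given $\varphi\in\mathrm{Aut}(G)$, I may choose an element $\eta=\eta_\varphi\in N(G)\subseteq\Sym_{p^k}$ with $\varphi(g)=\eta g\eta^{-1}$ for all $g\in G$.

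Then I would take $f_\varphi\colon X\to X$ to be the homeomorphism given by the action of $\eta$ under the standard $\Sym_{p^k}$-action on $X$ by permutation of coordinates. For $g\in G$ and $x\in X$,
\[
f_\varphi(g\cdot x)=\eta\cdot(g\cdot x)=(\eta g)\cdot x=(\eta g\eta^{-1})\cdot(\eta\cdot x)=\varphi(g)\cdot f_\varphi(x),
\]
so $f_\varphi$ is a $\varphi$-equivariant self-homeomorphism of $X$, in particular a $\varphi$-equivariant self-homotopy equivalence. Applying Lemma~\ref{lem:Fadell--Husseini_index_autos} with this $f_\varphi$ and $R=\FF_p$ then gives
\[
\varphi^*\bigl(\Index{G}(X;\FF_p)\bigr)=\Index{G}(X;\FF_p)
\quad\text{and}\quad
\varphi^*\bigl(\Index{G}^r(X;\FF_p)\bigr)=\Index{G}^r(X;\FF_p)
\]
for every $r\geq 2$.

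Finally, I would observe that as $\varphi$ ranges over $\mathrm{Aut}(G)\cong\mathrm{GL}_k(\ZZ/p)$, the induced maps $\varphi^*$ on $H^*(G;\FF_p)$ are exactly the maps defining the $\mathrm{GL}_k(\ZZ/p)$-action on $H^*(G;\FF_p)$ fixed in the paragraph preceding the theorem (the $N(G)$-action on $G$ factors through the standard $\mathrm{GL}_k(\ZZ/p)$-action, and group cohomology is contravariant). Hence both $\Index{G}(X;\FF_p)$ and $\Index{G}^r(X;\FF_p)$ are $\mathrm{GL}_k(\ZZ/p)$-invariant ideals in $H^*(G;\FF_p)$, which is the claim. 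There is essentially no obstacle beyond bookkeeping; the one point to treat with care is that the conjugation map $N(G)\to\mathrm{Aut}(G)$ is onto, so that every $\varphi$ is realized geometrically, and that the resulting family $\{\varphi^*\}$ matches the $\mathrm{GL}_k(\ZZ/p)$-action already specified on $H^*(G;\FF_p)$.
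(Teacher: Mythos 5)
Your proposal is correct and follows essentially the same route as the paper: realize each $\varphi\in\mathrm{Aut}(G)\cong\mathrm{GL}_k(\ZZ/p)$ by conjugation by an element $\eta$ of the normalizer $N(G)\leq\Sym_{p^k}$, let $\eta$ act on $F(\RR^d,p^k)$ via the ambient $\Sym_{p^k}$-action to produce a $\varphi$-equivariant self-homeomorphism, and then invoke Lemma~\ref{lem:Fadell--Husseini_index_autos}. The paper verifies $\varphi$-equivariance by an explicit coordinate calculation while you argue abstractly from the group-action axioms, but the content is identical.
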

\begin{proof}
  Let $\eta\in \mathrm{Aut}(G)\subset N(G)\subset \Sym_{p^k}$ be a permutation
  and $\varphi_{\eta}\colon G\to G$ the automorphism $\varphi_{\eta}(g):=\eta g
  \eta^{-1}$.  It defines a $\varphi_{\eta}$-equivariant selfhomotopy
  equivalence $F(\RR^d,p^k) \to F(\RR^d,p^k)$ as the restriction of the
  homomorphism $f\colon (\RR^d)^{p^k}\to (\RR^d)^{p^k}$ given by
\begin{equation}
\label{eq:SelfEq}
f(x_1,\ldots ,x_{p^k}):=(x_{\eta(1)},\ldots ,x_{\eta(p^k)}).
\end{equation}
Indeed, for $g\in G\subset \Sym_{p^k}$ we have that
\[
\begin{array}{lllll}
f(g\cdot (x_1,\ldots, x_{p^k}))  & = & f(x_{g(1)},\ldots, x_{g(p^k)}) 
                                & = & (x_{\eta g(1)},\ldots, x_{\eta g(p^k)})\\
                                & = & (x_{\eta g\eta^{-1} \eta(1)},\ldots, x_{\eta g \eta^{-1} \eta(p^k)})
                                & = & \eta g\eta^{-1}\cdot (x_{\eta(1)},\ldots, x_{\eta(p^k)})\\
                                & = & \varphi_{\eta}(g)\cdot (x_{\eta(1)},\ldots, x_{\eta(p^k)})
                                & = & \varphi_{\eta}(g)\cdot f(x_1,\ldots, x_{p^k}).
\end{array}
\]
Thus, according to Lemma~\ref{lem:Fadell--Husseini_index_autos}, the indexes are $\mathrm{GL}_k(\ZZ/p)$-invariant ideals.
\end{proof}

%-----------------------------------------------------------------------------------%
\subsection{Proof of Theorem \ref{Th:IndexN_plus_3}}
\label{Sec:E-page}
%-----------------------------------------------------------------------------------%

In Theorem~\ref{Th:Estimate-IndexF(X,p_upper_k)} we estimated the partial index, up to the page $E_N$, by detecting the differentials that land in the $0$-row of
the Serre spectral sequence of the fibration
\[
F(\RR^d,p^k)\To \EE G\times_{G}F(\RR^d,p^k)\To \BB G.
\]
Thus, the next step is to consider the  $E_{N+1}$-page of the spectral sequence and to understand
\begin{compactitem}
\item the cohomology $H^N(F(\RR^d,p^k);\FF_p)$ as an $\FF_p[G]$-module,
\item the cohomology $H^*(G;H^N(F(\RR^d,p^k);\FF_p))=E_2^{*,N}$ as an $H^*(G;\FF_p)$-module 
      by identifying the $H^*(G;\FF_p)$-module generators, and finally 
\item the differential $\partial_{N+1}:E_{N+1}^{\ell,N}\To E_{N+1}^{0,N+\ell +1}$ on at least one of the previously identified $H^*(G;\FF_p)$-module generators
      in the $N$-th row of the $(N+1)$st page,  $E_{N+1}^{\ell,N}$, ($l\geq 0$).
\end{compactitem}
In this way we will show that the homomorphism
\[
 H^{N+1}(\BB G;\FF_p)\to E^{N+1,0}_{N+2}(EG\times_{G}F(\RR^d,p^k))
\]
is not injective. 

\medskip

%-----------------------------------------------------------------------------------%
\subsubsection{~}
\label{Sec:Sub-1}
%\noindent\textbf{1.}
The Equivariant Goresky--MacPherson formula, Theorem~\ref{Th:EqGM}~\eqref{Th:EqGM-Formula-2}, gives the description of the $N$-th cohomology of the configuration spaces $F(\RR^d,p^k)$ as an $\FF_p[G]$-module:
\begin{equation}
\label{eq:Decomp-1}
H^{N}(F(\RR^d,p^k);\FF_p)\cong
\bigoplus_{\substack{\pi\in (\Pi_{p^k}{\setminus}{\hat{0}})/G \\ \mathrm{size}(\pi)=p^{k-1}}}
\ind^{G}_{G_{\pi}}\tilde{H}_{-2}(\Delta(\bar{\Pi}_{1});\FF_p)^{\otimes b_1(\pi)}\otimes
\cdots\otimes \tilde{H}_{p^k-3}(\Delta(\bar{\Pi}_{p^k});\FF_p)^{\otimes b_{p^k}(\pi)}.
\end{equation}

In Section~\ref{subsec:The_special_case_G_is_(Z/p)k} we pointed out that for all partitions $\pi$ of $[p^k]$ of size $p^{k-1}$ the following equivalence holds:
\[
G_{\pi}=G \Longleftrightarrow \text{There exists a subgroup}\; H \subseteq G\; \text{of order p with}\; \pi=\pi_H.
\]
Therefore, according to the isomorphism~\eqref{eq:TensorInduced} the
decomposition~\eqref{eq:Decomp-1} has the following additional
splitting
\begin{eqnarray}
\label{eq:Decomp-2}
H^{N}(F(\RR^d,p^k);\FF_p)
&\cong&
\underset{H<G~:~|H|=p}{\bigoplus} \tilde{H}_{p-3}(\Delta(\bar{\Pi}_H);\FF_p)^{\otimes G/H}
\oplus
\underset{H_{\lambda}\neq G}{\bigoplus}\mathrm{Ind}^G_{H_{\lambda}}M_{\lambda}\nonumber\\
&\cong&
\underset{H<G~:~|H|=p}{\bigoplus} \tilde{H}_{p-3}(\Delta(\bar{\Pi}_{p});\FF_p)^{\otimes G/H}
\oplus
\underset{H_{\lambda}\neq G}{\bigoplus}\mathrm{Ind}^G_{H_{\lambda}}M_{\lambda}
\end{eqnarray}
for some $\FF_p[H_{\lambda}]$-modules $M_{\lambda}$.  Since the second
sum is over all proper subgroups of $G$, it might happen that some of
the modules $M_{\lambda}$ are zero.

Now the $E_2^{*,N}$-row of the spectral sequence as an $H^*(G;\FF_p)$-module decomposes as follows:
\begin{eqnarray}
E_2^{*,N}   &     = & H^*(G;H^N(F(\RR^d,p^k);\FF_p))\nonumber                    \\
            & \cong &
\underset{H<G~:~|H|=p}{\bigoplus}H^*\big(G;\tilde{H}_{p-3}(\Delta(\bar{\Pi}_H);\FF_p)^{\otimes G/H}\big)
\oplus\underset{H_{\lambda}\neq G}{\bigoplus}H^*(G;\mathrm{Ind}^{G}_{H_{\lambda}}M_{\lambda})\nonumber\\
\label{eq:Decomp-3-2}
            & \cong &
\underset{H<G~:~|H|=p}{\bigoplus}H^*\big(G;\tilde{H}_{p-3}(\Delta(\bar{\Pi}_{p});\FF_p)^{\otimes G/H}\big)
\oplus\underset{H_{\lambda}\neq G}{\bigoplus}H^*(G;\mathrm{Ind}^{G}_{H_{\lambda}}M_{\lambda}).
\end{eqnarray}
It is important that the $E_{N+1}^{*,N}$-row is a
sub-quotient of the $E_2^{*,N}$-row and that the $H^*(G;\FF_p)$-module
structure on $E_{N+1}^{*,N}$-row comes from the one on the
$E_2^{*,N}$-row.  Using the same argument as in the proof of
Theorem~\ref{Th:DiffSSSeq-EAb}, we conclude that the $\partial_{N+1}$
differential vanishes on the second summand of the
decomposition~\eqref{eq:Decomp-3-2}.

Therefore we need to understand the $H^*(G;\FF_p)$-module structure on the first summand.
Since the first summand is a direct sum over all subgroups $H$ of $G$ of order
$p$, it suffices to describe the generators of the $H^*(G;\FF_p)$-module
\[
H^*\big(G;\tilde{H}_{p-3}(\Delta(\bar{\Pi}_H);\FF_p)^{\otimes G/H}\big)
\cong
H^*(G;\tilde{H}_{p-3}(\Delta(\bar{\Pi}_{p});\FF_p)^{\otimes G/H})
\]
for a particular subgroup $H$.
The subgroup $H\cong\ZZ/p$ acts on $\Pi_H\cong_{\rm{poset}}\Pi_p$ by the multiplication of $H$ on the ground
set of the partition lattice $H$, i.e., by the cyclic shift when $H$
is identified with $\ZZ/p$.  Recall that
Corollary~\ref{cor:RepresPi_p} describes the homology of the partition
lattice as an $\FF_p[H]=\FF_p[\ZZ/p]$-module:
\[
\tilde{H}_{p-3}(\Delta(\bar{\Pi}_{p});\FF_p)\cong \FF_p[\ZZ/p]^{\tfrac{(p-1)!-p+1}{p}}\oplus K
\]
where $K$ is the kernel of the augmentation map.
To simplify the notation we denote the $\FF_p[\ZZ/p]$-free part by $F:=\FF_p[\ZZ/p]^{\tfrac{(p-1)!-p+1}{p}}$.

\medskip

%-----------------------------------------------------------------------------------%
\subsubsection{~}
\label{Sec:Sub-2}
Now we start with the description of
\[
H^*\big(G;\tilde{H}_{p-3}(\Delta(\bar{\Pi}_H);\FF_p)^{\otimes G/H}\big)
\cong
H^*\big(G;\tilde{H}_{p-3}(\Delta(\bar{\Pi}_{p});\FF_p)^{\otimes G/H}\big)
=
H^*\big(G;(F\oplus K)^{\otimes G/H}\big)
\]
as an $H^*(G;\FF_p)$-module.

The main tool is the graded isomorphism due to Nakaoka~\cite{Nakaoka} and Leary~\cite[Theorem~2.1, page~192]{Leary}
\begin{equation}
\label{eq:Isom-tensor-induced}
H^*\big(G;M^{\otimes G/H}\big)\cong H^*\big(G/H;H^*(H,M)^{\otimes G/H}\big)
\end{equation}
where $M$ is any $\FF_p[H]$-module.
Before proceeding further let us explain how the graded isomorphism~\eqref{eq:Isom-tensor-induced}
is a consequence of~\cite[Theorem~2.1, page~192]{Leary}.
If we take:
\[
R=\FF_p~,~X=BH~,~E=E(G/H)~,~\Omega=G/H~,~S=G/H,
\]
then the graded isomorphism of~\cite[Theorem~2.1, page~192]{Leary}
reads off as the isomorphism~\eqref{eq:Isom-tensor-induced}.  This
isomorphism is obtained from the $E_2$ collapsing
Lyndon--Hochschild--Serre (LHS) spectral sequence of the exact sequence
of groups $1\longrightarrow H\longrightarrow G\longrightarrow G/H\longrightarrow 1$ with coefficients in the
$\FF_p[G]$-module $M^{\otimes G/H}$.

The action of
\[
H^*(G;\FF_p) \cong H^*(G;\FF_p^{\otimes G/H}) \cong H^*(G/H;H^*(H,\FF_p)^{\otimes G/H})\cong H^*(G/H;H^*(H,\FF_p))
\]
on
\[
H^*(G;M^{\otimes G/H})\cong H^*(G/H;H^*(H,M)^{\otimes G/H})
\]
can be obtained from the action of the LHS spectral sequence with
coefficients in $\FF_p$ on the same LHS spectral sequence but now with
$M^{\otimes G/H}$ coefficients.

\medskip

Consider the LHS spectral sequence of the exact sequence of groups $1\longrightarrow H\longrightarrow G\longrightarrow G/H\to 1$ with coefficients in $\FF_p\cong \FF_p^{\otimes G/H}$.
The $E_2$-term and also $E_{\infty}$-term are given by
\[
E_{\infty}^{i,j}=E_2^{i,j}=H^i(G/H;H^j(H;\FF_p))\cong H^i(G/H;\FF_p)\otimes_{\FF_p} H^j(H;\FF_p)
\]
since $H^j(H;\FF_p)$ is a trivial $\FF_p[G/H]$-module and the exact sequence of groups that induces the LHS spectral sequence splits.

Let us make a choice of the generators of the cohomology in the following way:
\begin{compactitem}
\item $H^*(G/H;\FF_p)=\FF_p[t_2,\ldots ,t_k]\otimes\Lambda[e_2,\ldots ,e_k]$, $\deg(e_i)=1$ and $\deg(t_i)=2$ for $p>2$,
\item $H^*(G/H;\FF_p)=\FF_p[t_2,\ldots ,t_k]$, $\deg(t_i)=1$ for $p=2$,
\item $H^*(H;\FF_p)=\FF_p[t_1]\otimes\Lambda[e_1]$, $\deg(e_1)=1$ and $\deg(t_1)=2$ for $p>2$, and
\item $H^*(H;\FF_p)=\FF_p[t_1]$, $\deg(t_1)=1$ for $p=2$.
\end{compactitem}
The convergence of the spectral sequence $E_{\infty}^{*,*}\Longrightarrow H^*(G;\FF_p)$ gives us the following presentation of the group cohomology for $p>2$:
\begin{equation}
\label{eq:presentation}
H^*(G;\FF_p)\cong\big(\FF_p[t_2,\ldots ,t_k]\otimes\Lambda[e_2,\ldots ,e_k]\big)\otimes\big(\FF_p[t_1]\otimes\Lambda[e_1]\big).
\end{equation}
Since our choice of generators depends on the subgroup $H$ let us additionally introduce the notation
\[
e_H:=e_1\qquad\text{and}\qquad t_H:=t_1
\]
when $p>2$, and $t_H:=t_1$ for $p=2$.

The $E_2=E_{\infty}$-term of this LHS spectral sequence for $p$ an odd prime that converges to $H^*(G;\FF_p)$ is depicted in Figure~\ref{Figure-1}.
 
%-----------------------------------------------------------------------------------%
\begin{figure}[h!]
\centering
\begin{tikzpicture}
\def\hlengthA{0cm} % excess length of the normal hlines
\def\hlengthB{0.3cm} % excess length of the arrow-hline
\def\vlengthA{0cm} % excess length of the normal vlines
\def\vlengthB{0.3cm} % excess length of the arrow-vline
\matrix (A) [matrix of math nodes, nodes in empty cells] 
{ 3 & 1 \otimes e_1 t_1 & \dots & \dots \\
  2 & 1 \otimes t_1 \phantom{e_1} & e_2 \otimes t_1, \dots, e_k \otimes t_1 & t_2 \otimes t_1, \dots, t_k \otimes t_1 \\
  1 & 1 \otimes e_1 \phantom{t_1} & e_2 \otimes e_1, \dots, e_k \otimes e_1 & t_2 \otimes e_1, \dots, t_k \otimes e_1 \\
  0 & 1 & e_2, \dots, e_k & t_2, \dots, t_k \\
    & 0 & 1 & 2 \\
};
\tikzhline{A}{1}{1}{4}{shorten >=-\hlengthA};
\tikzhline{A}{2}{1}{4}{shorten >=-\hlengthA};
\tikzhline{A}{3}{1}{4}{shorten >=-\hlengthA};
\tikzhline{A}{4}{1}{4}{-triangle 45, line width=1 pt,shorten >=-\hlengthB};
\tikzvline{A}{1}{5}{1}{triangle 45-, line width=1 pt,shorten <=-\vlengthB};
\tikzvline{A}{1}{5}{2}{shorten >=-\vlengthA};
\tikzvline{A}{1}{5}{3}{shorten >=-\vlengthA};
%\tikzvline{A}{1}{5}{4}{shorten >=-\vlengthA};
\end{tikzpicture}
\caption{$E_2=E_{\infty}$-term of the LHS spectral sequence with $\FF_p$ coefficients for $p>2$}
\label{Figure-1}
\end{figure}
%-----------------------------------------------------------------------------------%

Next consider the LHS spectral sequence of the exact sequence $1\longrightarrow H\longrightarrow G\longrightarrow G/H\longrightarrow 1$, but now with coefficients in $(F\oplus K)^{\otimes G/H}$.
Again by~\cite[Theorem~2.1, page~192]{Leary}, the $E_2=E_{\infty}$-term in this case is
\[
E_{\infty}^{i,j}=E_2^{i,j}=H^i\big(G/H;H^j(H;F\oplus K)^{\otimes G/H}\big)=
H^i\big( G/H;(H^j(H;F)\oplus H^j(H;K))^{\otimes G/H}\big).
\]
According to~\cite[Proposition~3.15.2.(iii), page~97]{Benson}, the $\FF_p[G/H]$-module
of coefficients decomposes in the following way:
\begin{eqnarray*}
(H^j(H;F)\oplus H^j(H;K))^{\otimes G/H}  &   =   &
\left\{
\begin{array}{ll}
(H^0(H;F)\oplus H^0(H;K))^{\otimes G/H},   & \text{for }j=0, \\
H^j(H;K)^{\otimes G/H},                    & \text{for }j>0,
\end{array}
\right.\\
                                         & \cong &
\left\{
\begin{array}{ll}
H^0(H;F)^{\otimes G/H}\oplus H^0(H;K)^{\otimes G/H}\oplus B,   & \text{for }j=0, \\
H^j(H;K)^{\otimes G/H},                                        & \text{for }j>0,
\end{array}
\right.
\end{eqnarray*}
where $B$ is a direct sum of $\FF_p[G/H]$-modules induced by the inclusion $L/H \to G/H$ for
proper subgroups $L$ of $G$ with  $H<L<G$.
Here we use the fact that $F$ is a free $\FF_p[H]$-module and so $H^j(H;F)=0$ for all $j>0$.

Moreover, since $H\cong\ZZ/p=\langle\varepsilon\rangle$, $F=\FF_p[\ZZ/p]^m$
where $m:=\tfrac{(p-1)!-p+1}{p}$, and 
\[
K=\ker(\FF_p[\ZZ/p]\overset{\epsilon}{\To}\FF_p)
\cong \FF_p[\ZZ/p]/{(1+\varepsilon+\cdots+\varepsilon^{p-1})\FF_p}
\]
we have that
\begin{eqnarray}
(H^j(H;F)\oplus H^j(H;K))^{\otimes G/H}
&\cong&
\left\lbrace
\begin{array}{ll}
(\FF_p^m)^{\otimes G/H}\oplus\FF_p^{\otimes G/H}\oplus B,  & \text{for }j=0,   \\
\FF_p^{\otimes G/H},                                       & \text{for }j>0,
\end{array}
\right.\nonumber\\
&\cong&
\left\lbrace
\begin{array}{ll}
(\FF_p^m)^{\otimes G/H}\oplus\FF_p\oplus B,  & \text{for }j=0,   \\
\FF_p,                                       & \text{for }j>0.
\end{array}
\right.\nonumber
\end{eqnarray}
Consequently
\[
E_{\infty}^{i,j}=E_2^{i,j}=
\left\lbrace
\begin{array}{ll}
H^i(G/H;(\FF_p^m)^{\otimes G/H})\oplus H^i(G/H;\FF_p)\oplus H^i(G/H;B),     & \text{for }j=0,   \\
H^i(G/H;\FF_p),                                                             & \text{for }j>0.
\end{array}
\right.
\]
The LHS spectral sequence with $(F\oplus K)^{\otimes G/H}$ coefficients for $p>2$
is given in Figure~\ref{Figure-2} in such a way that the action of the LHS spectral
sequence with $\FF_p$ coefficients is apparent.

%%-----------------------------------------------------------------------------------%
%\begin{figure}[tbh]
%\centering
%\includegraphics[scale=0.75]{figure-2}
%\caption{$E_2=E_{\infty}$-term of the LHS spectral sequence with $(F\oplus K)^{\otimes G/H}$ coefficients for $p>2$}
%\label{Figure-2}
%\end{figure}
%%-----------------------------------------------------------------------------------%

%-----------------------------------------------------------------------------------%
\begin{figure}[tbh]
\centering
\begin{tikzpicture}%[nodes={rectangle,draw}]
\def\hlengthA{0.5cm} % excess length of the normal hlines
\def\hlengthB{0.8cm} % excess length of the arrow-hline
\def\vlengthA{0.1cm} % excess length of the normal vlines
\def\vlengthB{0.4cm} % excess length of the arrow-vline
\matrix (A) [matrix of math nodes, nodes in empty cells %,column sep=-\pgflinewidth, row sep=-\pgflinewidth
] 
{ 4 & 0 & & 0 & 0 & \dots & 0 & 0 & \dots & 0 \\
3 & 0 & T_1 \cdot t_1 & 0 & 0 & \dots & 0 & 0 & \dots & 0  \\
2 & 0 & T_1 \cdot e_1 & 0 & 0 & \dots & 0 & 0 & \dots & 0  \\
1 & 0 & T_1 & 0 & 0 & T_1\cdot e_2,\dots,T_1\cdot e_k & 0 & 0 & T_1\cdot t_2,\dots,T_1\cdot t_k & 0  \\
0 & |[fill, black!20]|\hphantom{0}\vphantom{E_1} & E_1 & |[fill, black!20]|\hphantom{0}\vphantom{E_1} & |[fill, black!20]|\hphantom{0}\vphantom{E_1} & E_1\cdot e_2,\dots,E_1\cdot e_k & |[fill, black!20]|\hphantom{0}\vphantom{E_1} & |[fill, black!20]|\hphantom{0}\vphantom{E_1} & E_1\cdot t_2,\dots,E_1\cdot t_k & |[fill, black!20]|\hphantom{0}\vphantom{E_1} \\
  &  & 0  & & & 1 & & & 2 & \\
};
\tikzvline{A}{1}{6}{1}{triangle 45-, line width=1 pt,shorten <=-\vlengthB};
\tikzvline{A}{1}{5}{2}{black!20, densely dotted,shorten <=-\vlengthA};
\tikzvline{A}{1}{5}{3}{black!20, densely dotted,shorten <=-\vlengthA};
\tikzvline{A}{1}{6}{4}{shorten <=-\vlengthA};
\tikzvline{A}{1}{5}{5}{black!20, densely dotted,shorten <=-\vlengthA};
\tikzvline{A}{1}{5}{6}{black!20, densely dotted,shorten <=-\vlengthA};
\tikzvline{A}{1}{6}{7}{shorten <=-\vlengthA};
\tikzvline{A}{1}{5}{8}{black!20, densely dotted,shorten <=-\vlengthA};
\tikzvline{A}{1}{5}{9}{black!20, densely dotted,shorten <=-\vlengthA};
\tikzvline{A}{1}{6}{10}{shorten <=-\vlengthA};
\tikzhline{A}{1}{1}{10}{shorten >=-\hlengthA};
\tikzhline{A}{2}{1}{10}{shorten >=-\hlengthA};
\tikzhline{A}{3}{1}{10}{shorten >=-\hlengthA};
\tikzhline{A}{4}{1}{10}{shorten >=-\hlengthA};
\tikzhline{A}{5}{1}{10}{-triangle 45, line width=1 pt,shorten >=-\hlengthB};

\node[fit=(A-5-3),ellipse,draw=red, inner sep=-3.5pt]{};
\node[fit=(A-4-3),ellipse,draw=red, inner sep=-3.5pt]{};
\end{tikzpicture}
\caption{$E_2=E_{\infty}$-term of the LHS spectral sequence with $(F\oplus K)^{\otimes G/H}$ coefficients for $p>2$}
\label{Figure-2}
\end{figure}
%-----------------------------------------------------------------------------------%

For $p>2$, there are only two generators $E_1$ and $T_1$ of
the $H^*(G;\FF_p)$-module structure, appearing in positions $E_1\in
E^{0,0}_{\infty}$ and $T_1\in E^{0,1}_{\infty}$, that can have
non-zero image along the differential.  These generators are subject
to the following relation
\[
E_1\cdot t_1=T_1\cdot e_1.
\]
All other generators appearing in the shaded regions of the picture are mapped
to zero by the differential  since they are all
annihilated by $e_1$ and $t_1$ in $H^*(G;\FF_p)$.  In the case $p=2$ there is
only one relevant generator $T_1$ appearing in the position $E^{0,0}_{\infty}$.

\smallskip

\noindent Thus, for every subgroup $H$ of $G$ of order $p$ there are elements $e_H$ and $t_H$ of degree $1$ and $2$ respectively in $H^*(G;\FF_p)$, and
\begin{compactitem}[$\circ$]
\item two generators of the $N$th row of the Serre spectral sequence of the Borel construction
\[
E_{H,d}\in E_2^{0,N}=E_{N+1}^{0,N}\quad \text{and} \quad T_{H,d}\in E_2^{1,N}=E_{N+1}^{1,N}
\]
that satisfy the following relation
\begin{equation}
\label{eq:RelGen}
E_{H,d}\cdot t_H=T_{H,d}\cdot e_H.
\end{equation}
and can be mapped non-trivially by the differential $\partial_{N+1}$ for $p>2$; and there is
\item only one generator of the $N$th row of the Serre spectral sequence of the Borel construction
\[
T_{H,d}\in E_2^{0,N}=E_{N+1}^{0,N}
\]
that might have non-zero image along the differential $\partial_{N+1}$ for $p=2$.
\end{compactitem}

Since we determined all the generators of the $E^{*,N}_2$-row, as an $H^*(G;\FF_p)$-module, we can describe the $(N+2)$nd partial index in the following way.
\begin{lemma}
\label{lem:RelIndexPartial}
\[
\Index{G}^{N+2}(F(\RR^d,p^k);\FF_p)=\left\{
\begin{array}{ll}
	\langle\{\partial_{N+1}(E_{H,d}),~\partial_{N+1}(T_{H,d}) : H<G, |H|=p\}\rangle , & \text{for~}p>2,\\
	\langle\{\partial_{N+1}(T_{H,d}) : H<G, |H|=p\}\rangle ,                         & \text{for~}p=2.
\end{array}
\right.
\]

\end{lemma}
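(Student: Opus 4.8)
The plan is to identify $\Index{G}^{N+2}(F(\RR^d,p^k);\FF_p)$ with the image of a single differential and then extract a generating set from the module analysis of Sections~\ref{Sec:Sub-1} and~\ref{Sec:Sub-2}. By Theorem~\ref{Th:Estimate-IndexF(X,p_upper_k)} every differential $\partial_s$ with $2\le s\le N$ landing in the bottom row of the Serre spectral sequence of $F(\RR^d,p^k)\To \EE G\times_G F(\RR^d,p^k)\To \BB G$ vanishes, so $E_{N+1}^{*,0}=E_2^{*,0}=H^*(G;\FF_p)$ and the edge map $H^*(\BB G;\FF_p)\to E_{N+1}^{*,0}$ is the identity. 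For the $(N+2)$nd partial index only the differential $\partial_{N+1}\colon E_{N+1}^{r,N}\to E_{N+1}^{r+N+1,0}$ is relevant, and $E_{N+2}^{*,0}=H^*(G;\FF_p)/\im(\partial_{N+1})$; hence
\[
\Index{G}^{N+2}(F(\RR^d,p^k);\FF_p)=\im\bigl(\partial_{N+1}\colon E_{N+1}^{*,N}\To H^*(G;\FF_p)\bigr).
\]
Since the Serre spectral sequence is one of $H^*(G;\FF_p)$-modules, $\partial_{N+1}$ is $H^*(G;\FF_p)$-linear, so its image is an $H^*(G;\FF_p)$-submodule of $H^*(G;\FF_p)$, i.e.\ an ideal, equal to the ideal generated by $\partial_{N+1}(\mathcal{G})$ for any set $\mathcal{G}$ of $H^*(G;\FF_p)$-module generators of $E_{N+1}^{*,N}$.

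Next I would shrink $\mathcal{G}$ by splitting $E_2^{*,N}$ via \eqref{eq:Decomp-3-2} and discarding everything but $E_{H,d}$ and $T_{H,d}$ with a non-zero-divisor argument. On the induced part $\bigoplus_{\lambda}H^*(G;\mathrm{Ind}^G_{H_{\lambda}}M_{\lambda})$: choosing, as in the proof of Theorem~\ref{Th:DiffSSSeq-EAb}, an element $\xi\in S_G$ that restricts to $0$ on every proper subgroup of $G$ (available by \eqref{eq:Intersection_Res_and_SG}), Lemma~\ref{lem:nilpotent_operation} gives $\xi^{d_1}=0$ on this part for some $d_1$, and hence (nilpotency passes to subquotients, Lemma~\ref{lem;properties_of_mathfrac_and_nil}) $\xi^{d_1}=0$ on its image in $E_{N+1}^{*,N}$; for $y$ in that image, $\xi^{d_1}\partial_{N+1}(y)=\partial_{N+1}(\xi^{d_1}y)=0$, and as $\xi$ is a non-zero-divisor in $H^*(G;\FF_p)$ this forces $\partial_{N+1}(y)=0$. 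On each summand $H^*\bigl(G;\tilde{H}_{p-3}(\Delta(\bar{\Pi}_p);\FF_p)^{\otimes G/H}\bigr)$ with $|H|=p$, Section~\ref{Sec:Sub-2} exhibits the generators $E_{H,d}\in E_2^{0,N}$ and $T_{H,d}\in E_2^{1,N}$ (only $T_{H,d}\in E_2^{0,N}$ if $p=2$) together with further generators, each annihilated by $t_H\in H^*(G;\FF_p)$; running the same argument with $t_H$ in place of $\xi$ kills $\partial_{N+1}$ on those. Consequently $\im(\partial_{N+1})$ is generated as an ideal by $\{\partial_{N+1}(E_{H,d}),\,\partial_{N+1}(T_{H,d})\colon H<G,\ |H|=p\}$ for $p>2$ and by $\{\partial_{N+1}(T_{H,d})\colon H<G,\ |H|=p\}$ for $p=2$, which is the asserted formula.

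The point requiring genuine care --- and, I expect, the main obstacle --- is the passage from $E_2^{*,N}$ to the subquotient $E_{N+1}^{*,N}$: one must make sure that a set of $H^*(G;\FF_p)$-module generators of $E_{N+1}^{*,N}$ is obtained by pushing forward the $E_2^{*,N}$-generators listed above, i.e.\ that the positions $(0,N)$ and $(1,N)$ survive to page $N+1$ carrying the classes $E_{H,d}$, $T_{H,d}$, and that every other element of $E_{N+1}^{*,N}$ is, modulo $\ker(\partial_{N+1})$, an $H^*(G;\FF_p)$-combination of these and of elements killed by $t_H$ or $\xi$. No differentials enter the columns $0$ and $1$ (on pages $\ge 2$ the source columns are negative), and the differentials $\partial_r$, $2\le r\le N$, leaving those columns land in rows $s$ with $1\le s\le N-1$, where $H^s(F(\RR^d,p^k);\FF_p)\in\mathfrak{I}_G$ --- the range already exploited in the proof of Theorem~\ref{Th:Estimate-IndexF(X,p_upper_k)}; there $\xi$ acts nilpotently, so the mechanism of Theorem~\ref{Th:DiffSSSeq-EAb} is available, but tracking the fate of the row-$N$ generators through all intermediate pages $E_2,\ldots,E_{N+1}$, together with the $t_H$-annihilation pattern of \eqref{eq:Decomp-3-2} read off page by page, is the one step I would not regard as automatic. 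Everything else --- the identification of $\Index{G}^{N+2}(F(\RR^d,p^k);\FF_p)$ with $\im(\partial_{N+1})$, the fact that the image of a module homomorphism is the ideal generated by the images of module generators, and the two non-zero-divisor cancellations --- is formal given Sections~\ref{Sec:Sub-1} and~\ref{Sec:Sub-2}.
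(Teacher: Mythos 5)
Your argument reproduces the paper's proof essentially verbatim: the identification of $\Index{G}^{N+2}(F(\RR^d,p^k);\FF_p)$ with $\im(\partial_{N+1})$ on the $E_{N+1}$-page, the decomposition~\eqref{eq:Decomp-3-2} of $E_2^{*,N}$, the $\xi$-nilpotency argument (Lemma~\ref{lem:nilpotent_operation}) to eliminate the induced summand, and the $t_H$ (resp.\ $\xi$) non-zero-divisor cancellation in $H^*(G;\FF_p)$ to eliminate the remaining non-privileged generators are all exactly the steps taken in Sections~\ref{Sec:Sub-1} and~\ref{Sec:Sub-2}.

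The subtlety you flag in your closing paragraph --- whether the privileged generators $E_{H,d}\in E_2^{0,N}$ and $T_{H,d}\in E_2^{1,N}$ actually survive as cycles to $E_{N+1}^{*,N}$, and whether the generating pattern of $E_2^{*,N}$ passes to this subquotient --- is precisely where the paper, too, is laconic: it simply writes $E_2^{0,N}=E_{N+1}^{0,N}$ and $E_2^{1,N}=E_{N+1}^{1,N}$ without argument. You are right that this is not a direct consequence of Theorem~\ref{Th:DiffSSSeq-EAb}, which only controls differentials landing in row zero; here one must also rule out $\partial_r\colon E_r^{0,N}\to E_r^{r,N-r+1}$ (and the analogous maps from column $1$) for $2\le r\le N$, whose targets lie in rows $1,\ldots,N-1$. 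The $\xi$-nilpotency on those rows annihilates $\xi^{m}\partial_r(E_{H,d})$, not $\partial_r(E_{H,d})$ itself, so vanishing of these differentials does not follow formally from the mechanism you invoke. The step you single out as non-automatic is therefore a genuine omission in the published proof as well; modulo that shared gap, your proposal and the paper's argument coincide.
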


\medskip

%-----------------------------------------------------------------------------------%
\subsubsection{~}
\label{Sec:Sub-5}
We now introduce ingredients that are used in the evaluation of $\partial_{N+1}$ on the generators.

Consider the following real $G$-representation
\[
W_{p^k}=\{(a_1,\ldots,a_{p^k})\in\RR^{p^k}~:~a_1 +\cdots +a_{p^k}=0\}
\]
that is a sub-representation of the regular real $G$-representation $\RR^{p^k}=\bigoplus_{g\in G}g\RR$.
The Euler class, with $\FF_p$ coefficients, $\zeta :=e(W_{p^k})\in H^{p^k}(G;\FF_p)$ of the vector bundle
$W_n\longrightarrow \mathrm{E}G\times_{G}W_n\longrightarrow \BB G$
was computed in~\cite{Mann-Milgram} and for $p>2$ is given by
\[
\zeta=\Big( \prod_{(\alpha_1,\ldots,\alpha_k)\in\FF_p^k{\setminus}\{0\} }(\alpha_1 t_1+\cdots +\alpha_k t_k)\Big)^{\tfrac{1}{2}}
\]
while for $p=2$ we have that
\[
\zeta=\prod_{(\alpha_1,\ldots,\alpha_k)\in\FF_2^k {\setminus}\{0\}} (\alpha_1 t_1+\cdots +\alpha_k t_k).
\]
Since the square root in $\FF_p[t_1,\ldots ,t_k]$ is not uniquely determined we choose for $\zeta$, when $p>2$, an arbitrary square root.
This will not matter when we later consider submodules generated by it.
Following~\cite[Proposition~3.11, page~1338]{B-Z} it can be seen that
\[
\Index{G}(S(W_{p^k});\FF_p)=\langle \zeta \rangle \subseteq H^*(G;\FF_p).
\]

\smallskip

Now let us state the following rather simple observation in a slightly more general setting.

\noindent Let $\rho \colon G\to\mathrm{O}(d)$ define an orthogonal action
of a finite group $G$ on the Euclidean space $E\cong\RR^d$.  In
addition, let $\ArA$ be a $G$-invariant arrangement in $E$ with a
$G$-invariant sub-arrangement $\ArB$.  This means that the arrangement
$\ArB$ is $G$-invariant arrangement and $\ArB\subseteq\ArA$.  Thus, we
have $G$-equivariant maps induced by inclusions
\[
D_{\ArB}\To D_{\ArA} \quad \text{and}  \quad S(E){\setminus} D_{\ArA}\To S(E){\setminus} D_{\ArB}.
\]
Consequently, by the monotonicity property of Fadell--Husseini index, we have that
\begin{equation}
\label{eq:Ineq-Index-Arr}
\Index{G}( S(E){\setminus} D_{\ArA};R)\supseteq\Index{G}( S(E){\setminus} D_{\ArB};R),
\end{equation}
for any ring $R$.

\medskip

%-----------------------------------------------------------------------------------%
\subsubsection{~}
\label{Sec:Sub-6}
As we have seen in Lemma \ref{lem:RelIndexPartial}, to determine the relevant partial index it suffices to determine the elements $\partial_{N+1}(E_{H,d})$ and $\partial_{N+1}(T_{H,d})$ when $p>2$ and the element $\partial_{N+1}(T_{H,d})$ when $p=2$.

Consider the $G=(\ZZ/p)^k$-invariant sub-arrangement
$\mathcal{B}_{H,d}$ of the arrangement $\mathcal{B}_{p^k,d}$ defined by
\[
\mathcal{B}_{H,d}:=\{L_{i,j}\in\mathcal{B}_{p^k,d}~:~L_{i,j}\supseteq V_{H,d}\}
\]
where $V_{H,d}$ was previously introduced in~\eqref{eq:V_H}.
The maximum of the intersection poset of the sub-arrangement $\mathcal{B}_{H,d}$ is the subspace $V_{H,d}=( \RR^{dp^k})^H$.
The complement $S(\RR^{dp^k}){\setminus}D_{\mathcal{B}_{H,d}}$ is $G$-invariant.
Since $G$ acts transitively on the set $[p^k]$, and every element of the group $G$ is of order $p$, we get
\[
F(\RR^d,p^k)=\underset{H\leq G : |H|=p}{\bigcap}S(\RR^{dp^k}){\setminus} D_{\mathcal{B}_{H,d}}
\quad \text{and} \quad
\mathcal{B}_{p^k,d}=\underset{H\leq G : |H|=p}{\bigcup}\mathcal{B}_{H,d}.
\]

In addition consider the subspace $V_{H,d}$ as a one-element
$G$-invariant arrangement $\mathcal{C}_d:=\{V_{H,d}\}$.  As we have
seen, there are $G$-equivariant maps induced by inclusions
\begin{equation}
\label{eq:Inc-1}
S(\RR^{dp^k}){\setminus} D_{\mathcal{B}_{p^k,d}}
\To S(\RR^{dp^k}){\setminus} D_{\mathcal{B}_{H,d}}\To S(\RR^{dp^k}){\setminus} D_{\mathcal{C}_d}.
\end{equation}
If we recall that $F(\RR^d,p^k)\simeq_{G} S(\RR^{dp^k}){\setminus} D_{\mathcal{B}_{p^k,d}}$ and observe that
\[
S(\RR^{dp^k}){\setminus} D_{\mathcal{C}_d}=S(\RR^{dp^k}){\setminus} S(V_{H,d})\simeq_{G} S(V_{H,d}^{\perp})
\]
we obtain the following index inequalities:
\begin{equation}
\label{eq:IndexIn-1}
\Index{G}(F(\RR^d,p^k);\FF_p)\supseteq \Index{G}(S(\RR^{dp^k}){\setminus} D_{\mathcal{B}_{H,d}};\FF_p)
\supseteq \Index{G}(S(V_{H,d}^{\perp});\FF_p).
\end{equation}
Here $V_{H,d}^{\perp}$ denotes the orthogonal complement of the $G$-invariant vector space $V_{H,d}$ inside $\RR^{dp^k}$.
More is true: By the Equivariant Goresky--MacPherson formula the first inclusion of~\eqref{eq:Inc-1} induces an injection of $\FF_p[G]$-modules
\begin{multline}
\tilde{H}_{p-3}(\Delta(\bar{\Pi}_{p});\FF_p)^{\otimes G/H}\cong H^{N}(S(\RR^{dp^k}){\setminus} D_{\mathcal{B}_{H,d}};\FF_p)
\\
\To H^{N}(S(\RR^{dp^k}){\setminus} D_{\mathcal{B}_{p^k,d}};\FF_p)\cong H^{N}(F(\RR^d;p^k);\FF_p).
\end{multline}
Consequently, the generators of the $H^*(G;\FF_p)$-module
\[
H^*(G;\tilde{H}_{p-3}(\Delta(\bar{\Pi}_{p});\FF_p)^{\otimes G/H})\cong H^*(G;H^{N}(S(\RR^{dp^k}){\setminus} D_{\mathcal{B}_{H,d}};\FF_p))
\]
are mapped injectively onto the generators of the $H^*(G;\FF_p)$-module
\[
H^*(G;H^{N}(S(\RR^{dp^k}){\setminus} D_{\mathcal{B}_{p^k,d}};\FF_p))\cong H^*(G;H^{N}(F(\RR^d;p^k);\FF_p))
\]
that correspond to the subgroup $H$ in the decomposition~\eqref{eq:Decomp-2}.
Thus we have computed the index of the complement $S(\RR^{dp^k}){\setminus} D_{\mathcal{B}_{H,d}}$.

\begin{lemma}
\label{lem:RelIndexComplement-02}
\begin{eqnarray}
\Index{G}(S(\RR^{dp^k}){\setminus} D_{\mathcal{B}_{H,d}};\FF_p) &=&\Index{G}^{N+2}(S(\RR^{dp^k}){\setminus} D_{\mathcal{B}_{H,d}};\FF_p)\nonumber\\
\label{lem:RelIndexComplement}
                                                                &=&
\left\lbrace
\begin{array}{ll}
\langle \partial_{N+1}(E_{H,d}),\partial_{N+1}(T_{H,d})\rangle ,   & \text{for }p>2,  \\
\langle \partial_{N+1}(T_{H,d})\rangle ,                           & \text{for }p=2.
\end{array}
\right. 
\end{eqnarray}
\end{lemma}

\medskip
%-----------------------------------------------------------------------------------%
\subsubsection{~}
\label{Sec:Sub-7}
The next step on our path to determine $\Index{G}(S(\RR^{dp^k}){\setminus} D_{\mathcal{B}_{H,d}};\FF_p)$ is to obtain the index of the sphere $S(V_{H,d}^{\perp})$.
According to~\cite[Proposition~3.11, page~1338]{B-Z} it
is enough to find the Euler class of the vector bundle
$V_{H,d}^{\perp}\To\EE G\times_{G}V_{H,d}^{\perp}\To \BB G$.
There is a $G$-equivariant map
\[
S(V_{H,d}^{\perp})\simeq_{G} \RR^{dp^k}{\setminus}V_{H,d}\To \RR^{dp^k}{\setminus}{D_d}\simeq_{G} S(W_{p^k}^{\oplus d})
\]
where $D_d$ is the diagonal $\{(x_1,\ldots,x_{p^k})\in\RR^{dp^k}:x_1=\cdots=x_{p^k}\}$.
In other words, $V_{H,d}^{\perp}$ is a direct summand of $W_{p^k}^{\oplus d}$.
Therefore,
\[
\langle e(V_{H,d}^{\perp}) \rangle=\Index{G}(S(V_{H,d}^{\perp});\FF_p)
\supseteq \Index{G}(S(W_{p^k}^{\oplus d});\FF_p)=\langle e(W_{p^k}^{\oplus d}) \rangle = \langle \zeta^d\rangle,
\]
that is, the Euler class $e(V_{H,d}^{\perp})$ divides the Euler class $e(W_{p^k}^{\oplus d})=\zeta^d$.
The group $H$ acts freely on the sphere $S(V_{H,d}^{\perp})$ via the inclusion homomorphism $H\to G$.
Therefore, $\res^G_H~e(V_{H,d}^{\perp})\neq 0$ in $H^*(H;\FF_p)$.
Using the facts that
\begin{compactitem}
\item $e(V_{H,d}^{\perp})$ divides $e(W_{p^k}^{\oplus d})=\zeta^d$,
\item $\res^G_H e(V_{H,d}^{\perp})\neq 0$ in $H^*(H;\FF_p)$, and
\item $\codim V_{H,d}=\dim V_{H,d}^{\perp}=p^{k-1}(p-1)d=\deg e(V_{H,d}^{\perp})$,
\end{compactitem}
we conclude that $e(V_{H,d}^{\perp})=\zeta_H^d$ where for $p>2$
\[
\zeta_H=\Big( \prod_{\{(\alpha_1,\ldots,\alpha_k)\in\FF_p^k \,:\, \res^G_H(\alpha_1 t_1+\cdots +\alpha_k t_k)\neq 0\}}
(\alpha_1 t_1+\cdots +\alpha_k t_k)\Big)^{\tfrac{1}{2}},
\]
and in case $p=2$
\[
\zeta_H= \prod_{\{(\alpha_1,\ldots,\alpha_k)\in\FF_2^k \,:\, \res^G_H(\alpha_1 t_1+\cdots +\alpha_k t_k)\neq 0\}}
(\alpha_1 t_1+\cdots +\alpha_k t_k).
\]

\smallskip

%-----------------------------------------------------------------------------------%
\subsubsection{~}
\label{Sec:Sub-8}

In order to prove that $\Index{G}^{N+2}(F(\RR^d,p^k);\FF_p)$ does not vanish, according to Lemmas~\ref{lem:RelIndexPartial} and \ref{lem:RelIndexComplement-02},
it suffices to prove that $\Index{G}(S(\RR^{dp^k}){\setminus} D_{\mathcal{B}_{H,d}};\FF_p)$ is not zero for at least one order $p$ subgroup $H$ of $G$.

Consider the subgroup $H$ of $G$ determined by the presentation of $H^*(G;\FF_p)$ as given in \eqref{eq:presentation}:
\begin{eqnarray}
 H^*(G;\FF_p) &\cong&\big(\FF_p[t_2,\ldots ,t_k]\otimes\Lambda[e_2,\ldots ,e_k]\big)\otimes\big(\FF_p[t_1]\otimes\Lambda[e_1]\big)\nonumber \\
              &\cong&\big(\FF_p[t_2,\ldots ,t_k]\otimes\Lambda[e_2,\ldots ,e_k]\big)\otimes H^*(H;\FF_p).\nonumber
\end{eqnarray}
The complement of the arrangement $S(\RR^{dp^k}){\setminus} D_{\mathcal{B}_{H,d}}$ is a fixed point free space with respect to the action of the group $G$.
Indeed, the link of the arrangement $D_{\mathcal{B}_{H,d}}$ contains the $H$-fixed point set $S(\RR^{dp^k})^H$ of the sphere $S(\RR^{dp^k})$.
Now the cohomological characterization of the fixed point set for the action of an elementary abelian group \cite[Proposition 3.14, page 196]{tDieck} implies that
\[
\Index{G}(S(\RR^{dp^k}){\setminus} D_{\mathcal{B}_{H,d}};\FF_p)\neq 0.
\]
Consequently, Lemma~\ref{lem:RelIndexComplement} implies that
\begin{compactitem}
\item $\partial_{N+1}(E_{H,d})\neq 0$ or $\partial_{N+1}(T_{H,d})\neq 0$ for $p>2$, and
\item $\partial_{N+1}(T_{H,d})\neq 0$ for $p=2$.
\end{compactitem}
Thus the first part of the theorem is proved for all $p$, while the second one is proved only for $p=2$.

\medskip

In order to complete the proof of the second part of the theorem we need to prove that $\partial_{N+1}(E_{H,d})\neq 0$.
In the case $p>2$ the second inclusion of~\eqref{eq:IndexIn-1} gives 
\[
\langle \partial_{N+1}(E_{H,d}),\partial_{N+1}(T_{H,d})  \rangle
= \Index{G}(S(\RR^{dp^k}){\setminus} D_{\mathcal{B}_{H,d}};\FF_p)\supseteq \Index{GS}(S(V_{H,d}^{\perp});\FF_p)=\langle \zeta_H^d\rangle .
\]
The class $\zeta_H^d\in\FF_p[t_1,\ldots,t_k]$ is a polynomial and belongs to the index $\Index{G}(S(\RR^{dp^k}){\setminus} D_{\mathcal{B}_{H,d}};\FF_p)$.
Therefore, the relation~\eqref{eq:RelGen} implies that
\begin{compactitem}
\item $\partial_{N+1}(T_{H,d})\in \FF_p[t_1,\ldots,t_k]$ is a polynomial,
\item $\partial_{N+1}(T_{H,d})$ divides $\zeta_H^d$, and
\item $\partial_{N+1}(E_{H,d})\cdot t_H=\partial_{N+1}(T_{H,d})\cdot e_H$.
\end{compactitem}
Since the multiplication by $e_H$ on the set of polynomials in $E^{*,0}_{N+1}=H^*(G;\FF_p)$ is an injection, the assumption $\partial_{N+1}(E_{H,d})=0$ yields the following contradiction:
\[
0\neq \partial_{N+1}(T_{H,d})\cdot e_H = \partial_{N+1}(E_{H,d})\cdot t_H=0.
\]
Thus, $\partial_{N+1}(E_{H,d})\neq 0$ and Theorem \ref{Th:IndexN_plus_3} is proved.

%-----------------------------------------------------------------------------------%
\section{A few applications}
\label{Sec:App}
 
In this section we present four applications of the methods we have developed.
Further applications and extensions of the results below can be found in \cite{blagojevic-luck-ziegler-2} and \cite{blagojevic-cohen-luck-ziegler}.

\subsection{The Nandakumar \& Ramana Rao conjecture}
\label{Sec:App-NRR}

The problem of Nandakumar \& Ramana Rao, posed in 2006 in~\cite{Nandakumar06}, 
can be stated as follows.

\begin{conjecture}[Nandakumar \& Ramana Rao]
  For a given planar convex body $K$ and any natural number $n>1$ there exists a
  partition of the plane into $n$ convex pieces $P_1,\ldots ,P_n$ such that
  \[
  \mathrm{area}(P_1\cap K)=\cdots =\mathrm{area}(P_n\cap K) \quad \text{and}
  \quad \mathrm{perimeter}(P_1\cap K)=\cdots =\mathrm{perimeter}(P_n\cap K).
  \]
\end{conjecture}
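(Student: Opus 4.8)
The plan is to reduce the conjecture, for a fixed $n$, to a non-existence statement for $\Sym_n$-equivariant maps and then to feed in the index computations of this paper. First I would parametrize convex equipartitions of $K$ into $n$ pieces of equal area by the configuration space $F(\RR^2,n)$: to a generic tuple of ``sites'' $(x_1,\dots,x_n)\in F(\RR^2,n)$ one associates the power (weighted Voronoi) diagram of the $x_i$ with the unique normalized weight vector for which all $n$ cells carry $\tfrac{1}{n}$ of the area of $K$; equivalently, one uses the optimal transport map from the area measure on $K$ to the uniform measure on $\{x_1,\dots,x_n\}$. After a suitable $\Sym_n$-equivariant compactification of $F(\RR^2,n)$ and of the space of such partitions, the assignment ``$(x_i)\mapsto$ vector of perimeters of the cells, projected to the hyperplane $W_n=\{a\in\RR^n:\sum a_i=0\}$'' is $\Sym_n$-equivariant, and a zero of this map is exactly a partition as demanded by the conjecture. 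Hence the conjecture for $n$ follows once one knows that there is \emph{no} $\Sym_n$-equivariant map $F(\RR^2,n)\to S(W_n)$; by the monotonicity of the Fadell--Husseini index, this follows whenever the inclusion $\Index{\Sym_n}(S(W_n);R)\subseteq\Index{\Sym_n}(F(\RR^2,n);R)$ fails after restriction to a convenient subgroup and coefficient ring $R$.

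\textbf{The prime case.} For $n=p$ prime I would restrict the putative map to $\ZZ/p\le\Sym_p$ acting on the free $\ZZ/p$-space $F(\RR^2,p)$ by cyclic shift. Theorem~\ref{Th:IndexF(X,p)} with $d=2$ gives $\Index{\ZZ/p}(F(\RR^2,p);\FF_p)=H^{\ge p}(\ZZ/p;\FF_p)$, so the index is concentrated in degrees $\ge p$. On the other hand $S(W_p)$ is a free $\ZZ/p$-sphere of dimension $p-2$ with $\Index{\ZZ/p}(S(W_p);\FF_p)=\langle e(W_p)\rangle=H^{\ge p-1}(\ZZ/p;\FF_p)$, where the Euler class $e(W_p)$ is a nonzero class of degree $p-1$. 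Since $p-1<p$, the class $e(W_p)$ lies in $\Index{\ZZ/p}(S(W_p);\FF_p)$ but not in $\Index{\ZZ/p}(F(\RR^2,p);\FF_p)$; the required inclusion fails, so there is no $\ZZ/p$-equivariant and hence no $\Sym_p$-equivariant map $F(\RR^2,p)\to S(W_p)$, and the conjecture holds for every prime $n=p$.

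\textbf{Prime powers.} For $n=p^k$ I would instead restrict to a $p$-Sylow subgroup $\Sym_n^{(p)}$, which contains the regular copy of $(\ZZ/p)^k$. Now the role of Theorem~\ref{Th:IndexF(X,p)} is played by the estimates of Sections~\ref{Sec:FH-Index-I}--\ref{Sec:FH-Index-II}: Theorem~\ref{Th:Estimate-IndexF(X,p_upper_k)} together with the Euler-class argument behind item~(3) of the introduction (compare Theorem~\ref{Th:IndexN_plus_3}) shows that the Euler class $e(W_n)$, of degree $n-1=(d-1)(n-1)$ for $d=2$, does \emph{not} lie in $\Index{\Sym_n^{(p)}}(F(\RR^2,n);\ZZ)$, whereas $e(W_n)$ generates $\Index{\Sym_n^{(p)}}(S(W_n);\ZZ)$. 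As before this rules out a $\Sym_n^{(p)}$-equivariant and hence $\Sym_n$-equivariant map $F(\RR^2,n)\to S(W_n)$, and the conjecture follows for every prime power $n=p^k$.

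\textbf{The main obstacle: composite $n$.} The hard part is $n$ that is not a prime power, the smallest instance being $n=6$. The naive two-stage reduction to prime powers---cut $K$ into $a$ equal-area, equal-perimeter pieces, then cut each piece into $b$ such pieces---breaks down, because perimeter, unlike area, is not additive over a convex subdivision: the total length of the internal cuts used inside different first-stage pieces need not agree, so equalizing perimeters within each piece does not equalize them globally. At the same time the equivariant input is genuinely unavailable: the features exploited throughout this paper---transfer over a single prime, detection of obstructions on elementary abelian subgroups---have no analogue for $\Sym_n$ when $n$ has at least two prime factors, and one should expect, as is known to happen for closely related configuration-space problems, that the inclusion $\Index{\Sym_n}(S(W_n);R)\subseteq\Index{\Sym_n}(F(\RR^2,n);R)$ may actually hold, so that the configuration-space/test-map scheme carries no obstruction at all. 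Settling the conjecture in full would therefore require either a new equivariant obstruction for non-$p$-group subgroups of $\Sym_n$, or an approach that bypasses the configuration space entirely; accordingly, what the present methods yield is the conjecture for all prime powers $n$, and the general case is left open.
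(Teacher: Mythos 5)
Your reduction of the conjecture to the non-existence of a $\Sym_n$-equivariant map $F(\RR^2,n)\to S(W_n)$ is the same one the paper uses, except that the paper does not re-derive it: it quotes Karasev, Hubard \& Aronov and \cite{B-Z-New} for the statement that this non-existence implies both the original and the generalized conjecture. Your prime case then coincides with the paper's proof of Theorem~\ref{th:GNRR}: restrict to the cyclic subgroup $\ZZ/p\le\Sym_p$, use Theorem~\ref{Th:IndexF(X,p)} with $d=2$ to get $\Index{\ZZ/p}(F(\RR^2,p);\FF_p)=H^{\geq p}(\ZZ/p;\FF_p)$, observe that the $(p-2)$-dimensional free sphere $S(W_p)$ has index containing a nonzero class of degree $p-1$, and contradict monotonicity. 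Up to the cosmetic difference that the paper exhibits the class $t^{(d-1)(p-1)/2}$ rather than invoking the Euler class by name, this part is correct and identical to the paper.

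The prime-power paragraph, however, contains a genuine gap: the conclusion you want does not follow from the results you cite. Theorem~\ref{Th:Estimate-IndexF(X,p_upper_k)} only says that $\Index{(\ZZ/p)^k}(F(\RR^2,p^k);\FF_p)$ vanishes below degree $(d-1)(p^k-p^{k-1})+1$, and Theorem~\ref{Th:IndexN_plus_3} produces a nonzero partial-index element exactly in that degree; both statements concern the elementary abelian subgroup $(\ZZ/p)^k$ with $\FF_p$ coefficients and live far below the degree $(d-1)(p^k-1)$ of $e\bigl(W_n^{\oplus(d-1)}\bigr)$, so they give no information about whether that Euler class lies in the index, and in particular they cannot deliver the failure of monotonicity you assert for $\Sym_n^{(p)}$ with $\ZZ$ coefficients. (That restriction to small subgroups can lose the obstruction entirely is illustrated by Theorem~\ref{th:ExZ/n}: for $n=p^k$, $k>1$, a $\ZZ/n$-equivariant map usually does exist.) The assertion in item (3) of the introduction is obtained by a different route: the primary equivariant obstruction is identified with the Euler class of the flat bundle (Lemma~\ref{lem:Euler_and_o}), the computation of \cite{B-Z-New} (its Theorem~1.2 and Corollary~4.3) is imported to see that for $n=p^k$ this obstruction is a nonzero element of a group isomorphic to $\ZZ/p$, and it is restricted nontrivially to the Sylow subgroup by the transfer argument appearing in the proof of Theorem~\ref{the:estimate_for_cat}. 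So the prime-power case of the conjecture needs that external obstruction computation (or Karasev's argument) and is not a consequence of this paper's index estimates; accordingly the paper claims a new proof of the Nandakumar--Ramana Rao conjecture only for $n$ a prime. Your closing assessment of the composite case is accurate.
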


Nandakumar \& Ramana Rao~\cite{NandaKumarRamanaRao12} gave the answer for $n=2$,
relying on the intermediate value theorem.  The case of $n=3$ was resolved by
B\'ar\'any, Blagojevi\'c \& Sz\H{u}cs~\cite{BaranyBlagojevicSzuecs} using more
advanced topological methods.

Let $\conve(\RR^d)$ denotes the metric space of all $d$-dimensional convex bodies in $\RR^d$ with
the Hausdorff metric.  The conjecture of Nandakumar \& Ramana Rao can be
naturally generalized in the following way.

\begin{conjecture}[Generalized Nandakumar \& Ramana Rao]
  For a given convex body $K$ in $\RR^d$, an absolutely continuous
  probability measure $\mu$ on $\RR^d$, any natural number $n>1$ and any $n-1$
  continuous functions $\varphi_1,\ldots
  ,\varphi_{n-1}\colon\conve(\RR^d)\to\RR$, there exists a partition of $\RR^d$
  into $n$ convex pieces $P_1,\ldots ,P_n$ such that
  \[
  \mu(P_1\cap K)=\cdots =\mu(P_n\cap K)
  \]
  and for every $i\in\{1,\ldots ,n-1\}$
  \[
  \varphi_i(P_1\cap K)=\cdots =\varphi_i(P_n\cap K).
  \]
\end{conjecture}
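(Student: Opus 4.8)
The plan is to prove the conjecture when $n=p^k$ is a prime power, by turning it into a Borsuk--Ulam type non-existence statement about $F(\RR^d,n)$ and then feeding in the Fadell--Husseini index calculations of Sections~\ref{Sec:FH-Index-I}--\ref{Sec:FH-Index-II}. First I would parametrize the $\mu$-equipartitions by point configurations, following the optimal transport / power diagram approach used for $n=3$ by B\'ar\'any, Blagojevi\'c \& Sz\H{u}cs~\cite{BaranyBlagojevicSzuecs} and in general by Karasev~\cite{Karasev:genus-and-category}: to a tuple of pairwise distinct sites $x=(x_1,\dots,x_n)\in F(\RR^d,n)$ one attaches weights $w_1(x),\dots,w_n(x)$, unique up to a common additive constant, so that the power cells
\[
P_j(x)=\{\,y\in\RR^d : \|y-x_j\|^2-w_j\le\|y-x_i\|^2-w_i\ \text{ for all }i\,\},\qquad j=1,\dots,n,
\]
are convex, tile $\RR^d$, and satisfy $\mu(P_1(x)\cap K)=\cdots=\mu(P_n(x)\cap K)$. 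Since $x\mapsto(P_1(x),\dots,P_n(x))$ is continuous and $\Sym_n$-equivariant, the measure constraint of the conjecture is met automatically along this map, and the whole statement reduces to finding a configuration $x$ at which, in addition, each of the $n-1$ functions $\varphi_i$ takes one common value on $P_1(x)\cap K,\dots,P_n(x)\cap K$.

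Next I would set up the test map. Let $W_n=\{a\in\RR^n:\sum_j a_j=0\}$ carry the standard $\Sym_n$-action, and for $i\in\{1,\dots,n-1\}$ let $\Phi_i(x)\in W_n$ be the orthogonal projection of $(\varphi_i(P_1(x)\cap K),\dots,\varphi_i(P_n(x)\cap K))\in\RR^n$. Then $\Phi:=(\Phi_1,\dots,\Phi_{n-1})\colon F(\RR^d,n)\to W_n^{\oplus(n-1)}$ is $\Sym_n$-equivariant, and its zeros are precisely the configurations demanded by the conjecture. Assume for contradiction that $\Phi$ never vanishes; then $\Phi/\|\Phi\|$ is an $\Sym_n$-equivariant map $F(\RR^d,n)\to S(W_n^{\oplus(n-1)})$. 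For $n=p$, restrict it to the subgroup $\ZZ/p\le\Sym_p$ acting on $F(\RR^d,p)$ by cyclic shift; by monotonicity of the Fadell--Husseini index (Section~\ref{subsec:FH-indexDef}) this forces $\Index{\ZZ/p}(S(W_p^{\oplus(n-1)});\FF_p)\subseteq\Index{\ZZ/p}(F(\RR^d,p);\FF_p)$. The left-hand ideal is generated by the mod~$p$ Euler class $e(W_p^{\oplus(n-1)})=e(W_p)^{n-1}$ of the associated flat bundle (cf.~\cite[Proposition~3.11]{B-Z}), a nonzero class of degree $(n-1)^2$, while by Theorem~\ref{Th:IndexF(X,p)} the right-hand ideal is $H^{\ge(d-1)(p-1)+1}(\ZZ/p;\FF_p)$, which contains no class of degree $\le(d-1)(p-1)=(d-1)(n-1)$. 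Hence we reach a contradiction --- and prove the conjecture --- whenever $(n-1)^2\le(d-1)(n-1)$, i.e.\ whenever $n\le d$; and, more generally, if only $m\le d-1$ of the functions $\varphi_i$ are imposed the target is $W_n^{\oplus m}$ of Euler degree $m(n-1)\le(d-1)(n-1)$ and the argument is unconditional. In particular this reproves the planar Nandakumar \& Ramana Rao conjecture ($d=2$, one function) for every prime $n=p$.

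For $n=p^k$ the same scheme applies with a $p$-Sylow subgroup $\Sym_n^{(p)}\le\Sym_n$ in place of $\ZZ/p$, and with Theorem~\ref{Th:IndexF(X,p)} replaced by the Euler-class obstruction identified in Section~\ref{subsec:Fadell--Huesseini_index_intro} (proved via~\cite[Section~4]{B-Z-New}): the primary obstruction to an $\Sym_n^{(p)}$-equivariant map $F(\RR^d,n)\to S(W_n^{\oplus(d-1)})$ is the nonzero Euler class, of degree $(d-1)(n-1)$, of the flat bundle over $E\Sym_n^{(p)}\times_{\Sym_n^{(p)}}F(\RR^d,n)$, and this class does not belong to $\Index{\Sym_n^{(p)}}(F(\RR^d,n);\ZZ)$. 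This settles the $(d-1)$-function case of the conjecture for every prime power $n=p^k$; the full $n-1$-function statement would require ruling out the larger map $F(\RR^d,n)\to S(W_n^{\oplus(n-1)})$ and appears to lie beyond the present index computations (Theorems~\ref{Th:Estimate-IndexF(X,p_upper_k)} and~\ref{Th:IndexN_plus_3}, together with the splittings of Section~\ref{subsec:The_special_case_G_is_(Z/p)k}) when $n>d$.

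\medskip\noindent
The main obstacle is the \emph{geometric} reduction of the first step, not the topology, which is already in hand through Theorems~\ref{Th:IndexF(X,p)}, \ref{Th:Estimate-IndexF(X,p_upper_k)} and~\ref{Th:IndexN_plus_3}. One has to verify that equipartitioning weights exist and are unique up to translation for \emph{every} tuple of distinct sites (not merely generic ones), that $x\mapsto(P_1(x),\dots,P_n(x))$ remains continuous where some power cell degenerates, and that the assignment is $\Sym_n$-equivariant on all of $F(\RR^d,n)$, so that $\Phi$, and with it the index argument, is genuinely well defined. A secondary point of care is that $P_j(x)\cap K$ need not be a $d$-dimensional convex body --- it may be lower-dimensional or even empty --- while the $\varphi_i$ are only assumed continuous on $\conve(\RR^d)$; one must either extend the $\varphi_i$ continuously across such degenerate pieces or exclude them on the relevant part of $F(\RR^d,n)$ by a genericity-and-limit argument. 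No orientation-character subtlety enters over the fields $\FF_p$ --- and, in the $n=p^k$ case, after restriction to a $p$-Sylow subgroup, which acts orientation preserving on $S(W_n^{\oplus(d-1)})$ --- since there the determinant twists appearing in Theorems~\ref{Th:ModuleStructureOnCohomology} and~\ref{Th:IndexF(X,p)} are trivial.
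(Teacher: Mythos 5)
Your proposal follows essentially the same route as the paper: parametrize measure-equipartitions by power diagrams, reduce to the non-existence of an $\Sym_n$-equivariant map from $F(\RR^d,n)$ into a sphere of copies of $W_n$, and apply the Fadell--Husseini index computation of Theorem~\ref{Th:IndexF(X,p)} for $n=p$ prime (which is exactly Theorem~\ref{th:GNRR}), supplemented by the Euler-class obstruction of~\cite{B-Z-New} for $n=p^k$. The discrepancy you flagged --- your test map lands in $S(W_n^{\oplus(n-1)})$, and the index argument only kills it when at most $d-1$ of the functions are imposed (equivalently $n\le d$) --- is not a defect of your argument but an inconsistency in the statement you were handed: the ``$n-1$'' in the conjecture is a slip for ``$d-1$''. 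This is what the cited reduction in Section~\ref{Sec:App-NRR} actually asserts (``both conjectures would hold if there is no $\Sym_n$-equivariant map $F(\RR^d,n)\to S(W_n^{\oplus(d-1)})$''), it is what Theorem~\ref{th:GNRR} rules out, and it is forced by the dimension count: the $\Sym_n$-CW model $\mathcal{F}(d,n)$ of $F(\RR^d,n)$ has dimension $(d-1)(n-1)$ while $S(W_n^{\oplus m})$ has dimension $m(n-1)-1$, so the primary obstruction is calibrated precisely for $m=d-1$. For the original planar problem ($d=2$) this is one function, matching the area-and-perimeter formulation. Read with $d-1$ in place of $n-1$, your argument proves the conjecture for $n=p$ prime and for prime powers $n=p^k$, which is all the paper claims; for $n$ not a prime power the equivariant map $F(\RR^d,n)\to S(W_n^{\oplus(d-1)})$ actually exists (\cite{B-Z-New}, cf.\ Theorem~\ref{th:ExZ/n}), so these methods cannot settle the conjecture there. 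Your care over the continuity, equivariance, and degenerate-cell issues of the power-diagram parametrization is also warranted; the paper defers that entire geometric reduction to Karasev, Hubard--Aronov, and Blagojevi\'c--Ziegler rather than re-deriving it.
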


The next steps in solving both the original and the generalized Nandakumar \& Ramana
Rao conjecture were done first by Karasev~\cite{Karasev:equipartition}, Hubard \& Aronov
\cite{HubardAronov} and Blagojevi\'c \& Ziegler in~\cite{B-Z-New}.  They
observed that both conjectures would hold if there is no $\Sym_n$-equivariant
map $F(\RR^d,n)\to S(W_n^{\oplus(d-1)})$.  Here $W_n$ denotes the
$\Sym_n$-representation $\{(x_1,\ldots ,x_n)\in\RR^n : x_1+\cdots +x_n=0\}$
where the action is given by permuting the coordinates.

In this section, we offer a proof that both the original and the generalized
Nandakumar \& Ramana Rao conjecture holds for $n$ a prime number by proving the
following theorem.

\begin{theorem}
  \label{th:GNRR}
  Let $n=p$ be a prime number, $d\geq 2$ and $\ZZ/p$ the subgroup of $\Sym_p$
  generated by the permutation $(12\ldots p)$.  Then there is no
  $\ZZ/p$-equivariant map
  \[
  F(\RR^d,p)\to S(W_p^{\oplus(d-1)}).
  \]
  Consequently, there is no $\Sym_p$-equivariant map $F(\RR^d,p)\to S(W_p^{\oplus(d-1)})$.
\end{theorem}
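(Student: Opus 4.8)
The plan is to play the Fadell--Husseini index of $S(W_p^{\oplus(d-1)})$ against that of $F(\RR^d,p)$ via the monotonicity property. If a $\ZZ/p$-equivariant map $F(\RR^d,p)\to S(W_p^{\oplus(d-1)})$ existed, monotonicity would give
\[
\Index{\ZZ/p}\bigl(S(W_p^{\oplus(d-1)});\FF_p\bigr)\ \subseteq\ \Index{\ZZ/p}\bigl(F(\RR^d,p);\FF_p\bigr),
\]
and by Theorem~\ref{Th:IndexF(X,p)} the right-hand side is $H^{\geq(d-1)(p-1)+1}(\ZZ/p;\FF_p)$, so it contains no nonzero class of cohomological degree $\leq (d-1)(p-1)$. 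Thus it suffices to produce a nonzero element of $\Index{\ZZ/p}(S(W_p^{\oplus(d-1)});\FF_p)$ of degree at most $(d-1)(p-1)$.

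Next I would compute that index directly. The restriction of $W_p$ to $\ZZ/p$ is the reduced regular representation, which has no nonzero fixed vectors for any nontrivial element of $\ZZ/p$; hence $\ZZ/p$ acts freely on $S(W_p)$, and therefore on $S(W_p^{\oplus(d-1)})$. For a free linear action on a sphere the index is the principal ideal generated by the Euler class of the associated vector bundle (\cite[Proposition~3.11]{B-Z}), so by multiplicativity of the Euler class
\[
\Index{\ZZ/p}\bigl(S(W_p^{\oplus(d-1)});\FF_p\bigr)=\bigl\langle e\bigl(W_p^{\oplus(d-1)}\bigr)\bigr\rangle=\bigl\langle e(W_p)^{d-1}\bigr\rangle .
\]
The class $e(W_p)\in H^{p-1}(\ZZ/p;\FF_p)$ was computed by Mann and Milgram (\cite{Mann-Milgram}; this is the case $k=1$ of the formula recalled in Section~\ref{Sec:Sub-5}): it equals $t$ when $p=2$ and a nonzero scalar multiple of $t^{(p-1)/2}$ when $p$ is odd (note that $p-1$ is even and $\deg t=2$ in the odd case, so $t^{(p-1)/2}$ indeed has degree $p-1$). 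In all cases $e(W_p)^{d-1}$ is a nonzero element of $H^{(d-1)(p-1)}(\ZZ/p;\FF_p)$.

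Now the two computations are incompatible: the generator $e(W_p)^{d-1}$ of $\Index{\ZZ/p}(S(W_p^{\oplus(d-1)});\FF_p)$ sits in cohomological degree exactly $(d-1)(p-1)$, which is strictly below $(d-1)(p-1)+1$, the first degree in which $\Index{\ZZ/p}(F(\RR^d,p);\FF_p)$ is nonzero. Hence the inclusion that a $\ZZ/p$-map would force cannot hold, and there is no $\ZZ/p$-equivariant map $F(\RR^d,p)\to S(W_p^{\oplus(d-1)})$. For the $\Sym_p$-statement one argues by restriction: a hypothetical $\Sym_p$-equivariant map would, after restricting the actions along $\ZZ/p\hookrightarrow\Sym_p$ (and noting that $W_p$ restricted to $\ZZ/p$ is exactly the representation used above), give a $\ZZ/p$-equivariant map between the same two spaces, contradicting what was just shown. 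The argument has no essential difficulty once Theorem~\ref{Th:IndexF(X,p)} is in hand; the only delicate point is the degree bookkeeping, which makes the comparison tight — the index of the sphere is generated exactly one degree below the first nonvanishing degree of the index of the configuration space, and it is precisely this single-degree gap that obstructs the map.
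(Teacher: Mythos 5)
Your proposal is correct and follows essentially the same route as the paper: both proofs invoke Theorem~\ref{Th:IndexF(X,p)}, monotonicity of the Fadell--Husseini index, and the observation that $\Index{\ZZ/p}(S(W_p^{\oplus(d-1)});\FF_p)$ contains a nonzero class in degree $(d-1)(p-1)$, one degree below where the configuration-space index begins. The only difference is in how the sphere's index is pinned down: you compute it exactly as the principal ideal on the Euler class $e(W_p)^{d-1}$ via \cite[Proposition~3.11]{B-Z} and Mann--Milgram, whereas the paper uses the slightly more elementary fact that the sphere is $\ZZ/p$-free and $((d-1)(p-1)-1)$-dimensional, so its Borel construction cohomology vanishes from degree $(d-1)(p-1)$ on; both yield the same contradiction.
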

\begin{proof}
  Let us assume that there exists a $\ZZ/p$-equivariant map $f\colon
  F(\RR^d,p)\to S(W_p^{\oplus(d-1)})$.  
  Then according to the Monotonicity property of the Fadell--Husseini index, Section \ref{subsec:FH-indexDef}, we have that
  \[
  \Index{\ZZ/p}(F(\RR^d,p);\FF_p)\supseteq \Index{\ZZ/p}(S(W_p^{\oplus(d-1)})).
  \]
  The index of the configuration space $F(\RR^d,p)$ was computed in 
  Theorem~\ref{Th:IndexF(X,p)} and
  \begin{equation*}
    \Index{\ZZ/p}(F(\RR^d,p);\FF_p)=H^{\geq (d-1)(p-1)+1}(\ZZ/p;\FF_p)=
    \left\{
      \begin{array}{ll}
        \langle t^{(d-1)(p-1)+1}\rangle , & \text{~for~}p=2, \\
        \langle e\,t^{\tfrac{(d-1)(p-1)}{2}}, t^{\tfrac{(d-1)(p-1)}{2}+1}\rangle , & \text{~for~}p\text{~odd}.
      \end{array}
    \right.
  \end{equation*}
  On the other hand, the sphere $S(W_p^{\oplus(d-1)})$ is a free $\ZZ/p$-space
  and therefore
  \[
  \EE\ZZ/p\times_{\ZZ/p} S(W_p^{\oplus(d-1)}) \simeq S(W_p^{\oplus(d-1)})/(\ZZ/p)
  \]
  and consequently
  \[
  H^{\ell}(\EE\ZZ/p\times_{\ZZ/p} S(W_p^{\oplus(d-1)}))=0\; \text{for all}\; \ell
  > \dim S(W_p^{\oplus(d-1)})=(d-1)(p-1)-1.
  \]
  Therefore,
  \begin{eqnarray*}
    \Index{\ZZ/p}(S(W_p^{\oplus(d-1)});\FF_p)  &    =    & 
   \ker\left(H^*(\BB\ZZ/p;\FF_p)\to H^*(\EE\ZZ/p\times_{\ZZ/p} S(W_p^{\oplus(d-1)});\FF_p)\right)\\
    &\supseteq& H^{\geq (d-1)(p-1)}(\BB\ZZ/p;\FF_p).
  \end{eqnarray*}
  In particular, $t^{\frac{(d-1)(p-1)}{2}}\in\Index{\ZZ/p}(S(W_p^{\oplus
    d-1});\FF_p)$ for $p>2$, and $t^{(d-1)(p-1)}\in\Index{\ZZ/p}(S(W_p^{\oplus
    d-1});\FF_p)$ for $p=2$.  This is a contradiction to the Monotonicity property
  of the Fadell--Husseini index since
  \[
  t^{\frac{(d-1)(p-1)}{2}}\notin \Index{\ZZ/p}(F(\RR^d,p);\FF_p)
  \]
  in case $p>2$ and for $p=2$
  \[
  t^{(d-1)(p-1)}\notin \Index{\ZZ/p}(F(\RR^d,p);\FF_p).
  \]
  Thus, for every $d\geq 2$ there is no $\ZZ/p$-equivariant map $F(\RR^d,p)\to
  S(W_p^{\oplus(d-1)})$.
\end{proof}

\subsection{The Lusternik--Schnirelmann category of unordered configuration spaces}
\label{Sec:LS}

In this section using the results from 
Section~\ref{subsec:Lusternik-Schnirelmann_Category} and~\cite{B-Z-New} we study the
Lusternik--Schnirelmann category of unordered configuration spaces
$F(\RR^d,n)/\Sym_n$ as well as sectional category of the covering $F(\RR^d,n)\to F(\RR^d,n)/\Sym_n$.

\begin{theorem}
  \label{th:LS-ConfigSpaces-Rd}
  \qquad
  \begin{compactenum}[\rm (1)]
  \item Let $p$ be an odd prime and $n=p^k$ for some $k\geq 1$.  Then for every
    $d\geq 2$
    \[
    \cat (F(\RR^d,n)/\Sym_n)=(d-1)(n-1).
    \]
  \item Let $n=2^k$ for some $k\geq 1$.  Then for every odd $d\geq 3$
    \[
    \cat (F(\RR^d,n)/\Sym_n)=(d-1)(n-1).
    \]
  \end{compactenum}
\end{theorem}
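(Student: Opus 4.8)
The plan is to deduce both parts from Theorem~\ref{the:estimate_for_cat}. Put $D:=(d-1)(n-1)$, let $W_n:=\{(x_1,\dots,x_n)\in\RR^n:\sum_i x_i=0\}$ with $\Sym_n$ permuting the coordinates, set $W:=W_n^{\oplus(d-1)}$ (so $\dim W=D$), and let $\Sym_n^{(p)}\le\Sym_n$ be a $p$-Sylow subgroup. The essential first input is a model for the configuration space of the correct dimension: by \cite[Section~4]{B-Z-New} the space $F(\RR^d,n)$ is $\Sym_n$-homotopy equivalent to a free connected $\Sym_n$-$CW$-complex $X$ with $\dim X=D=\dim W$; restricting, $X$ is also a free connected $\Sym_n^{(p)}$-$CW$-complex of dimension $D$. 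I would apply Theorem~\ref{the:estimate_for_cat} to the data $(\Sym_n^{(p)},X,W,p)$ to get $\cat(X/\Sym_n^{(p)})=D$, and then pass to the quotient by the full symmetric group.

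Two of the three hypotheses of Theorem~\ref{the:estimate_for_cat} are easy. The torsion condition holds because $\Sym_n^{(p)}$ is a $p$-group: in the Serre spectral sequence of the Borel construction $F(\RR^d,n)\to \EE\Sym_n^{(p)}\times_{\Sym_n^{(p)}}F(\RR^d,n)\to \BB\Sym_n^{(p)}$ with coefficients in $\pi_{D-1}(S(W))$, every $E_2$-entry in a row $r\ge 1$ is a group cohomology $H^r(\Sym_n^{(p)};-)$, hence annihilated by a power of $p$, while the single $r=0$ contribution in total degree $D$ is a subgroup of the free abelian group underlying $H^{D}(F(\RR^d,n);\pi_{D-1}(S(W)))$ (the cohomology of $F(\RR^d,n)$ is torsion-free by Theorem~\ref{Th:ModuleStructureOnCohomology}); thus the torsion of $H^{D}_{\Sym_n^{(p)}}(X;\pi_{D-1}(S(W)))$ is $p$-primary. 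For the orientation condition: a transposition acts on $W_n$ by a reflection, hence on $W$ with determinant $(-1)^{d-1}$; every permutation of $p$-power order with $p$ odd is even, so for odd $p$ the subgroup $\Sym_n^{(p)}\subseteq A_n$ acts orientation preservingly on $S(W)$ for all $d\ge 2$, whereas for $p=2$ the $2$-Sylow contains transpositions and acts orientation preservingly on $S(W)$ exactly when $d$ is odd --- which is exactly why part~(2) requires $d$ odd.

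The decisive hypothesis is the non-existence of a $\Sym_n^{(p)}$-equivariant map $X\to S(W)$; since $\dim X=\dim W=D$, this is equivalent to non-vanishing of the equivariant primary obstruction $\gamma^{\Sym_n^{(p)}}(X,S(W))\in H^{D}_{\Sym_n^{(p)}}(X;\pi_{D-1}(S(W)))$, which depends only on the $\Sym_n^{(p)}$-homotopy type and so coincides with $\gamma^{\Sym_n^{(p)}}(F(\RR^d,n),S(W))$. For $n=p$ one has $\Sym_p^{(p)}=\langle(12\cdots p)\rangle\cong\ZZ/p$ and the statement is precisely Theorem~\ref{th:GNRR}. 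For $n=p^k$ it is the third result announced in the introduction: combining \cite[Section~4]{B-Z-New} with the partial index computation of Theorem~\ref{Th:IndexN_plus_3}, the obstruction $\gamma^{\Sym_n^{(p)}}$ is identified with the (non-zero) Euler class of the flat vector bundle $W_n^{\oplus(d-1)}\to \EE\Sym_n^{(p)}\times_{\Sym_n^{(p)}}(W_n^{\oplus(d-1)}\times F(\RR^d,n))\to \EE\Sym_n^{(p)}\times_{\Sym_n^{(p)}}F(\RR^d,n)$.

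Granting these, Theorem~\ref{the:estimate_for_cat}(1) gives $\cat(X/\Sym_n^{(p)})=D$. Since $X/\Sym_n^{(p)}\to X/\Sym_n$ is a covering and $X/\Sym_n$ is a connected $CW$-complex of dimension $D$, Lemma~\ref{lem:LS-1}(2) and~(3) give
\[
D \;=\; \cat(X/\Sym_n^{(p)}) \;\le\; \cat(X/\Sym_n) \;\le\; \dim(X/\Sym_n) \;=\; D,
\]
so $\cat(X/\Sym_n)=D$, and homotopy invariance (Lemma~\ref{lem:LS-1}(1)) yields $\cat(F(\RR^d,n)/\Sym_n)=D=(d-1)(n-1)$. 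The point where real work is hidden is not the argument above but the two facts imported from \cite{B-Z-New}: that $F(\RR^d,n)$ admits a $D$-dimensional free equivariant $CW$-model, and (for $k\ge 2$) that the relevant Euler class is non-zero; once these are available the rest is the machinery assembled in Section~\ref{Sec:EqOb_Euler_LS-cat}.
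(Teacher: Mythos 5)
Your overall architecture matches the paper's: build a $(d-1)(n-1)$-dimensional free $\Sym_n$-CW model $\mathcal{F}(d,n)$ from \cite{B-Z-New}, verify the orientation condition for the $p$-Sylow subgroup (your determinant computation $\sgn(g)^{d-1}$ is exactly right, and correctly explains why part (2) needs $d$ odd), and then invoke Theorem~\ref{the:estimate_for_cat}. The difference is that you apply the theorem with $G=\Sym_n^{(p)}$, whereas the paper applies it with $G=\Sym_n$. Your choice makes the torsion hypothesis vacuous (the transfer-restriction composite in the theorem's proof is then $\id$), so your Serre-spectral-sequence argument for $p$-power torsion, while sound, is actually unnecessary -- but it shifts the entire burden onto the hypothesis that no $\Sym_n^{(p)}$-equivariant map $X\to S(W)$ exists, and this is where a real gap opens for $k\ge 2$.

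For $n=p$ this input is Theorem~\ref{th:GNRR}, fine. But for $n=p^k$ with $k\ge 2$ you attribute the non-vanishing of $\gamma^{\Sym_n^{(p)}}$ to Theorem~\ref{Th:IndexN_plus_3} plus the introduction's announcement. Theorem~\ref{Th:IndexN_plus_3} is a $(\ZZ/p)^k$-index computation, not an $\Sym_n^{(p)}$-index computation, and in fact the introduction explicitly says that the $\Sym_n^{(p)}$ result is \emph{not} obtained via spectral sequence calculations -- so that citation does not do the job. In the paper, the non-vanishing of $\gamma^{\Sym_n^{(p)}}$ for $k\ge 2$ is itself a \emph{consequence} of the transfer argument inside the proof of Theorem~\ref{the:estimate_for_cat}, applied to $G=\Sym_n$: the inputs from \cite{B-Z-New} are that no $\Sym_n$-equivariant map exists (Theorem~1.2 there, prime-power case) and that $H^{(d-1)(n-1)}_{\Sym_n}\bigl(\mathcal{F}(d,n);\pi_{(d-1)(n-1)-1}(S(W))\bigr)\cong\ZZ/p$ (Corollary~4.3 there), which supplies the torsion hypothesis on the nose. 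So quoting the $\Sym_n^{(p)}$-level statement as given is close to assuming what Theorem~\ref{the:estimate_for_cat} is designed to prove. The fix is small: run the argument with $G=\Sym_n$, take the weaker non-existence from \cite[Thm.~1.2]{B-Z-New}, take the torsion hypothesis from \cite[Cor.~4.3]{B-Z-New}, and let the transfer step inside Theorem~\ref{the:estimate_for_cat} produce $\gamma^{\Sym_n^{(p)}}\neq 0$ for you; then the conclusion $\cat(X/\Sym_n)=(d-1)(n-1)$ drops out directly, without needing your final covering-space comparison.
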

\begin{proof}
  The Lusternik--Schnirelmann category is a homotopy invariant, therefore
  instead of the configuration space $F(\RR^d,n)$ we can consider any
  $\Sym_n$-CW model.  Here we use the $(d-1)(n-1)$-dimensional model
  $\mathcal{F}(d,n)$ derived in~\cite{B-Z-New}.  It was proved in~\cite[Theorem~1.2]{B-Z-New} 
  that there exists an $\Sym_n$-equivariant map
  $\mathcal{F}(d,n)\to S(W_n^{\oplus(d-1)})$ if and only if $n$ is not a prime
  power.  Moreover, in~\cite[Corollary~4.3]{B-Z-New} the following equivariant
  cohomology group was calculated:
  \[
  H^{(d-1)(n-1)}_{\Sym_n}\big(\mathcal{F}(d,n); \pi_{(d-1)(n-1)-1}(S(W_n^{\oplus(d-1)}))\big)\ =\
  \begin{cases}
    \ZZ/p & \text{if} \; n=p^k\; \text{is a prime power},\\
    0 & \text{otherwise.}
  \end{cases}
  \]
  Since under both sets of assumptions (1) and (2) the action of the $p$-Sylow subgroup
  $\Sym_n^{(p)}$ preserves orientation on $S(W_n^{\oplus(d-1)})$, we can
  apply Theorem~\ref{the:estimate_for_cat}(1) and conclude the
  proof.
\end{proof}

\begin{corollary}
  \label{cor:LS-ConfigSpaces-1}
  Let $p$ be a prime, $M$ be any topological space and $f:\RR^d\to M$ be an injective
  continuous map.
  \begin{compactenum}[\rm (1)]
  \item If $p$ is an odd prime and $n=p^k$ for some $k\geq 1$, then for every
    $d\geq 2$
    \[
    \cat(F(M,n)/\Sym_n)\geq (d-1)(n-1).
    \]
  \item If $n=2^k$ for some $k\geq 1$, then for every odd integer $d\geq 3$
  \end{compactenum}
  \[
  \cat(F(M,n)/\Sym_n)\geq (d-1)(n-1).
  \]
\end{corollary}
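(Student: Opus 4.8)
The plan is to reduce the statement to Theorem~\ref{the:estimate_for_cat}(2) by producing an $\Sym_n$-equivariant map out of the small model $\mathcal{F}(d,n)$ of the configuration space into $F(M,n)$. First I would observe that the injection $f\colon\RR^d\to M$ induces, by applying $f$ in each coordinate, a map $F(f,n)\colon F(\RR^d,n)\to F(M,n)$, $(x_1,\dots,x_n)\mapsto(f(x_1),\dots,f(x_n))$; this lands in $F(M,n)$ precisely because $f$ is injective, it is continuous as the restriction of $f^{\times n}\colon(\RR^d)^n\to M^n$, and it is $\Sym_n$-equivariant since both configuration spaces carry the coordinate-permutation action. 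I would also record that $F(M,n)$ is a free $\Sym_n$-space, since a permutation fixing a tuple of pairwise distinct points must be the identity.

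Next I would recall from the proof of Theorem~\ref{th:LS-ConfigSpaces-Rd} the $(d-1)(n-1)$-dimensional free connected $\Sym_n$-$CW$ model $\mathcal{F}(d,n)$ together with its $\Sym_n$-homotopy equivalence $\mathcal{F}(d,n)\simeq_{\Sym_n}F(\RR^d,n)$, as well as the two facts already established there for $n=p^k$ a prime power: there is no $\Sym_n$-map $\mathcal{F}(d,n)\to S(W_n^{\oplus(d-1)})$, and $H^{(d-1)(n-1)}_{\Sym_n}(\mathcal{F}(d,n);\pi_{(d-1)(n-1)-1}(S(W_n^{\oplus(d-1)})))\cong\ZZ/p$, so that the only torsion in this group has $p$-power order. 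Composing the equivalence $\mathcal{F}(d,n)\xrightarrow{\simeq_{\Sym_n}}F(\RR^d,n)$ with $F(f,n)$ then gives an $\Sym_n$-equivariant map $h\colon\mathcal{F}(d,n)\to F(M,n)$, which is exactly the input needed for part (2) of the category theorem.

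Finally I would invoke Theorem~\ref{the:estimate_for_cat}(2) with $G=\Sym_n$, $X=\mathcal{F}(d,n)$, the $(d-1)(n-1)$-dimensional representation $W=W_n^{\oplus(d-1)}$, and $Y=F(M,n)$; under assumption (1) (with $p$ odd) or (2) (with $p=2$ and $d$ odd) the $p$-Sylow subgroup $\Sym_n^{(p)}$ acts orientation preservingly on $S(W_n^{\oplus(d-1)})$, so all hypotheses are satisfied and the conclusion $\cat(F(M,n)/\Sym_n)\geq(d-1)(n-1)$ follows. There is essentially no serious obstacle here: all the content sits in Theorems~\ref{the:estimate_for_cat} and~\ref{th:LS-ConfigSpaces-Rd}, and the only point worth a moment's attention is that the hypotheses of the category theorem are imposed on the source $X=\mathcal{F}(d,n)$ rather than on the target $Y=F(M,n)$, so no regularity, finiteness, or $CW$-structure assumption on $M$ is needed beyond the bare existence of the injection $f$, and the argument goes through for an arbitrary topological space $M$.
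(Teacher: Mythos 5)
Your proof is correct and matches the paper's approach exactly: the paper's proof likewise constructs the $\Sym_n$-equivariant map $f^n\colon F(\RR^d,n)\to F(M,n)$ from the injection $f$ and then directly invokes Theorem~\ref{the:estimate_for_cat}(2) together with the data established in the proof of Theorem~\ref{th:LS-ConfigSpaces-Rd}. You have merely spelled out in more detail the hypothesis checks (freeness of $F(M,n)$, the $p$-power torsion statement from \cite[Corollary~4.3]{B-Z-New}, and the orientation condition on the $p$-Sylow subgroup) that the paper leaves implicit.
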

\begin{proof}
  Since $f$ is an injective map it induces an $\Sym_n$-equivariant map
  $f^n:F(\RR^d,n)\to F(M,n)$ defined by
  \[
  f^n(x_1,\ldots , x_n):=(f(x_1),\ldots , f(x_n)).
  \]
  The claim of the corollary now follows from Theorems~\ref{the:estimate_for_cat}(2) and \ref{th:LS-ConfigSpaces-Rd}.
\end{proof}

\subsection{Existence of equivariant maps}
\label{Sec:Z/n}

In~\cite[Theorem~1.2]{B-Z-New}, it was proved that there exists an
$\Sym_n$-equivariant map $F(\RR^d,n)\to S(W_n^{\oplus(d-1)})$ if and only if $n$
is not a prime power.

Let $G:=\ZZ/n$ be the cyclic subgroup of $\Sym_n$ generated by the permutation
$(1~2\ldots n)$.  In this section we consider the question when there exists a
$\ZZ/n$-equivariant map $F(\RR^d,n)\to S(W_n^{\oplus(d-1)})$ where the action is
induced via the inclusion $\ZZ/n\to\Sym_n$.

\begin{theorem}
\label{th:ExZ/n}
Let $n\geq 2$ and $d\geq 2$ be integers.
Then a $\ZZ/n$-equivariant map $F(\RR^d,n)\to S(W_n^{\oplus(d-1)})$ exists if and only if $n$ is not a prime.
\end{theorem}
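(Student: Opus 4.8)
The statement is an equivalence, and the implication ``a $\ZZ/n$-equivariant map exists $\Rightarrow$ $n$ is not a prime'' is precisely the contrapositive of Theorem~\ref{th:GNRR}, so nothing is required there. For the converse, let $n\ge 2$ be composite. If $n$ is not a prime power, then \cite[Theorem~1.2]{B-Z-New} already produces an $\Sym_n$-equivariant map $F(\RR^d,n)\to S(W_n^{\oplus(d-1)})$, and restricting the action along $\ZZ/n\hookrightarrow\Sym_n$ gives the desired map. So the only remaining --- and the only substantial --- case is $n=p^k$ with $k\ge 2$, and the plan is to handle it by equivariant obstruction theory.

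For $n=p^k$, $k\ge 2$, I would replace $F(\RR^d,n)$ by the $(d-1)(n-1)$-dimensional $\Sym_n$-$CW$ model $\mathcal{F}(d,n)$ of \cite{B-Z-New}. Since $S(W_n^{\oplus(d-1)})$ is $\bigl((d-1)(n-1)-2\bigr)$-connected while $\dim(\mathcal{F}(d,n)/(\ZZ/n))=(d-1)(n-1)$, a $\ZZ/n$-equivariant map exists if and only if the single primary obstruction $\gamma\in H^{(d-1)(n-1)}_{\ZZ/n}\bigl(\mathcal{F}(d,n);\pi_{(d-1)(n-1)-1}(S(W_n^{\oplus(d-1)}))\bigr)$ vanishes, and by Lemma~\ref{lem:Euler_and_o} this $\gamma$ is the equivariant Euler class $e(W_{p^k}^{\oplus(d-1)})$ viewed in $\Index{\ZZ/p^k}(F(\RR^d,p^k);\mathcal{Z})$. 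Two clean facts narrow the problem. First, by naturality of the primary obstruction under restriction of groups, $\gamma$ is the image of the $\Sym_n$-obstruction, which by \cite[Corollary~4.3]{B-Z-New} lies in a group of order $p$; hence $p\gamma=0$. Second, the restriction of $\gamma$ to the unique subgroup $\ZZ/p\le\ZZ/p^k$ (and, by the same argument, to $\ZZ/p^{k-1}$) vanishes, because $W_{p^k}^{\oplus(d-1)}$ restricted to $\ZZ/p$ contains a nonzero trivial summand --- the $\ZZ/p$-action on $[p^k]$ has $p^{k-1}\ge 2$ orbits --- so $S(W_{p^k}^{\oplus(d-1)})$ has a $\ZZ/p$-fixed point and a constant map serves.

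It then remains to show that an order-$p$ class which restricts to zero on $\ZZ/p$ and on $\ZZ/p^{k-1}$ is actually zero; a transfer argument alone does not suffice here (the relevant indices are $p$-fold), so I would compute the pertinent part of the Fadell--Husseini index $\Index{\ZZ/p^k}(F(\RR^d,p^k);\cdot)$ directly, along the lines of Theorem~\ref{Th:IndexF(X,p)}. That means running the Serre spectral sequence of $F(\RR^d,p^k)\to \EE\ZZ/p^k\times_{\ZZ/p^k}F(\RR^d,p^k)\to\BB\ZZ/p^k$ with the $\FF_p[\ZZ/p^k]$-module structure on $H^*(F(\RR^d,p^k);\FF_p)$ supplied by Theorems~\ref{Th:EqGM} and \ref{Th:ModuleStructureOnCohomology} --- in particular the structure of its top nonvanishing group as a tensor-induced module built from $\tilde{H}_{p-3}(\bar{\Pi}_p;\FF_p)$ (cf.~\eqref{eq:TensorInduced} and Corollary~\ref{cor:RepresPi_p}) --- and reading off that the Euler class $e(W_{p^k}^{\oplus(d-1)})$ is killed by the relevant differential, i.e.\ lies in the index.

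The hard part will be exactly this last computation, and it is genuinely more delicate than the cases treated earlier in the paper. The differential-vanishing machinery of Theorem~\ref{Th:DiffSSSeq-EAb} does not apply to $\ZZ/p^k$ for $k\ge 2$: condition~\eqref{eq:Intersection_Res_and_SG} fails for a rank-one cyclic $p$-group (the polynomial generator $t$ does \emph{not} restrict to zero on $\ZZ/p^{k-1}$, and the classes that do form the exterior ideal $(e)$, which meets $S_G$ only in $0$), so the spectral sequence must be analysed by hand. Moreover $\FF_p$-coefficients do not suffice: already for $p=2$, $k=2$, $d=2$ the mod-$2$ Euler class $e(W_4)$ is the nonzero element of $H^3(\ZZ/4;\FF_2)$, so one must carry out the twisted integral computation and control the interplay between the $\FF_p$- and the $\mathcal{Z}$-coefficient indices via the Bockstein sequence --- equivalently, show that the $p$-torsion class $e(W_{p^k}^{\oplus(d-1)})$ dies in the top cohomology $H^{(d-1)(n-1)}\bigl(F(\RR^d,n)/(\ZZ/n);\mathcal{Z}\bigr)$, for which the explicit top cells of $\mathcal{F}(d,n)/(\ZZ/n)$, or Poincar\'e--Lefschetz duality on the open manifold $F(\RR^d,n)$, give the handle.
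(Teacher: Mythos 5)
Your handling of the prime case (via Theorem~\ref{th:GNRR}) and of the non-prime-power case (restriction of the $\Sym_n$-map from~\cite{B-Z-New}) matches the paper. Your setup for $n=p^k$, $k\ge 2$ --- passing to the model $\mathcal{F}(d,n)$, identifying the single primary obstruction $\gamma$ with the equivariant Euler class via Lemma~\ref{lem:Euler_and_o}, observing $p\gamma=0$, and observing that $\gamma$ dies on proper subgroups because $S(W_{p^k}^{\oplus(d-1)})$ has fixed points for them --- is also sound and corresponds to what the paper does as groundwork. You are also right that Theorem~\ref{Th:DiffSSSeq-EAb} and condition~\eqref{eq:Intersection_Res_and_SG} genuinely fail for $\ZZ/p^k$ with $k\ge 2$, and that $\FF_p$-coefficients cannot see the full answer (the $p=2$, $k=2$, $d=2$ example is to the point).

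The gap is that you stop exactly where the proof has to deliver: you propose ``compute the pertinent part of $\Index{\ZZ/p^k}(F(\RR^d,p^k);\cdot)$ directly \ldots\ and read off that the Euler class $e(W_{p^k}^{\oplus(d-1)})$ lies in the index,'' and then immediately flag this as ``the hard part'' without carrying it out. For the statement to be proved, this is the statement; asserting that one would do a twisted integral spectral sequence plus Bockstein bookkeeping is a program, not an argument, and since you have correctly diagnosed that the tools developed earlier in the paper do not apply, nothing in the proposal substitutes for it.

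The paper's proof avoids the Fadell--Husseini computation entirely at this step. It first isolates the cases where $M=(d-1)(n-1)$ is even (all $d$ for $p$ odd with the single exception $(n,d)=(9,2)$; $d$ odd for $p=2$), so that $\pi_{M-1}(S(W_n^{\oplus(d-1)}))\cong\ZZ$ is a trivial $\ZZ/n$-module, identifies $\gamma$ with $e(W_n^{\oplus(d-1)})$, passes from $\ZZ$ to $\ZZ/n$ coefficients (an isomorphism in even positive degree), and then invokes the classification of nonvanishing Euler classes for cyclic-group representations due to Izydorek and Marzantowicz to conclude $e_{\ZZ/n}(W_n^{\oplus(d-1)})=0$. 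The leftover cases --- $(9,2)$ and $n=2^k$ with $d$ even --- are handled by a separate and quite different observation (statement~(v) of the paper's proof): the equivariant obstruction cocycle for $\mathcal{F}(d,n)$ computed in~\cite{B-Z-New} gives a system of integer linear equations whose solvability is independent of $d\ge 2$, so existence for one $d$ implies existence for all $d$. Your proposal does not anticipate either ingredient, in particular not the $d$-independence step, which is essential because the Euler-class argument, on its own, does not cover every $(n,d)$.

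So: the two directions and the non-prime-power case are fine and agree with the paper; the prime-power case is set up reasonably but left open, and that is precisely where the content of the theorem lives. To close the gap you would either need to actually carry out the $\Index{\ZZ/p^k}$ computation with twisted integral coefficients for all $k\ge 2$ and all $d\ge 2$ --- which the paper does not do and which looks considerably harder than the two earlier index theorems --- or replace it by the paper's route through the Izydorek--Marzantowicz vanishing theorem combined with the $d$-independence of the obstruction cocycle.
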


\begin{proof}
We will prove the following claims: 
\begin{compactenum} %[\rm (1)]
  \item If $n=p$ is a prime, then there is no $\ZZ/n$-equivariant map $F(\RR^d,n)\to S(W_n^{\oplus(d-1)})$;
  \item if $n$ is not a prime power, then a $\ZZ/n$-equivariant map $F(\RR^d,n)\to S(W_n^{\oplus(d-1)})$ exists;
  \item if $n=p^k$ is a power of an odd prime $p$ with $k>1$, and $(n,d)\neq (9,2)$, then a $\ZZ/n$-equivariant map $F(\RR^d,n)\to S(W_n^{\oplus(d-1)})$ exists;
  \item if $n=2^k$ for $k>1$, and $d>1$ is odd, then a $\ZZ/n$-equivariant map $F(\RR^d,n)\to S(W_n^{\oplus(d-1)})$ exists;
  \item if $d,d'\,{\geq}\,2$, then a $\ZZ/n$-equivariant map $F(\RR^d,n)\to S(W_n^{\oplus(d-1)})$ exists if and only if 
			a $\ZZ/n$-equivariant map $F(\RR^{d'},n)\to S(W_n^{\oplus(d'-1)})$ exists.
\end{compactenum}
The last claim implies the existence of $\ZZ/n$-equivariant maps in the ``prime power'' cases not covered by the claims (3) and (4), and 
thus completes the proof of the theorem.
\smallskip

\noindent\textrm{(i)} This is the content of Theorem~\ref{th:GNRR}.

\noindent\textrm{(ii)} Let us assume that $n$ is not a prime power and denote $M:=(d-1)(n-1)$.
There exists an $M$-dimensional $\ZZ/n$-CW model $\mathcal{F}(d,n)$ of the configuration space $F(\RR^d,n)$ derived in~\cite{B-Z-New}. 
Therefore it suffices to prove the existence of a $\ZZ/n$-equivariant map $\mathcal{F}(d,n)\to S(W_n^{\oplus(d-1)})$.
Since
  \begin{compactitem}
  \item $\mathcal{F}(d,n)$ is an $M$-dimensional free $\ZZ/n$-CW complex,
  \item the dimension of the $\ZZ/n$-sphere $S(W_n^{\oplus(d-1)})$ is $M-1$, and
  \item the sphere $S(W_n^{\oplus(d-1)})$ is $(M-1)$-simple and
    $(M-2)$-connected,
  \end{compactitem}
the existence of a $\ZZ/n$-equivariant map $\mathcal{F}(d,n)\to S(W_n^{\oplus(d-1)})$ is equivalent to the vanishing of the primary equivariant obstruction element
  \[
  \gamma^{\ZZ/n}:=\gamma^{\ZZ/n}(\mathcal{F}(d,n),S(W_n^{\oplus(d-1)}))\in\
  H^{M}_{\ZZ/n}\big(\mathcal{F}(d,n); \pi_{M-1}(S(W_n^{\oplus(d-1)}))\big).
  \]
For each subgroup $H\leq \ZZ/n$ the restriction $\res (\gamma^{\ZZ/n})=\gamma^H$ is the primary equivariant obstruction element with respect to the group $H$.  
Since for each non-trivial subgroup $H\leq \ZZ/n$ the set of $H$ fixed points $S(W_n^{\oplus(d-1)})^H$ of the sphere $S(W_n^{\oplus(d-1)})$ is non-empty, 
we have $\res (\gamma^{\ZZ/n})=\gamma^H = 0$.
Consequently, for each non-trivial subgroup $H\leq \ZZ/n$
  \begin{equation}
    \label{eq:SubResTr}
    [\ZZ/n:H]\cdot\gamma^{\ZZ/n}= \trf\circ\res (\gamma^{\ZZ/n}) = 0.
  \end{equation}
The restriction and transfer we use here are considered in Lemma~\ref{lem:transfer}.  
Thus, the equivariant obstruction element vanishes, i.e., $\gamma^{\ZZ/n}=0$.  
Therefore, for $n$ not a prime power there exists a $\ZZ/n$-equivariant map $F(\RR^d,n)\to S(W_n^{\oplus(d-1)})$.

\noindent\textrm{(iii)-(iv)} As in the previous case, the existence of the $\ZZ/n$-equivariant map $F(\RR^d,n)\to S(W_n^{\oplus(d-1)})$ is completely 
determined by the primary obstruction
  \[
  \gamma^{\ZZ/n}:=\gamma^{\ZZ/n}(\mathcal{F}(d,n),S(W_n^{\oplus(d-1)}))\in\
  H^{M}_{\ZZ/n}\big(\mathcal{F}(d,n); \pi_{M-1}(S(W_n^{\oplus(d-1)}))\big).
  \]
In all the cases we consider $M$ is even and the coefficient $\ZZ/n$-module $\pi_{M-1}(S(W_n^{\oplus(d-1)}))\cong\ZZ$ is trivial. 
Thus, by Lemma~\ref{lem:SW_and_o}, the primary obstruction $\gamma^{\ZZ/n}$ coincides with the Euler class $e(\xi_{\ZZ/n})$ of the vector bundle $\xi_{\ZZ/n}$:
\[
W_n^{\oplus(d-1)}\rightarrow \mathcal{F}(d,n)\times_{\ZZ/n}W_n^{\oplus(d-1)}\rightarrow \mathcal{F}(d,n)/(\ZZ/n).
\]
The Euler class $e(\xi_{\ZZ/n})$ is the pullback of the Euler class $e(W_n^{\oplus(d-1)})\in H^{M}(\BB\ZZ/n;\ZZ)$ of the vector bundle
\[
W_n^{\oplus(d-1)}\rightarrow \EE\ZZ/n\times_{\ZZ/n}W_n^{\oplus(d-1)}\rightarrow \BB\ZZ/n
\]
along the classifying map $f\colon\mathcal{F}(d,n)\to\EE\ZZ/n$.
We will now show that in all cases covered by  (3) and (4) the element $e(\xi_{\ZZ/n})=f^*(e(W_n^{\oplus(d-1)}))$ vanishes, by proving that $e(W_n^{\oplus(d-1)})=0$.

For every cyclic group the reduction of coefficient ring homomorphism $\ZZ\to\ZZ/n$ induces an isomorphism
\[
H^{2i}(\ZZ/n;\ZZ)\cong H^{2i}(\ZZ/n;\ZZ/n)
\]
for every $i\ge1$.
Consequently, the Euler class $e(W_n^{\oplus(d-1)})$ vanishes if and only if its $\ZZ/n$ reduction $e_{\ZZ/n}(W_n^{\oplus(d-1)})$ vanishes.

The existence of the isomorphism of $\ZZ/n$-representations $W_n\oplus\RR\cong\RR[\ZZ/n]$ allows us to decompose $W_n$ into irreducible ones.
Here $\RR$ denotes the trivial $1$-dimensional real $\ZZ/n$-representation, while $\RR[\ZZ/n]$ denotes the regular one.
Now we can apply \cite[Theorem 3.3, page 285]{Izy-Mar} to get that $e_{\ZZ/n}(W_n^{\oplus(d-1)})$ vanishes in all the cases we consider.
Consequently, both the Euler class $e(W_n^{\oplus(d-1)})$ and the primary obstruction $\gamma^{\ZZ/n}$ vanish, so
the $\ZZ/n$-equivariant map $F(\RR^d,n)\to S(W_n^{\oplus(d-1)})$ exists.

\noindent\textrm{(v)} 
The existence of a $\ZZ/n$-equivariant map $F(\RR^d,n)\to S(W_n^{\oplus (d-1)})$ can be discussed via the equivariant obstruction theory set-up of~\cite{B-Z-New}.
For example, in the case $n=4$ the existence of a $\ZZ/4$-equivariant map $F(\RR^d,4)\to S(W_4^{\oplus (d-1)})$ is equivalent to the solvability of the following system of linear equations in integer variables $x_{[\ldots ]}$:
\begin{eqnarray*}
x_{[1|234]} + x_{[1|342]} + x_{[1|243]} + x_{[1|234]} + x_{[12|34]} + x_{[13|24]} + x_{[14|23]} +   & & \\
x_{[14|23]} + x_{[13|24]} + x_{[12|34]} + x_{[123|4]} + x_{[124|3]} + x_{[134|2]} + x_{[123|4]}     &=&1\\
x_{[1|243]} + x_{[1|432]} + x_{[1|243]} + x_{[1|234]} + x_{[12|43]} + x_{[13|24]} + x_{[14|23]} +   & & \\
x_{[14|23]} + x_{[13|24]} + x_{[12|43]} + x_{[123|4]} + x_{[124|3]} + x_{[143|2]} + x_{[124|3]}     &=&1\\
x_{[1|324]} + x_{[1|342]} + x_{[1|243]} + x_{[1|324]} + x_{[12|34]} + x_{[13|24]} + x_{[14|32]} +   & & \\
x_{[14|32]} + x_{[13|24]} + x_{[12|34]} + x_{[132|4]} + x_{[124|3]} + x_{[134|2]} + x_{[132|4]}     &=&1\\
x_{[1|342]} + x_{[1|342]} + x_{[1|423]} + x_{[1|324]} + x_{[12|34]} + x_{[13|42]} + x_{[14|32]} +   & & \\
x_{[14|32]} + x_{[13|42]} + x_{[12|34]} + x_{[132|4]} + x_{[142|3]} + x_{[134|2]} + x_{[134|2]}     &=&1\\
x_{[1|423]} + x_{[1|432]} + x_{[1|423]} + x_{[1|234]} + x_{[12|43]} + x_{[13|42]} + x_{[14|23]} +   & & \\
x_{[14|23]} + x_{[13|42]} + x_{[12|43]} + x_{[123|4]} + x_{[142|3]} + x_{[143|2]} + x_{[142|3]}     &=&1\\
x_{[1|432]} + x_{[1|432]} + x_{[1|423]} + x_{[1|324]} + x_{[12|43]} + x_{[13|42]} + x_{[14|32]} +   & & \\
x_{[14|32]} + x_{[13|42]} + x_{[12|43]} + x_{[132|4]} + x_{[142|3]} + x_{[143|2]} + x_{[143|2]}     &=&1.
\end{eqnarray*}

Generally, the existence of a $\ZZ/n$-equivariant map $F(\RR^d,n)\to S(W_n^{\oplus(d-1)})$ is equivalent to the vanishing of the equivariant obstruction cocycle calculated in~\cite[Lemma 4.1]{B-Z-New}.
Here we assume the natural restriction of the $\Sym_n$ action to the $\ZZ/n$ action.
The vanishing of the equivariant obstruction cocycle is equivalent to the integer solvability of a system of $(n-1)!$ linear equations (generated by the facets of the model $\mathcal{F}(d,n)$)
in $(n-1)!(n-1)$ integer variables (corresponding to the ridges of the model $\mathcal{F}(d,n)$).
Since the system of the equations does not depends on $d$, as proved in~\cite[Lemma 4.1]{B-Z-New}, we conclude that the answer, that is, the existence of the
$\ZZ/n$-equivariant map $F(\RR^d,n)\to S(W_n^{\oplus(d-1)})$, does not depend on $d$, for $d\ge2$.
\end{proof}

In the proof of Theorem \ref{th:ExZ/n}(2) we did not need the condition \eqref{eq:SubResTr} to be satisfied for all non-trivial subgroups of $\ZZ/n$.  
It suffices that the condition \eqref{eq:SubResTr} holds for some choice of $p$-Sylow subgroups of a group where $p$ ranges over all prime divisors of the group order. 
Having this in mind the proof of the statement (2) of the previous theorem with small modification yields the proof of the sufficiency part of~\cite[Theorem 1.2]{B-Z-New}:
\begin{theorem}
If $n$ is not a prime power then there exists an $\Sym_n$-equivariant map
\[
F(\RR^d,n)\to S(W_n^{\oplus(d-1)}).
\]
\end{theorem}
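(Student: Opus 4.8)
The plan is to adapt the argument of Theorem~\ref{th:ExZ/n}(2): with the cyclic group $\ZZ/n$ replaced by $\Sym_n$ and the single distinguished prime replaced by all prime divisors of $n!$, the proof goes through, the point being that intransitivity of a Sylow subgroup on $[n]$ now fails for every prime precisely when $n$ is not a prime power. First I would replace $F(\RR^d,n)$ by the free $\Sym_n$-CW model $\mathcal{F}(d,n)$ of dimension $M:=(d-1)(n-1)$ from~\cite{B-Z-New}, which is legitimate since the assertion only depends on the $\Sym_n$-homotopy type. Since the sphere $S(W_n^{\oplus(d-1)})$ has dimension $M-1$, is $(M-1)$-simple and $(M-2)$-connected, a $\Sym_n$-equivariant map on the $(M-1)$-skeleton of $\mathcal{F}(d,n)$ exists without obstruction, and the whole question reduces to the vanishing of the single primary equivariant obstruction
\[
\gamma^{\Sym_n}:=\gamma^{\Sym_n}\big(\mathcal{F}(d,n),S(W_n^{\oplus(d-1)})\big)\in H^{M}_{\Sym_n}\big(\mathcal{F}(d,n);\pi_{M-1}(S(W_n^{\oplus(d-1)}))\big).
\]

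Next I would use the elementary observation that for a subgroup $G\le\Sym_n$ one has $W_n^{G}\ne 0$ if and only if $G$ acts intransitively on $[n]$, together with the fact that a $p$-Sylow subgroup $\Sym_n^{(p)}$ of $\Sym_n$ is transitive on $[n]$ exactly when $n$ is a power of $p$. Consequently, if $n$ is not a prime power then for every prime $p\le n$ the fixed point set $S(W_n^{\oplus(d-1)})^{\Sym_n^{(p)}}$ is non-empty, so a constant map onto a point of this fixed set is $\Sym_n^{(p)}$-equivariant, whence $\gamma^{\Sym_n^{(p)}}=0$. By naturality of the primary obstruction under restriction of the acting group (as already invoked in the proof of Theorem~\ref{th:ExZ/n}(2)), this says $\res_{\Sym_n^{(p)}}^{\Sym_n}(\gamma^{\Sym_n})=0$ for every such $p$.

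Finally I would apply the transfer--restriction identity of Lemma~\ref{lem:transfer} to get $[\Sym_n:\Sym_n^{(p)}]\cdot\gamma^{\Sym_n}=\trf\circ\res(\gamma^{\Sym_n})=0$ for each prime $p\le n$. Because $[\Sym_n:\Sym_n^{(p)}]$ is prime to $p$, the collection of these indices, as $p$ ranges over all primes dividing $n!$, has greatest common divisor $1$; a B\'ezout combination then forces $\gamma^{\Sym_n}=0$. Since $\mathcal{F}(d,n)$ has dimension $M$, the equivariant map extends over all of $\mathcal{F}(d,n)$, and composing with the $\Sym_n$-homotopy equivalence $F(\RR^d,n)\simeq\mathcal{F}(d,n)$ yields the claimed map. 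The argument is essentially routine given~\cite{B-Z-New}; the one point to watch is that a single obstruction really does control everything, which rests on the dimension of the model matching the connectivity of the target sphere, and on the coprimality of the Sylow indices replacing the single-prime bookkeeping of the cyclic case.
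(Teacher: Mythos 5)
Your proof is correct and is essentially the argument the paper has in mind (the paper presents it only as a terse remark following Theorem~\ref{th:ExZ/n}, noting that the cyclic-group argument carries over once one uses Sylow subgroups for all prime divisors). The two key points you correctly identify — that every Sylow subgroup of $\Sym_n$ is intransitive exactly when $n$ is not a prime power, giving a fixed point on the target sphere and hence vanishing of the restricted obstruction, and that the Sylow indices are jointly coprime so a B\'ezout combination kills $\gamma^{\Sym_n}$ — are precisely what the paper's remark gestures at.
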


Let us point out that the first reasoning of this type was used by \"Ozaydin in his remarkable unpublished paper~\cite{Ozay}.

\subsection{On coincidence theorems of Cohen, Connett, and Lusk}

In this section we use calculations presented in Section~\ref{Sec:FH-Index-II} 
to extend/improve the results of Cohen \& Connett \cite[Theorem 1, page 218]{Cohen-Connett},
Cohen \& Lusk \cite[Theorem 1 and 2, page 245]{Cohen-Lusk}, and Karasev \& Volovikov \cite[Theorem 7.1, page 1050]{Karasev-Volovikov}.

In this section $p$ is a prime, $k>1$ is an integer, and $G$ denotes the elementary abelian group $(\ZZ/p)^k$.
Moreover, the class of $\FF_p[G]$-modules $\mathfrak{FI}_G$, that we use in theorem that follows, was introduced in Section~\ref{Sec:DiffSSSeq}.

\begin{theorem}
\label{Th:Coincidence-1}
Let $X$ be a Hausdorff free $G$-space, $d\geq 2$ be an integer, and $f\colon X\to\RR^d$ be a continuous map.
If $H^i(X;\FF_p)\in\mathfrak{FI}_G$ for every $0<i \le (d-1)(p^k-p^{k-1})$, then there exist $x\in X$ and $g\in G{\setminus}\{1\}$ such that
\[
f(x)=f(g\cdot x).
\]
\end{theorem}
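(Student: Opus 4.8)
The plan is to argue by contradiction, manufacturing a $G$-equivariant map from $X$ into the configuration space $F(\RR^d,p^k)$ and then colliding the two index estimates of this paper. Throughout write $G=(\ZZ/p)^k$, acting on $F(\RR^d,p^k)$ through the regular embedding $\mathrm{(reg)}\colon G\to\Sym_{p^k}$, and $N:=(d-1)(p^k-p^{k-1})$, so that the hypothesis reads: $H^i(X;\FF_p)\in\mathfrak{FI}_G$ for all $1\le i\le N$. Assume, contrary to the assertion, that $f(x)\neq f(g\cdot x)$ for every $x\in X$ and every $g\in G{\setminus}\{1\}$.

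First I would build the map. Identifying the ground set $[p^k]$ with $G$ as in Section~\ref{subsec:The_special_case_G_is_(Z/p)k}, define $\Phi\colon X\to(\RR^d)^{p^k}$ by $\Phi(x)_g:=f(g^{-1}\cdot x)$. If two coordinates of $\Phi(x)$ agreed, say $\Phi(x)_g=\Phi(x)_h$ with $g\neq h$, then, setting $y:=g^{-1}\cdot x$, we would obtain $f(y)=f\bigl((h^{-1}g)\cdot y\bigr)$ with $h^{-1}g\neq 1$, contradicting the assumption; hence $\Phi$ actually takes values in $F(\RR^d,p^k)$. It is continuous, and for $h\in G$ one checks $\Phi(h\cdot x)_g=f(g^{-1}h\cdot x)=\Phi(x)_{h^{-1}g}=(h\cdot\Phi(x))_g$, so $\Phi$ is $G$-equivariant for the regular $G$-action on the target. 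By the monotonicity of the Fadell--Husseini index this gives
\[
\Index{G}(F(\RR^d,p^k);\FF_p)\ \subseteq\ \Index{G}(X;\FF_p).
\]

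Next I would invoke the two estimates. By Theorem~\ref{Th:IndexN_plus_3}(ii) there is a nonzero class $\xi\in\Index{G}^{N+2}(F(\RR^d,p^k);\FF_p)\cap H^{N+1}(\BB G;\FF_p)$; since $\Index{G}^{N+2}\subseteq\Index{G}$, and by the inclusion just displayed, $\xi$ is a nonzero element of $\Index{G}(X;\FF_p)$ sitting in degree $N+1$. On the other hand, the assumption on $H^i(X;\FF_p)$ is exactly what is needed to apply Theorem~\ref{Th:DiffSSSeq-EAb} with $n=N$ to the Borel construction $X\to\EE G\times_G X\to\BB G$, which yields $\Index{G}(X;\FF_p)\subseteq H^{\ge N+2}(\BB G;\FF_p)$; in particular $\Index{G}(X;\FF_p)$ contains no nonzero class of degree $N+1$. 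This contradicts the existence of $\xi$, so the coincidence-free assumption is untenable and the theorem follows. Equivalently, the last two steps are precisely an instance of the generalized Dold Theorem~\ref{Th:Dold} applied to $\Phi\colon X\to F(\RR^d,p^k)$ with $n=N$ and $j=N+1$.

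I do not expect a genuine obstacle in this assembly: all the depth resides in Theorem~\ref{Th:IndexN_plus_3}, proved in Section~\ref{Sec:FH-Index-II}, and the present argument merely harvests it via monotonicity and Theorem~\ref{Th:DiffSSSeq-EAb}. The one thing that requires care is the degree bookkeeping --- lining up the hypothesis range $1\le i\le(d-1)(p^k-p^{k-1})$ with the range of spectral-sequence pages controlled by Theorem~\ref{Th:DiffSSSeq-EAb} and with the degree $N+1$ in which the obstruction $\xi$ lives --- together with the routine remark that $F(\RR^d,p^k)$ is path-connected for $d\ge 2$, so that $\Phi$ is defined into a connected target and the connected-space statements of Theorems~\ref{Th:DiffSSSeq-EAb} and~\ref{Th:Dold} apply.
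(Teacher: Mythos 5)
Your proof is correct and follows essentially the same route as the paper: build the $G$-equivariant map $X\to F(\RR^d,p^k)$ from the assumed coincidence-free $f$ (the paper writes it as $\psi'(x)=\sum_{g\in G}f(g^{-1}\cdot x)g$ into $\ind^G_{\{1\}}(\RR^d)$, which is the same map you call $\Phi$), then derive a contradiction between the degree-$(N{+}1)$ nonvanishing in $\Index{G}(F(\RR^d,p^k);\FF_p)$ from Theorem~\ref{Th:IndexN_plus_3} and the vanishing $\Index{G}(X;\FF_p)\subseteq H^{\ge N+2}$ from Theorem~\ref{Th:DiffSSSeq-EAb}, via monotonicity of the index. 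Your closing remark that this is an instance of the generalized Dold theorem is a nice observation the paper leaves implicit.
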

\begin{proof}
Let us consider a continuous $G$-equivariant map $\psi' \colon X\to\ind^G_{\{1\}}(\RR^d)$ defined by
\[
x\mapsto \sum_{g\in G}f(g^{-1}\cdot x)g.
\]
Here we denote by $\ind^G_{\{1\}}(\RR^d)\cong(\RR^d)^{p^k}$ the induced $G$-representation from the trivial $G$-representation $\RR^d$.

\noindent If the theorem is not true, then for every $x\in X$ and every $g\in G{\setminus}\{1\}$ we have that $f(x)\neq f(g\cdot x)$.
Consequently, the map $\psi'$ factors through the configuration space $F(\RR^d,p^k)$, i.e., the following diagram of $G$-equivariant maps commutes:
\[
\xymatrix
{
X\ar[rd]^{\psi}\ar[rr]^--{\psi'} &   & \ind^G_{\{1\}}(\RR^d)\cong(\RR^d)^{p^k}\\
&F(\RR^d,p^k).\ar[ur]^{i}&
}
\]
Now using the Fadell--Husseini index we prove that the $G$-equivariant map $\psi\colon X\to F(\RR^d,p^k)$ cannot exist.

From the assumption that $H^i(X;\FF_p)\in\mathfrak{FI}_G$ for $0<i\le (d-1)(p^k-p^{k-1})$, using Theorem~\ref{Th:DiffSSSeq-EAb}, we conclude that
\[
\Index{G}(X;\FF_p)\subseteq H^{\geq (d-1)(p^k-p^{k-1})+2}(\BB G;\FF_p).
\]
On the other hand, Theorem~\ref{Th:IndexN_plus_3} implies that there exists a non-zero element
\[
v\in \Index{G}(F(\RR^d,p^k);\FF_p)\cap H^{(d-1)(p^k-p^{k-1})+1}(\BB G;\FF_p).
\]
Therefore, the existence of the element $v$ implies that
\[
\Index{G}(F(\RR^d,p^k);\FF_p)\nsubseteq \Index{G}(X;\FF_p)
\]
and consequently, by the basic monotonicity property of the Fadell--Husseini index, there cannot be a $G$-equivariant map $\psi \colon X\to F(\RR^d,p^k)$.
This concludes the proof of the theorem.
\end{proof}

Let $X$ be a Hausdorff free $G$-space, $d>2$ be an integer and $f:X\to\RR^d$ be a continuous map.
Set
\[
A(X,f):=\{x\in X\colon f(x)=f(g\cdot x)\text{~for some~}g\in G{\setminus}\{1\}\}.
\]
Along the lines of the proofs of Cohen \& Lusk \cite[Theorem 2]{Cohen-Lusk} and our Theorem~\ref{Th:Coincidence-1} the following result can be obtained.

\begin{theorem}
\label{Th:Coincidence-2}
Let $X$ be a closed $m$-dimensional manifold equipped with a free $G$-action, $d>2$ be an integer and $f\colon X\to\RR^d$ be a continuous map.
If $H^i(X;\FF_p)\in\mathfrak{FI}_G$ for $0<i\le (d-1)(p^k-p^{k-1})$, then the covering dimension of the set $A(X,f)$ can be estimated from below by
\[
\mathrm{cdim}\, A(X,f)\geq m-(d-1)(p^k-p^{k-1})-1 .
\]

\end{theorem}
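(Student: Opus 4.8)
The plan is to run a Bourgin--Yang type dimension estimate: off the coincidence set we build an equivariant test map into $F(\RR^d,p^k)$, push the distinguished index class produced in Theorem~\ref{Th:IndexN_plus_3} down to the quotient $X/G$, and then use Alexander duality in the closed manifold $X/G$ together with Theorem~\ref{Th:DiffSSSeq-EAb} to force a contradiction unless the coincidence set is large. Concretely, I would first imitate the proof of Theorem~\ref{Th:Coincidence-1} and introduce the $G$-equivariant \emph{test map}
\[
\psi'\colon X\To \ind^G_{\{1\}}(\RR^d)\cong(\RR^d)^{p^k},\qquad x\longmapsto\sum_{g\in G}f(g^{-1}\cdot x)\,g,
\]
where $(\RR^d)^{p^k}$ carries the $G$-action coming from the regular embedding $\mathrm{(reg)}\colon G\to\Sym_{p^k}$, which restricts on $F(\RR^d,p^k)\subset(\RR^d)^{p^k}$ to the action considered in Theorem~\ref{Th:IndexN_plus_3}. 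A direct check shows that $\psi'(x)$ lies in the union of the arrangement $\mathcal{B}_{p^k,d}$ if and only if $f(y)=f(g\cdot y)$ for some $y$ in the $G$-orbit of $x$ and some $g\in G\setminus\{1\}$; hence the closed $G$-invariant set $\widetilde A:=(\psi')^{-1}\bigl(\bigcup\mathcal{B}_{p^k,d}\bigr)$ equals $\bigcup_{g\in G}g\cdot A(X,f)$, a finite union of homeomorphic copies of $A(X,f)$, so the sum theorem for covering dimension gives $\mathrm{cdim}\,\widetilde A=\mathrm{cdim}\,A(X,f)$. On the open $G$-invariant complement $U:=X\setminus\widetilde A$ the map $\psi'$ avoids the arrangement, hence corestricts to a $G$-map $\psi\colon U\to F(\RR^d,p^k)$, and by monotonicity of the Fadell--Husseini index $\Index{G}(F(\RR^d,p^k);\FF_p)\subseteq\Index{G}(U;\FF_p)$.

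Next I would bring in Theorem~\ref{Th:IndexN_plus_3}, which supplies a nonzero class $v\in\Index{G}(F(\RR^d,p^k);\FF_p)\cap H^{N+1}(\BB G;\FF_p)$ with $N=(d-1)(p^k-p^{k-1})$; by the previous step $v\in\Index{G}(U;\FF_p)$. Since the $G$-action on $X$ is free, $H^*_G(U;\FF_p)\cong H^*(U/G;\FF_p)=H^*\bigl((X/G)\setminus(\widetilde A/G);\FF_p\bigr)$, so the image $\bar v\in H^{N+1}(X/G;\FF_p)$ of $v$ under the classifying map of the cover $X\to X/G$ restricts to $0$ on the open subset $(X/G)\setminus(\widetilde A/G)$. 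Assume, toward a contradiction, that $\mathrm{cdim}\,\widetilde A<m-N-1$; then also $\mathrm{cdim}\,(\widetilde A/G)<m-(N+1)$, as $\widetilde A\to\widetilde A/G$ is a finite covering. Alexander--Lefschetz duality in the closed $m$-manifold $X/G$ (with $\FF_p$-coefficients twisted by the orientation sheaf, so that no orientability hypothesis is needed when $p$ is odd) identifies $H^{N+1}\bigl(X/G,(X/G)\setminus(\widetilde A/G);\FF_p\bigr)$ with a \v{C}ech homology group of the compact set $\widetilde A/G$ in degree $m-N-1$, which vanishes because that degree exceeds $\mathrm{cdim}\,(\widetilde A/G)$. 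Consequently $\bar v=0$, i.e.\ $v\in\ker\bigl(H^*(\BB G;\FF_p)\to H^*(X/G;\FF_p)\bigr)=\Index{G}(X;\FF_p)$.

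Finally I would apply Theorem~\ref{Th:DiffSSSeq-EAb}: the assumption $H^i(X;\FF_p)\in\mathfrak{FI}_G$ for $0<i\le N$ forces all differentials of the Serre spectral sequence of $X\to\EE G\times_G X\to\BB G$ that land in the zero row to vanish through page $N+1$, whence $\Index{G}(X;\FF_p)\subseteq H^{\ge N+2}(\BB G;\FF_p)$. This contradicts $0\ne v\in\Index{G}(X;\FF_p)$ with $\deg v=N+1$, so in fact $\mathrm{cdim}\,A(X,f)=\mathrm{cdim}\,\widetilde A\ge m-N-1=m-(d-1)(p^k-p^{k-1})-1$. (When $m-N-1\le 0$ the inequality is trivial; in the borderline case $m-N-1=0$ the same argument applies verbatim with $\widetilde A=\emptyset$, reproving $A(X,f)\ne\emptyset$ as in Theorem~\ref{Th:Coincidence-1}.) I expect the main obstacle to be making the Alexander-duality step precise: keeping track of the orientation-sheaf twist for odd $p$, and invoking the correct statement that covering dimension bounds \v{C}ech homology for the compact pair $\bigl(X/G,\widetilde A/G\bigr)$.
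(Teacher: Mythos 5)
Your proof is correct and follows precisely the route the paper gestures at: the paper offers no proof of Theorem~\ref{Th:Coincidence-2} beyond the remark that it can be obtained ``along the lines of'' Cohen \& Lusk's Theorem~2 and the paper's own Theorem~\ref{Th:Coincidence-1}, and your argument is exactly that combination. In particular, the step you flag as delicate is fine: the orientation twist in $H^{N+1}(X/G,\,U/G;\FF_p)\cong\check H_{m-N-1}(\widetilde A/G;\FF_p^w)$ does not affect the conclusion, since all you need is that \v{C}ech (equivalently Steenrod--Sitnikov) homology of a compact metric set vanishes above its covering dimension regardless of the local system; combined with the finite closed sum theorem ($\mathrm{cdim}\,\widetilde A=\mathrm{cdim}\,A(X,f)$ because $A(X,f)$ is closed) and the finite-covering invariance of $\mathrm{cdim}$, the contradiction with Theorem~\ref{Th:DiffSSSeq-EAb} goes through as you wrote it.
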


\providecommand{\noopsort}[1]{}


\begin{thebibliography}{10}\itemsep=0pt

\bibitem{Adem-Milgram}
{\sc A.~Adem and J.~Milgram}, {\em Cohomology of Finite Groups}, vol.~309 of
  Grundlehren Math. Wissenschaften, Springer, Berlin, 2nd~ed., 2004.

\bibitem{A-H-P}
{\sc C.~Allday, B.~Hanke, and V.~Puppe}, {\em {Poincar\'e duality in P.A. Smith
  theory}}, Proc. Amer. Math. Soc., 131 (2003), 3275-3283.

\bibitem{Arone2006}
{\sc G.~Arone}, {\em A note on the homology of $\Sigma_n$, the {S}chwartz genus, and solving polynomial equations},
in ``An Alpine Anthology of Homotopy Theory'' (Proc. Arolla Conf. Algebraic Topology), Contemporary Math., 399 (2006), 1-10.

\bibitem{BaranyBlagojevicSzuecs}
{\sc I.~B\'ar\'any, P.~V.~M. Blagojevi\'c, and A.~Sz\H{u}cs}, {\em
  Equipartitioning by a convex $3$-fan}, Adv. in Math., 223 (2010), 579-593.

\bibitem{Benson}
{\sc D.~J.~Benson,}
{\em Representations and Cohomology, I},
\newblock Cambridge Studies on Advanced Math. 30, Cambridge University Press, Cambridge 1995.

\bibitem{Bjo}
{\sc A.~Bj{\"o}rner}, {\em On the homology of geometric lattices}, Algebra
  Univ., 14 (1982), 107-128.

\bibitem{BjoWel}
{\sc A.~Bj\"orner and V.~Welker}, {\em The homology of ``$k$-equal'' manifolds
  and related partition lattices}, Adv. in Math., 110 (1995), 277-313.

\bibitem{BlDiMcC}
{\sc P.~V.~M. Blagojevi\'c, A.~{Dimitrijevi\'c Blagojevi\'c}, and
  J.~{McCleary}}, {\em {Spectral sequences in combinatorial geometry: Cheeses,
  inscribed sets, and Borsuk-Ulam type theorems}}, Top. Appl., 158 (2011), 1920-1936.



\bibitem{B-Z}
{\sc P.~V.~M. Blagojevi\'c and G.~M. Ziegler}, 
{\em The ideal-valued index for a dihedral group action, and mass partition by two hyperplanes},
Top. Appl., 158 (2011), 1326-1351.

\bibitem{B-Z-New}
{\sc P.~V.~M. Blagojevi\'c and G.~M. Ziegler}, 
{\em Convex equipartitions via equivariant obstruction theory}, 
Israel Journal of Mathematics, 200 (2014), 49-77.
%  \url{http://link.springer.com/article/10.1007/s11856-014-1006-6}.

\bibitem{blagojevic-luck-ziegler-2}
 {\sc P.~V.~M. Blagojevi\'c, W.~L\"uck, G.~M. Ziegler},
 {\em On highly regular embeddings},
 Preprint, May 2013, 19 pages, \href{http://arxiv.org/abs/1305.7483}{arXiv:1305.7483}.
 To appear in Trans. Amer. Math. Soc.
 
\bibitem{blagojevic-cohen-luck-ziegler}
 {\sc P.~V.~M. Blagojevi\'c, F.~R.~Cohen, W.~L\"uck, G.~M. Ziegler},
 {\em On highly regular embeddings, II},
 Preprint, October 2014, 31 pages,
 \href{http://arxiv.org/abs/1410.6052}{arXiv:1410.6052}.

 
\bibitem{Brown}
{\sc K.~S. Brown}, {\em Cohomology of Groups}, vol.~87 of Graduate Texts in
  Math., Springer, 1982.

\bibitem{Chisholm}
{\sc M.~E. Chisholm}, {\em $k$-regular mappings of $2^n$-dimensional euclidean
  space}, Proc. Amer. Math. Soc., 74 (1979), 187-190.

\bibitem{Cohen}
{\sc F.~R. Cohen}, {\em {The homology of $C_{n+1}$-spaces, $n\geq 0$}}, in
  ``The Homology of Iterated Loop Spaces'', vol.~533 of Lecture Notes in Math.,
  Heidelberg, 1976, Springer, 207-351.
  
  \bibitem{Cohen95}
  {\sc F.~R. Cohen}, 
 {\em On configuration spaces, their homology, and Lie algebras}, 
  J. Pure Appl. Algebra 100 (1995), 19-42.

\bibitem{Cohen-Connett}
{\sc F.~R. Cohen, J.~E. Connett},
{\em A coincidence theorem related to the Borsuk-Ulam theorem},
Proc. Amer. Math. Soc. 44 (1974), ~218-220. 

\bibitem{Cohen-Handel}
{\sc F.~R. Cohen, D.~Handel}, {\em $k$-regular embeddings of the plane},
  Proc. Amer. Math. Soc., 72 (1978), 201-204.

\bibitem{Cohen-Lusk}
{\sc F.~R. Cohen, L. Lusk},
{\em Coincidence point results for spaces with free $\ZZ_p$-actions},
Proc. Amer. Math. Soc. 49 (1975), 245-252. 

\bibitem{Cohen-Taylor}
{\sc F.~R.~Cohen, L.~R.~Taylor},
{\em On the representation theory associated to the cohomology of configuration spaces},
Algebraic topology (Oaxtepec, 1991), 91-109, 
Contemp. Math., 146, Amer. Math. Soc., Providence, RI, 1993. 

\bibitem{Church}
{\sc T.~Church},
{\em Homological stability for configuration spaces of manifolds},
Invent. Math. 188 (2012), 465-504. 

\bibitem{Church-Farb}
{\sc T.~Church, B.~Farb, }
{\em Representation theory and homological stability},
Adv. Math. 245 (2013), 250-314. 

\bibitem{CLOT}
{\sc O.~Cornea, G.~Lupton, J.~Oprea, and D.~Tanr\'e}, {\em
  {Lusternik-{S}chnirelmann Category}}, vol.~103 of Math. Surveys and
  Monographs, American Math. Soc., Providence RI, 2003.

\bibitem{Davis-Lueck(1998)}
{\sc J.~Davis and W.~L\"uck}, {\em {Spaces over a category and assembly maps in
  isomorphism conjectures in $K$-and $L$-Theory}}, $K$-Theory, 15 (1998), 201-252.

\bibitem{DeConcini-Procesi-Salvetti} 
{\sc C. De Concini, C. Procesi, M. Salvetti}, 
{\em On the equation of degree 6},
Comment. Math. Helv. 79 (2004), 605-617. 

\bibitem{tDieck}
{\sc T.~{\noopsort{Dieck}}{tom Dieck}}, {\em Transformation Groups}, vol.~8 of
  Studies in Mathematics, Walter de Gruyter, Berlin, 1987.

\bibitem{Evens}
{\sc L.~Evens}, {\em The Cohomology of Groups}, Oxford University Press,
  Oxford, 1991.

\bibitem{FH}
{\sc E.~R. Fadell and S.~Y. Husseini}, {\em {An ideal-valued cohomological
  index, theory with applications to Borsuk-Ulam and Bourgin-Yang theorems}},
  Ergod. Th. and Dynam. Sys., 8$^{\ast}$ (1988), 73-85.

\bibitem{FadellHusseini:book}
\leavevmode\vrule height 2pt depth -1.6pt width 30pt, {\em Geometry and
  Topology of Configuration Spaces}, Springer Monographs in Math., Springer,
  Berlin Heidelberg, 2001.

\bibitem{Ganea}
{\sc T.~Ganea}, {\em {Lusternik-Schnirelmann category and strong category}},
  Illinois J. Math., 11 (1967), 417-427.

\bibitem{G-M}
{\sc M.~Goresky and R.~D. MacPherson}, {\em Stratified Morse Theory}, vol.~14
  of Ergebnisse Series, Springer Verlag, 1988.

\bibitem{Greenblatt}
{\sc R.~Greenblatt}, {\em Homology with local coefficients and characteristic
  classes}, Homology, Homotopy and Applications, 8 (2006), 91-103.

\bibitem{Gromov}
{\sc M.~Gromov},
{\em Isoperimetry of waists and concentration of maps},
Geom. Funct. Anal. 13 (2003), 178-215. 

\bibitem{Gun}
{\sc J.~Gunawardena}, {\em Stiefel–Whitney classes for representations of
  groups}, J. London Math. Soc. (2), 35 (1987), 539-550.

\bibitem{Hsiang:cohomology}
{\sc W.-Y. Hsiang}, {\em Cohomology Theory of Topological Transformation Groups}, vol.~85 of Ergebnisse Series,
  Springer, New York, 1975.

\bibitem{HubardAronov}
{\sc A.~Hubard and B.~Aronov}, {\em Convex equipartitions of volume and surface area}.
\newblock Preprint, October 2010, 9~pages; version 3, September 2011, 17~pages;
  \href{http://arxiv.org/abs/1010.4611}{arXiv:1010.4611}.

\bibitem{Izy-Mar}
{\sc M. Izydorek, W. Marzantowicz}, 
{\em Equivariant maps between cohomology spheres}, 
Topol. Methods Nonlinear Anal. 5 (1995), 279-289. 

\bibitem{Karasev:genus-and-category}
{\sc R.~N. Karasev}, {\em The genus and the category of configuration spaces}.
Topology and its Applications, 156 (2009), 2406-2415.

\bibitem{Karasev:regular_embeddings}
{\sc R.~N. Karasev}, {\em Regular embeddings of manifolds and topology of configuration spaces}.
\newblock Preprint, June 2010, 26~pages; version 3, June 2011, 22~pages;
\href{http://arxiv.org/abs/1006.0613}{ arXiv:1006.0613}.

\bibitem{Karasev:equipartition}
{\sc R.~N. Karasev}, {\em Equipartition of several measures}.
\newblock Preprint, November 2010, 6~pages; version 6, August 2011, 10~pages;
  \href{http://arxiv.org/abs/1011.4762}{arXiv:1011.4762}.

\bibitem{Karasev-Volovikov}
{\sc R.~N. Karasev, A.~Yu.~Volovikov}
{\em Configuration-like spaces and coincidences of maps on orbits},
\newblock Algebr. Geom. Topol. 11 (2011), no. 2, 1033-1052. 


\bibitem{Leary}
{\sc I.~Leary,}
{\em On the integral cohomology of wreath products},
\newblock Journal of Algebra 198 (1997), 184-239.

\bibitem{Mann-Milgram}
{\sc B.~Mann, J.~Milgram,}
{\em On the Chern classes of the regular representations of some finite groups},
\newblock Proc. Edinb. Math. Soc., II. Ser. 25 (1982), 259-268. 

\bibitem{M-S}
{\sc J.~Milnor and J.~D. Stasheff}, {\em Characteristic Classes}, vol.~76 of
  Annals of Mathematics Studies, Princeton University Press, Princeton, 1974.


\bibitem{Nakaoka}
{\sc N.~Nakaoka}, 
{\em Homology of the infinite symmetric group},
Ann. of Math., 73 (1961), 229-257. 

\bibitem{Nandakumar06}
{\sc R.~Nandakumar}, {\em ``{F}air'' partitions},
  \href{http://nandacumar.blogspot.de/2006/09/cutting-shapes.html}{Blog entry, September 28, 2006.}


\bibitem{NandaKumarRamanaRao12}
	{\sc R.~Nandakumar and N.~{Ramana Rao}}, 
	 {\em {F}air partitions of polygons: {A}n  elementary introduction},
	  Proc. Indian Academy of Sciences (Mathematical Sciences), 122 (2012), 459-467.

 
\bibitem{Ossa} 
 {\sc E.~Ossa}, 
 {\em On the cohomology of configuration spaces}. 
 Algebraic Topology: New Trends in Localization and Periodicity (Sant Feliu de Guíxols, 1994), 353–361, Progr. Math., 136, Birkhäuser, Basel, 1996. 



\bibitem{Ozay}
{\sc M.~{\"O}zaydin}, {\em Equivariant maps for the symmetric group}.
\newblock Preprint 1987, 17 pages.

\bibitem{Roth}
{\sc F.~Roth}, {\em On the category of euclidean configuration spaces and
  associated fibrations}, in ``Groups, Homotopy and Configuration Spaces'',
  vol.~13 of Geometry \& Topology Monographs, 2008, 447-461.

\bibitem{Rud}
{\sc Yu.~Rudyak}, {\em On category weight and its applications}, Topology, 38
  (1999), 37-55.
  
  
\bibitem{Schwarz}
{\sc A.S. Schwarz}, 
{\em Genus of a Fibre Bundle}, 
Transact. Moscow Math. Soc. 10 (1961), 217-272.

\bibitem{Segal}
{\sc G.~B. Segal}, {\em Classifying spaces and spectral sequences}, Publ. Math.
  Inst. Hautes Etud. Sci., 34 (1968), 105-112.
  
  

\bibitem{Stanley}
{\sc R.~P. Stanley}, {\em Enumerative Combinatorics, Volume I}, vol.~49 of
  Cambridge Studies in Advanced Mathematics, Cambridge University Press,
  Cambridge, second~ed., 2012.

\bibitem{Steenrod(1951)}
{\sc N.~Steenrod}, {\em The Topology of Fibre Bundles}, Princeton University
  Press, Princeton, 1951.

\bibitem{Strom}
{\sc J.~A. Strom}, {\em Category weight and essential category weight}, PhD
  thesis, Univ. Wisconsin -- Madison, 1997.

\bibitem{Sun}
{\sc S.~Sundaram}, {\em The homology representations of the symmetric group on
  Cohen--Macaulay posets of the partition lattice}, Advances in Math., 104
  (1994), 225-296.

\bibitem{SunWel}
{\sc S.~Sundaram and V.~Welker}, {\em Group actions on arrangements of linear
  subspaces and applications to configuration space}, Tran. Amer. Math.
  Soc., 349 (1997), 1389-1420.

\bibitem{Thom(1952)}
{\sc R.~Thom}, {\em Espaces fibr\'es en sph\`eres et carr\'es de Steenrod},
  Ann. Sci. Ecole Norm. Sup., 69 (1952), 109-182.

\bibitem{Vol-1}
{\sc A.~Y. Volovikov}, {\em On the index of {$G$}-spaces}, Mat. Sb., 191
  (2000), 3-22.

\bibitem{Wachs}
{\sc M.~L. Wachs}, {\em Poset topology: {T}ools and applications}, in
  ``Geometric Combinatorics'', Proc. Park City Math. Institute (PCMI) 2004,
  E.~Miller, V.~Reiner, and B.~Sturmfels, eds., Providence, RI, 2007, Amer.
  Math. Society, 497-615.

\bibitem{JW}
{\sc J.~W. Walker}, {\em Canonical homeomorphisms of posets}, Euro. J.\, Comb., 9 (1988), 97-108.

\bibitem{WZZ}
{\sc V.~Welker, G.~M. Ziegler, and R.~T. \v{Z}ivaljevi\'c}, {\em Homotopy
  colimits -- comparison lemmas for combinatorial applications}, J. Reine
  Angew. Math., 509 (1999), 117-149.

\bibitem{ZZ}
{\sc G.~M. Ziegler and R.~T. \v{Z}ivaljevi\'c}, {\em Homotopy types of subspace
  arrangements via diagrams of spaces}, Math. Ann., 295 (1993), 527-548.

\end{thebibliography}
\end{document}